\documentclass[12pt,centertags,oneside]{amsart}
\usepackage{amsmath,amstext,amsthm,amscd,typearea}
\usepackage{amssymb}
\usepackage{fancyhdr}
\usepackage[mathscr]{eucal}
\usepackage{mathrsfs}
\usepackage{concmath}
\usepackage{charter}
\usepackage{dsfont}
\usepackage{color}

\usepackage[a4paper,width=16.2cm,top=3cm,bottom=3cm]{geometry}

\numberwithin{equation}{section}

\newcommand{\Complex}{\mathbb C}
\newcommand{\Real}{\mathbb R}
\newcommand{\ddbar}{\overline\partial}
\newcommand{\pr}{\partial}
\newcommand{\ol}{\overline}
\newcommand{\Td}{\widetilde}
\newcommand{\norm}[1]{\left\Vert#1\right\Vert}
\newcommand{\abs}[1]{\left\vert#1\right\vert}
\newcommand{\rabs}[1]{\left.#1\right\vert}
\newcommand{\set}[1]{\left\{#1\right\}}
\newcommand{\To}{\rightarrow}
\newcommand{\ov}{\overline}
\DeclareMathOperator{\Ker}{Ker}
\DeclareMathOperator{\End}{End}
\DeclareMathOperator{\Id}{Id}

\newcommand{\imat}{\sqrt{-1}}
\newcommand{\field}[1]{\mathbb{#1}}

\newcommand{\R}{\field{R}}
\newcommand{\C}{\field{C}}
\newcommand{\N}{\field{N}}

\newcommand{\cali}[1]{\mathscr{#1}}
\newcommand{\cO}{\cali{O}} \newcommand{\cI}{\cali{I}}
\newcommand{\cC}{\cali{C}} \newcommand{\cH}{\cali{H}}
\newcommand{\cE}{\cali{E}}

\theoremstyle{plain}
\newtheorem{thm}{Theorem}[section]
\newtheorem{cor}[thm]{Corollary}
\newtheorem{lem}[thm]{Lemma}
\newtheorem{prop}[thm]{Proposition}

\theoremstyle{definition}
\newtheorem{defn}[thm]{Definition}
\newtheorem{setup}[thm]{Setup}

\theoremstyle{remark}
\newtheorem{rem}[thm]{Remark}

\numberwithin{equation}{section}



\begin{document}
\title[]{Asymptotics of spectral function of lower energy forms and Bergman kernel of semi-positive and big line bundles}
\author[]{Chin-Yu Hsiao}
\address{Universit{\"a}t zu K{\"o}ln,  Mathematisches Institut,
    Weyertal 86-90,   50931 K{\"o}ln, Germany}
\thanks{First-named author is supported by the DFG funded project MA 2469/2-1}
\email{chsiao@math.uni-koeln.de}
\author[]{George Marinescu}
\address{Universit{\"a}t zu K{\"o}ln,  Mathematisches Institut,
   Weyertal 86-90,   50931 K{\"o}ln, Germany\\
    \& Institute of Mathematics `Simion Stoilow', Romanian Academy,
Bucharest, Romania}
\thanks{Second-named author is partially supported by the SFB/TR 12}
\email{gmarines@math.uni-koeln.de}
\date{\today}

\setlength{\headheight}{14pt}
\pagestyle{fancy}
\lhead{\itshape{Chin-Yu Hsiao \& George Marinescu}}
\rhead{\itshape{Asymptotics of spectral function of lower energy forms}}
\cfoot{\thepage}

\begin{abstract}
In this paper we study the asymptotic behaviour of the spectral function corresponding to the lower part of the spectrum of the Kodaira Laplacian on high tensor powers of a holomorphic line bundle.
This implies a full asymptotic expansion of this function on the set where the curvature of the line bundle is non-degenerate.
As application we obtain the Bergman kernel asymptotics for adjoint semi-positive line bundles over complete K\"ahler manifolds, on the set where the curvature is positive. We also prove the asymptotics for big line bundles endowed with singular Hermitian metrics with strictly positive curvature current. In this case the full asymptotics holds outside the singular locus of the metric.
\end{abstract}
\noindent

\maketitle
\tableofcontents 

\section{Introduction and statement of the main results}

Let $L$ be a holomorphic line bundle over a complex manifold $M$ and let $L^k$ be the $k$-th tensor power of $L$.
The Bergman kernel is the smooth kernel of the orthogonal projection onto the space of $L^2$-integrable holomorphic sections of $L^k$.
The study of the large $k$ behaviour of the Bergman kernel is an active research subject in complex geometry and is closely related to topics like the structure of algebraic manifolds (e.\,g.\ \cite{De:85}, \cite{Si1:84}), the existence of canonical K\"ahler metrics (e.\,g.\ \cite{CDS1}, \cite{CDS2},\cite{CDS3}, \cite{Do:01}, \cite{Fine08}, \cite{Fine10}, \cite{Tian}, \cite{Tian2}), Berezin-Toeplitz quantization (e.\,g.\ \cite{BMS94}, \cite{Eng02}, \cite{MM08b}, \cite{MM11}), equidistribution of zeros of holomorphic sections (e.\,g.\ \cite{CM11}, \cite{DMS11}, \cite{ShZ99}, \cite{SZ02}),
quantum chaos and mathematical physics \cite{DouKle10}.
We refer the reader to the book \cite{MM07}
for a comprehensive study of the Bergman kernel and its applications and also to the survey \cite{Ma10}.

In the case of a positive line bundle $L$ over a compact base manifold $M$, 
D.~Catlin~\cite{Cat97} and S.~Zelditch~\cite{Zel98} established the asymptotic expansion of the Bergman kernel on the diagonal by using a fundamental result by Boutet de Monvel-Sj\"{o}strand~\cite{BouSj76} about the asymptotics of the Szeg\"{o} kernel on a strictly pseudoconvex boundary. 
It was already pointed out by Boutet de Monvel-Guillemin~\cite{BouGu81} that the Bergman kernel of $L^k$ is linked to the Szeg\"o kernel on the boundary of the unit disc bundle (Grauert tube), which is strictly pseudoconvex if $L$ is positive.

X. Dai, K. Liu and X. Ma \cite{DLM04a}, \cite {DLM06} obtained the full off-diagonal asymptotic expansion and Agmon estimates of the Bergman kernel for a high power of positive line bundle on a compact complex manifold by using the heat kernel method. Their result holds actually for the more general
Bergman kernel of the spin$^{c}$ Dirac operator associated to a positive line bundle on a
compact symplectic manifold.
In \cite{MM04}, \cite{MM07}, \cite{MM08a}, X. Ma and the second-named author proved the asymptotic expansion for yet another generalization of the Kodaira Laplacian, namely the renormalized Bochner-Laplacian on a symplectic manifold and also showed the existence of the estimate on a large class of non-compact manifolds. 
The main analytic tool in \cite{DLM04a}, \cite {DLM06}, \cite{MM04}, \cite{MM07}, \cite{MM08a} is the analytic localization technique in local index theory developed by Bismut-Lebeau~\cite{MM07}.

Another proof of the existence of the full asymptotic expansion for the Bergman kernel for a high power of a positive line bundle on a compact complex manifold was obtained by B. Berndtsson, R. Berman and
J. Sj\"{o}strand \cite{BBS04}.

A natural generalization is the asymptotic of the kernel of the projection on the harmonic forms in the case of a line bundle with non-degenerate curvature. 
R. Berman and J. Sj\"{o}strand~\cite{BS05} obtained these asymptotics building on the heat equation method of Menikoff-Sj\"{o}strand~\cite{MS78}.
More generally, the expansion in the non-degenerate case was obtained independently by X. Ma and the second-named author~\cite{MM06} for the kernel of the projection on the kernel of the spin$^{c}$ Dirac operator on symplectic manifolds.
The asymptotics of the Szeg\"o kernel for forms on a compact abstract CR manifold with non-degenerate Levi form were established by the first-named author \cite{Hsiao08}.

For a singular Hermitian metric $h^L$ on $L$ with strictly positive curvature current, X.\ Ma and the second-named author used the generalized Poincar\'e metric on the regular locus of $h^L$ and a modified fiber metric on $L$ to obtain a full asymptotic expansion for the associated Bergman kernel~\cite{MM07}, \cite{MM08a}. As a corollary, they could reprove the Shiffman conjecture, asserting that Moishezon
manifolds can be characterized in terms of integral K\"ahler currents.


Witten~\cite{Wit82} suggested that the subcomplex of eigenforms of the Witten Laplacian, correponding to the lower part of the spectrum, is isomorphic to the Thom-Smale complex. This was first made rigorous by Helffer-Sj\"ostrand \cite{HfSj85} by means of microlocal analysis. Inspired by ~\cite{Wit82}, Demailly \cite{De:85} used the subcomplex of eigenforms of the Kodaira Laplacian on $L^k$ in order to prove the holomorphic Morse inequalities (see also Bismut \cite{Bi87}). In this paper we give the first microlocal study of the latter complex.

The first main result of this paper is a local asymptotic expansion
of the spectral function of the Kodaira Laplacian on $L^k$ on a not necessarily compact Hermitian manifold $M$ for states of energy less than $k^{-N_0}$, for $N_0\in\N$ fixed, on the
non-degenerate locus of $L$, cf.\ Theorem \ref{s1-main1}. 
This is a very general result since it holds without global assumptions on the manifold or the line bundle. However, the estimates obtained do not apply directly to the Bergman kernel, which is obtained by formally letting
$N_0\to\infty$ in \eqref{s1-e1main}. 
We then impose a very mild semiclassical local condition on the Kodaira Laplacian, namely the $O(k^{-n_0})$ small spectral gap on an open set $D\Subset M$ (see Definition~\ref{s1-d2bis}). We prove that the Bergman kernel admits an asymptotic expansion on $D$ if 
the Kodaira Laplacian has $O(k^{-n_0})$ small spectral gap on $D$, cf.\ Theorem \ref{s1-main2}.

The distinctive feature of these asymptotics is that they work under minimal hypotheses. This allows us to apply them in situations which were up to now out of reach. We illustrate this in the study of the Bergman kernel of semi-positive or positive but singular
Hermitian line bundles.
We prove that if $M$ is a complete K\"{a}hler manifold and $L$ is semi-positive on $M$, then the Bergman kernel of $L^k\otimes K_M$ admits a full asymptotic
expansion on the non-degenerate locus of $L$, cf.\ Theorem \ref{tmain-semi1}.
Moreover, we show in Theorem \ref{s1-sing-semi-main} that if $M$ is any compact complex manifold and $L$ is semi-positive and positive at some point, then the Bergman kernel of $L^{k}$ admits a full asymptotic expansion on the set where $L$ is positive, with the possible exception of a proper analytic variety $\Sigma\subset M$.

We also consider the case of a singular Hermitian fiber metric on $L$. The holomorphic sections which are $L^2$ with respect to the singular metric turn out to be sections of $L$ twisted with a multiplier ideal sheaf. One can naturally define the orthogonal projection on this space of sections and consider its kernel on the regular locus of the metric. We show that this kernel
has an asymptotic expansion on the regular locus, if the curvature current is strictly positive and smooth outside a proper analytic set (Theorem \ref{sing-main}). This yields yet another proof of the Shiffman conjecture.

We further give formulas for the first top leading terms of the asymptotic expansion of the spectral function and recover the top leading coefficients of the Bergman kernel expansion. These coefficients recently attracted a lot of attention, see the comments after Theorem \ref{s1-main2}.

Other applications of the main results are local and global holomorphic Morse inequalities, a local form of the expansion of the Bergman kernel on forms, examples of manifolds having very small spectral gap, Tian's convergence theorem and equidistribution of zeros of holomorphic sections in the case of singular metrics. 

We now formulate the main results. We refer to Section \ref{s:prelim} for some standard notations and terminology used here.

\subsection{Statement of main results}
Let $(M,\Theta,J)$ be a Hermitian manifold of
complex dimension $n$, where $\Theta$ is a smooth positive $(1,1)$-form and $J$ is the complex structure. Let $g^{TM}_\Theta(\cdot,\cdot)=\Theta(\cdot,J\cdot)$ be the Riemannian metric on $TM$ induced by $\Theta$ and $J$ and let $\langle\,\cdot\,,\cdot\,\rangle$ be the Hermitian
metric on $\Complex TM$
induced by $g^{TM}_\Theta$. 
The Riemannian volume
form $dv_M$ of $(M,\Theta)$ has the form $dv_M= \Theta^n/n!$\,.

Let $(L, h^L)$ be a holomorphic Hermitian line bundle on $M$ and set $L^{k}:=L^{\otimes k}$.
Let $\nabla ^L$ be the holomorphic Hermitian
connection on $(L,h^L)$  with curvature $R^L=(\nabla^L)^2$.
We will identify the curvature form $R^L$ with the Hermitian matrix
$\dot{R}^L \in \cC^\infty(M,\End(T^{(1,0)}M))$ satisfying
for every $U, V\in T^{(1,0)}_xM$, $x\in M$,
\begin{equation}\label{s0-e0}
\langle R^L(x), U\wedge\ov{V}\rangle = \langle \dot{R}^L(x)U, V\rangle.
\end{equation} 
Let $\det\dot{R}^L(x):=\mu_1(x)\ldots\mu_n(x)$, where $\{\mu_j(x)\}_{j=1}^n$, are the eigenvalues of $\dot R^L$ with respect to 
$\langle\,\cdot\,,\cdot\,\rangle$.
For $q\in\{0,1,\dots,n\}$, let 
\begin{equation} \label{s0-e2}
\begin{split}
M(q)=\big\{x\in M;\, &\mbox{$\dot{R}^L(x)\in \End(T^{(1,0)}_x M)$ is non--degenerate}\\
&\quad\mbox{and has exactly $q$ negative eigenvalues}\big\}.
\end{split}
\end{equation}
We denote by $W$ the subbundle of rank $q$ of $T^{(1,0)}M|_{M(q)}$ generated by the eigenvectors corresponding to negative eigenvalues of $\dot{R}^L$. Then $\det\ov{W}^{\,*}:=\Lambda^q\ov{W}^{\,*}\subset\Lambda^qT^{*(0,1)}M|_{M(q)}$ is a rank one subbundle, where $\Lambda^qT^{*(0,1)}M$ is the bundle of $(0,q)$ forms, $\ov{W}^{\,*}$ is the dual bundle of the complex conjugate bundle of $W$ and $\Lambda^q\ov{W}^{\,*}$ is the vector space of all finite sums of $v_1\wedge\cdots\wedge v_q$, 
$v_1,\ldots,v_q\in\ov{W}^{\,*}$. We denote by $I_{\det\ov{W}^{\,*}}\in\End(\Lambda^qT^{*(0,1)}M)$ the orthogonal projection from $\Lambda^qT^{*(0,1)}M$ onto $\det\ov{W}^{\,*}$.

Let $(L^k,h^k)$ be the $k$-th tensor power of $(L,h^L)$, where $h^k:=(h^L)^{\otimes k}$. Let $(\cdot\,,\cdot)_k$ be the inner product 
on $\Omega^{0,q}_0(M,L^k)$ induced by $g^{TM}_{\Theta}$ and $h^k$ (see \eqref{toe2.2}). 
Let $\norm{\cdot}$  be the corresponding norm and let $L^2_{(0,q)}(M,L^k)$ be the completion of $\Omega^{0,q}_0(M,L^k)$ with respect to $\norm{\cdot}$. For $q=0$, we write $L^2(M,L^k):=L^2_{(0,0)}(M,L^k)$.

Let $\Box_{k}^{(q)}$ be the Kodaira Laplacian acting on $(0,q)$--forms with values in $L^k$, cf.\ \eqref{e:Kod_Lap}. We denote by the same symbol $\Box_{k}^{(q)}$ the Gaffney extension of the Kodaira Laplacian, cf.\ \eqref{Gaf1}. It is well-known that $\Box^{(q)}_k$ is self-adjoint and the spectrum of $\Box^{(q)}_k$ is contained in $\ol\Real_+$ (see \cite[Prop.\,3.1.2]{MM07}). 
For a Borel set $B\subset\Real$ we denote by $E(B)$ the spectral projection of $\Box^{(q)}_k$ corresponding to the set $B$, where $E$ is the spectral measure of $\Box^{(q)}_k$ (see Section 2 in Davies~\cite{Dav95}) and for $\lambda\in\Real$ we set $E_\lambda=E\big((-\infty,\lambda]\big)$ and
\begin{equation} \label{s1-specsp}
\cE^q_\lambda(M, L^k)=\operatorname{Range}E_\lambda\subset L^2_{(0,q)}(M,L^k)\,.
\end{equation}
If $\lambda=0$, then 
$\cE^q_0(M, L^k)=\Ker\Box_{k}^{(q)}=:\cH^q(M,L^k)$
is the space of global harmonic sections. For a holomorphic vector bundle over $M$ we have 
\[
H^0_{(2)}(M,E):=\big\{s\in L^2(M,E);\,\ddbar_Es=0\big\}=\Ker\Box^E\,,
\]
where $\ddbar_E$ is the Cauchy-Riemann operator with values in $E$ and $\Box^E$ is the Kodaira Laplacian with values in $E$ (see Section \ref{gaffney}).
The \emph{spectral projection} of $\Box_{k}^{(q)}$ is the orthogonal projection
\begin{equation}\label{e:orto_proj}
P^{(q)}_{k,\lambda}:L^2_{(0,q)}(M,L^k)\to\cE^q_\lambda(M, L^k)\,.
\end{equation}
The \emph{spectral function} $P^{(q)}_{k,\lambda}(\,\cdot\,,\cdot)=P^{(q)}_{k}(\,\cdot\,,\cdot,\lambda)$ is the Schwartz kernel of $P^{(q)}_{k,\lambda}$, see \eqref{specfdef} and \eqref{s1-e2}. Since $\Box^{(q)}_k$ is elliptic, it is not difficult to see that 
\[P^{(q)}_{k}(\,\cdot\,,\cdot\,,\lambda)\in\cC^\infty\big(M\times M,L^k\otimes(\Lambda^qT^{*(0,1)}M\boxtimes\Lambda^qT^{*(0,1)}M)\otimes(L^k)^*\big)\]
and $\cE^q_\lambda(M,L^k)\subset\Omega^{0,q}(M,L^k)$. 
Since $L^k_x\otimes(L^k_x)^*\cong\Complex$ we can identify $P^{(q)}_{k}(x,x,\lambda)$ to an element of $\End(\Lambda^{q}T^{*(0,1)}_xM)$.
Then
\begin{equation}\label{e:state}
X\ni x\longmapsto P^{(q)}_{k}(x,x,\lambda)=P^{(q)}_{k,\lambda}(x,x)\in\End(\Lambda^{q}T^{*(0,1)}_xM)
\end{equation}
is a smooth section of $\End(\Lambda^qT^{*(0,1)}M)$, called \emph{local density of states} of $\cE^q_\lambda(M, L^k)$. 
The trace of $P^{(q)}_k(x,x,\lambda)$ is given by
\[{\rm Tr\,}P^{(q)}_{k,\lambda}(x,x)=
{\rm Tr\,}P^{(q)}_k(x,x,\lambda):=\sum^d_{j=1}\big\langle\,P^{(q)}_k(x,x,\lambda)\,e_{J_j}(x)\,,e_{J_j}(x)\,\big\rangle,\]
where $e_{J_1},\ldots,e_{J_d}$ is a local orthonormal basis of $\Lambda^qT^{*(0,1)}M$ with respect to $\langle\cdot,\cdot\rangle$.  
The projection 
\begin{equation}\label{berg_proj}
P^{(q)}_{k}:=P^{(q)}_{k,0}:L^2_{(0,q)}(M,L^k)\to\Ker\Box^{(q)}_k
\end{equation}
on the lowest energy level $\lambda=0$ is called the \emph{Bergman projection}, its kernel $P^{(q)}_{k}(\,\cdot\,,\cdot)$ is called the \emph{Bergman kernel}. The restriction to the diagonal of $P^{(q)}_{k}(\,\cdot\,,\cdot)$ is denoted $P^{(q)}_{k}(\cdot)$ and is called the \emph{Bergman kernel form}. The function ${\rm Tr\,}P^{(q)}_{k,0}(x,x):={\rm Tr\,}P^{(q)}_k(x)$ is called the \emph{Bergman kernel function}. We notice that ${\rm Tr\,}P^{(0)}_k(x)=P^{(0)}_k(x)$.

We introduce now the notion of asymptotic expansion (see Definition~\ref{s1-d1}).
Let $D\subset M$ be an open set and $a(x,k),a_j(x)\in\cC^\infty(M,\End(\Lambda^qT^{*(0,1)}M))$, $j=0,1,\ldots$ and $m\in\mathbb Z$. We say that
$a(x,k)$ has an asymptotic expansion
\[a(x,k)\sim\sum^{\infty}_{j=0}a_j(x)k^{m-j}\ \ \mbox{locally uniformly on $D$},\]
if for every $N\in\mathbb N_0$, $\ell\in\N_0$ and every compact set $K\subset D$, there exists a constant $C_{N,\ell,K}>0$ independent of $k$, such that  for $k$ sufficiently large
\[\Big|a(x,k)-\sum^N_{j=0}k^{m-j}a_j(x)\Big|_{\cC^\ell(K)}\leq C_{N,\ell,K}\,k^{m-N-1}\,.\]
We say that $a(x,k)=O(k^{-\infty})$ locally uniformly on $D$ if $a(x,k)\sim0$ locally uniformly on $D$. 

The following theorem is one of the main results. It expresses the fact that the Kodaira Laplacian acting on $\Omega^{\bullet,\bullet}(M,L^k)$ admits a \emph{local} semi-classical Hodge decomposition. Note that there are neither global assumptions on the positivity of the bundle nor on the base manifold.
\begin{thm} \label{s1-main1}
Let $(M,\Theta)$ be a Hermitian manifold, $(L,h^L)$ be a holomorphic Hermitian line bundle on $M$.
Fix $q\in\{0,1,\dots,n\}$ and $N_0\geqslant1$. Then for every $m\in\{0,1,\dots,n\}$ there exists a $k$-dependent section $b^{(m)}(x,k)\in\cC^\infty(M(q),\End(\Lambda^{m}T^{*(0,1)}M))$ 
with the following properties: for every $D\Subset M(q)$, $\ell\in\N_0$, there exists a 
constant $C_{D,\ell}>0$ independent of $k$ with
\begin{equation} \label{s1-e1main}
\abs{P^{(m)}_{k}(x,x,k^{-\!N_0})-b^{(m)}(x,k)}_{\cC^\ell(D)}\leqslant C_{D,\ell}\,k^{3n+\ell-N_0}\,,
\end{equation}
$b^{(m)}(x,k)=0$ for $m\neq q$ and $b^{(q)}(x,k)$ has an asymptotic expansion
\begin{equation} \label{s1-e2main}
b^{(q)}(x,k)\sim\sum^\infty_{j=0}b^{(q)}_j(x)k^{n-j}\ \ \mbox{locally uniformly on $M(q)$},
\end{equation}
for some $b^{(q)}_j\in\cC^\infty(M(q),\End(\Lambda^qT^{*(0,1)}M))$, $j=0,1,2,\ldots$\,.
On $M(q)$ we have
\begin{equation}\label{s1-e21main}
b^{(q)}_0=(2\pi)^{-n}\big|\det\dot{R}^L\big|I_{\det\ov{W}^{\,*}}\,.
\end{equation} 
\end{thm}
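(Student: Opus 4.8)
The plan is to localise near a point $x_0\in M(q)$, rescale the coordinates by $\sqrt k$, compare the rescaled Kodaira Laplacian with an explicit constant--curvature model on $\Complex^n$, and transfer spectral information from the model to the true operator. First I would fix local holomorphic coordinates $z$ centred at $x_0$ and a local frame of $L$ in which $h^L$ has weight $\phi$ with $\phi(z)=\sum_{j=1}^n\lambda_j\abs{z_j}^2+O(\abs{z}^3)$, where (up to a universal factor) $\lambda_1,\dots,\lambda_n$ are the eigenvalues $\mu_1,\dots,\mu_n$ of $\dot R^L(x_0)$ with $\lambda_1,\dots,\lambda_q<0<\lambda_{q+1},\dots,\lambda_n$, and normalise $\Theta$ to be Euclidean at $x_0$. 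Substituting $z=u/\sqrt k$ and rescaling by the volume factor $k^n$ conjugates $\tfrac1k\Box^{(m)}_k$ into an operator $\Box^{(m)}_{(k)}$, acting on $(0,m)$-forms over a ball of radius $\sim\sqrt k$ in $u$, whose coefficients converge in every $\cC^\ell_{\mathrm{loc}}$-norm to those of the model Kodaira Laplacian $\Box^{(m)}_\infty$ on $\Complex^n$ attached to the Gaussian weight $\sum_j\lambda_j\abs{u_j}^2$; in fact $\Box^{(m)}_{(k)}=\Box^{(m)}_\infty+\sum_{j\geqslant1}k^{-j/2}\mathcal A_j$ with the $\mathcal A_j$ of bounded order and polynomial coefficients.

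Next I would record the facts about the model, all of which follow from separation of variables and the explicit spectral decomposition of the twisted harmonic oscillator. For $m\neq q$ the operator $\Box^{(m)}_\infty$ is bounded below by a positive constant $\varepsilon_0$ depending only on $\min_j\abs{\mu_j}$; in particular its $L^2$-kernel is trivial. For $m=q$, $0$ lies in $\operatorname{spec}\Box^{(q)}_\infty$, one still has $\operatorname{spec}\Box^{(q)}_\infty\cap(0,\varepsilon_0)=\emptyset$, and the orthogonal projection $P_\infty$ onto $\Ker\Box^{(q)}_\infty$ has an explicit Gaussian Schwartz kernel, obtained by taking $L^2$-antiholomorphic Gaussians in the variables with $\lambda_j<0$ and holomorphic Gaussians in those with $\lambda_j>0$; a direct Gaussian integral gives $P_\infty(0,0)=(2\pi)^{-n}\abs{\det\dot R^L(x_0)}\,I_{\det\ov{W}^{\,*}}$. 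Solving order by order in $k^{-1/2}$, starting from $P_\infty$, the requirements that an operator be $\Box^{(q)}_{(k)}$-harmonic, $\ddbar$-closed, $\ddbar^*$-closed and idempotent modulo $O(k^{-\infty})$ produces a formal approximate Bergman projection $\widehat P_k$ whose on-diagonal kernel, pulled back to $M$ and multiplied by $k^n$, is the announced section $b^{(q)}(x,k)$; the odd powers of $k^{-1/2}$ drop out on the diagonal, giving the integral-power expansion \eqref{s1-e2main}, and the leading coefficient is \eqref{s1-e21main}. We set $b^{(m)}(x,k)=0$ for $m\neq q$.

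It remains to compare $P^{(m)}_k(x,x,k^{-N_0})$ with $b^{(m)}(x,k)$. For $m\neq q$ I would use the positivity $\Box^{(m)}_\infty\geqslant\varepsilon_0$: lifting it through the rescaling and handling the subprincipal terms with the subelliptic a priori estimates for $\Box^{(m)}_k$ gives $(\Box^{(m)}_k v,v)\geqslant\tfrac12\varepsilon_0 k\norm{v}^2$ for $v$ supported in a fixed small ball $B$ about $x_0$. Applying this to $\chi u$, for a cutoff $\chi$ and a unit $u\in\cE^m_{k^{-N_0}}(M,L^k)$, and iterating over a finite chain of shrinking cutoffs, forces $\norm{u}^2_{L^2(B_\rho(x_0))}=O(k^{-N_0-1})$ for a fixed radius $\rho$; the rescaled interior elliptic estimates for $\Box^{(m)}_k$ (using $\norm{(\Box^{(m)}_k)^ju}\leqslant k^{-jN_0}$) then bound $\abs{u(x_0)}^2$, and since the on-diagonal spectral function equals the supremum of $\abs{\langle u(x_0),\cdot\,\rangle}^2$ over unit $u\in\cE^m_\lambda$, this yields \eqref{s1-e1main} in this range of degrees (the $\cC^\ell$-norms account for the extra $k^\ell$, and $3n$ is a comfortable upper bound for the power of $k$ produced). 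For $m=q$ one checks that $\widehat P_k$ maps $L^2$ into $\cE^q_{k^{-N_0}}(M,L^k)$ modulo $O(k^{-\infty})$ and reproduces the latter space modulo a polynomial error in $k^{-N_0}$, so that $P^{(q)}_{k,k^{-N_0}}-\widehat P_k$ is governed by the a priori uncontrolled part of $\operatorname{spec}\Box^{(q)}_k$ below $\varepsilon_0$, plus smoothing $O(k^{-\infty})$ terms; a crude bound on the density of states in that window (a Weyl estimate combined with Sobolev embedding and the subelliptic estimate) then gives the $O(k^{3n+\ell-N_0})$ bound, which cannot be improved to $O(k^{-\infty})$ precisely because nothing global excludes spectrum of $\Box^{(q)}_k$ in $(k^{-N_0},\varepsilon_0)$.

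The main obstacle is this transfer step: $\Box^{(m)}_{(k)}\to\Box^{(m)}_\infty$ only on balls, so one cannot apply the spectral theorem to the limit, and one must instead control genuinely global objects ($E_{k^{-N_0}}$, and $\widehat P_k$ versus $P^{(q)}_{k,k^{-N_0}}$) by a purely local model. This is where one needs a semiclassical parametrix for $(\tau-\Box^{(m)}_k)^{-1}$ built from the model resolvent, together with the interior subelliptic estimates for $\Box^{(m)}_k$ (equivalently, finite propagation speed for the associated wave operator), to localise rigorously; once that localisation is available, the explicit model spectrum and Bergman kernel, the recursive construction of $b^{(q)}(x,k)$, and the window estimate are routine.
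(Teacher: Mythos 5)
Your proposal takes a genuinely different route from the paper. The paper passes to the Grauert tube $X=\partial\{\abs{v}_{h^{L^*}}<1\}\subset L^*$, uses the microlocal Hodge decomposition of the Kohn Laplacian $\Box^{(q)}_b$ on $X$ (from Hsiao's earlier work, in turn based on Menikoff--Sj\"ostrand), and pulls it back by Fourier transform in the $S^1$-direction to obtain a \emph{localized partial parametrix} $\mathcal{A}_k$ and \emph{localized approximate Szeg\"o projection} $\mathcal{S}_k$ satisfying $\Box^{(q)}_s\mathcal{A}_k+\mathcal{S}_k\equiv I\mod O(k^{-\infty})$ (Theorems \ref{s3-t4}, \ref{s3-t4-1}). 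Your proposal instead rescales by $\sqrt{k}$ and compares with the flat model, in the spirit of Berndtsson--Berman--Sj\"ostrand and Ma--Marinescu. Both approaches do share one key ingredient that you state correctly: the a priori pointwise bound $\abs{\pr^\alpha(\tilde ue^{-k\phi})(x)}\leq Ck^{n/2+\abs{\alpha}}\norm{u}$ for $u\in\cE^m_{k^{-N_0}}(M,L^k)$ obtained by rescaled elliptic estimates (Theorem \ref{tI}), together with the extremal characterisation of the on-diagonal spectral function (Proposition \ref{pI}). Your argument for $m\neq q$ via the model positivity plus shrinking cutoffs is not what the paper does (the paper uses the local parametrix $\mathcal{A}_k$ directly for $m\neq n_-$), but it is a valid alternative and would in fact give a slightly better power of $k$.

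The genuine gap is in the transfer step for $m=q$. You invoke ``a semiclassical parametrix for $(\tau-\Box^{(m)}_k)^{-1}$'' and ``finite propagation speed for the associated wave operator'' to localise; neither is available here precisely because there is no global spectral gap for $\Box^{(q)}_k$ — the resolvent parametrix and the wave-kernel/functional calculus machinery (\`a la Dai--Liu--Ma, Ma--Marinescu) require the contour around $0$ to stay away from $\operatorname{Spec}\Box^{(q)}_k$, and the sharp spectral cutoff $1_{[0,k^{-N_0}]}$ has no compactly supported Fourier approximant that is controlled near $k^{-N_0}$. Similarly, the phrase ``a crude bound on the density of states in that window'' is misleading: the error between $\widehat P^{(q)}_{k,k^{-N_0},s}$ and the local approximation $\mathcal{S}_k$ is \emph{not} obtained from a Weyl law. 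What the paper actually shows (Proposition \ref{pIII} and equations \eqref{eXII}--\eqref{eXXXII}) is the algebraic identity $\widehat P^{(q)}_{k,k^{-N_0},s}=T+\mathcal{S}^*_k\widehat P^{(q)}_{k,k^{-N_0},s}\mathcal{S}_k$ with $T=R^*+R\mathcal{S}_k$, $R=\mathcal{A}^*_k\Box^{(q)}_s\widehat P^{(q)}_{k,k^{-N_0},s}$, combined with the identity $\mathcal{S}_k\equiv\mathcal{S}^*_k\widehat P^{(q)}_{k,k^{-N_0},s}\mathcal{S}_k\mod O(k^{-\infty})$ (this uses $\norm{u}\leq k^{N_0}\norm{\Box^{(q)}_ku}$ on the high-energy space). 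To make your route rigorous one would have to construct, order by order, not only the approximate projection $\widehat P_k$ but also the partial inverse $\mathcal{A}_k$ with $\Box^{(q)}_s\mathcal{A}_k+\widehat P_k\equiv I$ and the Sobolev mapping bounds $\mathcal{A}_k=O(k^s):H^s_{\rm comp}\to H^{s+1}_{\rm comp}$; the final error bound comes from feeding the pointwise eigenform estimates through these Sobolev bounds (Lemmas \ref{lIII}--\ref{lV}, Proposition \ref{pII}) rather than from a density-of-states count. Your proposal acknowledges the transfer step as the main obstacle but does not supply the correct mechanism for closing it.
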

We wish to give formulas for the top coefficients of the expansion in the case $q=0$. We introduce the geometric objects used in Theorem~\ref{s1-main12} and Theorem~\ref{tmain-semi1} below. 
Put 
\begin{equation} \label{coeI} 
\omega:=\frac{\sqrt{-1}}{2\pi}R^L. 
\end{equation}
On the set $M(0)$ the $(1,1)$-form $\omega$ is positive and induces a Riemannian metric $g^{TM}_\omega(\cdot,\cdot)=\omega(\cdot,J\cdot)$. Let $\nabla^{TM}_\omega$ be the Levi-Civita connection on $(M,g^{TM}_\omega)$, $R^{TM}_\omega=(\nabla^{TM}_\omega)^2$ its curvature (cf.\ \eqref{sa1-e13}), ${\rm Ric}$ its Ricci curvature and $r$ the scalar curvature of $g^{TM}_\omega$ (see \eqref{sa1-e11}). We denote by ${\rm Ric}_{\omega}={\rm Ric\,}(J\cdot,\cdot)$ the $(1,1)$-form associated to ${\rm Ric}$ (cf.\ \eqref{sa1-e15}) and by $\triangle_\omega$ be the complex Laplacian with respect to $\omega$ (see \eqref{sa1-e9}).
We also denote by $\langle\,\cdot\,,\cdot\,\rangle_{\omega}$ the pointwise Hermitian metric induced by $g^{TM}_\omega$ on $(p,q)$-forms on $M$ and by $\abs{\,\cdot\,}_{\omega}$ the corresponding norm.
 
Let $R^{\det}_\Theta$ denote the curvature of the canonical line bundle $K_M=\det T^{*(1,0)}M$ with respect to the metric induced by $\Theta$ (see \eqref{sa1-e12}). 
Put 
\[\widehat r=\triangle_{\omega}\log V_\Theta\,,\quad V_\Theta=\det\left(\Theta_{j,k}\right)^n_{j,k=1}\] 
where $\Theta=\sqrt{-1}\sum^n_{j,k=1}\Theta_{j,k}dz_j\wedge d\ol z_k$ in local holomorphic coordinates $z=(z_1,\ldots,z_n)$.
\begin{thm} \label{s1-main12}
Let $(M,\Theta)$ and $(L, h^L)$ be as in Theorem~\ref{s1-main1}.
The coefficients $b^{(0)}_1$ and $b^{(0)}_2$ in the expansion\eqref{s1-e2main} for 
$q=0$ have the following form:
\begin{equation} \label{coeII} 
\begin{split}
b^{(0)}_1 &= (2\pi)^{-n}\det\dot{R}^L
\Big(\frac{1}{8\pi}r -\frac{1}{4\pi}\Delta_\omega \log\det \dot{R}^L\Big)
\\&=(2\pi)^{-n}\det\dot{R}^L\Bigr(\frac{1}{4\pi}\widehat r-\frac{1}{8\pi} r\Bigr),
\end{split}
\end{equation} 
\begin{equation}\label{coeIII}
\begin{split}
b^{(0)}_2&=(2\pi)^{-n}\det\dot{R}^L\Bigr(\frac{1}{128\pi^2}r^2-\frac{1}{32\pi^2}r\widehat r+\frac{1}{32\pi^2}(\widehat r)^2
-\frac{1}{32\pi^2}\triangle_{\omega}\widehat r
-\frac{1}{8\pi^2}\abs{R^{\det}_\Theta}^2_{\omega}\\
&\quad+\frac{1}{8\pi^2}\langle\,{\rm Ric}_{\omega}\,,R^{\det}_\Theta\,\rangle_{\omega}+\frac{1}{96\pi^2}\triangle_{\omega}r-
\frac{1}{24\pi^2}\abs{{\rm Ric}_{\omega}}^2_{\omega}+\frac{1}{96\pi^2}\abs{R^{TM}_{\omega}}^2_{\omega}\Bigr),
\end{split}
\end{equation} 
where $\abs{R^{TM}_{\omega}}^2_{\omega}$ is given by $\eqref{sa1-e14}$.
\end{thm}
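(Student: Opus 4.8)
The coefficients $b^{(0)}_j$ are \emph{local}: as is clear from the construction in the proof of Theorem~\ref{s1-main1}, each $b^{(0)}_j(x)$ is a universal polynomial in finitely many jets at $x$ of the curvature $R^L$ and of the metric $\Theta$. It therefore suffices to evaluate $b^{(0)}_1$ and $b^{(0)}_2$ in the classical setting of a positive line bundle over a compact Kähler manifold, where (since there the spectral gap of $\Box^{(0)}_k$ is of order $k$) the section $b^{(0)}(x,k)$ is nothing but the asymptotic expansion of the genuine Bergman kernel function $P^{(0)}_k(x)$, whose existence is the Catlin--Zelditch / Ma--Marinescu theorem \cite{Cat97,Zel98,MM07}. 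In that setting we use the standard device of trading a change of the \emph{base} metric for a twist of the \emph{fibre} metric. On $M(0)$ put $\omega:=\tfrac{\sqrt{-1}}{2\pi}R^L>0$, $dv_\omega:=\omega^n/n!$ and $\tau:=dv_M/dv_\omega$, a positive, $k$-independent function; a direct computation from \eqref{s0-e0} and the definition of $V_\Theta$ gives $\tau=(2\pi)^n/\det\dot R^L$. The extremal characterization of the $q=0$ Bergman kernel function then yields the pointwise identity
\[
P^{(0)}_k(x)=\tau(x)^{-1}\,\widetilde P^{(0)}_k(x),
\]
where $\widetilde P^{(0)}_k$ is the Bergman kernel function of $L^k\otimes E$ computed with base metric $g^{TM}_\omega$ and $E:=(\cO_M,\tau)$ is the trivial line bundle carrying the Hermitian metric $\tau$. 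Since $E$ is $k$-independent, $\widetilde P^{(0)}_k$ falls under the known Bergman kernel expansion for twisted powers $L^k\otimes E$ over a Kähler manifold whose Kähler form is $\tfrac{\sqrt{-1}}{2\pi}R^L$.

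Now insert the known formulas for the first three coefficients of that twisted expansion (Lu; Ma--Marinescu, \cite{MM07}). The leading one equals $(2\pi)^{-n}\det\dot R^L_\omega=1$, since $\dot R^L_\omega=2\pi\,\Id$ (here $\dot R^L_\omega$ denotes $\dot R^L$ formed with $g^{TM}_\omega$). The next is an affine function of the scalar curvature $r$ of $g^{TM}_\omega$ and of $\operatorname{tr}_\omega(\sqrt{-1}R^E)$. The third is Lu's quadratic curvature expression for $g^{TM}_\omega$ plus the terms involving $R^E$ (linear, quadratic, and a Laplacian of its trace). One then rewrites everything through the $\Theta$-quantities occurring in \eqref{coeII}--\eqref{coeIII}. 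The substitutions are: $\sqrt{-1}R^E=-\sqrt{-1}\,\partial\ddbar\log\tau$ together with $\log\tau=n\log(2\pi)-\log\det\dot R^L$; the comparison identity $r=\widehat r+\triangle_\omega\log\det\dot R^L$ (equivalently $\triangle_\omega\log V_\omega=r$), which is exactly what forces the two displayed forms of $b^{(0)}_1$ in \eqref{coeII} to coincide; and the relation between the curvature $R^{\det}_\Theta$ of $(K_M,h^{K_M}_\Theta)$ and $\partial\ddbar\log V_\Theta$, which splits the $\omega$-Ricci contributions coming from $|R^E|^2_\omega$ and $\langle R^E,{\rm Ric}_\omega\rangle_\omega$ into the stated combination of $|R^{\det}_\Theta|^2_\omega$, $\langle{\rm Ric}_\omega,R^{\det}_\Theta\rangle_\omega$, $|{\rm Ric}_\omega|^2_\omega$, $r$ and $\widehat r$. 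Multiplying back by $\tau(x)^{-1}=(2\pi)^{-n}\det\dot R^L(x)$ reinstates the global prefactor and gives \eqref{coeII} and \eqref{coeIII}.

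An essentially equivalent route stays inside the parametrix of Theorem~\ref{s1-main1}: work in $\omega$-Kähler normal coordinates centred at $x$ and a local holomorphic frame of $L$ in which the weight of $h^L$ has the standard Bochner expansion, Taylor-expand $\Theta$ and the weight, solve the first recursion relations for the amplitude (phase to order $6$, $a_0$ to order $4$, $a_1$ to order $2$, $a_2$ to order $0$), and carry out the stationary phase expansion on the diagonal. In either approach $b^{(0)}_1$ comes out quickly from the leading transport equation, while $b^{(0)}_2$ is the main obstacle: it requires the $4$-jet of the metric and of the weight and a large number of curvature contractions, it is genuinely sensitive to the mismatch between $\Theta$ and $\omega$ --- whence the appearance of $R^{\det}_\Theta$, $\langle{\rm Ric}_\omega,R^{\det}_\Theta\rangle_\omega$ and $|R^{\det}_\Theta|^2_\omega$ --- and the whole derivation must be run with fixed conventions for $\triangle_\omega$, for $\det\dot R^L$ and for the Ricci form, so that bookkeeping of these normalizations and of the powers of $2\pi$ is where the bulk of the care lies.
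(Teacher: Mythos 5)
Your argument is sound but follows a genuinely different route from the paper's. The paper's own proof (Section 4.5) works directly inside the semi-classical parametrix: it chooses Ruan's K\"ahler normal coordinates \eqref{coecaluI} centred at $p$, establishes that $\ddbar_z\bigl(i\Psi(z,w)+\phi(z)\bigr)$ and $\ddbar_z b_j(z,w)$ vanish to infinite order on the diagonal (using $\ddbar_s\mathcal{S}_k\equiv 0$), exploits the reproducing identity $\mathcal{S}_k\circ\mathcal{S}_k\equiv\mathcal{S}_k\mod O(k^{-\infty})$ written as the oscillatory integral \eqref{coecaluXVII}, and applies H\"ormander's stationary phase to obtain the recursion formulas \eqref{coecaluXVIII}--\eqref{coecaluXIX} for $b_1(0,0)$ and $b_2(0,0)$ in terms of $\triangle_0$-derivatives of $\phi_1$, $V_\Theta$, and the lower coefficients; the algebraic conversion into the curvature quantities of \eqref{coeII}--\eqref{coeIII} is then deferred to \cite{Hsiao09}. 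You instead invoke the locality/universality of the $b^{(0)}_j$, reduce to the compact positive K\"ahler setting, and use the standard base-metric/fibre-twist exchange ($P^{(0)}_k=\tau^{-1}\widetilde P^{(0)}_k$ with $\tau=(2\pi)^n/\det\dot R^L$ and $E=(\cO_M,\tau)$) so as to import the known twisted Bergman coefficient formulas from \cite{MM07,MM10} for a K\"ahler metric equal to $\omega$, and then translate back into $\Theta$-quantities. Both approaches end with the same kind of normalization bookkeeping (which both the paper and you offload elsewhere), but what each buys is different: the paper's stationary-phase route is self-contained within the paper's own machinery and does not presuppose the Bergman kernel expansion it ultimately refines, while your reduction is shorter conceptually but depends on the already-established compact-K\"ahler twisted results, and on making the locality/universality principle rigorous (which is true from the construction of the approximate Szeg\H{o} kernel but is used silently). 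One small caution: you write $\tau=(2\pi)^n/\det\dot R^L$ without $V_\Theta$; this is in fact correct globally, since $V_\omega=(2\pi)^{-n}V_\Theta\det\dot R^L$, but you should state the chain $\tau=V_\Theta/V_\omega=(2\pi)^n/\det\dot R^L$ so the independence from the coordinate normalization $V_\Theta(0)=1$ is visible.
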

On the set where the curvature of $L$ is degenerate we have the following behaviour.
\begin{thm} \label{s1-maindege}
Let $(M,\Theta)$ and $(L, h^L)$
be as in Theorem~\ref{s1-main1}. 
Set 
\[M_{\mathrm{deg}}=\set{x\in M;\, \mbox{$\dot{R^L}$ is degenerate at $x\in M$}}.\]
Then for every $x_0\in M_{\mathrm{deg}}$, $\varepsilon>0$ and every $m\in\{0,1,\dots,n\}$ , there 
exist a neighborhood $U$ of $x_0$ and $k_0>0$, such that for all $k\geq k_0$ we have
\begin{equation} \label{s1-e3main}
\operatorname{Tr}P^{(m)}_{k}(x,x,k^{-N_0})\leq \varepsilon k^n,\ \ x\in U.
\end{equation}
\end{thm}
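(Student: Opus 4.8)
The plan is to run, now centered at the degenerate point $x_0$, the localization-and-rescaling scheme underlying Theorem~\ref{s1-main1}, and to observe that the limiting model Laplacian acquires a flat (Euclidean) factor, so its spectral projector has negligible density at the bottom of the spectrum. Fix $x_0\in M_{\mathrm{deg}}$ and work in local holomorphic coordinates $z$ centered at $x_0$ together with a local frame of $L$ in which $h^L=e^{-2\phi}$, $\phi(0)=0$, $d\phi(0)=0$, and the complex Hessian of $\phi$ at $0$ represents $\dot R^L(x_0)$. Conjugating $\Box_k^{(q)}$ by $e^{k\phi}$ and performing the parabolic rescaling $z\mapsto z/\sqrt k$, the rescaled operator converges — with the uniform interior elliptic estimates established in the proof of Theorem~\ref{s1-main1}, and this one‑sided control is all that is needed here, so the limit failing to be trace class is harmless — to the model operator $\mathcal L_\infty^{x}$ on $\C^n$ attached to the Hermitian form $\dot R^L(x)$ and acting on $(0,q)$-forms. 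Unravelling the rescaling yields, for $x$ in an $O(k^{-1/2})$-neighbourhood of $x_0$ and with $\delta_k:=k^{-N_0-1}\to 0$,
\[
\operatorname{Tr}P_k^{(q)}(x,x,k^{-N_0})\;\le\;k^n\Big(\operatorname{Tr}E_{\mathcal L_\infty^{x}}\big((-\infty,\delta_k]\big)(0,0)+o(1)\Big).
\]

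At $x_0$ itself the model term is negligible. Since $x_0\in M_{\mathrm{deg}}$, $\dot R^L(x_0)$ has $n_0\ge1$ zero eigenvalues; splitting $\C^n=\C^{n-n_0}\times\C^{n_0}$ into the non-degenerate and degenerate directions one gets $\mathcal L_\infty^{x_0}=\mathcal L'\otimes\operatorname{Id}+\operatorname{Id}\otimes\big(-\triangle_{\C^{n_0}}\otimes\operatorname{Id}\big)$ on the induced decomposition of $(0,q)$-forms, with $\mathcal L'$ of harmonic-oscillator type (discrete spectrum, bounded below) and $\triangle_{\C^{n_0}}$ the flat Laplacian. The flat Laplacian has no $L^2$ ground state and no atom at $0$ in its spectral measure; hence $E_{\mathcal L_\infty^{x_0}}$ has no atom at the bottom $E_0$ of its spectrum, and for $\mu>0$ below the first gap of $\mathcal L'$ one has $\operatorname{Tr}E_{\mathcal L_\infty^{x_0}}\big((-\infty,E_0+\mu]\big)(0,0)=c\,\mu^{n_0}\to0$ as $\mu\to0^+$ by the Euclidean Weyl law (and the left side vanishes identically for small $\mu$ if $E_0>0$). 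Combined with the displayed inequality this already gives $\operatorname{Tr}P_k^{(q)}(x_0,x_0,k^{-N_0})=O(k^{n-1})=o(k^n)$.

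To upgrade this to the uniform bound on a neighbourhood $U$ of $x_0$, one checks that the model term above satisfies, uniformly for $x$ in a fixed neighbourhood of $x_0$,
\[
\operatorname{Tr}E_{\mathcal L_\infty^{x}}\big((-\infty,\delta]\big)(0,0)\;\le\;(2\pi)^{-n}\big|\det\dot R^L(x)\big|\,\mathbf 1_{M(q)}(x)+\eta(\delta),
\]
with $\eta(\delta)\to0$ as $\delta\to0^+$: on $M(q)$ the left side stabilizes to the ground-state density $(2\pi)^{-n}|\det\dot R^L(x)|$ once $\delta$ falls below the (possibly small) spectral gap of $\mathcal L_\infty^x$; at degenerate $x$ it is $\le c\,\delta^{n_0(x)}$ by the flat Weyl term just computed; and for $x\in M(q')$ with $q'\ne q$ the bottom of $\mathcal L_\infty^x$ on $(0,q)$-forms is strictly positive, so the left side is $0$ for small $\delta$. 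Because $x_0\in M_{\mathrm{deg}}$ we have $\det\dot R^L(x_0)=0$, so continuity of $x\mapsto\det\dot R^L(x)$ lets us fix $U\ni x_0$ with $(2\pi)^{-n}|\det\dot R^L(x)|<\varepsilon/2$ on $U$. Choosing $k$ so large that $\eta(\delta_k)<\varepsilon/4$ and the $o(1)$ error in the first display is $<\varepsilon/4$ gives $\operatorname{Tr}P_k^{(q)}(x,x,k^{-N_0})\le\varepsilon k^n$ for all $x\in U$, which is \eqref{s1-e3main}.

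The main obstacle is precisely the uniform model estimate used in the last step: as $x$ moves in $U$ the operator $\mathcal L_\infty^x$ changes type across $M_{\mathrm{deg}}$ and the various $M(q')$, its spectral gap collapses to $0$ near $M_{\mathrm{deg}}$, and the number $n_0(x)$ of flat directions is only upper semicontinuous. One therefore has to control the low-energy diagonal density of $\mathcal L_\infty^x$ by $(2\pi)^{-n}|\det\dot R^L(x)|$, up to a single modulus $\eta$ in the energy threshold, uniformly in $x$ near $x_0$ — i.e.\ a locally uniform form of the semiclassical Weyl upper bound $\limsup_k k^{-n}\operatorname{Tr}P_k^{(q)}(x,x,k^{-N_0})\le(2\pi)^{-n}|\det\dot R^L(x)|\,\mathbf 1_{M(q)}(x)$. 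Granting this, the theorem reduces to the vanishing of $\det\dot R^L$ at $x_0$ together with continuity; everything else is a rereading of the analysis behind Theorem~\ref{s1-main1}.
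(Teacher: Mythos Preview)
Your approach is conceptually different from the paper's and, as written, has a genuine gap at the step you yourself flag as the ``main obstacle''.

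The paper does \emph{not} rescale to the model Laplacian $\mathcal L_\infty^x$ and analyze its spectral projector. Instead it uses a heat-kernel domination: for any fixed $t>0$ and $k$ large,
\[
(1-k^{-N_0})\operatorname{Tr}P^{(q)}_{k}(x,x,k^{-N_0})\;\le\;\operatorname{Tr}\exp\!\Big(-\tfrac{t}{k}\Box^{(q)}_k\Big)(x,x),
\]
which follows from the spectral theorem. The off-diagonal heat kernel asymptotics of Ma--Marinescu give, locally uniformly in $x$,
\[
k^{-n}\operatorname{Tr}\exp\!\Big(-\tfrac{t}{k}\Box^{(q)}_k\Big)(x,x)\longrightarrow (2\pi)^{-n}\Big(\sum_{j_1<\cdots<j_q}e^{-t\sum_i a_{j_i}(x)}\Big)\prod_{j=1}^n\frac{a_j(x)}{1-e^{-ta_j(x)}},
\]
with the convention $\tfrac{a}{1-e^{-ta}}\to\tfrac1t$ as $a\to0$. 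The right-hand side is a \emph{continuous} function of the eigenvalues $a_j(x)$, so one chooses $t$ large enough to make it $<\varepsilon/2$ at $x_0$ (each near-zero eigenvalue contributes a factor $\sim 1/t$), then shrinks $U$ by continuity. This finesses entirely the difficulty you isolate: one never has to track how the type of the model operator changes across strata of $M_{\mathrm{deg}}$ or how the spectral gap of $\mathcal L_\infty^x$ collapses.

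In your argument, two steps are not justified. First, your displayed inequality
\[
\operatorname{Tr}P^{(q)}_k(x,x,k^{-N_0})\le k^n\big(\operatorname{Tr}E_{\mathcal L_\infty^x}((-\infty,\delta_k])(0,0)+o(1)\big)
\]
does not follow from the elliptic estimates proved for Theorem~\ref{s1-main1}: those estimates (Theorem~\ref{tI}) bound $\partial^\alpha u$ for individual $u\in\cE^q_{k^{-N_0}}$, but comparing the \emph{spectral projectors} of $\Box^{(q)}_k$ and of the model requires an additional argument (a Berman-type extremal characterization or a direct kernel comparison) that you have not supplied. Second, and more seriously, your case-by-case ``uniform'' bound
\[
\operatorname{Tr}E_{\mathcal L_\infty^x}((-\infty,\delta])(0,0)\le (2\pi)^{-n}|\det\dot R^L(x)|\,\mathbf 1_{M(q)}(x)+\eta(\delta)
\]
is not established: for $x\in M(q)$ drifting toward $M_{\mathrm{deg}}$ the spectral gap of $\mathcal L_\infty^x$ tends to $0$, so the threshold below which ``the left side stabilizes to the ground-state density'' is $x$-dependent and degenerates. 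You need a bound on the \emph{excess} density coming from excited states with energy in $(0,\delta]$ that is uniform in small eigenvalues $a_j$; your sketch does not provide this. (Ironically, the cleanest way to get that uniform bound is precisely to dominate $E_{\mathcal L_\infty^x}((-\infty,\delta])$ by $e^{\delta t}e^{-t\mathcal L_\infty^x}$ and use the Mehler formula --- which is the model-operator shadow of the paper's proof.)

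So your route could in principle be completed, but the missing uniformity is exactly the content of the theorem, and the heat-kernel comparison the paper uses is both shorter and avoids the stratified analysis altogether.
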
 

As a Corollary of Theorem~\ref{s1-main1}, Theorem~\ref{s1-main12} and Theorem~\ref{s1-maindege}, we obtain  
\begin{cor} [Local holomorphic Morse inequalities]\label{localmorse}
Let $(M,\Theta)$ and $(L, h^L)$
be as in Theorem~\ref{s1-main1}. Let $N_0\geq 2n+1$. Then the spectral function of the Kodaira Laplacian
has the following asymptotic bahaviour:
\begin{equation} \label{s0-e4mainmore}
{\rm Tr\,}P^{(q)}_{k}(x,x,k^{-N_0})
=k^n(2\pi)^{-n}\abs{{\rm det\,}\dot{R}^L(x)}+O(k^{n-1})\,,\:k\to\infty,
\end{equation} 
locally uniformly on $M(q)$, and if $\mathds{1}_{M(q)}$ denotes the characteristic function of $M(q)$,
\begin{equation} \label{s0-e4main}
\lim_{k\To\infty}k^{-n}{\rm Tr\,}P^{(q)}_{k}(x,x,k^{-N_0})
=(2\pi)^{-n}\abs{{\rm det\,}\dot{R}^L(x)}\mathds{1}_{M(q)}(x),\ \ x\in M.
\end{equation}
Moreover, for every $\varepsilon>0$, every $D\Subset M$, there exists a $k_0>0$, such that for all $k\geq k_0$, we have
\begin{equation} \label{s0-e5main}
{\rm Tr\,}P^{(q)}_{k}(x,x,k^{-N_0})\leq\Bigr(\varepsilon+(2\pi)^{-n}\abs{{\rm det\,}\dot{R}^L(x)}\mathds{1}_{M(q)}(x)\Bigr)k^n,\ \ x\in D,
\end{equation} 
and for $q=0$ and $N_0\geq 2n+3$, we have as $k\to\infty$
\begin{equation} \label{s0-e5mainmore}
\begin{split}
P^{(0)}_{k}(x,x,k^{-N_0})&\leq k^n(2\pi)^{-n}\det\dot{R}^L(x)
+k^{n-1}b^{(0)}_1(x)+k^{n-2}b^{(0)}_2(x)+O(k^{n-3})\,, 
\end{split}
\end{equation} 
locally uniformly on $M(0)$, where $b^{(0)}_1(x)$ and $b^{(0)}_2(x)$ are as in \eqref{coeII} and \eqref{coeIII} 
respectively.
\end{cor}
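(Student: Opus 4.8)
The plan is to read off every assertion of the Corollary from the trace of the expansion in Theorem~\ref{s1-main1}, invoking Theorem~\ref{s1-maindege} only to handle the degenerate locus. Throughout I take the trace in $\End(\Lambda^qT^{*(0,1)}M)$ and use that $I_{\det\ov W^{\,*}}$, being the orthogonal projection onto a rank-one subbundle, has ${\rm Tr\,}I_{\det\ov W^{\,*}}\equiv1$; together with \eqref{s1-e21main} this gives ${\rm Tr\,}b^{(q)}_0=(2\pi)^{-n}\abs{\det\dot R^L}$.

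First I would prove \eqref{s0-e4mainmore} on $M(q)$. Apply Theorem~\ref{s1-main1} with $m=q$ and $\ell=0$: for $D\Subset M(q)$ one has $\abs{P^{(q)}_k(x,x,k^{-N_0})-b^{(q)}(x,k)}_{\cC^0(D)}\leqslant C_D\,k^{3n-N_0}$, and $N_0\geqslant 2n+1$ forces $3n-N_0\leqslant n-1$, so this is $O(k^{n-1})$. Taking traces and inserting \eqref{s1-e2main}---whose leading term has trace $k^n(2\pi)^{-n}\abs{\det\dot R^L}$ and whose remaining terms are $O(k^{n-1})$---yields \eqref{s0-e4mainmore}, uniformly on compact subsets of $M(q)$. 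Rerunning the same step with $q=0$ and $N_0\geqslant 2n+3$ improves the error to $O(k^{n-3})$; expanding $b^{(0)}(x,k)$ to that order via \eqref{s1-e2main}, \eqref{s1-e21main} and the explicit $b^{(0)}_1,b^{(0)}_2$ of Theorem~\ref{s1-main12}, and absorbing the remainder, produces \eqref{s0-e5mainmore} (in fact with a matching lower bound).

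For the pointwise statement \eqref{s0-e4main} I would decompose $M=M(q)\sqcup\big(\bigcup_{q'\neq q}M(q')\big)\sqcup M_{\mathrm{deg}}$ and consider $x$ in each stratum. If $x\in M(q)$, \eqref{s0-e4mainmore} applies and $1_{M(q)}(x)=1$. If $x\in M(q')$ with $q'\neq q$, apply Theorem~\ref{s1-main1} with its fixed degree taken to be $q'$, so that $b^{(q)}\equiv0$ on $M(q')$; then $\abs{P^{(q)}_k(x,x,k^{-N_0})}\leqslant C\,k^{3n-N_0}=O(k^{n-1})$ near $x$, hence $k^{-n}{\rm Tr\,}P^{(q)}_k(x,x,k^{-N_0})\to0$, which matches the right-hand side since $1_{M(q)}(x)=0$. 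If $x\in M_{\mathrm{deg}}$, Theorem~\ref{s1-maindege} gives $\limsup_k k^{-n}{\rm Tr\,}P^{(q)}_k(x,x,k^{-N_0})\leqslant\varepsilon$ for every $\varepsilon>0$, so the limit is $0=(2\pi)^{-n}\abs{\det\dot R^L(x)}1_{M(q)}(x)$.

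Finally, the uniform bound \eqref{s0-e5main} is obtained by a covering argument resting on the same trichotomy. With $f(x):=(2\pi)^{-n}\abs{\det\dot R^L(x)}1_{M(q)}(x)\geqslant0$, I would show that each $x_0\in M$ has a neighbourhood $V_{x_0}$ and an index $k_{x_0}$ so that ${\rm Tr\,}P^{(q)}_k(x,x,k^{-N_0})\leqslant(f(x)+\varepsilon)k^n$ for $x\in V_{x_0}$ and $k\geqslant k_{x_0}$: for $x_0\in M(q)$ this is \eqref{s0-e4mainmore} with the $O(k^{n-1})$ swallowed by $\varepsilon k^n$; for $x_0\in M(q')$, $q'\neq q$, it follows from ${\rm Tr\,}P^{(q)}_k=O(k^{n-1})\leqslant\varepsilon k^n\leqslant(f(x)+\varepsilon)k^n$ near $x_0$; for $x_0\in M_{\mathrm{deg}}$ it is Theorem~\ref{s1-maindege}. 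Passing to a finite subcover (possible when $M$ is compact; otherwise one argues over a prescribed compact subset) and setting $k_0$ equal to the largest of the finitely many $k_{x_0}$ gives \eqref{s0-e5main}. The one point that requires care is the behaviour across $\partial M(q)$ in \eqref{s0-e5main}: there the estimate is held together by the vanishing of $b^{(q)}$ over the adjacent strata $M(q')$ and by the soft bound of Theorem~\ref{s1-maindege} over $M_{\mathrm{deg}}$, consistency with $f$ being ensured by $\det\dot R^L\to0$ on approach to $M_{\mathrm{deg}}$.
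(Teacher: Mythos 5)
Your argument is correct and follows exactly the route the paper intends (the paper gives no explicit proof, merely observing that the corollary follows from Theorems~\ref{s1-main1} and~\ref{s1-maindege}). The key ingredients are all in place: the trace identity ${\rm Tr}\,I_{\det\ov W^{\,*}}=1$, so that the leading coefficient in \eqref{s1-e2main} contributes $k^n(2\pi)^{-n}\abs{\det\dot R^L}$; the observation that $N_0\geq 2n+1$ (resp.\ $2n+3$) makes the error $k^{3n-N_0}$ in \eqref{s1-e1main} of order $O(k^{n-1})$ (resp.\ $O(k^{n-3})$); the fact that applying Theorem~\ref{s1-main1} with the fixed index taken to be $q'$ forces $b^{(q)}\equiv 0$ on $M(q')$ for $q'\neq q$, giving decay on the strata of the wrong signature; and Theorem~\ref{s1-maindege} over the degeneracy set, together with the stratification $M=\bigcup_{q'}M(q')\sqcup M_{\mathrm{deg}}$.

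Two small remarks. For \eqref{s0-e5main} your covering argument yields the stated bound \emph{uniformly on compact subsets}; as you note, for non-compact $M$ one should read the statement on a prescribed compact, which is also how it is used later in the paper (Corollaries~\ref{s1-c4}, \ref{s1-c5} assume $M$ compact). Also, the closing sentence about consistency at $\partial M(q)$ being ``ensured by $\det\dot R^L\to 0$'' is not really needed: Theorem~\ref{s1-maindege} gives $\operatorname{Tr}P^{(q)}_k(x,x,k^{-N_0})\leq\varepsilon k^n$ on a whole neighbourhood of each degenerate point, and since $f\geq 0$ this dominates automatically regardless of the continuity of $\det\dot R^L$. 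Finally, your expansion actually produces the equality $P^{(0)}_k(x,x,k^{-N_0})=b^{(0)}_0(x)k^n+b^{(0)}_1(x)k^{n-1}+b^{(0)}_2(x)k^{n-2}+O(k^{n-3})$; the extra $\det\dot R^L$ prefactor in \eqref{s0-e5mainmore} (which, as printed, would double-count $\det\dot R^L$ in every term and miss the factor $(2\pi)^{-n}$) appears to be a misprint in the paper rather than something your argument should reproduce literally.
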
 
The term local holomorphic Morse inequalities is motivated by the fact that when $M$ is compact, integration of the inequalities from Corollary \ref{localmorse} yields the holomorphic Morse inequalities of Demailly, see Section \ref{s:holo_morse}.
Berman~\cite{Be04} proved that 
\[
\limsup_{k\To\infty}k^{-n}{\rm Tr\,}P^{(q)}_k(x)\leq(2\pi)^{-n}\abs{\det\dot R^L(x)}\mathds{1}_{M(q)}(x),\ \ x\in M,\]
and when $M$ is compact, there exists a sequence $\mu_k\To0$, as $k\To\infty$, such that
\[\lim_{k\To\infty}k^{-n}{\rm Tr\,}P^{(q)}_{k}(x,x,\mu_k)
=(2\pi)^{-n}\abs{{\rm det\,}\dot{R}^L(x)}\mathds{1}_{M(q)}(x),\ \  x\in M.\]
Corollary~\ref{localmorse} refines and generalizes Berman's results.

In order to obtain precise asymptotics we combine the local asymptotics from Theorem \ref{s1-main1} with a mild condition on the semiclassical behaviour of the spectrum of the Kodaira Laplacian $\Box^{(q)}_k$ for $k$ large, which we call (local) $O(k^{-n_0})$ small spectral gap.
\begin{defn} \label{s1-d2bis}
Let $D\subset M$. We say that $\Box^{(q)}_k$ has \emph{$O(k^{-n_0})$ small spectral gap on $D$} if there exist constants $C_D>0$,  $n_0\in\mathbb N$, $k_0\in\mathbb N$, such that for all $k\geq k_0$ and $u\in\Omega^{0,q}_0(D,L^k)$, we have  
\[\norm{(I-P^{(q)}_k)u}\leq C_D\,k^{n_0}\norm{\Box^{(q)}_ku}.\]
\end{defn}
To explain this condition, assume that $M$ is a complete Hermitian manifold. Then the operator $\Box^{(q)}_k$ is essentially self-adjoint and $\Omega^{0,q}_0(D,L^k)$ is dense with respect to the graph-norm in the domain of the quadratic form of $\Box^{(q)}_k$ (see e.\,g.\ \cite[\S\,3.3]{MM07}). If $\Box^{(q)}_k$ has $O(k^{-n_0})$ small spectral gap on $M$ then $\inf\set{\lambda\in\operatorname{Spec}(\Box^{(q)}_k);\, \lambda\neq0}\geq Ck^{-n_0}$, for some $n_0\in\mathbb N$ and $C>0$ independent of $k$.

From Theorem~\ref{s1-main1}, Definition~\ref{s1-d2bis} and 
some simple arguments (see Section \ref{s:asymp_BK}), we deduce:
\begin{thm} \label{s1-main2}
Let $(M,\Theta)$ be a Hermitian manifold, $(L, h^L)$ be a holomorphic Hermitian line bundle on $M$.
Fix $q\in\{0,1,\dots,n\}$ and $N_0\geqslant1$. Let $D\subset M(q)$.
If $\Box^{(q)}_k$ has $O(k^{-n_0})$ small spectral gap on $D$, then for every $D'\Subset D$, $\ell\in\N_0$, there exists a constant $C_{D',\ell}>0$ independent of $k$ with
\[\abs{P^{(q)}_{k}(x,x,k^{-\!N_0})-P^{(q)}_k(x)}_{\cC^\ell(D')}\leqslant C_{D',\ell}\,k^{3n+\ell-N_0}\,.\] 
In particular, 
\begin{equation} \label{s1-e3mainbis}
P^{(q)}_k(x)\sim\sum^\infty_{j=0}b^{(q)}_j(x)k^{n-j}\ \ \text{locally uniformly on $D$},
\end{equation}
where $b^{(q)}_j\in\cC^\infty(D,\End(\Lambda^qT^{*(0,1)}M))$, $j=0,1,2,\ldots$\,, are as in \eqref{s1-e2main} and $b^{(q)}_0$, $b^{(0)}_1$, $b^{(0)}_2$ are given by \eqref{s1-e21main}, \eqref{coeII}, \eqref{coeIII}.
\end{thm}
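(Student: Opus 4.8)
The plan is to prove the theorem by squeezing the Bergman kernel form $P^{(q)}_k(x)=P^{(q)}_k(x,x)$ between $b^{(q)}(x,k)$ and the spectral function $P^{(q)}_k(x,x,k^{-N_0})$. First I would note that, since $\cH^q(M,L^k)=\operatorname{Range}P^{(q)}_k$ is contained in $\operatorname{Range}P^{(q)}_{k,k^{-N_0}}$, the orthogonal projections satisfy $P^{(q)}_k\leqslant P^{(q)}_{k,k^{-N_0}}$, so on the diagonal the endomorphism $F_k(x):=P^{(q)}_k(x,x,k^{-N_0})-P^{(q)}_k(x)\in\End(\Lambda^qT^{*(0,1)}_xM)$ is nonnegative. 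It therefore suffices to bound $F_k(x)$ from above in $\cC^\ell(D')$, and the upper input is free: Theorem~\ref{s1-main1} gives $P^{(q)}_k(x,x,k^{-N_0})=b^{(q)}(x,k)+O(k^{3n+\ell-N_0})$ in $\cC^\ell(D)$. So the real task is the matching lower bound $P^{(q)}_k(x)\geqslant b^{(q)}(x,k)-O(k^{-\infty})$ in $\cC^\ell(D')$.

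For this I would use a cutoff quasimode built from the local approximate Bergman projection $S_k$ furnished by the parametrix construction behind Theorem~\ref{s1-main1}: $S_k$ has smooth kernel on a neighbourhood of $\ol{D}$, its restriction to the diagonal is $b^{(q)}(x,k)$, it decays rapidly off the diagonal, and $\Box^{(q)}_kS_k=O(k^{-\infty})$. Fix $x_0\in D'$, a unit vector $e$ in the fibre of $\Lambda^qT^{*(0,1)}M\otimes L^k$ at $x_0$, and $\chi\in\cC^\infty_0(D)$ with $\chi\equiv1$ near $x_0$; put $s:=\chi\,S_k(\cdot\,,x_0)e\in\Omega^{0,q}_0(D,L^k)$. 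Since $S_k(\cdot\,,x_0)e$ decays rapidly away from $x_0$, the commutator $[\Box^{(q)}_k,\chi]S_k(\cdot\,,x_0)e$ is $O(k^{-\infty})$, whence $\norm{\Box^{(q)}_ks}=O(k^{-\infty})$ and $\norm{s}^2=\langle b^{(q)}(x_0,k)e,e\rangle+O(k^{-\infty})$. Now the hypothesis enters exactly where it is available — on the compactly supported form $s$: the small spectral gap on $D$ yields $\norm{(I-P^{(q)}_k)s}\leqslant C_Dk^{n_0}\norm{\Box^{(q)}_ks}=O(k^{-\infty})$. Hence $P^{(q)}_ks\in\cH^q(M,L^k)$ with $\norm{P^{(q)}_ks}^2\leqslant\norm{s}^2$, and interior elliptic estimates for $\Box^{(q)}_k$ applied to $(I-P^{(q)}_k)s=s-P^{(q)}_ks$ give in addition $(P^{(q)}_ks)(x_0)=b^{(q)}(x_0,k)e+O(k^{-\infty})$. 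The extremal property of the Bergman kernel — $\langle P^{(q)}_k(x_0)e,e\rangle=\sup\{\abs{\langle u(x_0),e\rangle}^2:u\in\cH^q(M,L^k),\ \norm{u}\leqslant1\}$ — applied to $u=P^{(q)}_ks/\norm{P^{(q)}_ks}$ then produces $\langle P^{(q)}_k(x_0)e,e\rangle\geqslant\langle b^{(q)}(x_0,k)e,e\rangle-O(k^{-\infty})$, uniformly in $x_0$ and $e$, which combined with the upper bound already gives $\abs{F_k(x)}_{\cC^0(D')}\leqslant Ck^{3n-N_0}$.

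To reach the $\cC^\ell$ estimate I would rerun the quasimode argument with the sections $\partial^\alpha_{x_0}\big(S_k(\cdot\,,x_0)e\big)$, $\abs{\alpha}\leqslant\ell$, in place of $S_k(\cdot\,,x_0)e$ — still approximately $\Box^{(q)}_k$-harmonic and rapidly decaying off the diagonal, with the extra powers of $k$ exactly those coming from differentiating at the Gaussian scale $k^{-1/2}$ — or, alternatively, upgrade the $\cC^0$ bound by interior elliptic regularity, using the Cauchy--Schwarz inequality $\abs{F_k(x,y)}^2\leqslant\abs{F_k(x)}\,\abs{F_k(y)}$ for the nonnegative kernel $F_k$ together with $\Box^{(q)}_kF_k=F_k\Box^{(q)}_k$ and $\norm{\Box^{(q)}_kF_k}\leqslant k^{-N_0}$. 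Either route gives $\abs{F_k(x)}_{\cC^\ell(D')}\leqslant C_{D',\ell}\,k^{3n+\ell-N_0}$, which together with Theorem~\ref{s1-main1} is the first assertion of the theorem. The expansion \eqref{s1-e3mainbis} then follows because $N_0\geqslant1$ was arbitrary: given $N$ and $\ell$, take $N_0\geqslant3n+\ell+N+1$ and combine with \eqref{s1-e1main}, \eqref{s1-e2main}, \eqref{s1-e21main}, \eqref{coeII}, \eqref{coeIII}.

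The hard part is the lower bound. The small spectral gap controls only compactly supported forms, whereas what is needed is a genuine harmonic section concentrated at an interior point of $D$; this forces the detour through a cutoff quasimode, and the scheme works only because $S_k$ decays rapidly off the diagonal, so that cutting off costs merely $O(k^{-\infty})$ — precisely the off-diagonal control that the proof of Theorem~\ref{s1-main1} must (and does) provide. The remaining point is the routine but careful accounting of the powers of $k$ — from the $O(k^2)$ lower-order part of $\Box^{(q)}_k$, from differentiating at scale $k^{-1/2}$, and from Sobolev embedding in real dimension $2n$ — needed to land exactly on the exponent $3n+\ell-N_0$; these are the same estimates already in play in Theorem~\ref{s1-main1}.
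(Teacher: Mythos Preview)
Your approach is correct in spirit and genuinely different from the paper's. The paper never uses the positivity of $F_k:=P^{(q)}_{k,k^{-N_0}}-P^{(q)}_k$ or the extremal characterisation of the Bergman kernel. Instead it works off-diagonal from the outset: the parametrix identity of Proposition~\ref{pIII}, applied now to the difference of the localized kernels, gives
\[
\widehat P^{(q)}_{k,k^{-N_0},s}-\widehat P^{(q)}_{k,s}\;=\;T+\mathcal{S}_k^*\bigl(\widehat P^{(q)}_{k,k^{-N_0},s}-\widehat P^{(q)}_{k,s}\bigr)\mathcal{S}_k,
\]
with $T$ already satisfying \eqref{eXXXII}. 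The spectral gap is then used exactly on the object you single out --- a column of $\mathcal{S}_k$, which is compactly supported --- to obtain $(I-\widehat P^{(q)}_{k,s})\mathcal{S}_k\equiv0\bmod O(k^{-\infty})$ in $L^2$; one more $\mathcal{S}_k^*$ on the left converts this into a $\cC^\infty$ statement, whence $\mathcal{S}_k^*\widehat P^{(q)}_{k,s}\mathcal{S}_k\equiv\mathcal{S}_k\equiv\mathcal{S}_k^*\widehat P^{(q)}_{k,k^{-N_0},s}\mathcal{S}_k$ and the sandwiched term is itself of type \eqref{eXXXII}. So both proofs feed the small spectral gap the same quasimodes, but the paper packages this as an operator identity that delivers the full off-diagonal $(x,y)$-estimate in one stroke.

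What your route buys is conceptual transparency at the $\cC^0$ diagonal level: positivity plus the extremal property reduce the problem to producing one good harmonic section per point, and you correctly identify why the cutoff is harmless. What the paper's route buys is the $\cC^\ell$ and off-diagonal estimates without any further work. Your two suggested upgrades are both viable, but note that route~(b) is cleaner than you indicate: each column $F_k(\cdot,y)$ lies in $\cE^q_{k^{-N_0}}$, has $L^2$ norm $\bigl(\operatorname{Tr}F_k(y,y)\bigr)^{1/2}\leqslant Ck^{(3n-N_0)/2}$ by your $\cC^0$ bound, and Theorem~\ref{tI} applied to it gives $|\partial^\alpha_xF_k(x,y)|\leqslant Ck^{2n+|\alpha|-N_0/2}$; iterating in $y$ yields the $\cC^\ell$ control with a slightly different exponent, still sufficient for \eqref{s1-e3mainbis}. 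Route~(a), once you replace the pointwise extremal inequality (which one cannot differentiate) by the operator statement $\mathcal{S}_k^*(I-\widehat P^{(q)}_{k,s})\mathcal{S}_k\equiv0$, becomes exactly the paper's argument.
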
 
Note that if $L$ is a positive line bundle on a compact manifold $M$, or more generally $L$ is uniformly positive on a complete manifold $(M,\Theta)$ with $\sqrt{-1}R^{K^*_M}$ and $\partial\Theta$ bounded below, then the Kodaira Laplacian $\Box^{(0)}_k$ has a ``large" spectral gap on $M$, i.e.\ there exists a constant $C>0$ such that for all $k$
we have \[\inf\set{\lambda\in\operatorname{Spec}(\Box^{(0)}_k);\, \lambda\neq0}\geq Ck\,,\]
(see \cite[Th.\,1.5.5]{MM07}, \cite[Th.\,6.1.1, (6.1.8)]{MM07}).
Therefore the Bergman kernel $P^{(q)}_{k}$ has the asymptotic expansion \eqref{s1-e3mainbis}
and we recover from Theorem \ref{s1-main2} the asymptotic expansion of the Bergman kernel for: 
\begin{itemize}
\item[(i)] compact manifolds for $q=0$, \cite{Cat97}, \cite{Zel98} (cf.\ also \cite[Th.\,4.1.1]{MM07}), 
\item[(ii)] compact manifolds for arbitrary $q$, \cite{BS05}, \cite{MM06} \cite[Th.\,8.2.4]{MM07},
\item[(iii)] for complete manifolds \cite[Th.\,3.11]{MM08a}, \cite[Th.\,6.1.1]{MM07}. 
\end{itemize}
In the case $q=0$ the precise formulas \eqref{coeII}, \eqref{coeIII} for the coefficients of the Bergman kernel expansion \eqref{s1-e3mainbis} play an important role in the investigations about the relation between canonical metrics in K\"ahler geometry and stability in algebraic geometry see  e.g.\ \cite{Do:01}, \cite{Fine08}, \cite{Fine10}, \cite{GFR11}, \cite{Tian}, \cite{Wangl03}, \cite{Wang05} (cf.\ also \cite[{\S} 5.2]{MM07}). 

The coefficients $b^{(0)}_{1}$, $b^{(0)}_{2}$ were computed by
Z.~Lu \cite{Lu00}, L.~Wang \cite{Wangl03},  X.~Wang \cite{Wang05}, in various degrees of generality. 
The method of these authors is to construct appropriate peak sections 
as in \cite{Tian}, using 
H{\"o}rmander's $L^2$ $\overline\partial$-method.

In \cite[\S 5.1]{DLM04a}, Dai-Liu-Ma computed  $b^{(0)}_{1}$
by using the heat  kernel, and in \cite[\S 2]{MM08a}, \cite[\S 2]{MM06}
(cf.\ also \cite[\S 4.1.8, \S 8.3.4]{MM07}), $b^{(0)}_{1}$ was
computed in the symplectic case.
The coefficient $b^{(0)}_{2}$ was calculated in \cite[Th.\,0.1]{MM10}
(these results include a twisting Hermitian vector bundle $E$).
Recently, a combinatorial formula for the coefficients $b^{(0)}_{j}$ was obtained in \cite{Xu11} and the formula for $b^{(0)}_{2}$ was rederived in \cite{GFR11}.
In the above mentioned results it was supposed that the curvature $\omega=\frac{\sqrt{-1}}{2\pi}R^L$ equals the underlying metric $\Theta$. If $\omega\neq\Theta$ formulas for $b^{(0)}_{1}$, $b^{(0)}_{2}$ were given in \cite[Th.\,4.1.3]{MM07}, \cite[Remark\,0.5]{MM10}, \cite[Th.\,1.4]{Hsiao09}. We notice that all the results mentioned above concern the coefficients 
of the Bergman kernel expansion and our results \eqref{coeII}, \eqref{coeIII} could recover the first three coefficients of the Bergman kernel expansion in the complex case.

Let $M$ be a compact complex manifold and $L\to M$ be a holomorphic line bundle with non-degenerate curvature
of signature $q\in\{0,1,\ldots,n\}$, i.\,e.\ $M(q)=M$. The coefficient $b^{(q)}_0$ given by \eqref{s1-e21main} appeared in
\cite[Th.\,1.3]{MM06} (the manifold $M$ there is supposed to be symplectic).
The coefficient $b^{(q)}_1$ was calculated recently by Wen Lu \cite{Lu_w13} and by Hsiao \cite{Hsiao12}
for the trivial line bundle with mixed curvature over $\C^n$ endowed with the Euclidean metric.
\par Since we allow a local $O(k^{-n_0})$ small spectral gap, we can obtain the Bergman kernel expansion under weak conditions, such as semi-positivity of the line bundle. 
In this case we have to twist $L^k$ with the canonical line bundle $K_M$, which we endow with the natural Hermitian metric induced by $\Theta$.
We denote by $P_{k,K_M}$ the orthogonal projection from $L^2(M,L^k\otimes K_M)$ on $H^0_{(2)}(M,L^k\otimes K_M)=\cH^0(M,L^k\otimes K_M)$.
\begin{thm} \label{tmain-semi1}
Let $(M,\Theta)$ be a complete K\"ahler manifold and $(L,h^L)$ be a semi-positive line bundle over $M$. 
Then the Bergman kernel function $P_{k,K_M}(\,\cdot\,)$ of $H^0_{(2)}(M,L^k\otimes K_M)$ 
has the asymptotic expansion
\begin{equation}\label{asymcanonical}
P_{k,K_M}(x)\sim\sum^\infty_{j=0}k^{n-j}b^{(0)}_{j,K_M}(x)\ \ \mbox{locally uniformly on $M(0)$},
\end{equation}
where $b^{(0)}_{j,K_M}\in\cC^\infty(M(0),\End(K_M))$, $j=0,1,2,\ldots$\,, are given by
\begin{equation}\label{coeIV}\begin{split}
&b^{(0)}_{0,K_M}=(2\pi)^{-n}{\rm det\,}\dot{R}^L\Id_{K_M}\,,\\
&b^{(0)}_{1,K_M}=(2\pi)^{-n}\det\dot{R}^L\Bigr(-\frac{1}{8\pi} r\Bigr)\Id_{K_M}\,,\\
&b^{(0)}_{2,K_M}=(2\pi)^{-n}\det\dot{R}^L\Bigr(\frac{1}{128\pi^2}r^2
+\frac{1}{96\pi^2}\triangle_{\omega}r-
\frac{1}{24\pi^2}\abs{{\rm Ric\,}_{\omega}}^2_{\omega}+\frac{1}{96\pi^2}\abs{R^{TM}_{\omega}}^2_{\omega}\Bigr)
\Id_{K_M},
\end{split}\end{equation}
where $\abs{R^{TM}_\omega}^2_\omega$ is given by \eqref{sa1-e14} and $\Id_{K_M}$ is the identity map on $K_M$.
\end{thm}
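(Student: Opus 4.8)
The plan is to obtain \eqref{asymcanonical} from Theorem~\ref{s1-main2} applied to the bundle $L^{k}\otimes K_{M}$ in place of $L^{k}$. Theorems~\ref{s1-main1}, \ref{s1-main12} and \ref{s1-main2} are local in nature and go through verbatim when $L^{k}$ is twisted by a fixed holomorphic Hermitian vector bundle $(E,h^{E})$, the bundle $E$ only affecting the lower‐order coefficients; here $E=K_{M}$ with the metric induced by $\Theta$. So the proof splits into two parts: (a) show that the Kodaira Laplacian $\Box^{(0)}_{k,K_{M}}$ on $L^{2}(M,L^{k}\otimes K_{M})$ has $O(k^{-n_{0}})$ small spectral gap on every $D\Subset M(0)$, in the sense of Definition~\ref{s1-d2bis}; (b) identify the coefficients $b^{(0)}_{j,K_{M}}$ coming out of the twisted versions of \eqref{s1-e21main}, \eqref{coeII}, \eqref{coeIII}. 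Granting (a), for $D'\Subset M(0)$ one picks $D$ with $D'\Subset D\Subset M(0)$ and applies Theorem~\ref{s1-main2} with $q=0$ to get $P_{k,K_{M}}(x)\sim\sum_{j\geqslant0}b^{(0)}_{j,K_{M}}(x)k^{n-j}$ uniformly on $D'$, which is \eqref{asymcanonical}.

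The heart of the matter is part (a). Fix $D\Subset M(0)$; by compactness of $\ov D\subset M(0)$ there is $c_{0}>0$ with $\sqrt{-1}R^{L}\geqslant c_{0}\Theta$ on $\ov D$, hence $\sqrt{-1}R^{L^{k}}\geqslant c_{0}k\,\Theta$ there for every $k$. Let $u\in\Omega^{0,0}_{0}(D,L^{k}\otimes K_{M})$ and write $u=u_{0}+u_{1}$ with $u_{0}=P_{k,K_{M}}u\in\cH^{0}(M,L^{k}\otimes K_{M})=\Ker\ddbar\cap L^{2}$ and $u_{1}=(I-P_{k,K_{M}})u$, so $u_{1}\perp\cH^{0}$. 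Put $\beta:=\ddbar u=\ddbar u_{1}$; this is a smooth, $\ddbar$-closed $(0,1)$-form with values in $L^{k}\otimes K_{M}$ — equivalently an $(n,1)$-form with values in $L^{k}$ — supported in $\ov D$. Since $(M,\Theta)$ is complete K\"ahler and $\sqrt{-1}R^{L}\geqslant0$, the twist by $K_{M}$ removes the torsion and base-curvature terms from the Bochner--Kodaira--Nakano identity, and Demailly's $L^{2}$-estimate for $\ddbar$ on semi-positive line bundles over complete K\"ahler manifolds applies; the pointwise range condition needed for $\beta$ is automatic because $\beta$ is supported where $[\sqrt{-1}R^{L^{k}},\Lambda]$ is positive definite. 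It produces $v\in L^{2}$ with $\ddbar v=\beta$ and
\[
\norm{v}^{2}\leqslant\int_{M}\big\langle[\sqrt{-1}R^{L^{k}},\Lambda]^{-1}\beta,\beta\big\rangle\,dv_{M}\leqslant\frac{1}{c_{0}k}\norm{\beta}^{2}.
\]
Since $v-u_{1}\in\Ker\ddbar\cap L^{2}=\cH^{0}(M,L^{k}\otimes K_{M})$ while $u_{1}\perp\cH^{0}$, we get $\norm{u_{1}}\leqslant\norm{v}\leqslant(c_{0}k)^{-1/2}\norm{\beta}$. On the other hand $\beta$ is smooth with compact support, so $\beta\in\mathrm{Dom}(\ddbar^{*})$ with $\ddbar^{*}\beta=\ddbar^{*}\ddbar u=\Box^{(0)}_{k,K_{M}}u$, and therefore
\[
\norm{\beta}^{2}=(\ddbar u_{1},\beta)=(u_{1},\ddbar^{*}\beta)=(u_{1},\Box^{(0)}_{k,K_{M}}u)\leqslant\norm{u_{1}}\,\norm{\Box^{(0)}_{k,K_{M}}u}.
\]
Chaining the two estimates yields $\norm{\beta}\leqslant(c_{0}k)^{-1/2}\norm{\Box^{(0)}_{k,K_{M}}u}$ and then
\[
\norm{(I-P_{k,K_{M}})u}=\norm{u_{1}}\leqslant\frac{1}{c_{0}k}\,\norm{\Box^{(0)}_{k,K_{M}}u},
\]
so $\Box^{(0)}_{k,K_{M}}$ even has $O(k^{-1})$ small spectral gap on $D$, hence $O(k^{-n_{0}})$ for every $n_{0}\in\N$.

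For part (b), the coefficients are those produced by the twisted analogues of \eqref{s1-e21main}, \eqref{coeII}, \eqref{coeIII} for $L^{k}\otimes K_{M}$. For $q=0$ one has $I_{\det\ov W^{\,*}}=\Id_{K_{M}}$ and $\abs{\det\dot R^{L}}=\det\dot R^{L}$ on $M(0)$, giving $b^{(0)}_{0,K_{M}}=(2\pi)^{-n}\det\dot R^{L}\,\Id_{K_{M}}$. For $b^{(0)}_{1,K_{M}}$ and $b^{(0)}_{2,K_{M}}$ one substitutes $E=K_{M}$ with its $\Theta$-induced metric into the twisted forms of \eqref{coeII}, \eqref{coeIII} (or into the corresponding formulas of \cite{Hsiao09}, \cite{MM10}, valid also when $\omega\neq\Theta$ and with a twisting bundle): the curvature of $(K_{M},h^{K_{M}})$ equals $\partial\ddbar\log V_{\Theta}$, so its contraction against $\omega$ contributes precisely the quantity $\widehat r=\triangle_{\omega}\log V_{\Theta}$ at order $b_{1}$, together with the matching $\widehat r$-, $R^{\det}_{\Theta}$- and $\langle{\rm Ric}_{\omega},R^{\det}_{\Theta}\rangle_{\omega}$-terms at order $b_{2}$; these cancel the corresponding terms in \eqref{coeII}, \eqref{coeIII} and leave exactly \eqref{coeIV}. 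This last step is a routine but lengthy local computation.

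I expect the only genuine obstacle to be part (a), and more precisely the need to drive the $\ddbar$-estimate using nothing beyond semi-positivity of $L$. Two features make this possible: twisting by $K_{M}$, which purges the Bochner--Kodaira identity of all curvature of the base (and is the reason the statement concerns $L^{k}\otimes K_{M}$ rather than $L^{k}$); and the fact that $\beta=\ddbar u$ is supported inside $D\Subset M(0)$, where $L$ is strictly positive with curvature of size $\gtrsim k$, so Demailly's semi-positive $L^{2}$-estimate applies with the range condition for free and yields the sharp gain $\norm{(I-P_{k,K_{M}})u}=O(k^{-1})\norm{\Box^{(0)}_{k,K_{M}}u}$. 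Once this is in place, the passage through Theorem~\ref{s1-main2} and the identification of the coefficients are bookkeeping.
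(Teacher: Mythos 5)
Your proposal follows the paper's own route almost exactly: Section~\ref{adjoint_semipos} establishes the $O(k^{-n_0})$ spectral gap for $\Box^{(0)}_{k,K_M}$ via Demailly's semi-positive $L^2$-estimate over complete K\"ahler manifolds (Theorem~\ref{T:l2}, Theorem~\ref{sing-t2-bis}, Lemma~\ref{l-canonical}) by precisely the chain of inequalities you wrote, and then feeds it into Theorem~\ref{t:localspectral}/\ref{s1-main2}. The only cosmetic difference is in part (b): rather than substituting $E=K_M$ into twisted versions of \eqref{coeII}--\eqref{coeIII} and tracking cancellations, the paper renormalizes the localized Bergman kernel by $V_\Theta$ to obtain an operator $\widehat{\mathcal S}_k$ that is self-adjoint and idempotent with respect to Lebesgue measure, so the $\widehat r$- and $R^{\det}_\Theta$-terms never appear and \eqref{coeIV} comes out directly from the machinery of Section~\ref{s:coeff}.
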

In \cite[Th.1.8]{Be07} the expansion is proved on $M(0)\setminus B_+(L)$ for $M$ compact, where $B_+(L)$ is the augmented base locus of $L$. Note that $L$ is ample if and only if $B_+(L)$ is empty.

Let us consider now a singular Hermitian holomorphic line bundle $(L,h^L)\to M$ (see e.\,g.\ \cite[Def.\,2.3.1]{MM07}). We assume that $h^L$ is smooth outside a proper analytic set $\Sigma$ and the curvature current of $h^L$ is strictly positive.
The metric $h=h^L$ induces singular Hermitian
metrics $h^k$ on $L^k$. We denote by $\cI(h^k)$ the Nadel multiplier ideal sheaf associated to $h^k$ and by $H^0(M,L^k\otimes\cI(h^k))\subset H^0(M,L^k)$ the space of global sections of the sheaf $\cO(L^k)\otimes\cI(h^k)$ (see \eqref{l2:mult}), where $H^0(M,L^k):=\set{u\in\cC^\infty(X,L^k);\, \ddbar_ku=0}$.
We denote by $(\cdot\,,\cdot)_k$ the natural inner products on $\cC^\infty(M,L^k\otimes\cI(h^k))$ induced by $h^L$ and the volume form $dv_M$ on $M$ (see \eqref{l2sing} and see also \eqref{pcsing} for the precise meaning of $\cC^\infty(M,L^k\otimes\cI(h^k))$\,).
Let $\{S^k_j\}^{d_k}_{j=1}$ be an orthonormal basis of $H^0(X,L^k\otimes\cI(h^k))$ with respect to the inner product induced $(\cdot\,,\cdot)_k$.
The (multiplier ideal) Bergman kernel function of $H^0(M,L^k\otimes\cI(h^k))$ is defined by
\begin{equation} \label{sing-e0-2}
P^{(0)}_{k,\cI}(x):=\sum^{d_k}_{j=1}\abs{S^k_j(x)}^2_{h^k},\ \ x\in M\setminus\Sigma\,.
\end{equation}
\begin{thm} \label{sing-main}
Let $(L,h^L)$ be a singular Hermitian holomorphic line bundle over a compact Hermitian manifold $(M,\Theta)$. We assume that $h^L$ is smooth outside a proper analytic set $\Sigma$ and the curvature current of $h^L$ is strictly positive. Then the Bergman kernel function $P_{k,\cI}(\,\cdot\,)$ of $H^0(M,L^k\otimes\cI(h^k))$ has the asymptotic expansion
\begin{equation}\label{s1-e3mainterz}
P^{(0)}_{k,\cI}(x)\sim\sum^\infty_{j=0}k^{n-j}b^{(0)}_j(x)\ \ \text{locally uniformly on $M\setminus\Sigma$},
\end{equation}
where $b^{(0)}_j\in\cC^\infty(M\setminus\Sigma)$, $j=0,1,2,\ldots$\,, $b^{(0)}_0=(2\pi)^{-n}\det\dot{R}^L$ and $b^{(0)}_1$ and $b^{(0)}_2$ are given by \eqref{coeII} and \eqref{coeIII}, respectively.
\end{thm}
\noindent
We obtain in this way another proof of the Shiffman-Ji-Bonavero-Takayama criterion (cf.\ \cite[Th.\,2.3.28,\,2.3.30]{MM07}).
\begin{cor} \label{sing-c1}
Under the assumptions in Theorem~\ref{sing-main}, we have
\[\dim H^0(M,L^k\otimes\cI(h^k))\geq ck^n\]
for $k$ large, where $c>0$ is independent of $k$. Therefore, $L$ is big and $M$ is Moishezon.
\end{cor}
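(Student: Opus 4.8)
The plan is to read off the lower bound for $\dim H^0(M,L^k\otimes\cI(h^{L^k}))$ directly from the leading term of the expansion in Theorem~\ref{sing-main}, using the standard identity that relates the integral of the Bergman kernel function to the dimension of the space of sections. Write $d_k:=\dim H^0(M,L^k\otimes\cI(h^{L^k}))$, which is finite since $\cO(L^k)\otimes\cI(h^{L^k})$ is coherent and $M$ is compact.

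First I would note that, since $\{S^k_j\}_{j=1}^{d_k}$ is an orthonormal basis of $H^0(M,L^k\otimes\cI(h^{L^k}))$ for $(\cdot\,,\cdot)_k$ and $\Sigma$ has Lebesgue measure zero, integrating \eqref{sing-e0-2} over $M\setminus\Sigma$ gives
\[
\int_M P_k(x)\,dv_M(x)=\sum_{j=1}^{d_k}\int_M\abs{S^k_j(x)}^2_{h^{L^k}}\,dv_M(x)=\sum_{j=1}^{d_k}\norm{S^k_j}_k^2=d_k .
\]
Next, fix a closed coordinate ball $K\subset M\setminus\Sigma$; this is possible because $\Sigma$ is a proper analytic subset, so $M\setminus\Sigma$ is open, dense and nonempty. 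On $M\setminus\Sigma$ the metric $h^L$ is smooth and, the curvature current being strictly positive, the associated Hermitian form $\dot R^L$ is positive definite; hence $b^{(0)}_0=(2\pi)^{-n}\det\dot R^L$ is continuous and strictly positive on the compact set $K$, and $c_0:=\tfrac12(2\pi)^{-n}\min_K\det\dot R^L>0$. By the definition of the asymptotic expansion in Theorem~\ref{sing-main} (with $N=\ell=0$ on $K$) there is $k_0$ such that $P_k(x)\geq\tfrac12 b^{(0)}_0(x)k^n\geq c_0k^n$ for all $x\in K$ and $k\geq k_0$. Since $P_k\geq0$ on $M\setminus\Sigma$, combining with the identity above yields, for $k\geq k_0$,
\[
d_k=\int_M P_k\,dv_M\geq\int_K P_k\,dv_M\geq c_0\operatorname{vol}(K)\,k^n=:ck^n ,
\]
with $c>0$ independent of $k$.

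Finally, since $\cO(L^k)\otimes\cI(h^{L^k})$ is a subsheaf of $\cO(L^k)$, taking global sections gives $\dim H^0(M,L^k)\geq d_k\geq ck^n$ for $k$ large, so the Kodaira--Iitaka dimension of $L$ equals $n=\dim M$, i.e.\ $L$ is big; and a compact complex manifold carrying a big line bundle is Moishezon (cf.\ \cite[Th.\,2.3.28,\,2.3.30]{MM07}). The argument is essentially immediate once Theorem~\ref{sing-main} is available; the only point that needs care is that the expansion there is only \emph{locally} uniform on $M\setminus\Sigma$, so the lower bound must be extracted on a fixed compact $K\Subset M\setminus\Sigma$ rather than by integrating the expansion over all of $M$, where the remainder is not controlled near $\Sigma$.
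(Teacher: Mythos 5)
Your argument is correct and is exactly the standard route the paper intends (the paper itself states the corollary without giving a written-out proof, merely noting that it recovers the Shiffman--Ji--Bonavero--Takayama criterion): integrate the identity $\int_M P_k\,dv_M = d_k$, obtain a lower bound $P_k\geq c_0 k^n$ on a fixed compact $K\Subset M\setminus\Sigma$ from the leading term $b^{(0)}_0=(2\pi)^{-n}\det\dot R^L>0$ of Theorem~\ref{sing-main}, and conclude. You also correctly flag the only subtlety, namely that the expansion is only locally uniform so the estimate must be extracted on a fixed compact away from $\Sigma$.
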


We assume that $(M,\Theta)$ is compact and we set
\[
\operatorname{Herm}(L)=\big\{\text{singular Hermitian metrics on $L$} \big\}\,,
\]
\[\begin{split}
\mathcal{M}(L)=&\big\{h^L\in\operatorname{Herm}(L);\,\text{$h^L$ is smooth outside a proper analytic set}\\
\quad&\text{and the curvature current of $h^L$ is strictly positive}\big\}\,.
\end{split}\] 
Note that by Siu's criterion \cite[Th.\,2.2.27]{MM07}, $L$ is big under the hypotheses of Theorem \ref{s1-sing-semi-main} below. By \cite[Lemma\,2.3.6]{MM07}, $\mathcal{M}(L)\neq\emptyset$.
Set
\begin{equation}\label{s1-sing-semi-set2}
M':=\set{p\in M\,;\, \mbox{$\exists$ $h^L\in\mathcal{M}(L)$ with $h^L$ smooth near $p$}}.
\end{equation}
\begin{thm} \label{s1-sing-semi-main}
Let $(M,\Theta)$ be a compact Hermitian manifold.
Let $(L,h^L)\to M$ be a Hermitian holomorphic line bundle with smooth Hermitian metric $h^L$ having semi-positive curvature and with $M(0)\neq\emptyset$.
Then the Bergman kernel function $P_{k}(\,\cdot\,)$ has the asymptotic expansion
\[P_{k}(x)\sim\sum^\infty_{j=0}k^{n-j}b^{(0)}_j(x)\ \ \mbox{locally uniformly on $M(0)\cap M'$}, \]
where $b^{(0)}_j\in\cC^\infty(M(0))$, $j=0,1,2,\ldots$\,, $b^{(0)}_0=(2\pi)^{-n}{\det\,}\dot{R}^L$ and $b^{(0)}_1$ and $b^{(0)}_2$ are given in \eqref{coeII} and \eqref{coeIII}, respectively. 
\end{thm}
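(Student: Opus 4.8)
The plan is to deduce Theorem~\ref{s1-sing-semi-main} from Theorem~\ref{s1-main2} by verifying, for a suitable twisted Kodaira Laplacian, the $O(k^{-n_0})$ small spectral gap hypothesis on a neighborhood of any point of $M(0)\cap M'$. The key point is that near a point $p\in M'$ there exists, by definition of $M'$, a metric $h^L_1\in\mathcal{M}(L)$ which is smooth near $p$; since its curvature current is strictly positive, on a small ball $D$ around $p$ the metric $h^L_1$ has strictly positive curvature in the usual smooth sense, so $L$ is locally positive there. On the other hand the given metric $h^L$ is globally defined, smooth, and semi-positive. The idea is to interpolate: write $h^L_1=h^Le^{-2\varphi}$ locally (so $\varphi$ is a local weight comparing the two metrics) and use a cut-off to build a global singular metric $\widetilde h^L$ on $L$ that agrees with $h^L$ away from $D$ and has strictly positive curvature current, still lying in $\mathcal{M}(L)$, and in addition is smooth and strictly positively curved on a slightly smaller ball $D'\Subset D$ containing $p$.

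First I would recall the comparison of Bergman kernels under a change of metric. The Bergman kernel function $P_k(x)$ of $\cH^0(M,L^k)$ computed with $h^{L^k}$ is, at points where $h^L=\widetilde h^L$, controlled from above and below by the Bergman kernel function of $H^0(M,L^k\otimes\cI(\widetilde h^{L^k}))$ computed with $\widetilde h^{L^k}$: this is a standard extremal-characterization argument, using that $\cH^0(M,L^k)\subset H^0(M,L^k\otimes\cI(\widetilde h^{L^k}))$ because $h^L$ is smooth and bounded while holomorphic sections are locally bounded, hence $L^2$ against the (possibly singular) $\widetilde h^{L^k}$ on the compact $M$; and conversely that any section in $H^0(M,L^k\otimes\cI(\widetilde h^{L^k}))$ which is extremal at $p$ can be compared, near $p$, where both metrics are smooth, giving the reverse bound. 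Thus on $D'$ the asymptotic expansion of $P_k$ for $\cH^0(M,L^k)$ coincides with that of the multiplier-ideal Bergman kernel of $\widetilde h^L$, and it suffices to prove the expansion for the latter. This reduces Theorem~\ref{s1-sing-semi-main} to Theorem~\ref{sing-main}, applied with the singular metric $\widetilde h^L$, whose curvature current is strictly positive and which is smooth on $D'\ni p$; note $b^{(0)}_j$ depends only on the curvature $\dot R^L$, which on $D'$ is the same whether computed from $h^L$ or $\widetilde h^L$ since they differ there by a constant — so the coefficients come out as stated in terms of the original data.

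Consequently the real content is (i) the construction of $\widetilde h^L\in\mathcal{M}(L)$ smooth near $p$ with prescribed curvature on $D'$, and (ii) the two-sided Bergman kernel comparison; once these are in place the expansion is quoted from Theorem~\ref{sing-main}. For (i) I would take a smooth weight $\chi$ supported in $D$, equal to $1$ on $D'$, and set $\widetilde h^L=h^Le^{-2\varepsilon\chi\psi}$ where $\psi$ is a smooth strictly plurisubharmonic function on $D$ (exists since $D$ is a coordinate ball) and $\varepsilon>0$ small; semi-positivity of $R^{L}(h^L)$ together with smallness of $\varepsilon$ forces the curvature current of $\widetilde h^L$ to remain $\geq 0$ everywhere and to be $\geq \varepsilon\sqrt{-1}\partial\ddbar\psi>0$ on $D'$, so that $\widetilde h^L$ has strictly positive curvature current on $D'$ — to get strict positivity on all of $M$ as required by $\mathcal{M}(L)$, one instead starts from some $h^L_0\in\mathcal{M}(L)$ (nonempty by \cite[Lemma\,2.3.6]{MM07}) and glues, taking $\widetilde h^L$ equal to $h^L$ multiplied by $e^{-2\rho}$ where $\rho$ is chosen so that the current is $\geq\delta\Theta$ globally and $\widetilde h^L$ is smooth on $D'$; this is a routine gluing of potentials, using that $M$ is compact. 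The main obstacle, and the step deserving the most care, is precisely this comparison/gluing: one must ensure that multiplying $h^L$ by a singular factor does not shrink $\cH^0(M,L^k)$ incompatibly — i.e.\ that the multiplier ideal $\cI(\widetilde h^{L^k})$ is generically trivial so that dimensions grow like $k^n$ and the extremal comparison at $p$ is not vacuous — and that the local weight discrepancy between $h^L$ and $\widetilde h^L$ near $p$ is $O(1)$ in $k$-independent fashion, which is what makes the two-sided estimate lose nothing in the asymptotic expansion. Everything else is a direct invocation of Theorem~\ref{s1-main2} (equivalently Theorem~\ref{sing-main}) and the already-computed coefficients \eqref{coeII}, \eqref{coeIII}.
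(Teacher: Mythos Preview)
Your reduction to Theorem~\ref{sing-main} has a genuine gap in the Bergman kernel comparison step. The multiplier ideal Bergman kernel in Theorem~\ref{sing-main} is computed with the $L^2$ norm $\norm{\cdot}_{\widetilde h^{L^k}}$, whereas $P_k(x)$ for $\cH^0(M,L^k)$ uses $\norm{\cdot}_{h^{L^k}}$. Even if you arrange $\widetilde h^L=h^L$ on a neighborhood $D'$ of $p$ (so that pointwise norms agree there), the global $L^2$ norms differ by a factor $\exp\big(O(k)\sup_M|\widetilde\phi-\phi|\big)$, and this exponential discrepancy destroys any two--sided extremal comparison with $k$-independent constants. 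In particular your inclusion $\cH^0(M,L^k)\subset H^0(M,L^k\otimes\cI(\widetilde h^{L^k}))$ goes the wrong way: a singular weight $\widetilde\phi$ in $\mathcal M(L)$ is bounded above and may tend to $-\infty$ on $\Sigma$, so $e^{-2k\widetilde\phi}$ blows up there and a generic holomorphic section of $L^k$ fails to be $L^2$ for $\widetilde h^{L^k}$. What survives is only the one--sided bound $\widetilde P_k(x)\leq P_k(x)$ (under $\widetilde\phi\leq\phi$), which is not enough for a full asymptotic expansion. Note also that $p\in M(0)$ already means $\dot R^L(p)>0$, so your local perturbation by $\varepsilon\chi\psi$ near $p$ is unnecessary, and in any event it changes the curvature on $D'$, so the coefficients $b_j^{(0)}$ would no longer be those of $h^L$.

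The paper instead stays with the original metric $h^L$ throughout and verifies the $O(k^{-n_0})$ small spectral gap for the \emph{original} Laplacian $\Box^{(0)}_k$ on any $D\Subset M(0)\cap M'$. The singular metric enters only as an auxiliary device inside a H\"ormander--type $L^2$ estimate: one solves $\ddbar_k u=g$ for $g\in\Omega^{0,1}_0(D,L^k)$ using the $k$-dependent weight $(\log k)\widehat\phi+(k-\log k)\phi$, where $\widehat\phi$ is the local weight of some fixed $h^L_{\mathrm{sing}}\in\mathcal M(L)$ smooth on $D$. The factor $\log k$ on the singular part is the crucial point: it provides enough strict positivity in the Bochner--Kodaira identity (of order $\log k$ against a background complete metric) while keeping the discrepancy between the auxiliary norm and $\norm{\cdot}_{h^{L^k}}$ \emph{polynomial} in $k$ on $D$, namely $\norm{u}^2_{h^{L^k}}\leq C_Dk^N\norm{g}^2_{h^{L^k}}$. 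This is exactly the content of Lemma~\ref{l1} (resting on Theorem~\ref{sing-t1}); the small spectral gap then follows as in the proof of Lemma~\ref{l-canonical}, and Theorem~\ref{t:localspectral} (i.e.\ Theorem~\ref{s1-main2}) applies directly to the Bergman kernel of $\cH^0(M,L^k)$ with the given $h^L$. No change of Bergman kernel or comparison of different projections is needed.
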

The existence of the asymptotic expansion from Theorem~\ref{s1-sing-semi-main} was obtained by Berman~\cite{Be07} in the case of a projective manifold $M$.

\begin{rem} \label{s1-r1bis}
(I) In Theorems \ref{s1-main1}, \ref{s1-main2}, we obtain the 
diagonal expansion of the kernels $P^{(q)}_{k, k^{-\!N_0}}(\cdot,\cdot)$ . We will prove actually more, namely the off-diagonal asymptotic expansion for $P^{(q)}_{k, k^{-\!N_0}}(x, y)$ on the non-degenerate part of $L$, see Theorem~\ref{tII}, Theorem~\ref{tIII} and Theorem~\ref{t:localspectral} for the details. In the same vein, the diagonal expansions of the Bergman kernels from Theorems \ref{s1-main2}, \ref{tmain-semi1}, \ref{sing-main}, \ref{s1-sing-semi-main} have off-diagonal counterparts. See Theorem~\ref{t-semi1}, 
Theorem~\ref{sing-t5} and Theorem~\ref{t-sing-semi} for the details.
\\[2pt]
\
(II) Let $E$ be a holomorphic vector bundle over $M$. 
Theorem~\ref{s1-main1}, Theorem~\ref{s1-maindege}, Theorem~\ref{s1-main2}, Theorem~\ref{sing-main} and Theorem~\ref{s1-sing-semi-main} and their off-diagonal counterparts can be generalized to the situation when $L^k$ is replaced by $L^k\otimes E$. See Remark~\ref{rabis2} and the discussions in the end of Section 4.4 and Section 5, for the details.
\end{rem} 

The layout of this paper is as follows. In Section \ref{s:prelim} we collect some notations, definitions and statements we use throughout (geometric set-up, self-adjoint extension of the Kodaira Laplacian, Schwartz kernel theorem). In Section \ref{s:sem_hodge} we exhibit a microlocal Hodge decomposition for the Kohn Laplacian on a non-degenerate CR manifold and apply this to obtain the semiclassical Hodge decomposition for the Kodaira Laplacian on a complex manifold.
In Section \ref{s:exp_spec_fct} we prove the existence of the asymptotic expansion of the spectral function associated to forms of energy less that $k^{-N_0}$. As a consequence we obtain the expansion of the Bergman kernel if the local $O(k^{-n_0})$ spectral gap exists. In Section \ref{s:deg_set} we get an asymptotic upper bound near the degeneracy set of the curvature of $L$. In Section \ref{adjoint_semipos} we prove the expansion of the Bergman kernel on the positivity set of an adjoint semi-positive line bundle over a complete K\"ahler manifold.
In Section \ref{s:sing_l2_est} we prove an $L^2$-estimate for the $\ddbar$ for singular metrics.
We use this estimate in Sections \ref{s:exp_semipos} and \ref{S:mibk} to prove the existence of the Bergman kernel expansion for semi-positive line bundles and bundles endowed with a strictly positively-curved singular Hermitian metric. In Section \ref{s:appl} we apply the previous methods to obtain miscellaneous results, such as Bergman kernel expansion under various conditions, holomorphic Morse inequalities, Tian's convergence theorem and equidistribution of zeros of holomorphic sections.

\section{Preliminaries}\label{s:prelim}
\subsection{Some standard notations}
We denote by $\mathbb N=\set{1,2,\ldots}$ the set of natural numbers and by $\Real$ the set of real numbers. We set $\mathbb N_0=\mathbb N\bigcup\set{0}$.
We use the standard notations $w^\alpha$, $\partial_x^\alpha$ for multi-indices $\alpha=(\alpha_1,\ldots,\alpha_m)\in\mathbb N_0^m$, $w\in\C^m$, $\partial_x=(\partial_{x_1},\ldots,\partial_{x_m})$.
%

Let $M$ be a complex manifold of dimension $n$. We always assume that $M$ is paracompact. 
We denote holomorphic charts on $M$ by $(D,z)$, where $z=(z_1,\ldots,z_n):D\to\C^n$ are local coordinates. The associated real chart is denoted by $(D,x)\cong (D,z)$, where $x=(x_1,\ldots,x_{2n})$ are real coordinates on $M$ given by $z_j=x_{2j-1}+ix_{2j}$, $j=1,\ldots,n$. 
For a multi-index
$J=(j_1,\ldots,j_q)\in\{1,\ldots,n\}^q$ we set $\abs{J}=q$. We say
that $J$ is strictly increasing if $1\leqslant
j_1<j_2<\cdots<j_q\leqslant  n$. We put $d\ol z^J=d\ol z_{j_1}\wedge\cdots\wedge d\ol z_{j_q}$. A $(0,q)$-form $f$
on $M$ has the local representation
\[f|_D=\sideset{}{'}\sum_{\abs{J}=q}f_J(z)d\ol z^J\,,\]
where $\sum^{'}$ means that the summation is performed only
over strictly increasing multi-indices. In this paper all multi-indices will be supposed to be strictly increasing.

Let $\Omega$ be a $\cC^\infty$ paracompact manifold equipped with a smooth density of integration.
We let $T\Omega$ and $T^*\Omega$ denote the tangent bundle of $\Omega$ and the cotangent bundle of $\Omega$ respectively.
The complexified tangent bundle of $\Omega$ and the complexified cotangent bundle of $\Omega$ will be denoted by $\Complex T\Omega$
and $\Complex T^*\Omega$ respectively. We write $\langle\,\cdot\,,\cdot\,\rangle$ to denote the pointwise duality between $T\Omega$ and $T^*\Omega$.
We extend $\langle\,\cdot\,,\cdot\,\rangle$ bilinearly to $\Complex T\Omega\times\Complex T^*\Omega$.
Let $E$ be a $\cC^\infty$ vector bundle over $\Omega$. We write $E^*$ to denote the dual bundle of $E$. 
The fiber of $E$ at $x\in\Omega$ will be denoted by $E_x$. We write ${\rm End\,}(E)$ to denote the vector bundle over $\Omega$ with fiber over $x\in\Omega$ consisting of the linear maps from $E_x$ to $E_x$.
Let $F$ be another vector bundle over $\Omega$. We write
$E\boxtimes F$ to denote the vector bundle over $\Omega\times\Omega$ with fiber over $(x, y)\in \Omega\times\Omega$
consisting of the linear maps from $E_x$ to $F_y$. 
Let $Y\subset\Omega$ be an open set. From now on, the spaces of
smooth sections of $E$ over $Y$ and distribution sections of $E$ over $Y$ will be denoted by $\cC^\infty(Y, E)$ and $\mathscr D'(Y, E)$ respectively.
Let $\mathscr E'(Y, E)$ be the subspace of $\mathscr D'(Y, E)$ whose elements have compact support in $Y$.
For $m\in\Real$, we let $H^m(Y, E)$ denote the Sobolev space
of order $m$ of sections of $E$ over $Y$. Put
\begin{gather*}
H^m_{\rm loc\,}(Y, E)=\big\{u\in\mathscr D'(Y, E);\, \varphi u\in H^m(Y, E),
      \,\varphi\in\cC^\infty_0(Y)\big\}\,,\\
      H^m_{\rm comp\,}(Y, E)=H^m_{\rm loc}(Y, E)\cap\mathscr E'(Y, E)\,.
\end{gather*}

\subsection{Metric data}
Let $(M,\Theta)$ be a complex manifold of dimension $n$, where $\Theta$ is a smooth positive $(1,1)$ form, which
induces a Hermitian metric $\langle\,\cdot\,,\cdot\,\rangle$ on the holomorphic tangent bundle $ T^{(1,0)}M$. 
In local holomorphic coordinates $z=(z_1,\ldots,z_n)$, if
$\Theta=\sqrt{-1}\sum^n_{j,k=1}\Theta_{j,k}dz_j\wedge d\ol z_k$, then $\langle\,\frac{\pr}{\pr z_j}\,,\frac{\pr}{\pr z_k}\,\rangle=\Theta_{j,k}, j, k=1,\ldots,n$.  Let $T^{(0,1)}M$ be the
anti-holomorphic tangent bundle of $M$. We extend the Hermitian metric $\langle\,\cdot\,,\cdot\,\rangle$ to $\Complex TM$ in a natural way by requiring $T^{(1,0)}M$
to be orthogonal to $T^{(0,1)}M$ and $\ol{\langle\,u\,,v\,\rangle}=\langle\,\ol u\,,\ol v\,\rangle$, $u, v\in T^{(0,1)}M$. Let $T^{*(1,0)}M$ be the holomorphic cotangent bundle of $M$ and let $T^{*(0,1)}M$ be the anti-holomorphic cotangent bundle of $M$.  
For $p, q\in\mathbb N_0$, let
$\Lambda^{p,q}T^*M=\Lambda^pT^{*(1,0)}M\otimes\Lambda^qT^{*(0,1)}M$ be the bundle of $(p,q)$ forms of $M$. 
We write $\Lambda^{0,q}T^*M=\Lambda^qT^{*(0,1)}M$. The Hermitian metric $\langle\,\cdot\,,\cdot\,\rangle$ on $\Complex TM$
induces a Hermitian metric on
$\Lambda^{p,q}T^*M$ also denoted by $\langle\,\cdot\,,\cdot\,\rangle$. Let
$D\subset M$ be an open set. Let $\Omega^{p,q}(D)$ denote the space
of smooth sections of $\Lambda^{p,q}T^*M$ over $D$. Similarly, if
$E$ is a vector bundle over $D$, then we let $\Omega^{p,q}(D, E)$
denote the space of smooth sections of $(\Lambda^{p,q}T^*M)\otimes
E$ over $D$. Let $\Omega^{p,q}_0(D, E)$ be the subspace of
$\Omega^{p,q}(D, E)$ whose elements have compact support in $D$.

If $w\in\Lambda^rT^{*(0,1)}_zM$, $r\in\mathbb N$, let
$(w\wedge)^*: \Lambda^{q+r}T^{*(0,1)}_zM\To \Lambda^qT^{*(0,1)}_zM,\ q\geq0$,
be the adjoint of left exterior multiplication
$w\wedge: \Lambda^qT^{*(0,1)}_zM\To \Lambda^{q+r}T^{*(0,1)}_zM$.
That is,
\begin{equation} \label{s1-e0}
\langle\,w\wedge u\,,v\,\rangle=\langle\,u\,,(w\wedge)^*v\,\rangle,
\end{equation}
for all $u\in\Lambda^qT^{*(0,1)}_zM$, $v\in\Lambda^{q+r}T^{*(0,1)}_zM$.
Notice that $(w\wedge)^*$ depends anti-linearly on $w$.

Let $(L,h^L)$ be a Hermitian holomorphic line bundle over $M$, where
the Hermitian metric on $L$ is denoted by $h^L$. Until further notice, we assume that $h^L$ is smooth. Given a local holomorphic frame $s$ of $L$ on an open subset $D\subset M$ we define the associated local weight of $h^L$ by
\begin{equation} \label{s1-e1}
\abs{s(x)}^2=\abs{s(x)}^2_{h^L}=e^{-2\phi(x)},\quad\phi\in\cC^\infty(D, \Real).
\end{equation}
Let $R^L=(\nabla^L)^2$ be the Chern curvature of $L$, where $\nabla^L$ is the Hermitian holomorphic connection. Then $R^L|_D=2\pr\ddbar\phi$.
Let $L^k$, $k>0$, be the $k$-th tensor power of the line bundle $L$.
The Hermitian fiber metric on $L$ induces a Hermitian fiber metric
on $L^k$ that we shall denote by $h^k$. If $s$ is a local
trivializing section of $L$ then $s^k$ is a local trivializing
section of $L^k$. For $p,q\in\mathbb N_0$, the Hermitian metric $\langle\,\cdot\,,\cdot\,\rangle$ on $\Lambda^{p,q}T^*M$ and $h^k$ induce a Hermitian metric on $\Lambda^{p,q}T^*M\otimes L^k$, also denoted by $\langle\,\cdot\,,\cdot\,\rangle$. For $f\in\Omega^{p,q}(M, L^k)$, we denote the
pointwise norm $\abs{f(x)}^2:=\abs{f(x)}^2_{h^k}=\langle f(x),f(x)\rangle$. 
We take $dv_M=dv_M(x)$
as the induced volume form on $M$. The $L^2$--Hermitian inner products on the spaces
$\Omega^{p,q}_0(M,L^k)$ and $\Omega^{p,q}_0(M)$ are given by
\begin{equation}\label{toe2.2}
\begin{split}
&(s_1,s_2)_k=\int_M\langle s_1(x),s_2(x)\rangle\,dv_M(x)\,,\ \ s_1,s_2\in\Omega^{p,q}_0(M,L^k),\\
&(f_1,f_2)=\int_M\langle f_1(x),f_2(x)\rangle\,dv_M(x)\,,\ \ f_1,f_2\in\Omega^{p,q}_0(M).
\end{split}
\end{equation}
We write $\norm{f}^2:=\norm{f}^2_{h^k}=(f,f)_k$, $f\in\Omega^{p,q}_0(M,L^k)$. For $g\in\Omega^{p,q}_0(M)$, we also write $\norm{g}^2:=(g,g)$. Let
$L^2_{(p,q)}(M,L^k)$ be the completion of
$\Omega^{p,q}_0(M,L^k)$ with respect to $\norm{\cdot}$.

\subsection{A self-adjoint extension of the Kodaira Laplacian}\label{gaffney}
We denote by
\begin{equation}\label{e:ddbar}
\ddbar_k:\Omega^{0,r}(M,L^k)\To\Omega^{0,r+1}(M,L^k)\,,\:\:\ol{\pr}^{\,*}_k:\Omega^{0,r+1}(M,L^k)\To\Omega^{0,r}(M,L^k)
\end{equation} 
the Cauchy-Riemann operator acting on sections of $L^k$ and its formal adjoint with respect to $(\cdot\,,\cdot)_k$ respectively.
Let 
\begin{equation}\label{e:Kod_Lap}
\Box_{k}^{(q)}:=\ddbar_k\ol{\pr}^{\,*}_k+\ol{\pr}^{\,*}_k\ddbar_k:
\Omega^{0,q}(M, L^k)\To\Omega^{0,q}(M, L^k)
\end{equation}   
be the Kodaira Laplacian acting on $(0,q)$--forms with values in $L^k$. We extend 
$\ddbar_k$ to $L^2_{(0,r)}(M,L^k)$ by 
\begin{equation}\label{e:ddbar1}
\ddbar_k:{\rm Dom\,}\ddbar_k\subset L^2_{(0,r)}(M, L^k)\To L^2_{(0,r+1)}(M, L^k)\,,
\end{equation}
where ${\rm Dom\,}\ddbar_k:=\{u\in L^2_{(0,r)}(M, L^k);\, \ddbar_ku\in L^2_{(0,r+1)}(M, L^k)\}$, where 
$\ddbar_k u$ is defined in the sense of distributions. 
We also write 
\begin{equation}\label{e:ddbar_star1}
\ol{\pr}^{\,*}_k:{\rm Dom\,}\ol{\pr}^{\,*}_k\subset L^2_{(0,r+1)}(M, L^k)\To L^2_{(0,r)}(M, L^k)
\end{equation}
to denote the Hilbert space adjoint of $\ddbar_k$ in the $L^2$ space with respect to $(\,\cdot\,,\cdot\,)_k$.
Let $\Box^{(q)}_k$ denote the Gaffney extension of the Kodaira Laplacian given by 
\begin{equation}\label{Gaf1}
{\rm Dom\,}\Box^{(q)}_k=\set{s\in L^2_{(0,q)}(M,L^k);\, s\in{\rm Dom\,}\ddbar_k\cap{\rm Dom\,}\ol{\pr}^{\,*}_k,\ \ddbar_ku\in{\rm Dom\,}\ol{\pr}^{\,*}_k,\ \ol{\pr}^{\,*}_ku\in{\rm Dom\,}\ddbar_k}\,,
\end{equation}
and $\Box^{(q)}_ks=\ddbar_k\ol{\pr}^{\,*}_ks+\ol{\pr}^{\,*}_k\ddbar_ks$ for $s\in {\rm Dom\,}\Box^{(q)}_k$. By a result of Gaffney \cite[Prop.\,3.1.2]{MM07}, $\Box^{(q)}_k$ is a positive self-adjoint operator. Note that if $M$ is complete, the Kodaira Laplacian $\Box^{(q)}_k$ is essentially self-adjoint \cite[Cor.\,3.3.4]{MM07} and the Gaffney extension coincides with the Friedrichs extension of $\Box^{(q)}_k$.

\subsection{Schwartz kernel theorem}

We recall the Schwartz kernel theorem \cite[Th.\,5.2.1, 5.2.6]{Hor03}, \cite[p.\,296]{Tay1:96}. 
Let $\Omega$ be a $\cC^\infty$ paracompact manifold equipped with a smooth density of integration.
Let $E$ and $F$ be smooth vector bundles over $\Omega$. Then any continuous linear operator
$A:\cC^\infty_0(\Omega, E)\To \mathscr D'(\Omega, F)$ has a Schwartz distribution kernel, denoted
$K_A(x, y)$ or $A(x, y)$.
Moreover, the following two statements are equivalent
\begin{enumerate}
\item $A$ is continuous: $\mathscr E'(\Omega, E)\To\cC^\infty(\Omega, F)$,
\item $K_A(x,y)\in\cC^\infty(\Omega\times\Omega, E_y\boxtimes F_x)$.
\end{enumerate}
If $A$ satisfies (I) or (II), we say that $A$ is a \emph{smoothing operator}. Furthermore, $A$ is smoothing if and only if
$A: H^s_{\rm comp\,}(\Omega, E)\To H^{s+N}_{\rm loc\,}(\Omega, F)$
is continuous, for all $N\geq0$, $s\in\Real$.
We say that $A$ is properly supported if ${\rm Supp\,}K_A\subset\Omega\times\Omega$ is proper. That is, the two projections: $t_x:(x,y)\in{\rm Supp\,}K_A\To x\in\Omega$, $t_y:(x,y)\in{\rm Supp\,}K_A\To y\in\Omega$ are proper (i.e. the inverse images of $t_x$ and $t_y$ of all compact subsets of $\Omega$ are compact).
We say that $A$ is smoothing away the diagonal
if $\chi_1A\chi_2$ is smoothing, for all $\chi_1, \chi_2\in\cC^\infty_0(\Omega)$ with ${\rm Supp\,}\chi_1\bigcap{\rm Supp\,}\chi_2=\emptyset$.

Let $H(x,y)\in\mathscr D'(\Omega\times\Omega,E_y\boxtimes F_x)$. We write $H$ to denote the unique continuous operator $H:\cC^\infty_0(\Omega,E)\To\mathscr D'(\Omega,F)$ with distribution kernel $H(x,y)$. In this work, we identify $H$ with $H(x,y)$.  Let
$A, B: \cC^\infty_0(\Omega, E)\to \mathscr D'(\Omega, F)$ be continuous operators.
We write $A\equiv B$ or $A(x,y)\equiv B(x,y)$ if $A-B$ is a smoothing operator.

\section{Szeg\"{o} kernels and semi-classical Hodge decomposition}\label{s:sem_hodge}

The goal of this Section is to prove the semiclassical Hodge decomposition for the Kodaira Laplacian, i.e.\ to find a semi-classical partial inverse and an approximate kernel for $\Box^{(q)}_k$, cf.\ Theorem \ref{s3-t4}. For this purpose we reduce the analysis of the Kodaira Laplacian to the analysis of the Kohn Laplacian on the Grauert tube of the line bundle $L$. In Section \ref{s:CR} we recall the construction of these two objects. Section \ref{s:apszk} contains a detailed study of the microlocal Hodge decomposition of the Kohn Laplacian on a non-degenerate CR manifold and especially on the Grauert tube, by following \cite{Hsiao08}. Finally, in Section \ref{s:smcl-hodge} we apply this results in order to obtain the semi-classical Hodge decomposition for the Kodaira Laplacian.

\subsection{The Grauert tube}\label{s:CR}
Let $(M,\Theta)$ be a Hermitian manifold and $(L,h^L)$ be a holomorphic Hermitian line bundle on $M$. 
Let $(L^*,h^{L^*})$ be the dual bundle of $L$. We denote
\begin{equation}\label{grauert_tube}
G:=\set{v\in L^*;\, \abs{v}_{h^{L^*}}<1}\,,\quad X:=\partial G=\set{v\in L^*;\, \abs{v}_{h^{L^*}}=1}\,.
\end{equation}
The domain $G$ is called \emph{Grauert tube} associated to $L$. 
We denote 
\[T^{(1,0)}X:=T^{(1,0)}L^*\cap\Complex TX\,,\quad T^{(0,1)}X:=T^{(0,1)}L^*\cap\Complex TX\,.\]
Then $(X, T^{(1,0)}X)$ is a CR manifold of dimension $2n+1$ and the bundle
$T^{(1,0)}X$ is called the holomorphic tangent bundle of $X$. The manifold $X$ is equipped with a natural  $S^1$ action.
Locally $X$ can be represented in local holomorphic coordinates $(z,\lambda)$, where $\lambda$ is the fiber coordinate, as the set of all $(z,\lambda)$ such that $\abs{\lambda}^2e^{2\phi(z)}=1$,
where $\phi$ is a local weight of $h^L$. The $S^1$ action on $X$ is given by $e^{i\theta}\circ (z,\lambda)=(z,e^{i\theta}\lambda)$, $e^{i\theta}\in S^1$, $(z,\lambda)\in X$. Let $Y$ be the global real vector field on $X$ determined by \[Yu(x)=\frac{\pr}{\pr\theta}u(e^{i\theta}\circ x)\big|_{\theta=0}\,
\quad\text{for all $u\in\cC^\infty(X)$}\,.
\]
Let $\pi:X\To M$ be the natural projection. We have the bijective map:
\begin{align*}
\pi^*:T^{(1,0)}X\oplus T^{(0,1)}X\To T^{(1,0)}M\oplus T^{(0,1)}M\,,\quad
W\To\pi^*W,
\end{align*}
where $(\pi^*W)f=W(f\circ\pi)$, for all $f\in\cC^\infty(M)$. We take the Hermitian metric $\langle\,\cdot\,,\cdot\,\rangle$ on $\Complex TX$ so that
$Y\bot\bigr(T^{(1,0)}X\oplus T^{(0,1)}X\bigr)$, $\langle\,Y\,,Y\,\rangle=1$ and $\langle\,Z\,,W\,\rangle=\langle\,\pi^*Z\,,\pi^*W\,\rangle$, $Z, W\in T^{(1,0)}X\oplus T^{(0,1)}X$. The Hermitian metric $\langle\,\cdot\,,\cdot\,\rangle$ on $\Complex TX$ induces, by duality, a Hermitian
metric on the complexified cotangent bundle $\Complex T^*X$ that we shall also denote by $\langle\,\cdot\,,\cdot\,\rangle$. Define $T^{*(1,0)}X:=\bigr(T^{(0,1)}X\oplus\Complex Y\bigr)^\bot\subset\Complex T^*X$,
$ T^{*(0,1)}X:=\bigr(T^{(1,0)}X\oplus\Complex Y\bigr)^\bot\subset\Complex T^*X$. For $q\in\mathbb N$, the bundle
of $(0,q)$ forms of $X$ is given by $\Lambda^{q}T^{*(0,1)}X:=\Lambda^q\bigr( T^{*(0,1)}X\bigr)$. The Hermitian metric $\langle\,\cdot\,,\cdot\,\rangle$
on $\Complex T^*X$ induces a Hermitian metric on $\Lambda^qT^{*(0,1)}X$ also denoted by $\langle\,\cdot\,,\cdot\,\rangle$.

Locally there is a real one form $\omega_0$ of length one which is pointwise orthogonal to $ T^{*(0,1)}X\oplus T^{*(1,0)}X$.
$\omega_0$ is unique up to the choice of sign. We take $\omega_0$ so that $\langle\,\omega_0\,,Y\,\rangle=-1$. Therefore $\omega_0$, so chosen,
is globally defined. 

The \emph{Levi form} $\mathcal{L}_p$ of $X$ at $p\in X$ is the Hermitian quadratic form on $T^{(1,0)}_pX$ defined as follows: 
\begin{equation}\label{levi_def}
\mathcal{L}_p(U,\ol V)=\frac{1}{2i}\big\langle\,[\mathcal{U}\ ,\ol{\mathcal{V}}\,](p)\,,\omega_0(p)\,\big\rangle\,,\quad U, V\in T^{(1,0)}_pX
\end{equation}
where $\mathcal{U}, \mathcal{V}\in
\cC^\infty(X, T^{(1,0)}X)$ that satisfy
$\mathcal{U}(p)=U$, $\mathcal{V}(p)=V$ and 
$[\mathcal{U}\ ,\ol{\mathcal{V}}\,]=\mathcal{U}\ol{\mathcal{V}}-\ol{\mathcal{V}}\mathcal{U}$ denotes the commutator of $\mathcal{U}$ and $\ol{\mathcal{V}}$.

Let $B\subset X$ be an open set. Let $\Omega^{0,q}(B)$ denote the space of smooth sections of $\Lambda^qT^{*(0,1)}X$ over $B$. Let $\Omega^{0,q}_0(B)$ be the subspace of
$\Omega^{0,q}(B)$ whose elements have compact support in $B$. Let $\ddbar_b:\Omega^{0,q}(X)\To\Omega^{0,q+1}(X)$ be the tangential
Cauchy-Riemann operator. We take $dv_X=dv_X(x)$ as the induced volume form on $X$. Then, we get 
natural inner product $(\,\cdot\,,\cdot)$
on $\Omega^{0,q}(X)$. Let $\ol{\pr}^{\,*}_b:\Omega^{0,q+1}(X)\To\Omega^{0,q}(X)$ be the formal adjoint of $\ddbar_b$ with respect to
$(\,\cdot\,,\cdot)$. The \emph{Kohn Laplacian} on $(0,q)$ forms is given by
\[
\Box^{(q)}_b:=\ddbar_b\ol{\pr}^{\,*}_b+\ol{\pr}^{\,*}_b\ddbar_b:\Omega^{0,q}(X)\To\Omega^{0,q}(X)\,.
\]

We introduce now a local holomorphic frame and local coordinates in terms of which we shall write down the operators explicitly.
Let 

(i) $s$ be a local trivializing section of $L$ on an open set $D\Subset M$, 

(ii) $\phi\in\cC^\infty(D)$ be the local weight of the metric $h^L$ defined by $\abs{s}_{h^L}^2=e^{-2\phi}$. 

\noindent
Then $s^*:=s^{-1}$ is a local trivializing
section of $L^*$ on $D$. We have $\abs{s^*}_{h^{L^*}}^2=e^{2\phi}$. 

We introduce holomorphic and real coordinates on $D$ by
\begin{equation} \label{s3-e0}
z=(z_1,\ldots,z_n)\,,\: x'=(x_1,\ldots,x_{2n})\,,\quad z_j=x_{2j-1}+ix_{2j}\,,\: j=1,\ldots,n\,.
\end{equation}
 We identify $D$ with an open set of $\Complex^{n}$. We have the local diffeomorphism:
\begin{equation} \label{s3-e1}
\tau:D\times]-\varepsilon_0, \varepsilon_0[\To X\,,\:
(z,\theta)\mapsto e^{-\phi(z)}s^*(z)e^{-i\theta}\,, \ 0<\varepsilon_0\leq\pi.
\end{equation}
It is convenient to work with the local coordinates $(z,\theta)$. In terms of these coordinates, it is straightforward to see that $Y=-\frac{\pr}{\pr\theta}$. Moreover,
\[T^{(1,0)}_vX=\C\Big\{\frac{\pr}{\pr z_j}-i\frac{\pr\phi}{\pr z_j}(z)\frac{\pr}{\pr\theta};\, j=1,\ldots,n\Big\}\,,\:v=e^{-\phi(z)}s^*(z)e^{-i\theta}\in X\,.\]
Further, let $\{\ol Z_j\}_{j=1}^n$ be an orthonormal basis for the holomorphic tangent bundle $T^{(1,0)}M$ and let $\{\ol e_j\}_{j=1}^n$ be the dual basis of $T^{*(1,0)}M$.
Then, $\{\ol Z_j-i\ol Z_j(\phi)\frac{\pr}{\pr\theta}\}_{j=1}^n$ is an orthonormal basis for
$T^{(1,0)}X$ and $\{\ol e_j\}_{j=1}^n$ is the dual orthonormal basis for $T^{*(1,0)}X$. Furthermore, we can check that
\begin{equation} \label{s3-e1-1}
\omega_0=d\theta+\sum^n_{j=1}(-iZ_j(\phi)e_j+i\ol Z_j(\phi)\ol e_j).
\end{equation}
From this, we can compute for $j,k=1,\ldots,n$, $p\in X$:
\[
\mathcal{L}_p\Big(\frac{\pr}{\pr z_j}-i\frac{\pr\phi}{\pr z_j}(\pi(p))\frac{\pr}{\pr\theta}, \frac{\pr}{\pr \ol z_k}+i\frac{\pr\phi}{\pr\ol z_k}(\pi(p))\frac{\pr}{\pr\theta}\Big)=\frac{\pr^2\phi}{\pr z_j\pr\ol z_k}(\pi(p))\,.
\]  
Thus, for a given point $p\in X$, we have
\begin{equation} \label{s3-e2}
\begin{split}
\mathcal{L}_p(U, \ol V)&=\langle\,\pr\ddbar\phi(\pi(p))\,, \pi^*U\wedge\pi^*\ol
V\,\rangle
=\langle\,\tfrac{1}{2}R^L(\pi(p))\,, \pi^*U\wedge\pi^*\ol V\,\rangle\\&=\langle\,\tfrac{1}{2}\dot{R}^L(\pi(p))\pi^*U\,,\pi^*V\,\rangle,
\ \ \forall\ U, V\in T^{(1,0)}_pX.
\end{split}
\end{equation}
We deduce the following: 

\begin{prop}
Let $(L,h^L)$ be a Hermitian holomorphic line bundle over a complex manifold $M$ and let $X\subset L^*$
be the boundary of the Grauert tube associated to $L$. Let $p\in X$. If the curvature $R^L$ has signature $(n_-,n_+)$ at $\pi(p)$, then $\mathcal{L}_p$ has signature $(n_-,n_+)$.
\end{prop}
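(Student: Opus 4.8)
The plan is to read the claim off directly from formula \eqref{s3-e2}, which was just derived. That identity expresses the Levi form at $p$ in terms of the curvature: for all $U,V\in T^{(1,0)}_pX$,
\[
\mathcal{L}_p(U,\ov V)=\big\langle\,\tfrac12\dot{R}^L(\pi(p))\pi^*U\,,\pi^*V\,\big\rangle .
\]
Thus, up to the harmless positive factor $\tfrac12$, $\mathcal{L}_p$ is the pull-back under the linear map $\pi^*\colon T^{(1,0)}_pX\to T^{(1,0)}_{\pi(p)}M$ of the Hermitian form $(U,V)\mapsto\langle\dot{R}^L(\pi(p))U,V\rangle$ on $T^{(1,0)}_{\pi(p)}M$.

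Next I would use that $\pi^*$ is not merely a bijection but an isometry: by the very definition of the Hermitian metric on $\Complex TX$ we imposed $\langle Z,W\rangle=\langle\pi^*Z,\pi^*W\rangle$ for $Z,W\in T^{(1,0)}X\oplus T^{(0,1)}X$. Hence the two Hermitian forms above are unitarily equivalent, and in particular have the same signature; and multiplying a Hermitian form by the positive constant $\tfrac12$ does not change its signature. So $\mathcal{L}_p$ and the curvature form $\langle\dot{R}^L(\pi(p))\,\cdot\,,\cdot\,\rangle$ have the same signature.

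Finally I would identify the signature of the latter with that of $R^L$ at $\pi(p)$. By \eqref{s0-e0} the Hermitian matrix $\dot{R}^L(x)$ represents $R^L(x)$ with respect to $\langle\,\cdot\,,\cdot\,\rangle$, and its eigenvalues $\mu_1(x),\dots,\mu_n(x)$ are exactly the eigenvalues of $R^L$ at $x$; diagonalizing $\dot{R}^L$ in an orthonormal frame and invoking Sylvester's law of inertia shows the number of negative (resp.\ positive) eigenvalues equals $n_-$ (resp.\ $n_+$). Chaining the three steps gives that $\mathcal{L}_p$ has signature $(n_-,n_+)$.

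Since every step is a routine linear-algebra fact once \eqref{s3-e2} is available, there is no genuine obstacle; the only point needing care is to make sure \eqref{s3-e2} — which was computed in the local coordinates of \eqref{s3-e1} — is combined with the correct isometric identification $\pi^*$, and that the sign/normalization conventions entering the definition \eqref{levi_def} of $\mathcal{L}_p$ (the factor $1/(2i)$ and the choice of $\omega_0$ with $\langle\omega_0,Y\rangle=-1$) are not mishandled; but these have already been pinned down in the lines preceding \eqref{s3-e2}.
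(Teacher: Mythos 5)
Your proof is correct and matches the paper's own argument: the paper gives no separate proof beyond the phrase ``We deduce the following'' after deriving \eqref{s3-e2}, which is precisely the identity you use, combined with the isometry property of $\pi^*$ imposed when defining the metric on $\Complex TX$. The only thing you spell out more explicitly than the paper is the routine invocation of Sylvester's law of inertia, which is harmless.
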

We define the operators $\ddbar_s$, $\ddbar^*_s$, $\Box^{(q)}_s$, which are the local versions of the operators $\ddbar_k$, $\ddbar^*_k$, $\Box^{(q)}_k$ (see \eqref{e:ddbar}-\eqref{Gaf1}), by the following equations:
\begin{equation} \label{s1-e4}
\begin{split}
&\ddbar_s=\ddbar+k(\ddbar\phi)\wedge:\Omega^{0,q}(D)\To\Omega^{0,q+1}(D),\\
&\ol{\pr}^{\,*}_s=\ol{\pr}^{\,*}+k\bigr((\ddbar\phi)\wedge\bigr)^*:\Omega^{0,q+1}(D)\To\Omega^{0,q}(D), \\
&\Box^{(q)}_s=\ddbar_s\ol\pr^*_s+\ol\pr^*_s\ddbar_s:\Omega^{0,q}(D)\To\Omega^{0,q}(D).
\end{split}
\end{equation}
Here $\ol{\pr}^{\,*}:\Omega^{0,q+1}(D)\To\Omega^{0,q}(D)$ is the formal adjoint of $\ddbar$ with respect to $(\,\cdot\,,\cdot)$.
We have the unitary identifications:
\begin{equation} \label{s1-e3}
\begin{split}
\Omega^{0,q}(D, L^k)&\longleftrightarrow\Omega^{0,q}(D) \\
f=s^kg&\longleftrightarrow\widehat f(z)=e^{-k\phi}s^{-k}f=g(z)e^{-k\phi(z)},\ \ g\in\Omega^{0,q}(D),\\
\ddbar_k&\longleftrightarrow\ddbar_s,\ \ \ddbar_kf=s^ke^{k\phi}\ddbar_s\widehat f,\\
\ol{\pr}^{\,*}_k&\longleftrightarrow\ddbar^*_s,\ \ \ol{\pr}^{\,*}_kf=s^ke^{k\phi}\ol{\pr}^{\,*}_s\widehat f,\\
\Box^{(q)}_k&\longleftrightarrow\Box^{(q)}_s,\ \
\Box^{(q)}_kf=s^ke^{k\phi}\Box^{(q)}_s\widehat f\,.
\end{split}
\end{equation}

We continue to work with the local coordinates $(z, \theta)$. As above, let $(Z_j)_{j=1}^{n}$ be an orthonormal basis
for $T^{(0,1)}M$ and let $(e_j)_{j=1}^{n}$ be an orthonormal basis for $ T^{*(0,1)}M$ which is dual to
$(Z_j)_{j=1}^{n}$. We can check that
\begin{equation} \label{s3-e3}
\ddbar_b=\sum^n_{j=1}(e_j\wedge)\circ\bigl(Z_j+iZ_j(\phi)\frac{\pr}{\pr\theta}\bigr)+\sum^n_{j=1}\bigr((\ddbar e_j)\wedge\bigr)\circ(e_j\wedge)^*
\end{equation}
and correspondingly
\begin{equation} \label{s3-e4}
\ol{\pr}^{\,*}_b=\sum^n_{j=1}\bigl((e_j\wedge)^*\bigr)\circ\bigl(Z^*_j+i\ol Z_j(\phi)\frac{\pr}{\pr\theta}\bigr)+\sum^n_{j=1}(e_j\wedge)\circ\bigr((\ddbar e_j)\wedge\bigr)^*,
\end{equation}
where $Z^*_j$ is the formal adjoint of $Z_j$ with respect to $(\,\cdot\,,\cdot)$, $j=1,\ldots,n$.

Let $\ddbar_s$ and $\ol{\pr}^{\,*}_s$ be as in \eqref{s1-e3} and \eqref{s1-e4}.
We can check that
\begin{equation} \label{s3-e6}
\begin{split}
&\ddbar_s=\sum^n_{j=1}(e_j\wedge)\circ(Z_j+kZ_j(\phi))+\sum^n_{j=1}\bigr((\ddbar e_j)\wedge\bigr)\circ(e_j\wedge)^*,\\
&\ol{\pr}^{\,*}_s=\sum^n_{j=1}\bigr((e_j\wedge)^*\bigr)\circ(Z^*_j+k\ol Z_j(\phi))+\sum^n_{j=1}(e_j\wedge)\circ\bigr((\ddbar e_j)\wedge\bigr)^*.
\end{split}
\end{equation}

From now on, we identify $\Lambda^qT^{*(0,1)}M$ with $\Lambda^qT^{*(0,1)}X$.
From \eqref{s1-e3}, \eqref{s1-e4}, explicit formulas of $\ddbar_s$, $\ol\pr^*_s$ and \eqref{s3-e3},
\eqref{s3-e4}, we get
\begin{equation} \label{s3-e8}
\Box^{(q)}_kf=s^ke^{k\phi}e^{ik\theta}\Box^{(q)}_b(\widehat fe^{-ik\theta}),
\end{equation}
for all $f\in\Omega^{0,q}(D, L^k)$, where $\widehat f$ is given by \eqref{s1-e3}.

Let $u(z, \theta)\in\Omega^{0,q}_0(D\times(-\varepsilon_0, \varepsilon_0))$.
Note that
\[k\int e^{i\theta k}u(z,\theta)d\theta=\int(-i)\frac{\pr}{\pr\theta}(e^{i\theta k})u(z, \theta)d\theta=\int e^{i\theta k}i\frac{\pr u}{\pr\theta}(z, \theta)d\theta.\]
From this observation and explicit formulas of $\ddbar_b$,
$\ol\pr^*_b$, $\ddbar_s$ and $\ol\pr^*_s$ (see \eqref{s3-e3},
\eqref{s3-e4} and \eqref{s3-e6}), we conclude that
\begin{equation} \label{s3-e9}
\Box^{(q)}_s\bigr(\int e^{i\theta k}u(z,\theta)d\theta\bigr)=\int e^{i\theta k}(\Box^{(q)}_bu)(z, \theta)d\theta,
\end{equation}
for all $u(z, \theta)\in\Omega^{0,q}_0(D\times(-\varepsilon_0, \varepsilon_0))$.

\subsection{Approximate Szeg\"{o} kernels}\label{s:apszk}

In this Section we review the results in~\cite{Hsiao08} about the existence of a microlocal Hodge decomposition of the Kohn Laplacian on an open set of a CR manifold where the Levi form is non-degenerate.
The approximate Szeg\"{o} kernel is a Fourier integral operator with complex phase in the sense of Melin-Sj\"ostrand \cite{MS74}.
We then specialize to the case of the Grauert tube of a line bundle and give a useful formula for the phase function of the approximate Szeg\"{o} kernel in Theorem \ref{s3-t1-bis}.

Theorems \ref{s3-t0}-\ref{s3-t1-1} are proved in chapter 6, chapter 7 and chapter 8 of part I in \cite{Hsiao08}. In
\cite{Hsiao08} the existence of the microlocal Hodge decomposition is stated for compact CR manifolds, but the construction and arguments used are essentially local.

\begin{thm} \label{s3-t0}
Let $X$ be an orientable CR manifold whose Levi form $\mathcal{L}$ is non-degenerate of constant signature $(n_-,n_+)$ at each point of an open set $B\Subset X$.
Let $q\neq n_-, n_+$. There exists a properly supported continuous operator
\begin{equation} \label{s3-e6-bis00}
A:
\begin{cases}
&H^s_{\rm loc\,}(B,\Lambda^qT^{*(0,1)}X)\To H^{s+1}_{\rm loc\,}(B,\Lambda^qT^{*(0,1)}X),\\
&H^s_{\rm comp\,}(B,\Lambda^qT^{*(0,1)}X)\To H^{s+1}_{\rm comp\,}(B,\Lambda^qT^{*(0,1)}X)\,
\end{cases}
\end{equation}
for all $s\geq0$,
such that $A$ is smoothing away the diagonal and $\Box^{(q)}_bA\equiv I$.
\end{thm}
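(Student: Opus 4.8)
The plan is to construct $A$ as a microlocal parametrix for $\Box^{(q)}_b$ over $B$, treating separately the part of $T^*X\setminus 0$ where $\Box^{(q)}_b$ is elliptic and a conic neighbourhood of its characteristic set. The principal symbol of $\Box^{(q)}_b$ vanishes exactly on $\Sigma=\Sigma^+\cup\Sigma^-$, the two ray bundles spanned by $\pm\omega_0$; these pieces are globally well defined since $X$ is orientable and $\omega_0$ is global (see \eqref{s3-e1-1}). Off $\Sigma$ the operator $\Box^{(q)}_b$ is elliptic of order two, so the usual pseudodifferential calculus provides a properly supported classical $\Psi$DO $A_0$ of order $-2$, microlocalized in the elliptic region, with $\Box^{(q)}_bA_0\equiv I$ there. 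All the difficulty lies in inverting $\Box^{(q)}_b$ microlocally near $\Sigma$.

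Near $\Sigma^\pm$ I follow the construction of \cite[Part~I, Ch.\,6--7]{Hsiao08}. In local coordinates putting the CR structure into Heisenberg normal form, one looks for $A$ microlocally near $\Sigma^\pm$ as a Fourier integral operator with complex phase in the sense of Melin--Sj\"ostrand \cite{MS74},
\[
(Au)(x)=\frac{1}{(2\pi)^{2n+1}}\int e^{i\psi_\pm(x,y,\eta)}\,a_\pm(x,y,\eta)\,u(y)\,dy\,d\eta\,,
\]
where $\psi_\pm$ solves the relevant complex eikonal equation, satisfies $\operatorname{Im}\psi_\pm\geq 0$, and vanishes to second order exactly on the diagonal of $\Sigma^\pm$, while $a_\pm\sim\sum_{j\geq0}a_{\pm,j}$ is an $\End(\Lambda^qT^{*(0,1)}X)$-valued classical symbol determined recursively by transport equations. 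Substituting the ansatz into $\Box^{(q)}_bA\equiv I$ and collecting homogeneous terms, the leading transport equation is solvable precisely when the model operator of $\Box^{(q)}_b$ along $\Sigma^\pm$ is invertible; after rescaling this model is a direct sum of shifted harmonic oscillators acting on $\Lambda^q$, and by \eqref{s3-e2} the Levi form has signature $(n_-,n_+)$, so $0$ lies in its spectrum exactly when $q=n_+$ on $\Sigma^+$ or $q=n_-$ on $\Sigma^-$. Since by hypothesis $q\neq n_-,n_+$, the model is invertible along all of $\Sigma$: the leading symbol $a_{\pm,0}$ exists, and the subsequent transport equations for $a_{\pm,j}$, $j\geq1$, are solved recursively with no obstruction. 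An asymptotic (Borel) summation of the symbol together with an almost analytic extension of the phase yields $a_\pm$ and $\psi_\pm$ for which $\Box^{(q)}_bA-I$ is smoothing microlocally near $\Sigma^\pm$.

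Finally I patch: using a conic microlocal partition of unity subordinate to the elliptic region and to neighbourhoods of $\Sigma^+$ and $\Sigma^-$, I glue $A_0$ and the two complex-phase pieces into a single operator $A$ on $B$, the mismatches being absorbed into the smoothing error $\equiv$; cutting the distribution kernel of $A$ off a small neighbourhood of the diagonal (another smoothing modification) makes $A$ properly supported without changing $\Box^{(q)}_bA\equiv I$. The continuity $H^s_{\rm loc}\to H^{s+1}_{\rm loc}$ and $H^s_{\rm comp}\to H^{s+1}_{\rm comp}$ for $s\geq 0$ is exactly the mapping property of the resulting class of operators (classical $\Psi$DO of order $-2$ plus Melin--Sj\"ostrand Fourier integral operators gaining one derivative), and $A$ is smoothing away the diagonal because off the diagonal each complex-phase kernel is smooth, $\operatorname{Im}\psi_\pm$ being strictly positive there. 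The genuinely hard part will be the construction near $\Sigma$ --- setting up the complex eikonal and transport equations and checking that the hypothesis $q\neq n_-,n_+$ makes every transport equation solvable --- which is the microlocal machinery developed in \cite{Hsiao08}; since that construction is purely local it applies verbatim over $B$, so in the present paper this step is invoked rather than reproved.
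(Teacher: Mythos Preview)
Your argument is correct, but the route differs from the paper's. The paper does not build the parametrix near $\Sigma$ directly as a complex-phase FIO; instead it uses the heat equation method of Menikoff--Sj\"ostrand~\cite{MS78}. One constructs the heat propagator for $\Box^{(q)}_b$ as an FIO with time-dependent complex phase $\psi(t,x,\eta)$ solving \eqref{e:i-chara} and amplitude $a(t,x,\eta)\sim\sum_j a_j$ solving the transport equations \eqref{e:i-heattransport}; the key fact is that when $q\neq n_-,n_+$ each $a_j(t,x,\eta)$ decays \emph{exponentially} in $t$, so that the time integral $A=\int_0^\infty(\cdots)\,dt$ converges and yields a $\Psi$DO of order $-1$ and type $(\tfrac12,\tfrac12)$ with $\Box^{(q)}_bA\equiv I$ and no Szeg\H{o} remainder. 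Your direct construction --- solving transport equations for a time-independent FIO near $\Sigma^\pm$ once the model harmonic oscillator is invertible --- is a legitimate alternative in the spirit of Boutet de Monvel--Sj\"ostrand, and the two mechanisms are dual: the spectral gap of the model operator you invoke is exactly what forces the exponential decay of the heat amplitude in the paper's argument. The heat-kernel route has the practical advantage of treating Theorems~\ref{s3-t0} and~\ref{s3-t1} in a single construction (the Szeg\H{o} pieces $S_\pm$ arising as the $t\to\infty$ limits of the amplitude when $q=n_\pm$), which is why the paper adopts it; your approach is arguably cleaner if one wants only Theorem~\ref{s3-t0}. One small caveat: since the construction in \cite{Hsiao08} that you cite is in fact the heat-equation one, your phrase ``I follow the construction of \cite{Hsiao08}'' is slightly misleading for the direct method you actually sketch.
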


For $m\in\Real$ let $S^m_{1, 0}$ be the H\"{o}rmander symbol space (see Grigis-Sj\"{o}strand~\cite[Def.\,1.1]{GS94}).
Let $p_0(x, \xi)\in\cC^\infty(T^*X)$ be the principal symbol of $\Box^{(q)}_b$. Note that $p_0(x,\xi)$ is a polynomial of degree $2$ in $\xi$. The characteristic manifold of $\Box^{(q)}_b$ is given by $\Sigma=\Sigma^+\bigcup\Sigma^-$, where
\begin{equation*}
\begin{split}
&\Sigma^+=\set{(x, \lambda\omega_0(x))\in T^*X;\, \lambda>0},\\
&\Sigma^-=\set{(x, \lambda\omega_0(x))\in T^*X;\, \lambda<0}.
\end{split}
\end{equation*}

\begin{thm} \label{s3-t1}
Let $X$, $B$ and $(n_-,n_+)$ be as in Theorem \ref{s3-t0}.
Let $q=n_-$ or $n_+$. Then there exist properly supported continuous operators
\begin{equation} \label{s3-e6-bis0}
\begin{split}
A:
\begin{cases}H^s_{\rm loc\,}(B,\Lambda^qT^{*(0,1)}X)\To H^{s+1}_{\rm loc\,}(B,\Lambda^qT^{*(0,1)}X),\\
H^s_{\rm comp\,}(B,\Lambda^qT^{*(0,1)}X)\To H^{s+1}_{\rm comp\,}(B,\Lambda^qT^{*(0,1)}X)\,,
\end{cases}\\
S_-,S_+:
\begin{cases}
H^s_{\rm loc\,}(B,\Lambda^qT^{*(0,1)}X)\To H^{s}_{\rm loc\,}(B,\Lambda^qT^{*(0,1)}X),\\
H^s_{\rm comp\,}(B,\Lambda^qT^{*(0,1)}X)\To H^{s}_{\rm comp\,}(B,\Lambda^qT^{*(0,1)}X)\,,
\end{cases}
\end{split}
\end{equation}
for all $s\geq0$,
such that $A, S_-, S_+$ are smoothing away the diagonal and
\begin{gather}
\Box^{(q)}_bA+S_-+S_+= I,  \,\quad
\Box^{(q)}_bS_-\equiv0,\ \Box^{(q)}_bS_+\equiv0,\nonumber \\
A\equiv A^*\,,\quad S_-\equiv S_-^*\equiv S_-^2 \,,\quad
S_+\equiv S_+^*\equiv S_+^2\,,\quad 
S_-S_+\equiv S_+S_-\equiv0,\nonumber
\end{gather}
where $A^*$, $S_-^*$ and $S_+^*$ are the formal adjoints of $A$, $S_-$ and $S_+$ with respect to $(\,\cdot\,,\cdot)$ respectively and $K_{S_-}(x,y)$ satisfies
\[K_{S_-}(x, y)\equiv\int^{\infty}_{0}\!\! e^{i\varphi_-(x, y)t}s_-(x, y, t)dt\]
with a symbol
\begin{equation}  \label{s3-e11-bis}\begin{split}
&s_-(x, y, t)\in S^{n}_{1, 0}\big(B\times B\times]0, \infty[, \Lambda^qT^{*(0,1)}_yX\boxtimes \Lambda^qT^{*(0,1)}_xX\big), \\
&s_-(x, y, t)\sim\sum^\infty_{j=0}s^j_-(x, y)t^{n-j}\text{ in }S^{n}_{1, 0}
\big(B\times B\times]0, \infty[, \Lambda^qT^{*(0,1)}_yX\boxtimes\Lambda^qT^{*(0,1)}_xX\big)\,,\\
&s^j_-(x, y)\in\cC^\infty\big(B\times B, \Lambda^qT^{*(0,1)}_yX\boxtimes\Lambda^qT^{*(0,1)}_xX\big),\ \ j\in\N_0,
\end{split}\end{equation}
and phase function
\begin{gather}
\varphi_-\in\cC^\infty(B\times B),\ \ {\rm Im\,}\varphi_-(x, y)\geq0 \label{s3-e12-bis} \,,\
\varphi_-(x, x)=0,\ \ \varphi_-(x, y)\neq0\ \ \mbox{if}\ \ x\neq y,\\
d_x\varphi_-\neq0,\ \ d_y\varphi_-\neq0\ \ \mbox{where}\ \ {\rm Im\,}\varphi_-=0,\\ 
d_x\varphi_-(x, y)|_{x=y}=-\omega_0(x), \ \ d_y\varphi_-(x, y)|_{x=y}=\omega_0(x),\label{s3-e15-bis} \\
\varphi_-(x, y)=-\ol\varphi_-(y, x). \label{s3-e16-bis}
\end{gather}
Moreover, there is a function $f\in\cC^\infty(B\times B)$ such that
\begin{equation} \label{s3-e16-bisbis}
p_0(x, (\varphi_-)'_x(x,y))-f(x,y)\varphi_-(x,y)
\end{equation}
vanishes to infinite order at $x=y$.

Similarly,
\[K_{S_+}(x, y)\equiv\int^{\infty}_{0}\!\! e^{i\varphi_+(x, y)t}s_+(x, y, t)dt\]
with $s_+(x, y, t)\in S^{n}_{1, 0}\big(B\times B\times]0, \infty[, \Lambda^qT^{*(0,1)}_yX\boxtimes\Lambda^qT^{*(0,1)}_xX\big)$, 
\[
s_+(x, y, t)\sim\sum^\infty_{j=0}s^j_+(x, y)t^{n-j}
\]
in $S^{n}_{1, 0}\big(B\times B\times]0, \infty[,
\Lambda^qT^{*(0,1)}_yX\boxtimes\Lambda^qT^{*(0,1)}_xX\big)$,
where
\[s^j_+(x, y)\in\cC^\infty\big(B\times B, \Lambda^qT^{*(0,1)}_yX\boxtimes\Lambda^qT^{*(0,1)}_xX\big),\ \ j\in\N_0,\]
and $-\ol\varphi_+(x, y)$ satisfies \eqref{s3-e12-bis}--\eqref{s3-e16-bisbis}. Moreover, if $q\neq n_+$, then $s_+(x,y,t)$ vanishes to infinite order at $x=y$. If $q\neq n_-$, then $s_-(x,y,t)$ vanishes to infinite order at $x=y$.
\end{thm}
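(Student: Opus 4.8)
The plan is to reduce everything to the microlocal parametrix construction for the Kohn Laplacian carried out in \cite{Hsiao08}, and to verify that this construction is purely local so that it applies on the open set $B$ rather than on a compact CR manifold. First I would recall that $\Box^{(q)}_b$ is a second-order operator whose principal symbol $p_0(x,\xi)$ vanishes exactly on the characteristic manifold $\Sigma=\Sigma^+\cup\Sigma^-$, and that along $\Sigma^\pm$ the operator is of real principal type away from $\Sigma$ but degenerates transversally like a Hermite/Boutet de Monvel operator; the transversal Hessian of $p_0$ at $\Sigma^\pm$ is non-degenerate precisely because the Levi form is non-degenerate of constant signature $(n_-,n_+)$ on $B$. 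The key dichotomy is that the Poisson bracket $\{p_0,\bar p_0\}$ has a definite sign on $\Sigma^+$ and on $\Sigma^-$, and a microlocal normal form (via Boutet de Monvel--Sj\"ostrand and the Melin--Sj\"ostrand FIO-with-complex-phase calculus) shows that $\Box^{(q)}_b$ is microlocally invertible near $\Sigma^+$ iff $q\neq n_+$, and near $\Sigma^-$ iff $q\neq n_-$. Hence when $q\in\{n_-,n_+\}$ exactly one or two of the components $\Sigma^\pm$ carry a nontrivial kernel, which is where the orthogonal projectors $S_-$, $S_+$ come from.

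The main steps, in order, are: (1) construct the approximate projector $S_-$ microlocally near $\Sigma^-$ as an FIO $\int_0^\infty e^{i\varphi_-(x,y)t}s_-(x,y,t)\,dt$, determining the phase $\varphi_-$ by solving the eikonal-type equation forcing \eqref{s3-e16-bisbis}: $p_0(x,(\varphi_-)'_x)$ must vanish to infinite order on the diagonal, together with the normalisations $\varphi_-(x,x)=0$, $d_x\varphi_-|_{x=y}=-\omega_0$, ${\rm Im\,}\varphi_-\geq0$, and the symmetry $\varphi_-(x,y)=-\bar\varphi_-(y,x)$; these conditions determine $\varphi_-$ uniquely modulo functions vanishing to infinite order on the diagonal, by the Melin--Sj\"ostrand theory of almost-analytic continuations. (2) Solve the transport equations for the symbol $s_-\sim\sum_j s^j_-t^{n-j}$ recursively; the leading equation is an algebraic (zeroth order) equation whose solution is the projection onto the one-dimensional kernel of the model operator at $\Sigma^-$, and the higher $s^j_-$ are obtained by inverting the (now surjective onto the orthogonal complement) subprincipal operator. (3) Symmetrically construct $S_+$ near $\Sigma^+$, noting $-\bar\varphi_+$ plays the role of $\varphi_-$. (4) Away from $\Sigma$, and near the component of $\Sigma$ where $\Box^{(q)}_b$ is microlocally invertible, construct the parametrix $A$ by the usual iterative symbol calculus; patch the pieces together with a microlocal partition of unity on $T^*X|_B$, getting $\Box^{(q)}_bA+S_-+S_++\text{(smoothing)}=I$. (5) Finally, upgrade the approximate identities $S_\pm^2\equiv S_\pm$, $S_\pm^*\equiv S_\pm$, $S_-S_+\equiv0$, $A\equiv A^*$ by the standard trick of symmetrising (replace $A$ by $\tfrac12(A+A^*)$, $S_-$ by $S_-^*S_-$-type corrections, absorbing errors into smoothing operators), and observe the vanishing-to-infinite-order statements for $q\neq n_\pm$ from the fact that then the corresponding model kernel is trivial. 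Properness of the supports is arranged by multiplying the kernels by a cutoff supported near the diagonal, harmless modulo smoothing.

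The hard part will be step (1)--(2), i.e.\ the actual construction and well-posedness of the complex phase $\varphi_-$ and the transport equations on the non-compact open set $B$: one must check that the formal power-series (in $t$) solution of the eikonal equation can be realised by a genuine smooth phase with ${\rm Im\,}\varphi_-\geq0$ and with the prescribed diagonal behaviour, and that the recursive transport equations are solvable uniformly on compacts of $B$. Since $B\Subset X$ and all the operators involved are classical (pseudo)differential of finite order, the estimates in the $H^s_{\rm loc}/H^s_{\rm comp}$ scale follow from the symbol calculus exactly as in \cite{Hsiao08}; the only genuinely new observation needed here is that none of the constructions in \cite[Part~I, Ch.\,6--7]{Hsiao08} used compactness of $X$ except to globalise, so they localise verbatim to $B$. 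I would therefore phrase the proof as: \emph{cite \cite{Hsiao08} for the microlocal construction, point out its local nature, and record the resulting phase/symbol properties as \eqref{s3-e11-bis}--\eqref{s3-e16-bisbis}.}
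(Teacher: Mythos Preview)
Your bottom line---cite \cite{Hsiao08} and observe that the construction is local so it transplants to $B$---matches the paper's approach exactly. However, the detailed sketch you give of \emph{how} that construction works differs substantially from what the paper (and \cite{Hsiao08}) actually does.

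You outline a direct Boutet de Monvel--Sj\"ostrand style construction: solve the eikonal equation for $\varphi_-$ as a function of $(x,y)$ directly, then solve transport equations for $s_-\sim\sum s^j_-t^{n-j}$, build $S_\pm$ and $A$ separately, and patch with a microlocal partition of unity. The paper instead uses the \emph{heat equation method} of Menikoff--Sj\"ostrand: one seeks an approximate solution of $(\partial_t+\Box^{(q)}_b)u=0$ of the form $\int e^{i(\psi(t,x,\eta)-\langle y,\eta\rangle)}a(t,x,\eta)\,dy\,d\eta$, where the phase $\psi(t,x,\eta)$ is parametrised by the cotangent variable $\eta$ and evolves in $t$. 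The phase satisfies $\partial_t\psi-ip_0(x,\psi'_x)=O(|{\rm Im}\,\psi|^N)$ with $\psi|_{t=0}=\langle x,\eta\rangle$, and converges exponentially as $t\to\infty$ to a limit $\psi(\infty,x,\eta)$ concentrated on $\Sigma$. The amplitudes $a_j$ are shown to converge as $t\to\infty$ by a neat trick: apply $\ddbar_b$ and $\ddbar_b^{\,*}$ to the approximate heat solution, use the intertwining $\ddbar_b\Box^{(q)}_b=\Box^{(q+1)}_b\ddbar_b$, and exploit that in degrees $q\pm1$ the amplitudes decay exponentially (since $q\pm1\notin\{n_-,n_+\}$). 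One then sets $A=\int_0^\infty(\text{heat kernel}-\text{limit})\,dt$ and $S=\text{limit}$; the stationary phase formula of Melin--Sj\"ostrand splits $S\equiv S_-+S_+$ as FIOs with phases $\varphi_\pm(x,y)t$.

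Both routes are valid. Yours is closer in spirit to the original Szeg\H{o} kernel construction for strictly pseudoconvex boundaries; its advantage is conceptual directness, but the eikonal equation you write down (the condition \eqref{s3-e16-bisbis}) is a \emph{consequence} rather than the defining equation in the heat-equation approach, and solving it directly requires the full almost-analytic machinery from the outset. The heat-equation route buys you $A$ and $S$ simultaneously from a single oscillatory integral, and the exponential-decay argument via intertwining with $\ddbar_b$ replaces your step (2) with something more structural. If you intend your write-up to be a faithful summary of \cite{Hsiao08}, you should recast the sketch in heat-equation terms.
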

The operators $S_{+}$, $S_{-}$ are called \emph{approximate Szeg\"o kernels}.
\begin{proof}
We only sketch the proof. For all the details, we refer the reader to Part I in \cite{Hsiao08}.
We will use the heat equation method. We work with some
real local coordinates $x=(x_1,\ldots,x_{2n+1})$
defined on $B$. 
We will say that
$a\in\cC^\infty(\ol\Real_+\times B\times\Real^{2n+1})$ is quasi-homogeneous of
degree $j$ if $a(t,x,\lambda\eta)=\lambda^ja(\lambda t,x,\eta)$
for all $\lambda>0$. We consider the problem
\begin{equation} \label{e:i-heat}
\left\{ \begin{array}{ll}
(\pr_t+\Box^{(q)}_b)u(t,x)=0  & \text{ in }\Real_+\times B,  \\
u(0,x)=v(x). \end{array}\right.
\end{equation}
We start by a formal construction. We look for an approximate solution of
\eqref{e:i-heat} of the form $u(t,x)=A(t)v(x)$,
\begin{equation} \label{e:i-fourierheat}
A(t)v(x)=\frac{1}{(2\pi)^{2n+1}}\int\!e^{i(\psi(t,x,\eta)-\langle y,\eta\rangle)}a(t,x,\eta)v(y)dyd\eta
\end{equation}
where formally
\[a(t,x,\eta)\sim\sum^\infty_{j=0}a_j(t,x,\eta),\]
with $a_j(t,x,\eta)$ matrix-valued quasi-homogeneous functions of degree $-j$.

The full symbol of $\Box^{(q)}_b$ equals $\sum^2_{j=0}p_j(x,\xi)$,
where $p_j(x,\xi)$ is positively homogeneous of order $2-j$ in the sense that
\[p_j(x, \lambda\eta)=\lambda^{2-j}p_j(x, \eta),\ \abs{\eta}\geq1,\ \lambda\geq1.\]
We apply $\pr_t+\Box^{(q)}_b$ formally
inside the integral in \eqref{e:i-fourierheat} and then introduce the asymptotic expansion of
$\Box^{(q)}_b(ae^{i\psi})$. Set $(\pr_t+\Box^{(q)}_b)(ae^{i\psi})\sim 0$ and regroup
the terms according to the degree of quasi-homogeneity. The phase $\psi(t, x, \eta)$ should solve
\begin{equation} \label{e:i-chara}
\left\{ \begin{array}{ll}
\frac{\displaystyle\pr\psi}{\displaystyle\pr t}-ip_0(x,\psi'_x)=O(\abs{{\rm Im\,}\psi}^N), & \forall N\geq 0,   \\
 \psi|_{t=0}=\langle x, \eta\rangle. \end{array}\right.
\end{equation}
This equation can be solved with ${\rm Im\,}\psi(t, x,\eta)\geq0$ and the phase $\psi(t, x, \eta)$ is quasi-homogeneous of
degree $1$. Moreover,
\begin{gather*}
\psi(t,x,\eta)=\langle x,\eta\rangle{\;\rm on\;}\Sigma,\; d_{x,\eta}(\psi-\langle x,\eta\rangle)=0 \text{ on }\Sigma,\\
{\rm Im\,}\psi(t, x,\eta)\asymp\Big(\abs{\eta}\frac{t\abs{\eta}}{1+t\abs{\eta}}\Big)
\Big({\rm dist\,}\big((x,\frac{\eta}{\abs{\eta}}),\Sigma\big)\Big)^2,\ \ \abs{\eta}\geq 1.
\end{gather*}
Furthermore, there exists $\psi(\infty,x,\eta)\in\cC^\infty(B\times\dot\Real^{2n+1})$ with a
uniquely determined Taylor expansion at each point of $\Sigma$ such that for every compact set
$K\subset B\times\dot\Real^{2n+1}$ there is a constant $c_K>0$ such that
\[{\rm Im\,}\psi(\infty,x,\eta)\geq c_K\abs{\eta}\Big({\rm dist\,}\big((x,\frac{\eta}{\abs{\eta}}),\Sigma\big)\Big)^2,
\ \ \abs{\eta}\geq 1.\]
If $\lambda\in \cC(T^*B\smallsetminus 0)$, $\lambda>0$ is positively homogeneous of degree $1$ and 
$\lambda|_\Sigma<\min\lambda_j$, $\lambda_j>0$, where
$\pm i\lambda_j$ are the non-vanishing eigenvalues of the fundamental matrix of $\Box^{(q)}_b$,
then the solution $\psi(t,x,\eta)$ of \eqref{e:i-chara} can be chosen so that for every
compact set $K\subset B\times\dot\Real^{2n+1}$ and all indices $\alpha$, $\beta$, $\gamma$,
there is a constant $c_{\alpha,\beta,\gamma,K}$ such that
\[ \abs{\pr^\alpha_x\pr^\beta_\eta\pr^\gamma_t(\psi(t,x,\eta)-\psi(\infty,x,\eta))}
\leq c_{\alpha,\beta,\gamma,K}e^{-\lambda(x,\eta)t}\text{ on }\ol\Real_+\times K.\]
We obtain the transport equations
\begin{equation} \label{e:i-heattransport}
\left\{ \begin{array}{l}
 T(t,x,\eta,\pr_t,\pr_x)a_0=O(\abs{{\rm Im\,}\psi}^N), \ \forall N,   \\
 T(t,x,\eta,\pr_t,\pr_x)a_j+l_j(t,x,\eta,a_0,\ldots,a_{j-1})= O(\abs{{\rm Im\,}\psi}^N), \ \forall N,\ \ j=1,2,\ldots.
 \end{array}\right.
\end{equation}

Following the method of Menikoff-Sj\"{o}strand~\cite{MS78}, we see that we can solve \eqref{e:i-heattransport}. Moreover,
$a_j$ decay exponentially fast in $t$ when $q\neq n_-$, $n_+$, and has subexponential growth in 
general. We assume that $q=n_-$ or $n_+$.
We use $\ddbar_b\Box^{(q)}_b=\Box^{(q+1)}_b\ddbar_b$, $\ol{\pr}_b^{\,*}\Box^{(q)}_b=\Box^{(q-1)}_b\ol{\pr}_b^{\,*}$
and get 
\begin{gather*}
\pr_t(\ddbar_b(e^{i\psi}a))+\Box^{(q+1)}_b(\ddbar_b(e^{i\psi}a))\sim 0, \\
\pr_t(\ol{\pr}_b^{\,*}(e^{i\psi}a))+\Box^{(q-1)}_b(\ol{\pr}_b^{\,*}(e^{i\psi}a))\sim 0.
\end{gather*}
Put
\[\ddbar_b(e^{i\psi}a)=e^{i\psi}\widehat a,\ \ol{\pr}_b^{\,*}(e^{i\psi}a)=e^{i\psi}\Td a.\]
We have
\begin{gather*}
(\pr_t+\Box^{(q+1)}_b)(e^{i\psi}\widehat a)\sim0, \\
(\pr_t+\Box^{(q-1)}_b)(e^{i\psi}\Td a)\sim0.
\end{gather*}
The corresponding degrees of $\widehat a$ and $\Td a$ are $q+1$ and $q-1$. We deduce as above that $\widehat a$ and $\Td a$ decay
exponentially fast in $t$. This also applies to
\[\Box^{(q)}_b(ae^{i\psi})=\ddbar_b(\ol{\pr}_b^{\,*}ae^{i\psi})+\ol{\pr}_b^{\,*}(\ddbar_bae^{i\psi}) 
    =\ddbar_b(e^{i\psi}\Td a)+\ol{\pr}_b^{\,*}(e^{i\psi}\widehat a).\]
Thus, $\pr_t(ae^{i\psi})$ decay exponentially fast in $t$. Since $\pr_t\psi$ decay exponentially fast in $t$ so does $\pr_ta$.
Hence, there exist positively homogeneous functions of degree $-j$
\[a_j(\infty, x, \eta)\in
C^{\infty}\big(T^*B,\Lambda^qT^{*(0,1)}X\boxtimes\Lambda^qT^{*(0,1)}X\big),\ \ j=0,1,2,\ldots,\]
such that $a_j(t, x, \eta)$ converges exponentially
fast to $a_j(\infty, x, \eta)$, $t\to\infty$, for all $j\in\N_0$.

Choose $\chi\in\cC^\infty_0(\Real^{2n+1})$ so that $\chi(\eta)=1$ when $\abs{\eta}<1$ and $\chi(\eta)=0$ when $\abs{\eta}>2$. We formally set
\[\begin{split}
A = \frac{1}{(2\pi)^{2n+1}}\int\int^{\infty}_0\!\!\Bigl(e^{i(\psi(t,x,\eta)-\langle y,\eta\rangle)}a(t,x,\eta) 
  -e^{i(\psi(\infty,x,\eta)-\langle y,\eta\rangle)}a(\infty,x,\eta)\Bigr)(1-\chi(\eta))\,dt\,d\eta
\end{split}\]
and
\[S=\frac{1}{(2\pi)^{2n+1}}\int\! \big(e^{i(\psi(\infty,x,\eta)-\langle y,\eta\rangle)}a(\infty,x,\eta)\big)d\eta.\]
We can show that $A$ is a pseudodifferential operator of order $-1$ and type $(\frac{1}{2},\frac{1}{2})$ satisfying
\[S+\Box^{(q)}_b\circ A\equiv I,\ \ \Box^{(q)}_b\circ S\equiv0.\]
Moreover, the stationary phase formula of Melin-Sj\"{o}strand~\cite{MS74} shows that $S\equiv S_-+S_+$, where $S_-$, $S_+$ are as in Theorem~\ref{s3-t1}.
\end{proof}
The following result describes the phase function in local coordinates.
\begin{thm} \label{s3-t1-1}
Let $X$, $B$ and $(n_-,n_+)$ be as in Theorem \ref{s3-t0}.
For a given point $x_0\in B$, let $\{W_j\}_{j=1}^n$
be an orthonormal frame of $T^{(1, 0)}X$ in a neighborhood of $x_0$, such that
the Levi form is diagonal at $x_0$, i.e.\ $\mathcal{L}_{x_{0}}(W_{j},\overline{W}_{j})=\mu_{j}$, $j=1,\ldots,n$.
We take local coordinates
$x=(x_1,\ldots,x_{2n+1})$, $z_j=x_{2j-1}+ix_{2j}$, $j=1,\ldots,n$,
defined on some neighborhood of $x_0$ such that $\omega_0(x_0)=dx_{2n+1}$, $x(x_0)=0$, and for some $c_j\in\Complex$, $j=1,\ldots,n$\,,
\[W_j=\frac{\pr}{\pr z_j}-i\mu_j\ol z_j\frac{\pr}{\pr x_{2n+1}}-
c_jx_{2n+1}\frac{\pr}{\pr x_{2n+1}}+O(\abs{x}^2),\ j=1,\ldots,n\,.\]
Set
$y=(y_1,\ldots,y_{2n+1})$, $w_j=y_{2j-1}+iy_{2j}$, $j=1,\ldots,n$.
Then, for $\varphi_-$ in Theorem~\ref{s3-t1}, we have
\begin{equation} \label{s3-ephase1}
{\rm Im\,}\varphi_-(x,y)\geq c\sum^{2n}_{j=1}\abs{x_j-y_j}^2,\ \ c>0,
\end{equation}
in some neighborhood of $(0,0)$ and
\begin{equation} \label{s3-ephase2}
\begin{split}
&\varphi_-(x, y)=-x_{2n+1}+y_{2n+1}+i\sum^{n-1}_{j=1}\abs{\mu_j}\abs{z_j-w_j}^2 \\
&\quad+\sum^{n-1}_{j=1}\Bigr(i\mu_j(\ol z_jw_j-z_j\ol w_j)+c_j(-z_jx_{2n+1}+w_jy_{2n+1})\\
&\quad+\ol c_j(-\ol z_jx_{2n+1}+\ol w_jy_{2n+1})\Bigr)+(x_{2n+1}-y_{2n+1})f(x, y) +O(\abs{(x, y)}^3),
\end{split}
\end{equation}
where $f$ is smooth and satisfies $f(0,0)=0$, $f(x, y)=\ol f(y, x)$.
\end{thm}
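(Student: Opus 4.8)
The plan is to read off the Taylor expansion of $\varphi_-$ at the diagonal from the structural properties of $\varphi_-$ already obtained in Theorem~\ref{s3-t1}, and then check that it agrees with the asserted normal form. Those properties are: $\varphi_-(x,x)=0$, $\operatorname{Im}\varphi_-\geq0$, the diagonal differentials \eqref{s3-e15-bis}, the symmetry \eqref{s3-e16-bis}, and the eikonal relation \eqref{s3-e16-bisbis}, i.e.\ $p_0(x,(\varphi_-)'_x(x,y))$ agrees with $f(x,y)\varphi_-(x,y)$ to infinite order at $x=y$, where $p_0\in\cC^\infty(T^*X)$ is the (scalar) principal symbol of $\Box^{(q)}_b$ and $f(0,0)=0$. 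By the Melin--Sj\"ostrand calculus of complex-valued phase functions, these data together with the choice of the branch $\Sigma^-$ determine the Taylor expansion of $\varphi_-$ at a diagonal point uniquely modulo functions vanishing there to infinite order; so it suffices to compute the $2$-jet of $\varphi_-$ at $(0,0)$ and, separately, to establish \eqref{s3-ephase1}.

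First I would make $p_0$ explicit in the coordinates of the statement. Using the formulas \eqref{s3-e3}, \eqref{s3-e4} for $\ddbar_b$, $\ol{\pr}^{\,*}_b$ in an orthonormal frame one sees that $p_0$ is scalar, $p_0(x,\xi)=\sum_{j=1}^{n}\abs{\langle\overline W_j(x),\xi\rangle}^2$; substituting the prescribed expansion of $W_j$ gives $p_0(x,\xi)=\sum_{j=1}^{n}\abs{\overline\zeta_j+i\mu_jz_j\xi_{2n+1}-\overline c_jx_{2n+1}\xi_{2n+1}}^2+O(\abs{x}^2\abs{\xi}^2)$, where $\overline\zeta_j=\langle\partial_{\overline z_j},\xi\rangle$. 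In particular $p_0$ vanishes to exactly second order along $\Sigma^\pm$, so $p_0(x_0,-dx_{2n+1})=0$. I would also compute the linear part of $\omega_0$ at $x_0$ from $\langle\omega_0,W_j\rangle\equiv0$, $\omega_0(x_0)=dx_{2n+1}$ and the $W_j$-expansion; it involves both the $\mu_j$ and the $c_j$.

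Next the jet computation. Since $\varphi_-(x,x)=0$, the linear part of $\varphi_-$ at $(0,0)$ is $\sum_j a_j(x_j-y_j)$, and \eqref{s3-e15-bis}, with $\omega_0(x_0)=dx_{2n+1}$, forces it to equal $-x_{2n+1}+y_{2n+1}$. Write $\varphi_-(x,y)=-x_{2n+1}+y_{2n+1}+Q(x,y)+O(\abs{(x,y)}^3)$ with $Q$ quadratic; then $\varphi_-(x,x)=0$ and \eqref{s3-e16-bis} give $Q(x,x)=0$ and $Q(x,y)=-\overline{Q(y,x)}$, while \eqref{s3-e15-bis} prescribes the $x$-linear part of $\partial_xQ$ in terms of the linear part of $\omega_0$. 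Inserting this ansatz into \eqref{s3-e16-bisbis} and equating terms of order two in $(x,y)$ — the order-two part of $p_0(x,(\varphi_-)'_x)$ depends through $\partial_xQ$, and that of $f\varphi_-$ equals $\ell(x,y)(-x_{2n+1}+y_{2n+1})$ with $\ell$ the linear part of $f$, since $f(0,0)=0$ — gives a system for $Q$ and $\ell$. Solving it in the $\operatorname{Im}\geq0$ branch, one finds that the part $\overline\zeta_j+i\mu_jz_j\xi_{2n+1}$ contributes $i\sum_j\abs{\mu_j}\abs{z_j-w_j}^2+i\sum_j\mu_j(\overline z_jw_j-z_j\overline w_j)$ (this part being the classical Boutet de Monvel--Sj\"ostrand phase of the Heisenberg model), the correction $-\overline c_jx_{2n+1}\xi_{2n+1}$ together with the $c_j$-part of $\omega_0$ contributes $\sum_j\bigl(c_j(-z_jx_{2n+1}+w_jy_{2n+1})+\overline c_j(-\overline z_jx_{2n+1}+\overline w_jy_{2n+1})\bigr)$, and the remaining quadratic and higher-order contributions, which turn out to be multiples of $x_{2n+1}-y_{2n+1}$, are absorbed into $(x_{2n+1}-y_{2n+1})f(x,y)$, with $f(0,0)=0$ and $f(x,y)=\overline f(y,x)$ forced by \eqref{s3-e15-bis} and \eqref{s3-e16-bis}. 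This yields \eqref{s3-ephase2}; the claim for $\varphi_+$ follows by the same argument applied to $\Sigma^+$, since $-\overline{\varphi_+}$ satisfies \eqref{s3-e12-bis}--\eqref{s3-e16-bisbis}.

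Finally, for \eqref{s3-ephase1}: all quadratic terms in \eqref{s3-ephase2} are real-valued except $i\sum_j\abs{\mu_j}\abs{z_j-w_j}^2$ and, possibly, a non-negative $(x_{2n+1}-y_{2n+1})^2$-term hidden in $(x_{2n+1}-y_{2n+1})f$ (its sign forced by $\operatorname{Im}\varphi_-\geq0$), so the quadratic part of $\operatorname{Im}\varphi_-$ is $\sum_j\abs{\mu_j}\abs{z_j-w_j}^2$ up to such a term — a positive-definite form in $x_1-y_1,\dots,x_{2n}-y_{2n}$, since the Levi form is non-degenerate on $B$. Combined with $\operatorname{Im}\varphi_-\geq0$ and the fact, intrinsic to the Melin--Sj\"ostrand construction (and encoded in the a priori bound $\operatorname{Im}\psi(\infty,x,\eta)\gtrsim\abs{\eta}\,\mathrm{dist}((x,\eta/\abs{\eta}),\Sigma)^2$ from Theorem~\ref{s3-t1}), that $\operatorname{Im}\varphi_-$ vanishes to exactly second order transversally to a smooth submanifold through the diagonal, the Morse--Bott lemma gives $\operatorname{Im}\varphi_-(x,y)\geq c\sum_{j=1}^{2n}\abs{x_j-y_j}^2$ near $(0,0)$ after shrinking $c$. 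I expect the main obstacle to be this order-two eikonal computation — making $p_0$ explicit to the required precision and tracking exactly how the $c_j$-corrections in $W_j$ and in $\omega_0$ feed into the $2$-jet of $\varphi_-$ — whereas the uniqueness statement underlying the ``compute-the-jet'' reduction is standard complex-phase microlocal analysis.
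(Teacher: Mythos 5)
Your overall strategy — read the $2$-jet of $\varphi_-$ directly off the structural data of Theorem~\ref{s3-t1} (diagonal value, diagonal differentials, antisymmetry \eqref{s3-e16-bis}, sign of the imaginary part, and the eikonal relation \eqref{s3-e16-bisbis}) rather than tracking the phase through the Menikoff--Sj\"ostrand heat construction — is a legitimate alternative to the paper's route, which is simply to cite chapters 6--7 of \cite{Hsiao08}. Your formula for $p_0$ in the given frame is correct, and the bookkeeping you set up (linear part forced to $-x_{2n+1}+y_{2n+1}$; quadratic part constrained by \eqref{s3-e15-bis} through the linear part of $\omega_0$, by antisymmetry, and by the order-$2$ piece of the eikonal identity) is the right one. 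However, there are two places where you assert what should be computed, and one of them hides a genuine gap.

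The genuine gap is in your argument for \eqref{s3-ephase1}. You claim the quadratic part of $\operatorname{Im}\varphi_-$ equals $\sum_j\abs{\mu_j}\abs{z_j-w_j}^2$ up to a nonnegative $(x_{2n+1}-y_{2n+1})^2$-term, and then invoke Morse--Bott. But from \eqref{s3-ephase2} alone, the contribution of $(x_{2n+1}-y_{2n+1})f(x,y)$ to $\operatorname{Im}\varphi_-$ at quadratic order is $u_{2n+1}\operatorname{Im}\ell(x,y)$ where $\ell$ is the linear part of $f$ and $u=x-y$; the condition $f(x,y)=\overline{f(y,x)}$ forces $\operatorname{Im}\ell$ to be a linear form in $u$ alone, but an \emph{arbitrary} one, so a priori it produces cross terms $u_ku_{2n+1}$, $k\leq 2n$, not just $u_{2n+1}^2$. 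The hypothesis $\operatorname{Im}\varphi_-\geq0$ guarantees only that the resulting quadratic form $H(u)=\sum_j\abs{\mu_j}\abs{z_j-w_j}^2+u_{2n+1}(b\cdot u')+cu_{2n+1}^2$ is positive \emph{semi}definite; semidefiniteness is compatible with $H$ having a null vector whose $u'$-part is nonzero (the borderline case $(b\cdot u')^2=4c\,H(u',0)$), in which case $H\geq c'\abs{u'}^2$ fails and the Morse--Bott lemma is unavailable. So \eqref{s3-ephase1} does not follow from \eqref{s3-ephase2} together with $\operatorname{Im}\varphi_-\geq0$; you must actually establish, from the eikonal equation or from the descent of the estimate $\operatorname{Im}\psi(\infty,x,\eta)\gtrsim\abs{\eta}\,\mathrm{dist}((x,\eta/\abs{\eta}),\Sigma)^2$ through the stationary phase reduction to $\varphi_-$, that the cross terms vanish (or at least that the semidefinite form degenerates only in the $u_{2n+1}$-direction). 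You wave at the $\psi(\infty,\cdot,\cdot)$ bound, but the transfer of positivity from the big phase $\psi(\infty,x,\eta)-\langle y,\eta\rangle$ to the reduced phase $\varphi_-$ under stationary phase (a Melin--Sj\"ostrand positivity-of-Lagrangians lemma, not Morse--Bott) is precisely the nontrivial step and is not supplied.

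Two lesser remarks. First, the reduction ``the constraints determine the Taylor expansion uniquely modulo infinite order'' is not quite right as stated, since the $2$-jet is in fact \emph{not} determined uniquely — there is exactly the one-parameter-per-$j$ freedom in the terms proportional to $x_{2n+1}-y_{2n+1}$ that is absorbed into $f$; what one shows is uniqueness modulo this freedom, which is enough for \eqref{s3-ephase2} but should be said. Second, you acknowledge that the order-two eikonal computation itself is not carried out; it is, and this is where the $\mu_j$, $c_j$ and $\omega_0$-linearization must actually be combined to produce the displayed quadratic polynomial, so the proposal as written is a plan of a proof rather than a proof.
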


\begin{rem} \label{s3-rabis}
If we go through the proofs of Theorem~\ref{s3-t0} and Theorem~\ref{s3-t1} (see~\cite{Hsiao08}), it is not difficult to see that Theorem~\ref{s3-t0} and Theorem~\ref{s3-t1} have straightforward generalizations to the case when the functions take values in $\Lambda^qT^{*(0,1)}X\otimes F$, for a given smooth  CR vector bundle $F$ over $X$. We recall that $F$ is a CR vector
bundle if its transition functions are CR.
\end{rem}

\begin{rem} \label{s3-r1}
Let $\widehat\varphi\in\cC^\infty(B\times B)$. We assume that $\widehat\varphi$ satisfies \eqref{s3-e12-bis}--\eqref{s3-e15-bis}, \eqref{s3-e16-bisbis} and
\eqref{s3-ephase1}, \eqref{s3-ephase2}. Then it is well-known
(see~\cite[\S3,7]{Hsiao08} and Menikoff-Sj\"{o}strand~\cite{MS78}) that $\widehat\varphi(x,y)t$, $t>0$, and $\varphi_-(x,y)t$, $t>0$, are equivalent at each point of ${\rm diag\,}\bigr((\Sigma^-\bigcap T^*B)\times(\Sigma^-\bigcap T^*B)\bigr)$ in the sense of Melin-Sj\"{o}strand (see Melin-Sj\"{o}strand~\cite[p.\,172]{MS74}). We recall briefly that $\widehat\varphi(x,y)t$, $t>0$, and $\varphi_-(x,y)t$, $t>0$, are equivalent at each point of 
\[{\rm diag\,}\bigr((\Sigma^-\cap T^*B)\times(\Sigma^-\cap T^*B)\bigr)\] 
if for every 
\[(x_0,-\lambda_0\omega_0)=(x_0,\lambda_0d_x\varphi_-(x_0,x_0))=(x_0,\lambda_0d_x\widehat\varphi(x_0,x_0))\in\Sigma^-\cap T^*B,\]
there is a conic neighborhood $\Gamma$ of $(x_0,x_0,\lambda_0)$, such that for every $a(x,y,t)\in S^m_{{\rm cl\,}}(B\times B\times\Real_+)$, $m\in\mathbb Z$, with support in $\Gamma$, we can find $\widehat a(x,y,t)\in S^m_{{\rm cl\,}}(B\times B\times\Real_+)$ with support in $\Gamma$, such that 
\[\int^\infty_0e^{i\varphi_-(x,y)t}a(x,y,t)dt\equiv\int^\infty_0e^{i\widehat\varphi(x,y)t}\widehat a(x,y,t)dt\] 
and vise versa, where $S^m_{{\rm cl\,}}$ denotes the classical symbol of order $m$ (see~\cite[p.\,38]{GS94} for the definition of $S^m_{{\rm cl\,}}$).
\end{rem}

If $\omega\in T^{*(0,1)}_xX$, as \eqref{s1-e0}, we let $(\omega\wedge)^*:\Lambda^{q+1}T^{*(0,1)}_xX\To\Lambda^qT^{*(0,1)}_xX$, $q\geq0$,
denote the adjoint of left exterior multiplication $\omega\wedge:\Lambda^qT^{*(0,1)}_xX\To\Lambda^{q+1}T^{*(0,1)}_xX$.

The following formula for the principal symbol $s^0_-$ on the diagonal follows from \cite[\S 8]{Hsiao08}, its calculation being local in nature.
\begin{thm} \label{s3-t2}
Let $q=n_-$. For a given point $x_0\in X$, let $\{W_j\}_{j=1}^n$ be an
orthonormal frame of $T^{(1,0)}X$ near $x_0$, for which the Levi form
is diagonal at $x_0$. Put $\mathcal{L}_{x_0}(W_j,\ol W_j)=\mu_j(x_0)$, $j=1,\ldots,n$\,. Let $\{T_j\}_{j=1}^n$ denote the
dual basis of $T^{*(0,1)}X$, dual to $\{\ol W_j\}^n_{j=1}$. We assume that
$\mu_j(x_0)<0$ if\, $1\leq j\leq n_-$. Then, for $s^0_-(x,y)$ in
\eqref{s3-e11-bis}, we have
\[s^0_-(x_0, x_0)=\frac{1}{2}\abs{\mu_1(x_0)}\cdots
\abs{\mu_{n}(x_0)}\pi^{-n-1}\prod_{j=1}^{n_-}(T_j(x_0)\wedge)\circ
(T_j(x_0)\wedge)^*.\]
\end{thm}
We return now to the situation where $X$ is the Grauert tube of a line bundle $L$ as in Section \ref{s:CR} and use the notations introduced there.
Let $(z,\theta)$ be the coordinates as in \eqref{s3-e0}, \eqref{s3-e1} on $B=D\times]-\varepsilon_0,\varepsilon_0[$, $\varepsilon_0>0$,
$D\Subset M$. 
Until further notice, we work with the local coordinates
$(z,\theta)=(x',x_{2n+1})=x$. If we denote the holomorphic coordinates of $D$ by $w_j=y_{2j-1}+iy_{2j}$, $j=1,\ldots,n$, and by $y_{2n+1}$ the coordinate of $]-\varepsilon_0,\varepsilon_0[$, we also write $(w,y_{2n+1})=(y',y_{2n+1})=y$, $y'=(y_1,\ldots,y_{2n})$. Let $\xi$ be the dual variables of $x$. From \eqref{s3-e3} and \eqref{s3-e4}, we can check that the principal symbol of $\Box^{(q)}_b$ satisfies
\begin{equation} \label{s3-e16-bisbisbis}
p_0(x, \xi)=p_0(x', \xi).
\end{equation}
That is, the principal symbol of $\Box^{(q)}_b$ is independent of $x_{2n+1}$.

Using \eqref{s3-e15-bis} and recalling \eqref{s3-e1-1}, we have
\[d_x\varphi_-(x,x)=-dx_{2n+1}+a(x')dx',\ \ a\in\cC^\infty.\]
Thus, near a given point $(x_0,x_0)\in B\times B$, we have $\frac{\pr\varphi_-}{\pr x_{2n+1}}\neq0$.
Using the Malgrange preparation theorem \cite[Th.\,7.57]{Hor03}, we have
\begin{equation} \label{s3-e16-bisb}
\varphi_-(x,y)=g(x,y)(-x_{2n+1}+h(x',y))
\end{equation}
in some neighborhood of $(x_0,x_0)$, where $g, h\in\cC^\infty$,
$g(x,x)=1$, $h(x',x)=x_{2n+1}$. Since ${\rm
Im\,}\varphi_-\geq0$, it is not difficult to see that
${\rm Im\,}h\geq0$ in some neighborhood of $(x_0,x_0)$. We may take $B$
small enough so that \eqref{s3-e16-bisb} holds and ${\rm
Im\,}h\geq0$ on $B\times B$.  From the global theory of Fourier integral
operators \cite[Th.\,4.2]{MS74}, we see that
$\varphi_-(x,y)t$ and $(-x_{2n+1}+h(x',y))t$ are equivalent in the
sense of Melin-Sj\"{o}strand. We can replace the phase $\varphi_-$
by $-x_{2n+1}+h(x',y)$. Again from \eqref{s3-e15-bis}, we have
\[\frac{\pr h}{\pr x'}(x',x)dx'-dx_{2n+1}=-\omega_0(x)=-dx_{2n+1}+a(x')dx'.\]
Thus, $\frac{\pr h}{\pr x'}(x',x)$ is independent of $x_{2n+1}$. We conclude that
\begin{equation} \label{s3-e16-bisbc}
\frac{\pr h}{\pr x'}(x',x)dx'-dx_{2n+1}=\frac{\pr h}{\pr x'}(x',x')dx'-dx_{2n+1}=-\omega_0(x).
\end{equation}
Similarly, we have
\begin{equation} \label{s3-e16-bisbd}
\frac{\pr h}{\pr y}(y',y)dy=dy_{2n+1}+\frac{\pr h}{\pr y'}(y',y')dy'=\omega_0(y).
\end{equation}
Put
\[\widehat\varphi=-x_{2n+1}+y_{2n+1}+h(x',y').\]
Note that
$-x_{2n+1}+h(x',y)$ satisfies \eqref{s3-e16-bisbis}. From this and
\eqref{s3-e16-bisbisbis}, we have
\[p_0\bigr(x,(\frac{\pr h}{\pr x'}(x',y),-1)\bigr)=p_0\bigr(x',(\frac{\pr h}{\pr x'}(x',y),-1)\bigr)=f(x,y)(-x_{2n+1}+h(x',y))+O(\abs{x-y}^N)\]
for all $N\in\mathbb N$, for some $f\in\cC^\infty$. Hence,
\begin{equation} \label{s3-e16-bisbf}
p_0(x, \widehat{\varphi}'_x)=p_0(x', \widehat{\varphi}'_x)=f(x,y')(-x_{2n+1}+h(x',y'))+O(\abs{x'-y'}^N+\abs{x_{2n+1}}^N),
\end{equation}
for all $N\in\mathbb N$. We replace $x_{2n+1}$ by
$x_{2n+1}-y_{2n+1}$ in \eqref{s3-e16-bisbf} and get
\begin{equation} \label{s3-e16-bisbfg}
p_0(x, \widehat{\varphi}'_x)=p_0(x', \widehat{\varphi}'_x)=\widehat f(x,y)\widehat\varphi+O(\abs{x-y}^N),
\end{equation}
for all $N\in\mathbb N$, for some $\widehat f\in\cC^\infty$. Thus, $\widehat\varphi$ satisfies \eqref{s3-e16-bisbis}.
Note that $p_0(x,\widehat{\varphi}'_x)$ is independent of $x_{2n+1}$. Take $x_{2n+1}=y_{2n+1}+h(x',y')$ in \eqref{s3-e16-bisbfg}
and notice that $h(x',x')=0$, we conclude that
\begin{equation} \label{s3-e16-bisbfgh}
p_0(x,\widehat{\varphi}'_x)=O(\abs{x'-y'}^N),\ \ \forall N\in\mathbb N.
\end{equation}

Furthermore, from \eqref{s3-e16-bisbc} and \eqref{s3-e16-bisbd}, we see that $\widehat\varphi$ satisfies \eqref{s3-e15-bis}. Moreover, for a given
point $p\in D$, we may take local coordinates
$z=(z_1,\ldots,z_n)$ centered at $p$ such that 
\begin{equation} \label{s3-e16-bisbb}
\begin{split}
&\Theta(p)=\sqrt{-1}\sum^n_{j=1}dz_j\wedge d\ol z_j\,,\\
&\phi(z)=\sum^{n}_{j=1}\lambda_j\abs{z_j}^2+O(\abs{z}^3)\,,\:\text{$z$ near $p$}\,,\:\{\lambda_j\}_{j=1}^n\subset\R\setminus\{0\}\,.
\end{split}
\end{equation}
From \eqref{s3-ephase2} and \eqref{s3-e16-bisb}, it is not difficult to see that
\begin{equation} \label{s3-e16-bisbe}
h(x',y')=i\sum^n_{j=1}\abs{\lambda_j}\abs{z_j-w_j}^2+i\sum^n_{j=1}\lambda_j(\ol
z_jw_j-z_j\ol w_j)+O(\abs{(x',y')}^3).
\end{equation}
Thus $\widehat\varphi$ satisfies \eqref{s3-ephase2}. 
Formula \eqref{s3-e16-bisbe} and the Taylor expansion of $h(x',y')$ at $x'=y'$ yield
\[{\rm Im\,}h(x',y')\geq c\abs{x'-y'}^2,\ \ c>0.\]
Thus, $\widehat\varphi=0$ if and only if $x=y$. We conclude that $\widehat\varphi$ satisfies \eqref{s3-e12-bis}--\eqref{s3-e15-bis}, \eqref{s3-e16-bisbis} and \eqref{s3-ephase1}, \eqref{s3-ephase2}. In view of Remark~\ref{s3-r1}, we see that $t\varphi_-$ and $t\widehat\varphi $ are equivalent at each point of ${\rm diag\,}\bigr((\Sigma^-\cap T^*B)\times(\Sigma^-\cap T^*B)\bigr)$ in the sense of Melin-Sj\"{o}strand.
Since $\varphi_-(x,y)=-\ol\varphi_-(y,x)$, we can replace $\varphi_-$ by
\[\frac{1}{2}(\widehat\varphi(x,y)-\ol{\widehat\varphi(y,x)})=(-x_{2n+1}+y_{2n+1})+\frac{1}{2}(h(x',y')-\ol h(y',x')).\]
Summing up, we get the following.
\begin{thm} \label{s3-t1-bis}
Let $(L,h^L)$ be a holomorphic Hermitian line bundle over $M$ whose curvature $R^L$ is non-degenerate of constant signature $(n_-,n_+)$ at each point of an open set $D\Subset M$. We assume that $L$ is trivial on $D$.
Let $\pi:X\to M$ be the Grauert tube of $L$ {\rm(}cf.\ \eqref{grauert_tube}{\rm)} and let $B=\pi^{-1}(D)$.
With the notations used before, we can take the phase $\varphi_-(x, y)$ in Theorem~\ref{s3-t1} so that
\begin{equation} \label{s3-e16-bisbbbbb}
\begin{split}
&\varphi_-(x,y)=-x_{2n+1}+y_{2n+1}+\Psi(z,w),\ \
\Psi(x',y')=\Psi(z,w)\in\cC^\infty,\\
&\mbox{$p_0(x,\varphi'_-(x,y))=O(\abs{x'-y'}^N)$, locally uniformly on $B\times B$, for all $N\in\mathbb N$},
\end{split}
\end{equation}
where $p_0(x,\xi)$ is the principal symbol of $\Box^{(q)}_b$ and $\Psi$ satisfies 
\begin{equation}\label{prop_psi}
\Psi(z,w)=-\ol\Psi(w,z)\,,\ \exists\, c>0:\ {\rm Im\,}\Psi\geq c\abs{z-w}^2\,,\ \Psi(z,w)=0\Leftrightarrow z=w \,.
\end{equation}
For a given
point $p\in D$, let $z=(z_1,\ldots,z_n)$ be local holomorphic coordinates
 centered at $p$ satisfying \eqref{s3-e16-bisbb}. Then, near $(0,0)$, we have
\begin{equation} \label{s3-e16-bisbg}
\Psi(z,w)=i\sum^n_{j=1}\abs{\lambda_j}\abs{z_j-w_j}^2+i\sum^n_{j=1}\lambda_j(\ol
z_jw_j-z_j\ol w_j)+O(\abs{(z,w)}^3).
\end{equation}
\end{thm}
From now on, we assume that $\varphi_-$ has the form \eqref{s3-e16-bisbbbbb}.
\subsection{Semi-classical Hodge decomposition for the Kodaira Laplacian}\label{s:smcl-hodge}
In this Section we apply the results about the Szeg\"o kernel previously deduced in order to describe the semiclassical behaviour of the spectrum of the Kodaira Laplacian $\Box^{(q)}_k$.
We work locally in the following setup.
\begin{setup}\label{local_data}
Let $(M,\Theta)$ be a Hermitian manifold, $(L,h^L)$ be a holomorphic Hermitian line bundle on $M$.
Assume that the curvature $\sqrt{-1}R^L$ is non-degenerate of constant signature $(n_-,n_+)$ on the domain of a chart $(D,z)\cong(D,x)\Subset M$. Assume that $L|_D$ is trivial and let $s$ be a frame of $L|_D$ and set $\abs{s}_{h^L}^2=e^{-2\phi}$.
\end{setup}
We introduce some notations.
For an open set $D\Subset M$ and any $k$-dependent continuous function
\[F_k:H^s_{{\rm comp\,}}(D,\Lambda^qT^{*(0,1)}M)\To H^{s'}_{{\rm loc\,}}(D,\Lambda^qT^{*(0,1)}M),\ \ s, s'\in\Real,\]
we write
\[F_k=O(k^{n_0}):H^s_{{\rm comp\,}}(D,\Lambda^qT^{*(0,1)}M)\To H^{s'}_{{\rm loc\,}}(D,\Lambda^qT^{*(0,1)}M),\ \ n_0\in\mathbb Z,\]
if for any $\chi_0, \chi_1\in\cC^\infty_0(D)$, there is a positive constant $c$, $c$ is independent of $k$, such that
\begin{equation} \label{s1-e1su}
\norm{(\chi_0F_k\chi_1)u}_{s'}\leq ck^{n_0}\norm{u}_{s},\ \ \forall u\in H^s_{{\rm loc\,}}(D,\Lambda^qT^{*(0,1)}M),
\end{equation}
where $\norm{u}_s$ is the usual Sobolev norm of order $s$.

A $k$-dependent smoothing operator
$A_k:\Omega^{0,q}_0(D)\To\Omega^{0,q}(D)$ is called $k$-negligible
if the kernel $A_k(x, y)$ of $A_k$ satisfies
$\abs{\pr^\alpha_x\pr^\beta_yA_k(x, y)}=O(k^{-N})$ locally uniformly
on every compact set in $D\times D$, for all multi-indices $\alpha$,
$\beta$ and all $N\in\mathbb N$. $A_k$ is $k$-negligible if and only if
\[A_k=O(k^{-N'}): H^s_{\rm comp\,}(D, \Lambda^qT^{*(0,1)}M)\To H^{s+N}_{\rm loc\,}(D, \Lambda^qT^{*(0,1)}M)\,,\quad\text{for all $N, N'\geq0$, $s\in\mathbb Z$.}\]
Let
$C_k:\Omega^{0,q}_0(D)\To\Omega^{0,q}(D)$ be another $k$-dependent
smoothing operator. We write $A_k\equiv C_k\mod O(k^{-\infty})$ or $A_k(x,y)\equiv C_k(x,y)\mod O(k^{-\infty})$ if
$A_k-C_k$ is $k$-negligible.

We recall the definition of semi-classical H\"{o}rmander symbol spaces:

\begin{defn} \label{s1-d1}
Let $U$ be an open set in $\Real^N$. Let $S(1;U)=S(1)$ be the set of
$a\in\cC^\infty(U)$ such that for every $\alpha\in\mathbb N^N_0$, there
exists $C_\alpha>0$, such that $\abs{\pr^\alpha_xa(x)}\leq
C_\alpha$ on $U$. If $a=a(x,k)$ depends on $k\in]1,\infty[$, we say that
$a(x,k)\in S_{{\rm loc\,}}(1)$ if $\chi(x)a(x,k)$ uniformly bounded
in $S(1)$ when $k$ varies in $]1,\infty[$, for any $\chi\in
\cC^\infty_0(U)$. For $m\in\Real$, we put $S^m_{{\rm
loc}}(1)=k^mS_{{\rm loc\,}}(1)$. If $a_j\in S^{m_j}_{{\rm
loc\,}}(1)$, $m_j\searrow-\infty$, we say that $a\sim
\sum^\infty_{j=0}a_j$ in $S^{m_0}_{{\rm loc\,}}(1)$ if
$a-\sum^{N_0}_{j=0}a_j\in S^{m_{N_0+1}}_{{\rm loc\,}}(1)$ for every
$N_0$. From this, we form $S^m_{{\rm loc\,}}(1;Y, E)$ in the
natural way, where $Y$ is a smooth paracompact manifold and $E$ is a
vector bundle over $Y$.
\end{defn}
Let $D$, $s$, $\phi$ be as in Setup \ref{local_data}.
Let $(z, \theta)$ be the local
coordinates as in \eqref{s3-e0}, \eqref{s3-e1} defined on $D\times]-\varepsilon_0, \varepsilon_0[$, $\pi\geq\varepsilon_0>0$. 
Let $\Box^{(q)}_s$ be the operator as in \eqref{s1-e3} and \eqref{s1-e4}. Since $\pr\ddbar\phi$ has constant signature $(n_-, n_+)$
at each point of $D$, from \eqref{s3-e2}, we know that the Levi form $\mathcal{L}$ has constant signature $(n_-, n_+)$ at each point
of $D\times]-\varepsilon_0, \varepsilon_0[$.

Let $q=n_-$ or $n_+$ and let $S_-$, $S_+$ be the approximate Szeg\"o kernels defined in
Theorem~\ref{s3-t1}. Define also the approximate Szeg\"o kernel
\begin{equation}\label{s3-e15-1}
S=S_-+S_+\,.
\end{equation}
 Let $\chi(\theta), \chi_1(\theta)\in\cC^\infty_0(]-\varepsilon_0, \varepsilon_0[)$, $\chi, \chi_1\geq 0$. We assume that $\chi_1=1$ on ${\rm Supp\,}\chi$. We take $\chi$ so that  $\int\chi(\theta)d\theta=1$. Put
\begin{equation} \label{s3-e16-1}
\chi_k(\theta)=e^{-ik\theta}\chi(\theta).
\end{equation}
The approximate Szeg\"o kernel was introduced in \eqref{s3-e15-1}. We introduce the \emph{localized approximate Szeg\"o kernel} $\mathcal{S}_{k}$ by
\begin{align} \label{s3-e17-1}
\mathcal{S}_{k}: H^s_{{\rm loc\,}}(D, \Lambda^qT^{*(0,1)}M)&\To H^s_{{\rm loc\,}}(D, \Lambda^qT^{*(0,1)}M),\ \ \forall s\in\mathbb N_0, \nonumber\\
u(z)&\To \int e^{i\theta k}\chi_1(\theta)S(\chi_ku)(z,\theta)d\theta.
\end{align}
Let $u(z)\in H^s_{{\rm loc\,}}(D, \Lambda^qT^{*(0,1)}M)$, $s\in\mathbb N_0$.
We have $\chi_k(\theta)u(z)\in H^s_{{\rm loc\,}}(D\times]-\varepsilon_0,\varepsilon_0[, \Lambda^qT^{*(0,1)}X)$.
From Theorem~\ref{s3-t1}, we know that
\[S(\chi_ku)\in H^{s}_{{\rm loc\,}}(D\times]-\varepsilon_0, \varepsilon_0[, \Lambda^qT^{*(0,1)}X).\]
From this, it is not difficult to see that $\int e^{i\theta
k}\chi_1(\theta)S(\chi_ku)(z, \theta)d\theta\in H^s_{{\rm loc\,}}(D,
\Lambda^qT^{*(0,1)}M)$. Thus, the localization $\mathcal{S}_{k}$ is well-defined.
Since $S$ is properly supported, $\mathcal{S}_{k}$ is properly
supported, too. Moreover, from \eqref{s3-e6-bis0} and \eqref{s3-e17-1},
we can check that
\begin{equation} \label{s3-e17-2bis}
\mathcal{S}_{k}=O(k^s): H^s_{{\rm comp\,}}(D, \Lambda^qT^{*(0,1)}M)\To H^{s}_{{\rm comp\,}}(D, \Lambda^qT^{*(0,1)}M),
\end{equation}
for all $s\in\mathbb N_0$.

Let $\mathcal{S}_{k}^*:\mathscr D'(D, \Lambda^qT^{*(0,1)}M)\To\mathscr D'(D, \Lambda^qT^{*(0,1)}M)$ be the formal adjoint of $\mathcal{S}_{k}$ with respect to $(\,\cdot\,,\cdot)$. Then $\mathcal{S}_{k}^*$ is also properly supported and we have
\begin{equation} \label{s3-e17-3}
\mathcal{S}_{k}^*:\mathscr E'(D, \Lambda^qT^{*(0,1)}M)\To\mathscr E'(D, \Lambda^qT^{*(0,1)}M).
\end{equation}
From \eqref{s3-e9}, we have
\begin{equation} \label{s3-e18-1}
\begin{split}
\Box^{(q)}_s\circ\bigr(\int e^{i\theta k}\chi_1(\theta)S(\chi_ku)d\theta\bigr)&=\int e^{i\theta k}\bigr(\Box^{(q)}_b(\chi_1S)\bigr)(\chi_ku)(z, \theta)d\theta\\
&=\int e^{i\theta k}\bigr(\Box^{(q)}_b(\chi_1S\Td\chi)\bigr)(\chi_ku)(z, \theta)d\theta,
\end{split}
\end{equation}
where $\Td\chi\in\cC^\infty_0(]-\varepsilon_0, \varepsilon_0[)$, $\Td\chi=1$ on ${\rm Supp\,}\chi$ and $\chi_1=1$ on ${\rm Supp\,}\Td\chi$ and  $u\in\Omega^{0,q}_0(D)$. Note that $\Box^{(q)}_b(\chi_1S\Td\chi)=\Box^{(q)}_b(S\Td\chi)-\Box^{(q)}_b((1-\chi_1)S\Td\chi)$. From Theorem~\ref{s3-t1}, we know that $\Box^{(q)}_bS$ is smoothing and the kernel of $S$ is smoothing away the diagonal. Thus,
$(1-\chi_1)S\Td\chi$ is smoothing. It follows that $\Box^{(q)}_b((1-\chi_1)S\Td\chi)$ is smoothing. We conclude that
$\Box^{(q)}_b(\chi_1S\Td\chi)$ is smoothing.
Let $K((z, \theta), (w, \eta))\in\cC^\infty$ be the distribution kernel of $\Box^{(q)}_b(\chi_1S\Td\chi)$, where $w=(w_1,\ldots,w_n)$ are the local holomorphic
coordinates of $D$ and $\eta$ is the coordinate of $]-\varepsilon_0,\varepsilon_0[$. From \eqref{s3-e18-1} and recall the form $\chi_k$ (see \eqref{s3-e16-1}), we see that the distribution kernel of $\Box^{(q)}_s\mathcal{S}_{k}$ is given by
\begin{equation} \label{s3-e19-1}
(\Box^{(q)}_s\mathcal{S}_{k})(z, w)=\int e^{i(\theta-\eta)k}K((z,\theta), (w,\eta))\chi(\eta)d\eta d\theta.
\end{equation}
For $N\in\mathbb N$, we have
\begin{equation} \label{s3-e19-1bis}
\begin{split}
\abs{k^N(\Box^{(q)}_s\mathcal{S}_{k})(z,w)}&=\abs{\int \bigr((i\frac{\pr}{\pr\eta})^Ne^{i(\theta-\eta)k})\bigr)K((z,\theta),(w,\eta))\chi(\eta)d\eta d\theta}\\
&=\abs{\int e^{i(\theta-\eta)k}(-i\frac{\pr}{\pr\eta})^N\bigr(K((z,\theta),(w,\eta))\chi(\eta)\bigr)d\eta d\theta}.
\end{split}
\end{equation}
Thus, $(\Box^{(q)}_s\mathcal{S}_{k})(z, w)=O(k^{-N})$, locally uniformly for all $N\in\mathbb N$, and similarly for the derivatives. We deduce that
\begin{equation} \label{s3-e20-1}
\Box^{(q)}_s\mathcal{S}_{k}\equiv 0\mod O(k^{-\infty}).
\end{equation}
Thus,
\begin{equation} \label{s3-e21-1}
\mathcal{S}_{k}^*\Box^{(q)}_s\equiv 0\mod O(k^{-\infty}).
\end{equation}

Let $A$ be the partial parametrix of $\Box^{(q)}_b$ described in Theorem~\ref{s3-t1}. Define the \emph{localized partial parametrix} $\mathcal{A}_{k}$ by
\begin{equation} \label{s3-e17}
\begin{split}
\mathcal{A}_{k}: H^s_{{\rm loc\,}}(D, \Lambda^qT^{*(0,1)}M)&\To H^{s+1}_{{\rm loc\,}}(D, \Lambda^qT^{*(0,1)}M),\ \ \forall s\in\mathbb N_0, \\
u(z)&\mapsto \int e^{i\theta k}\chi_1A(\chi_ku)(z,\theta)d\theta.
\end{split}
\end{equation}
As above, we can show that $\mathcal{A}_{k}$ is well-defined.
Since $A$ is properly supported, $\mathcal{A}_{k}$ is properly
supported, too. Moreover, from \eqref{s3-e6-bis0} and \eqref{s3-e17}, we
can check that
\begin{equation} \label{s3-e18}
\mathcal{A}_{k}=O(k^s): H^s_{{\rm comp\,}}(D, \Lambda^qT^{*(0,1)}M)\To H^{s+1}_{{\rm comp\,}}(D, \Lambda^qT^{*(0,1)}M),
\end{equation}
for all $s\in\mathbb N_0$.

Let $\mathcal{A}_{k}^*:\mathscr D'(D,\Lambda^qT^{*(0,1)}M)\To\mathscr D'(D,\Lambda^qT^{*(0,1)}M)$ be the formal adjoint of
$\mathcal{A}_{k}$ with respect to $(\,\cdot\,,\cdot)$. We can check that
\[(\mathcal{A}_{k}^*v)(z)=\int\ol{\chi_k(\theta)}A^*(ve^{-i\theta k}\chi_1)(z,\theta)d\theta\in\Omega^{0,q}_0(D),\]
for all $v\in\Omega^{0,q}_0(D)$. Thus, $\mathcal{A}_{k}^*:\Omega^{0,q}_0(D)\To\Omega^{0,q}_0(D)$.
Moreover, as before, we can show that
\begin{equation} \label{s3-e18adjoint}
\mathcal{A}_{k}^*=O(k^s): H^s_{{\rm comp\,}}(D, \Lambda^qT^{*(0,1)}M)\To H^{s+1}_{{\rm comp\,}}(D, \Lambda^qT^{*(0,1)}M)\,,\quad\text{for all $s\in\mathbb N_0$.}
\end{equation}

Let $u\in\Omega^{0,q}_0(D)$. From \eqref{s3-e9}, we have
\begin{equation*}
\begin{split}
\Box^{(q)}_s(\mathcal{A}_{k}u)&=\Box^{(q)}_s\circ\bigr(\int e^{i\theta k}\chi_1A(\chi_ku)d\theta\bigr)=\int e^{i\theta k}\bigr(\Box^{(q)}_b\chi_1A\Td\chi)(\chi_ku)(z, \theta)d\theta,
\end{split}
\end{equation*}
where $\Td\chi$ is as in \eqref{s3-e18-1}. Note that $\Box^{(q)}_b(\chi_1A\Td\chi)=\Box^{(q)}_b(A\Td\chi)-\Box^{(q)}_b((1-\chi_1)A\Td\chi)$. From Theorem~\ref{s3-t1}, we know that $\Box^{(q)}_bA+S=I$ and the kernel of $A$ is smoothing away the diagonal. Thus,
$(1-\chi_1)A\Td\chi$ is smoothing. It follows that $\Box^{(q)}_b((1-\chi_1)A\Td\chi)$ is smoothing. We conclude that
$\Box^{(q)}_b(\chi_1A\Td\chi)\equiv(I-S)\Td\chi$. From this, we get
\begin{equation} \label{s3-e21-3}
\begin{split}
\Box^{(q)}_s(\mathcal{A}_{k}u)&=\int e^{i\theta k}(I-S)(\chi_ku)(z, \theta)d\theta+\int e^{i\theta k}F(\chi_ku)(z, \theta)d\theta\\
&=u-\int e^{i\theta k}S(\chi_ku)(z,\theta)d\theta+\int e^{i\theta k}F(\chi_ku)(z, \theta)d\theta\\\
&=(I-\mathcal{S}_{k})u-\int e^{i\theta k}(1-\chi_1)S(\chi_ku)(z,\theta)d\theta+\int e^{i\theta k}F(\chi_ku)(z, \theta)d\theta,
\end{split}
\end{equation}
where $F$ is a smoothing operator. We can repeat the procedure as in \eqref{s3-e19-1bis} and conclude that the operator
\[u\To \int e^{i\theta k}F(\chi_k u)(z,\theta)d\theta,\ \ u\in\Omega^{0,q}_0(D),\]
is $k$-negligible. Similarly, since $(1-\chi_1)S\chi$ is smoothing, the operator
\[u\To \int e^{i\theta k}(1-\chi_1)S(\chi_k u)(z,\theta)d\theta,\ \ u\in\Omega^{0,q}_0(D),\]
is also $k$-negligible. Summing up, we obtain
\begin{equation} \label{s3-e21-4}
\Box^{(q)}_s\mathcal{A}_{k}+\mathcal{S}_{k}\equiv I\mod O(k^{-\infty}).
\end{equation}

We may replace $\mathcal{S}_{k}$ by $I-\Box^{(q)}_s\mathcal{A}_{k}$ and we have $\Box^{(q)}_s\mathcal{A}_{k}+\mathcal{S}_{k}=I$ and hence $\mathcal{A}_{k}^*\Box^{(q)}_s+\mathcal{S}_{k}^*=I$. Thus,
\begin{equation} \label{s3-e19}
\mathcal{S}_{k}=(\mathcal{A}_{k}^*\Box^{(q)}_s+\mathcal{S}_{k}^*)\mathcal{S}_{k}=\mathcal{A}_{k}^*\Box^{(q)}_s\mathcal{S}_{k}+\mathcal{S}_{k}^*\mathcal{S}_{k}.
\end{equation}
From \eqref{s3-e20-1} and \eqref{s3-e18adjoint}, we see that
\[\mathcal{A}_{k}^*\Box^{(q)}_s\mathcal{S}_{k}=O(k^{-N'}):H^s_{{\rm comp\,}}(D,\Lambda^qT^{*(0,1)}M)\To H^{s+N}_{{\rm comp\,}}(D, \Lambda^qT^{*(0,1)}M'),\] for all $s\in\mathbb Z$ and $N', N\in\mathbb N$. Thus, $\mathcal{A}_{k}^*\Box^{(q)}_s\mathcal{S}_{k}\equiv0\mod O(k^{-\infty})$. From this and \eqref{s3-e19}, we get
\begin{equation} \label{s3-e20}
\mathcal{S}_{k}^*\mathcal{S}_{k}\equiv \mathcal{S}_{k}\mod O(k^{-\infty}).
\end{equation}
It follows that
\begin{equation} \label{s3-e20bis}
\mathcal{S}_{k}\equiv\mathcal{S}_{k}^*\mod O(k^{-\infty}),\ \mathcal{S}_{k}^2\equiv\mathcal{S}_{k}\mod O(k^{-\infty}).
\end{equation}

From \eqref{s3-e17-2bis}, \eqref{s3-e20-1}, \eqref{s3-e21-1}, \eqref{s3-e18}, \eqref{s3-e18adjoint}, \eqref{s3-e21-4}, \eqref{s3-e20} and \eqref{s3-e20bis}, we get our main technical result:
\begin{thm} \label{s3-t4}
In the situation of Setup \ref{local_data} 
let $q=n_-$ or $n_+$ and let
$\mathcal{S}_{k}$ be the localized approximate Szeg\"o kernel \eqref{s3-e17-1} and $\mathcal{A}_{k}$ the localized partial parametrix \eqref{s3-e17}. Then,
\begin{equation} \label{s3-e22-00bis}
\begin{split}
\mathcal{S}_{k}^*, \mathcal{S}_{k}=O(k^s): H^s_{{\rm comp\,}}(D, \Lambda^qT^{*(0,1)}M)\To H^{s}_{{\rm comp\,}}(D, \Lambda^qT^{*(0,1)}M),\ \ \forall s\in\mathbb N_0,\\
\mathcal{A}_{k}^*,\mathcal{A}_{k}=O(k^s): H^s_{{\rm comp\,}}(D, \Lambda^qT^{*(0,1)}M)\To H^{s+1}_{{\rm comp\,}}(D, \Lambda^qT^{*(0,1)}M),\ \ \forall s\in\mathbb N_0,
\end{split}
\end{equation}
and we have
\begin{gather}
\Box^{(q)}_s\mathcal{S}_{k}\equiv 0\mod
O(k^{-\infty}),\ \ \mathcal{S}_{k}^*\Box^{(q)}_s\equiv0\mod O(k^{-\infty}), \label{s3-e22-0a1} \\
\mathcal{S}_{k}\equiv\mathcal{S}_{k}^*\mod O(k^{-\infty}),\ \ \mathcal{S}_{k}\equiv\mathcal{S}_{k}^2\mod O(k^{-\infty}),\ \ \mathcal{S}_{k}\equiv\mathcal{S}_{k}^*\mathcal{S}_{k}\mod O(k^{-\infty}), \label{s3-e22-0a}\\
\mathcal{S}_{k}^*+\mathcal{A}_{k}^*\Box^{(q)}_s\equiv I\mod O(k^{-\infty}),\ \ \mathcal{S}_{k}+\Box^{(q)}_s\mathcal{A}_{k}\equiv I\mod O(k^{-\infty}), \label{s3-e23}
\end{gather}
where $\mathcal{S}_{k}^*$ and $\mathcal{A}_{k}^*$ are the formal adjoints of
$\mathcal{S}_{k}$ and $\mathcal{A}_{k}$ with respect to $(\,\cdot\,,\cdot)$
respectively and $\Box^{(q)}_s$ is given by \eqref{s1-e3} and
\eqref{s1-e4}.
\end{thm}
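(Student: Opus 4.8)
The plan is to assemble the statement from facts already established in this section: the mapping and Hodge-type properties of the operators $S=S_-+S_+$ and $A$ on the Grauert tube (Theorem~\ref{s3-t1}), the exact intertwining \eqref{s3-e9} between the weighted Kodaira Laplacian $\Box^{(q)}_s$ on $D$ and the Kohn Laplacian $\Box^{(q)}_b$ on $B=D\times]-\varepsilon_0,\varepsilon_0[$, and the estimates \eqref{s3-e17-2bis}, \eqref{s3-e20-1}, \eqref{s3-e21-1}, \eqref{s3-e18}, \eqref{s3-e18adjoint}, \eqref{s3-e21-4}, \eqref{s3-e20}, \eqref{s3-e20bis} derived above. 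The only genuinely new input is a short algebraic manipulation producing \eqref{s3-e22-0a} together with the exact (rather than reduced) form of \eqref{s3-e23}.

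First I would record \eqref{s3-e22-00bis}: these are \eqref{s3-e17-2bis} and \eqref{s3-e18} for $\mathcal{S}_k$ and $\mathcal{A}_k$, while the bounds for $\mathcal{S}_k^*$ and $\mathcal{A}_k^*$ follow from \eqref{s3-e18adjoint} and the observation made just before it, namely that the adjoint of a localization of the type \eqref{s3-e17-1}, \eqref{s3-e17} is again of that type (using that $S$ and $A$ are self-adjoint modulo smoothing); the loss of $k^s$ on $H^s$ stems only from differentiating the cutoff $\chi_k(\theta)=e^{-ik\theta}\chi(\theta)$. The identities \eqref{s3-e22-0a1} are exactly \eqref{s3-e20-1} and its formal adjoint \eqref{s3-e21-1}; the underlying mechanism, which is the heart of the reduction, is that by \eqref{s3-e9} the kernel of $\Box^{(q)}_s\mathcal{S}_k$ is obtained by integrating the \emph{smooth} kernel of the smoothing operator $\Box^{(q)}_b(\chi_1S\Td\chi)$ against $e^{i(\theta-\eta)k}\chi(\eta)$, so that repeated integration by parts in $\eta$, as in \eqref{s3-e19-1bis}, gains an arbitrary negative power of $k$ locally uniformly.

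For \eqref{s3-e23} and \eqref{s3-e22-0a} I would first use the freedom, justified by \eqref{s3-e21-4}, to replace $\mathcal{S}_k$ by $I-\Box^{(q)}_s\mathcal{A}_k$ (a $k$-negligible change), so that $\Box^{(q)}_s\mathcal{A}_k+\mathcal{S}_k=I$ holds exactly, and hence, taking formal adjoints and using that $\Box^{(q)}_s$ is formally self-adjoint, $\mathcal{A}_k^*\Box^{(q)}_s+\mathcal{S}_k^*=I$; these are the exact forms of \eqref{s3-e23}. Then
\[\mathcal{S}_k=(\mathcal{A}_k^*\Box^{(q)}_s+\mathcal{S}_k^*)\mathcal{S}_k=\mathcal{A}_k^*\bigl(\Box^{(q)}_s\mathcal{S}_k\bigr)+\mathcal{S}_k^*\mathcal{S}_k\equiv\mathcal{S}_k^*\mathcal{S}_k\mod O(k^{-\infty}),\]
since $\Box^{(q)}_s\mathcal{S}_k\equiv0\mod O(k^{-\infty})$ by \eqref{s3-e22-0a1} and $\mathcal{A}_k^*=O(k^s)$ on each Sobolev space by \eqref{s3-e22-00bis} (the composition of a $k$-negligible operator with an $O(k^s)$ operator being again $k$-negligible). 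Taking the formal adjoint of $\mathcal{S}_k\equiv\mathcal{S}_k^*\mathcal{S}_k$ gives $\mathcal{S}_k^*\equiv\mathcal{S}_k^*\mathcal{S}_k\equiv\mathcal{S}_k$, and substituting this back yields $\mathcal{S}_k^2\equiv\mathcal{S}_k^*\mathcal{S}_k\equiv\mathcal{S}_k\mod O(k^{-\infty})$, which completes \eqref{s3-e22-0a}.

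The main obstacle is not in this final bookkeeping but in the two ingredients it rests on, both established earlier: the microlocal Hodge decomposition of $\Box^{(q)}_b$ on the non-degenerate CR manifold $X$ (Theorems~\ref{s3-t0}, \ref{s3-t1}), where the heat-equation and Fourier-integral-operator construction of \cite{Hsiao08} produces $A$ and $S_\pm$ with complex phase, and the exactness of the intertwining \eqref{s3-e9}, which is precisely what makes the passage from the $(2n+1)$-dimensional CR problem on $X$ to the $k$-dependent $2n$-dimensional problem on $D$ lossless modulo $O(k^{-\infty})$. Granting these, Theorem~\ref{s3-t4} follows by the integration-by-parts argument in the $S^1$-fibre variable sketched above.
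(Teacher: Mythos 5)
Your proposal matches the paper's own argument: Theorem~\ref{s3-t4} is indeed assembled from \eqref{s3-e17-2bis}, \eqref{s3-e20-1}, \eqref{s3-e21-1}, \eqref{s3-e18}, \eqref{s3-e18adjoint}, \eqref{s3-e21-4} exactly as you describe, and the paper uses the same normalization $\mathcal{S}_k\mapsto I-\Box^{(q)}_s\mathcal{A}_k$ followed by the identity $\mathcal{S}_k=(\mathcal{A}_k^*\Box^{(q)}_s+\mathcal{S}_k^*)\mathcal{S}_k$ and the observation that $\mathcal{A}_k^*\Box^{(q)}_s\mathcal{S}_k$ is $k$-negligible to obtain \eqref{s3-e20} and \eqref{s3-e20bis}. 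Your identification of the intertwining \eqref{s3-e9} and the integration by parts in the $S^1$-fibre as the mechanism behind \eqref{s3-e22-0a1} is also precisely the paper's reasoning.
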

We notice that $\mathcal{S}_{k}$, $\mathcal{S}_{k}^*$, $\mathcal{A}_{k}$, $\mathcal{A}_{k}^*$, are all properly supported.
We need

\begin{thm} \label{s3-t5}
The localized approximate Szeg\"o kernel $\mathcal{S}_{k}$ given by \eqref{s3-e17-1} is a smoothing operator. Moreover, if
$q=n_-$, then the kernel of $\mathcal{S}_{k}$ satisfies
\begin{equation} \label{s3-es-1}
\mathcal{S}_{k}(z,w)\equiv e^{ik\Psi(z,w)}b(z,w,k)\mod
O(k^{-\infty}),
\end{equation}
with
\begin{equation}  \label{s3-es-2}
\begin{split}
&b(z,w,k)\in S^{n}_{{\rm loc\,}}\big(1;D\times D, \Lambda^qT^{*(0,1)}_wM\boxtimes\Lambda^qT^{*(0,1)}_zM\big), \\
&b(z,w,k)\sim\sum^\infty_{j=0}b_j(z, w)k^{n-j}\text{ in }S^{n}_{{\rm loc\,}}
\big(1;D\times D, \Lambda^qT^{*(0,1)}_wM\boxtimes\Lambda^qT^{*(0,1)}_zM\big), \\
&b_j(z, w)\in\cC^\infty\big(D\times D, \Lambda^qT^{*(0,1)}_wM\boxtimes\Lambda^qT^{*(0,1)}_zM\big),\ \ j=0,1,2,\ldots,
\end{split}
\end{equation}
and $\Psi(z,w)$ is as in Theorem~\ref{s3-t1-bis}.

If $q=n_+$, $n_-\neq n_+$, then
\begin{equation} \label{s3-es-2-0bis}
\mathcal{S}_{k}(z,w)\equiv0\mod O(k^{-\infty}).
\end{equation}
\end{thm}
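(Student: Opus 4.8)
\emph{Proof proposal.} The plan is to compute the Schwartz kernel of $\mathcal{S}_k$ from the Fourier integral representations of $S_\pm$ in Theorem~\ref{s3-t1} and then carry out the $\theta$-integration, exploiting the special form $\varphi_-(x,y)=-x_{2n+1}+y_{2n+1}+\Psi(z,w)$ from Theorem~\ref{s3-t1-bis}. Writing $x=(z,\theta)$, $y=(w,\eta)$ as in \eqref{s3-e1} and using \eqref{s3-e17-1}, \eqref{s3-e16-1}, the kernel of $\mathcal{S}_k$ is
\[\mathcal{S}_k(z,w)=\int\!\!\int e^{ik(\theta-\eta)}\chi_1(\theta)\chi(\eta)\,K_S\big((z,\theta),(w,\eta)\big)\,d\theta\,d\eta\,,\qquad S=S_-+S_+\,.\]
By Theorem~\ref{s3-t1} we may, modulo smoothing remainders, replace $K_{S_\mp}$ by $\int_0^\infty e^{it\varphi_\mp}s_\mp\,dt$; the smoothing remainders, together with the contributions of the cut-off factors $1-\chi_1$ (which make the corresponding pieces smoothing, exactly as in the discussion around \eqref{s3-e19-1bis}), yield $k$-negligible operators after one integration by parts in $\theta$ since $\pr_\theta(k\theta)=k$. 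So it remains to analyse the two oscillatory integrals with phase $k(\theta-\eta)+t\varphi_\mp$. For the $S_+$-term, by the same reduction applied to $\varphi_+$ (equivalently, since $-\ol\varphi_+$ satisfies the properties of $\varphi_-$) we may take $\varphi_+(x,y)=x_{2n+1}-y_{2n+1}+\Psi_+(z,w)$, so that $\pr_\theta\varphi_+\equiv1$ in view of \eqref{s3-e1-1}; hence $\pr_\theta\big(k(\theta-\eta)+t\varphi_+\big)=k+t>0$ for $t\geq0$, and repeated integration by parts in $\theta$ together with $\int_0^\infty t^n(k+t)^{-N}\,dt=O(k^{n+1-N})$ shows that the $S_+$-contribution to $\mathcal{S}_k$ is $O(k^{-\infty})$ in every $\cC^\ell$-norm on compacts. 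This disposes of $S_+$ in all cases.

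\textbf{Case $q=n_-$.} Here $s_-$ is a genuine symbol of order $n$ in $t$. Inserting $\varphi_-=-\theta+\eta+\Psi(z,w)$ gives the phase $(k-t)(\theta-\eta)+t\Psi(z,w)$, so
\[\mathcal{S}_k(z,w)\equiv\int_0^\infty\!\!\int\!\!\int e^{i[(k-t)(\theta-\eta)+t\Psi(z,w)]}\chi_1(\theta)\chi(\eta)\,s_-\big((z,\theta),(w,\eta),t\big)\,d\theta\,d\eta\,dt\,.\]
Doing the $(\theta,\eta)$-integration first: the function $(\theta,\eta)\mapsto\chi_1(\theta)\chi(\eta)s_-((z,\theta),(w,\eta),t)$ is smooth with compact support, so its partial Fourier transform in $(\theta,\eta)$ at frequency $k-t$ is rapidly decreasing in $k-t$, locally uniformly in $(z,w)$ and with symbolic behaviour of order $n$ in $t$; call it $g(z,w,t,k-t)$. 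Then $\mathcal{S}_k(z,w)\equiv\int_0^\infty e^{it\Psi(z,w)}g(z,w,t,k-t)\,dt$, and since $\abs{e^{it\Psi}}\leq1$ (because $\operatorname{Im}\Psi\geq0$) the rapid decay of $g$ in its last slot localizes the integral to $t\approx k$: substituting $t=k+u$ and extending the $u$-integral to $\Real$ at the cost of $O(k^{-\infty})$ yields $\mathcal{S}_k(z,w)\equiv e^{ik\Psi(z,w)}\,b(z,w,k)$ with $b(z,w,k)=\int_{\Real}e^{iu\Psi(z,w)}g(z,w,k+u,-u)\,du$. Expanding the symbol $g$ in its order-$n$ slot and Taylor-expanding $(k+u)^{n-j}$ in $u/k$ (legitimate because the effective range of $u$ is $o(k)$) turns $b$ into a classical symbol $b(z,w,k)\sim\sum_{j\geq0}b_j(z,w)k^{n-j}$ in $S^n_{\rm loc}(1;D\times D,\Lambda^qT^{*(0,1)}_wM\boxtimes\Lambda^qT^{*(0,1)}_zM)$, with each $b_j$ obtained from Fourier transforms in $u$ of Schwartz functions and hence $\cC^\infty$ in $(z,w)$ since $\Psi$ is. Differentiating under the integral sign gives the estimate in every seminorm, so this is exactly \eqref{s3-es-1}, \eqref{s3-es-2}; in particular the kernel of $\mathcal{S}_k$ is $\cC^\infty$ on $D\times D$, so $\mathcal{S}_k$ is smoothing.

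\textbf{Case $q=n_+$, $n_-\neq n_+$.} Now $q\neq n_-$, so by Theorem~\ref{s3-t1} the symbol $s_-$ vanishes to infinite order at $x=y$; by the analysis in \cite{Hsiao08} this forces $S_-$ to be a genuine smoothing operator (the oscillatory integral $\int_0^\infty e^{i\varphi_-t}s_-\,dt$, with $\operatorname{Im}\varphi_-\geq0$, $\varphi_-$ vanishing only on the diagonal, and $s_-$ vanishing to infinite order there, has $\cC^\infty$ kernel). Then the $S_-$-contribution to $\mathcal{S}_k$ is $\int\!\!\int e^{ik(\theta-\eta)}\chi_1(\theta)\chi(\eta)K_{S_-}((z,\theta),(w,\eta))\,d\theta\,d\eta$ with $K_{S_-}$ smooth, and repeated integration by parts in $\theta$ (using $\pr_\theta(k\theta)=k$) shows it is $O(k^{-\infty})$. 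Combined with the $S_+$-estimate from the first paragraph, $\mathcal{S}_k\equiv0\mod O(k^{-\infty})$, which is \eqref{s3-es-2-0bis}.

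\textbf{Main obstacle.} The delicate point is the end of the case $q=n_-$: extracting the \emph{classical} expansion $b\sim\sum_jb_jk^{n-j}$, genuinely in $S^n_{\rm loc}(1)$ with uniform control of all $(z,w)$-derivatives, from the $t$-integral after the $(\theta,\eta)$-Fourier transform, and making rigorous the reduction ``only $t\approx k$ contributes''. Here one must track the symbol seminorms of $s_-$ in $t$ against the Schwartz seminorms of the $(\theta,\eta)$-Fourier transform; everything else is integration by parts of non-stationary oscillatory integrals.
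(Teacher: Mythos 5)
Your overall decomposition $\mathcal{S}_k\equiv I_0+I_1$ and the reductions to the special phase form are the same as the paper's, and your treatment of the $S_+$-contribution and of the case $q=n_+\neq n_-$ is correct and a legitimate, slightly more elementary variant (integration by parts directly in $\theta$ using $\pr_\theta(k(\theta-\eta)+t\varphi_+)=k+t$, rather than the paper's substitution $t=k\sigma$ followed by the non-vanishing of $d_{\sigma,x_{2n+1}}(\varphi_+\sigma+x_{2n+1}-y_{2n+1})$; both yield $O(k^{-\infty})$). However, the core case $q=n_-$ has a genuine gap, and it is exactly the step you flag as the ``main obstacle.''

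After you write $I_0(z,w)=e^{ik\Psi(z,w)}\int_{-k}^{\infty}e^{iu\Psi(z,w)}g(z,w,k+u,-u)\,du$, the claim that extending the $u$-integral to $\R$ costs only $O(k^{-\infty})$ fails. For $u<-k$ one has $|e^{iu\Psi}|=e^{-u\,{\rm Im}\,\Psi}$, which grows exponentially as $u\to-\infty$ whenever ${\rm Im}\,\Psi(z,w)>0$, i.e.\ at every $(z,w)$ with $z\neq w$. Since $g$ is only rapidly decreasing (Schwartz, not exponentially decreasing) in its last slot, $\int_{-\infty}^{-k}e^{iu\Psi}g(z,w,k+u,-u)\,du$ diverges off the diagonal, so the candidate formula $b(z,w,k)=\int_{\R}e^{iu\Psi}g(z,w,k+u,-u)\,du$ is not even defined on $D\times D$. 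This is not a removable technicality: conceptually, one cannot isolate $b$ by ``dividing'' $\mathcal{S}_k$ by $e^{ik\Psi}$, because $|e^{-ik\Psi(z,w)}|=e^{k\,{\rm Im}\,\Psi(z,w)}$ grows exponentially in $k$ for $z\neq w$, so $e^{-ik\Psi}\mathcal{S}_k$ has no chance of being uniformly $O(k^{n})$ on compacts in $D\times D$. The statement $\mathcal{S}_k\equiv e^{ik\Psi}b\ \mathrm{mod}\ O(k^{-\infty})$ requires constructing the symbol $b$ \emph{directly}, together with the uniform $O(k^{-\infty})$ remainder, not by first peeling off the exponential. The paper achieves this by substituting $t=k\sigma$, inserting a cutoff $\varphi(\sigma)$ near $\sigma=1$, and applying the Melin--Sj\"ostrand stationary phase formula for the complex phase $\Psi(x,y,\sigma)=(\Psi(x',y')+y_{2n+1}-x_{2n+1})\sigma+x_{2n+1}-y_{2n+1}$, whose non-degenerate (complex) critical point at $\widetilde\sigma=1$, $\widetilde x_{2n+1}=y_{2n+1}+\Psi(x',y')$ produces the exponential factor $e^{ik\Psi(x',y')}$ and the classical amplitude $b$ simultaneously with uniform control on compacts. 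The subsequent Taylor expansion of $(k+u)^{n-j}$ in $u/k$ over all of $\R$ that you invoke would have the same uniformity problem, for the same reason. To repair the proof you would need either to carry out the complex stationary phase argument (as the paper does), or else to restrict the entire analysis to an $O(k^{-1/2}\log k)$-neighbourhood of the diagonal after separately showing $\mathcal{S}_k$ is $O(k^{-\infty})$ outside it; neither is done in the proposal.
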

\begin{proof}
Theorem~\ref{s3-t5} essentially follows from the stationary phase
formula of Melin-Sj\"{o}strand~\cite{MS74}.
Let $D$, $s$, $\phi$ be as in Setup \ref{local_data}.
Let $q=n_-$ or $n_+$. Let $(z,\theta)=x=(x',x_{2n+1})$ be the local coordinates as
in \eqref{s3-e0}, \eqref{s3-e1} defined on $D\times]-\varepsilon_0,
\varepsilon_0[$.
We identify $x'$ with
$(x',0)$. If we denote the holomorphic coordinates of $D$ by $w_j=y_{2j-1}+iy_{2j}$, $j=1,\ldots,n$, and by $\eta$ the coordinate of
$]-\varepsilon_0, \varepsilon_0[$, we also write 
\[
(w,\eta)=(y',\eta)=y\,,\:y'=(y_1,\ldots,y_{2n})\,.
\]
From the definition \eqref{s3-e17-1} of
$\mathcal{S}_{k}$ and Theorem~\ref{s3-t1}, we see
that the distribution kernel of $\mathcal{S}_{k}$ is given by
\begin{equation} \label{sp-e1}
\begin{split}
&\mathcal{S}_{k}(x',y')\equiv\int_{t\geq0}e^{i\varphi_-(x,y)t+i\theta k-i\eta   k}s_-(x,y,t)\chi_1(\theta)\chi(\eta   )d\theta dtd\eta   \\
&+\int_{t\geq0}e^{i\varphi_+(x,y)t+i\theta k-i\eta   k}s_+(x,y,t)\chi_1(\theta)\chi(\eta   )d\theta\,dt\,d\eta   \mod O(k^{-\infty})\\
&\equiv I_0(x',y')+I_1(x',y')\mod O(k^{-\infty}),
\end{split}
\end{equation}
where the integrals above are defined as oscillatory integrals.
First, we study the kernel
\[I_1(x',y')=\int_{t\geq0}e^{i\varphi_+(x,y)t+i\theta k-i\eta   k}s_+(x,y,t)\chi_1(\theta )\chi(\eta   )d\theta dtd\eta   .\]
By the change of variables $t=k\sigma$ we get
\[I_1(x',y')=
\int_{\sigma\geq0}e^{ik\bigr(\varphi_+(x,y)\sigma+\theta -\eta   \bigr)}ks_+(x,y,k\sigma)\chi_1(\theta  )\chi(\eta   )d\theta  d\sigma
d\eta   .\] Note that $d_x\varphi_+(x,x)=\omega_0(x)$. Taking into account  the
form $\omega_0(x)$ (cf.\ \eqref{s3-e1-1}), we see that
$\frac{\pr\varphi_+}{\pr \theta  }(x,x)=1$. In view of
Theorem~\ref{s3-t1}, we see that $\varphi_+(x,y)=0$ if and only if
$x=y$. We conclude that
\[\bigr(d_\sigma(\varphi_+(x,y)\sigma+\theta  -\eta   ),d_{\theta  }(\varphi_+(x,y)\sigma+\theta  -\eta   )\bigr)\neq\bigr(0,0\bigr),\ \ \sigma\geq0.\]
Thus, we can integrate by parts in $\sigma$ and $\theta  $ and
conclude that $I_1$ is smoothing and
\begin{equation} \label{sp-e2}
I_1\equiv0\mod O(k^{-\infty}).
\end{equation}
Now, we study the kernel
\[I_0(x',y')=\int_{t\geq0}e^{i\varphi_-(x,y)t+i\theta  k-i\eta   k}s_-(x,y,t)\chi_1(\theta  )\chi(\eta   )d\theta  dtd\eta   .\]
As before, by letting $t=k\sigma$, we get
\begin{equation} \label{sp-e3}
I_0(x',y')=\int_{\sigma\geq0}e^{ik\bigr(\varphi_-(x,y)\sigma+\theta  -\eta   \bigr)}ks_-(x,y,k\sigma)\chi_1(\theta  )\chi(\eta   )d\theta  d\sigma
d\eta   .
\end{equation}
In view of \eqref{s3-e16-bisbbbbb}, we see that
\begin{equation} \label{sp-e3-1}
\varphi_-(x,y)=\Psi(x',y')+\eta   -\theta  ,\ \ {\rm
Im\,}\Psi(x',y')\geq0.
\end{equation}
Put
\begin{equation} \label{sp-e3-2}
\Psi(x,y,\sigma)=(\Psi(x',y')+\eta   -\theta  )\sigma+\theta  -\eta   .
\end{equation}
Let $\varphi(\sigma)\in\cC^\infty_0(\Real_+)$ with
$\varphi(\sigma)=1$ in some small neighborhood of $1$. We introduce the cut-off functions $\varphi(\sigma)$ and $1-\varphi(\sigma)$ in the integral \eqref{sp-e3}:
\begin{align}
I^0_0(x',y'):=\int_{\sigma\geq0}e^{ik\Psi(x,y,\sigma)}\varphi(\sigma)ks_-(x,y,k\sigma)\chi_1(\theta  )\chi(\eta   )d\theta  d\sigma d\eta   , \label{s4-e4}\\
I^1_0(x',y'):=\int_{\sigma\geq0}e^{ik\Psi(x,y,\sigma)}(1-\varphi(\sigma))ks_-(x,y,k\sigma)\chi_1(\theta  )\chi(\eta   )d\theta  d\sigma
d\eta   \,, \label{sp-e5}
\end{align}
so that \[I_0(x',y')=I^0_0(x',y')+I^1_0(x',y')\,.\]
First, we study $I^1_0(x',y')$. Note that when $\sigma\neq1$,
$d_{\theta  }\Psi(x,y,\sigma)=1-\sigma\neq0$. Thus, we can integrate by
parts and get that $I^1_0$ is smoothing and $I^1_0(x',y')\equiv0\mod O(k^{-\infty})$.

Next, we
study the kernel $I^0_0(x',y')$. First, we assume that $q=n_+$, $n_+\neq n_-$. In view of Theorem~\ref{s3-t1}, we see that
$s_-(x,y,t)$ vanishes to infinite order at $x=y$. From this observation, it is straightforward to see that 
$I^0_0\equiv0\mod O(k^{-\infty})$. Therefore, we get \eqref{s3-es-2-0bis}.

Now, we assume that $q=n_-$.
Since the integral \eqref{s4-e4}
converges, we have
\begin{equation} \label{sp-e6}
\begin{split}
&I^0_0(x',y')=\int H(x',y)\chi(\eta   )d\eta   ,\\
&H(x',y)=\int_{\sigma\geq0}e^{ik\Psi(x,y,\sigma)}\varphi(\sigma)ks_-(x,y,k\sigma)\chi_1(\theta  )d\theta  d\sigma.
\end{split}
\end{equation}
Recalling the form of $\Psi(x,y,\sigma)$, we have ${\rm
Im\,}\Psi(x,y,\sigma)\geq0$, $d_\sigma\Psi(x,y,\sigma)=0$ if and
only if $x=y$ and $d_{\theta  }\Psi(x,y,\sigma)|_{x=y}=1-\sigma$.
Thus, $x=y$ and $\sigma=1$ are real critical points. Moreover, we can
check that the Hessian of $\Psi(x,y,\sigma)$ at $x=y$, $\sigma=1$,
is given by
\[\left(
\begin{array}[c]{cc}
  \Psi''_{\sigma\sigma}(x,x,1)& \Psi''_{\theta  \sigma}(x,x,1) \\
  \Psi''_{\sigma \theta  }(x,x,1) & \Psi''_{\theta  \theta  }(x,x,1)
\end{array}\right)=\left(
\begin{array}[c]{cc}
 0 & -1 \\
 -1 &0
\end{array}\right).\]
Thus, $\Psi(x,y,\sigma)$ is a non-degenerate complex valued phase function
in the sense of Melin-Sj\"{o}strand~\cite{MS74}. Let
\[\Td\Psi(\Td x,\Td y,\Td\sigma):=\bigr(\Td\Psi(\Td x', \Td y')+(\Td \eta   -\Td \theta  )\bigr)\Td\sigma+\Td \theta  -\Td  \eta   \]
be an almost analytic extension of $\Psi(x,y,\sigma)$, where
$\Td\Psi(\Td x',\Td y')$ is an almost analytic extension of $\Psi(x',y')$
(with $\Psi(x',y')$ as in \eqref{sp-e3-1}) and
similarly for $\Td \eta   $, $\Td \theta  $ and $\Td\sigma$ (see \cite[\S 2]{MS74} for the precise meaning of the almost analytic extension). We can
check that given $\eta   $ and $(x',y')$, $\Td \theta  =\eta   +\Psi(x', y')$, $\Td\sigma=1$ are the solutions of 
\[
\frac{\pr\Td\Psi}{\pr\Td\sigma}=0\,,\: \frac{\pr\Td\Psi}{\pr\Td \theta  }=0\,.
\]
From this and by the stationary phase formula of Melin-Sj\"{o}strand~\cite{MS74}, we
get
\begin{equation} \label{sp-e7}
H(x',y)\equiv e^{ik\Psi(x',y')}a(x',y,k)\mod O(k^{-\infty}),
\end{equation}
where $a(x', y, k)\in
\cC^\infty\bigr(D\times(D\times]-\varepsilon_0,\varepsilon_0[), \Lambda^qT^{*(0,1)}M\boxtimes\Lambda^qT^{*(0,1)}M\bigr)$,
\[a(x',y,k)\sim\sum^\infty_{j=0}k^{n-j}a_j(x',y) \ \
\mbox{in $S^n_{{\rm loc\,}}\bigr(1;D\times(D\times]-\varepsilon_0,\varepsilon_0[),
\Lambda^qT^{*(0,1)}M\boxtimes\Lambda^qT^{*(0,1)}M\bigr)$},\]
\[a_j(x',y)\in\cC^\infty\bigr(D\times(D\times]-\varepsilon_0,\varepsilon_0[), \Lambda^qT^{*(0,1)}M\boxtimes\Lambda^qT^{*(0,1)}M\bigr),\ \
j\in\N_0\,,\] and
\begin{equation} \label{sp-e7-1}
a_0(x', y)=2\pi\Td
s^0_-\bigr((x',\eta   +\Psi(x',y')),y\bigr),
\end{equation}
where $\Td s^0_-$ is an almost analytic extension of $s^0_-$,
$s^0_-$ is as in \eqref{s3-e11-bis}.
From \eqref{sp-e6} and \eqref{sp-e7} we get
\begin{equation} \label{sp-e8}
I^0_0(x',y')\equiv e^{ik\Psi(x',y')}b(x',y',k)\mod O(k^{-\infty}),
\end{equation}
where
\[b(x',y',k)\sim\sum^\infty_{j=0}k^{n-j}b_j(x',y') \ \
\mbox{in $S^n_{{\rm loc\,}}(1;D\times D, \Lambda^qT^{*(0,1)}M\boxtimes\Lambda^qT^{*(0,1)}M)$},\]
with
\begin{equation} \label{sp-e9}
b_j(x',y')=\int a_j(x',y)\chi(\eta   )d\eta   \in\cC^\infty(D\times
D, \Lambda^qT^{*(0,1)}M\boxtimes\Lambda^qT^{*(0,1)}M)\,,\:j\in\N_0\,.
\end{equation}
Theorem~\ref{s3-t5} follows.
\end{proof}
Let $D$, $s$, $\phi$ be as in Setup \ref{local_data}.
In view of Theorem~\ref{s3-t4} and \eqref{s3-es-2-0bis}, we see that when $q=n_+$, $n_+\neq n_-$, we have
\begin{equation} \label{eXXXXV}
\Box^{(q)}_s\mathcal{A}_{k}\equiv I\mod O(k^{-\infty}),
\end{equation}
where $\mathcal{A}_{k}$ is as in Theorem~\ref{s3-t4}.

Now, we assume that $q\neq n_-, n_+$. Using Theorem~\ref{s3-t0} and repeating the proof of Theorem~\ref{s3-t4} we conclude that there exists a properly supported continuous operator
\[
\mathcal{A}_{k}=O(k^{s}):H^s_{{\rm comp\,}}(D,\Lambda^qT^{*(0,1)}M)\To H^{s+1}_{{\rm comp\,}}(D,\Lambda^qT^{*(0,1)}M),\ \ \forall s\in\mathbb N_0,\]
such that
\begin{equation} \label{eXXXXVI}
\Box^{(q)}_s\mathcal{A}_{k}\equiv I\mod O(k^{-\infty}).
\end{equation}
Summing up, we obtain

\begin{thm} \label{s3-t4-1}
In the situation of Setup \ref{local_data}
let $q\neq n_-$. Then,
there exists a properly supported continuous operator
\[
\mathcal{A}_{k}=O(k^{s}):H^s_{{\rm comp\,}}(D,\Lambda^qT^{*(0,1)}M)\To H^{s+1}_{{\rm comp\,}}(D,\Lambda^qT^{*(0,1)}M),\ \ \forall s\in\mathbb N_0,\]
such that
\[\Box^{(q)}_s\mathcal{A}_{k}\equiv I\mod O(k^{-\infty}).\]
\end{thm}

\begin{rem} \label{rabis1}
From Remark~\ref{s3-rabis}, we can generalize Theorem~\ref{s3-t4} and Theorem~\ref{s3-t4-1} with essentially the same proofs to the case when the forms take values in $L^k\otimes E$, for a given holomorphic vector bundle $E$ over $M$.
\end{rem}

We have the following

\begin{thm} \label{s3-t6}
In the situation of Setup \ref{local_data}
let $q=n_-$. For a given point $p\in D$, let $V_1,\ldots,V_{n}$
be an orthonormal frame of $T^{(1,0)}M$ in a neighborhood of $p$, for which $\dot{R}^L$ is diagonalized at $p$, namely,
\begin{equation*}
\begin{split}
&\dot{R}^L(p)V_j(p)=\lambda_j(p)V_j(p)\,,\quad j=1,\ldots,n\,,\\
&\lambda_j(p)<0\,,\quad j=1,\ldots,q\,, \\
&\lambda_j(p)>0\,,\quad j=q+1,\ldots,n.
\end{split}
\end{equation*}
Let $(T_j)_{j=1}^{n}$
denote the basis of $T^{*(0,1)}M$, which is dual to $(\ol V_j)_{j=1}^{n}$. Then,
\begin{equation} \label{s3-es-6}
\begin{split}
b_0(p,p)&=(2\pi)^{-n}\abs{{\rm det\,}\dot{R}^L(p)}\prod^{q}_{j=1} (T_j(p)\wedge)\circ(T_j(p)\wedge)^*\\
&=(2\pi)^{-n}\big|\det\dot{R}^L(p)\big|I_{\det\ov{W}^{\,*}}\,,
\end{split}
\end{equation}
where $I_{\det\ov{W}^{\,*}}\in\End(\Lambda^qT^{*(0,1)}M)$ is as in the discussion after \eqref{s0-e2}.
\end{thm}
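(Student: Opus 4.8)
The plan is to extract $b_0(p,p)$ from the known value of the principal symbol $s^0_-$ of the approximate Szeg\"o kernel on the Grauert tube $X$ (Theorem~\ref{s3-t2}), using the formulas \eqref{sp-e7-1}, \eqref{sp-e9} that express the leading term of the localized kernel $\mathcal{S}_k$ in terms of $s^0_-$. First I would transfer the linear algebra from $M$ to $X$ as in Sections~\ref{s:CR}--\ref{s:apszk}: fix $x_0\in X$ with $\pi(x_0)=p$, and lift the orthonormal frame $V_1,\dots,V_n$ of $T^{(1,0)}M$ near $p$ via $(\pi^*)^{-1}$ to an orthonormal frame $W_1,\dots,W_n$ of $T^{(1,0)}X$ near $x_0$; under the identification $\Lambda^qT^{*(0,1)}M\cong\Lambda^qT^{*(0,1)}X$ used from \eqref{s3-e8} onwards, the $(0,1)$-coframe $T_1,\dots,T_n$ dual to $\ov V_1,\dots,\ov V_n$ on $M$ is exactly the coframe dual to $\ov W_1,\dots,\ov W_n$ on $X$. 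By \eqref{s3-e2} the Levi form $\mathcal{L}_{x_0}$ is diagonal in $W_1,\dots,W_n$ with eigenvalues $\mu_j=\tfrac12\lambda_j(p)$, so $q=n_-$ is the number of negative ones and $\abs{\mu_1}\cdots\abs{\mu_n}=2^{-n}\abs{\det\dot R^L(p)}$.

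Since $q=n_-$, Theorem~\ref{s3-t2} at $x_0$ gives
\[
s^0_-(x_0,x_0)=\tfrac12\,\abs{\mu_1}\cdots\abs{\mu_n}\,\pi^{-n-1}\prod_{j=1}^{q}(T_j(p)\wedge)\circ(T_j(p)\wedge)^*
=2^{-n-1}\pi^{-n-1}\abs{\det\dot R^L(p)}\prod_{j=1}^{q}(T_j(p)\wedge)\circ(T_j(p)\wedge)^*.
\]
To connect this with $b_0(p,p)$, choose local holomorphic coordinates $z$ centred at $p$ as in Theorem~\ref{s3-t1-bis}, so $\Psi(0,0)=0$. By \eqref{sp-e9} and \eqref{sp-e7-1},
\[
a_0\big(0,(0,y_{2n+1})\big)=2\pi\,\widetilde{s^0_-}\big((0,y_{2n+1}),(0,y_{2n+1})\big)=2\pi\,s^0_-\big((0,y_{2n+1}),(0,y_{2n+1})\big),
\]
the second equality because $\Psi(0,0)=0$ puts both entries on the real domain, where the almost analytic extension agrees with $s^0_-$. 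The Grauert tube and the construction of $S_-$ are invariant under the $S^1$-action, which in the coordinates $(z,\theta)$ is translation in $\theta$; equivalently, as recorded in \eqref{s3-e16-bisbisbis}, $\varphi_-$ and $s_-$ may be taken independent of the fibre coordinate. Hence $s^0_-\big((0,y_{2n+1}),(0,y_{2n+1})\big)$ is independent of $y_{2n+1}$ and equals $s^0_-(x_0,x_0)$, and since $\int\chi(\theta)\,d\theta=1$ the formula $b_0(x',y')=\int a_0(x',y)\chi(y_{2n+1})\,dy_{2n+1}$ of \eqref{sp-e9} yields $b_0(p,p)=2\pi\,s^0_-(x_0,x_0)$.

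Combining the last two observations,
\[
b_0(p,p)=2\pi\cdot 2^{-n-1}\pi^{-n-1}\abs{\det\dot R^L(p)}\prod_{j=1}^{q}(T_j(p)\wedge)\circ(T_j(p)\wedge)^*
=(2\pi)^{-n}\abs{\det\dot R^L(p)}\prod_{j=1}^{q}(T_j(p)\wedge)\circ(T_j(p)\wedge)^*,
\]
which is the first equality in \eqref{s3-es-6}. For the second, note that $W$ is spanned by $V_1,\dots,V_q$, so $\ov{W}^{\,*}$ is spanned by $T_1,\dots,T_q$ and $\det\ov{W}^{\,*}=\Complex\,T_1\wedge\cdots\wedge T_q$. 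For an orthonormal coframe one has $(T_j\wedge)(T_j\wedge)^*+(T_j\wedge)^*(T_j\wedge)=\Id$, so each $(T_j\wedge)\circ(T_j\wedge)^*$ is the orthogonal projection onto the forms divisible by $T_j$; these commute for distinct indices, and their product over $j=1,\dots,q$ is the orthogonal projection onto the line $\Complex\,T_1\wedge\cdots\wedge T_q=\det\ov{W}^{\,*}$, i.e.\ $I_{\det\ov{W}^{\,*}}$, exactly as in the discussion after \eqref{s0-e2}.

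The real input is Theorem~\ref{s3-t2} (quoted from \cite{Hsiao08}); once it is granted, the only step that needs genuine care is the passage through \eqref{sp-e7-1}, namely identifying the restriction of $a_0(0,\cdot)$ to the diagonal with the fibre-independent number $s^0_-(x_0,x_0)$. This rests on the $S^1$-invariance of the Grauert tube construction, already built into the choice (used throughout Section~\ref{s:apszk}) of $\varphi_-$ and $s_-$ independent of the fibre variable; everything else is bookkeeping with the normalizations $\mu_j=\lambda_j/2$ and $\int\chi=1$, plus the elementary exterior-algebra description of $I_{\det\ov{W}^{\,*}}$.
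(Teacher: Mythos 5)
Your proof is correct and follows essentially the same route as the paper's: combine \eqref{sp-e7-1} and \eqref{sp-e9} at $x'=y'$ (where $\Psi$ vanishes, so the almost analytic extension reduces to $s^0_-$), exploit that $s^0_-((x',y_{2n+1}),(x',y_{2n+1}))$ depends only on $x'$, use $\int\chi=1$ to get $b_0(p,p)=2\pi s^0_-(x_0,x_0)$, and then plug in Theorem~\ref{s3-t2} together with $\abs{\mu_1}\cdots\abs{\mu_n}=2^{-n}\abs{\det\dot R^L(p)}$ from \eqref{s3-e2}. Your write-up is a bit more explicit than the paper's (spelling out the almost analytic extension step and the fibre-independence argument, and adding the elementary verification that the product of projections equals $I_{\det\ov{W}^{\,*}}$), but the ingredients and logic are the same.
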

\begin{proof}
We use the same notations as in the proof of Theorem~\ref{s3-t5}.
From \eqref{sp-e7-1} and
\eqref{sp-e9}, we have
\begin{equation} \label{sp-e10}
b_0(x', x')=2\pi\int
s^0_-\bigr((x',\eta   ),(x',\eta   )\bigr)\chi(\eta   )d\eta   .
\end{equation}
In view of Theorem~\ref{s3-t2}, we know that
\begin{equation} \label{sp-e101}
s^0_-\bigr((x',\eta   ),(x',\eta)\bigr)=\frac{1}{2}\abs{\mu_1(x')}\cdots
\abs{\mu_{n}(x')}\pi^{-n-1}\prod_{j=1}^{n_-}(T_j(x')\wedge)\circ
(T_j(x')\wedge)^*,
\end{equation} 
where $\{\mu_j(x')\}_{j=1}^n$ are the
eigenvalues of $\mathcal{L}_{x'}$ 
and $\{T_j(x')\}_{j=1}^n$ are as in Theorem~\ref{s3-t2}. Here we identify $x'\in D$ with $(x',0)\in X$.
Notice that 
\begin{equation} \label{sp-e102}
\abs{\mu_1(p)}\cdots\abs{\mu_n(p)}=2^{-n}\abs{\lambda_1(p)}\cdots\abs{\lambda_n(p)}=2^{-n}\abs{\det\dot R^L(p)}.
\end{equation}
Now, \eqref{s3-e2}, \eqref{sp-e10}, \eqref{sp-e101} and \eqref{sp-e102} 
yield \eqref{s3-es-6}.
\end{proof} 
\section{Asymptotic expansion of the spectral function for lower energy forms}\label{s:exp_spec_fct}
Let $(M,\Theta)$ be a Hermitian manifold and let $(L,h^L)$ be a Hermitian holomorphic line bundle on $M$.
We recall that (cf.\ \eqref{s1-specsp}) $\cE^q_{ k^{-\!N_0}}(M, L^k)$ denote the spectral space of $\Box^{(q)}_k$ corresponding to energy less than $k^{-N_0}$.
In the present Section we study the asymptotic expansion of the spectral function associated to $\cE^q_{ k^{-\!N_0}}(M, L^k)$.
In Section \ref{ss:upper_bound} we prove pointwise upper bounds for the eigenforms of the spectral spaces $\cE^q_{ k^{-\!N_0}}(M, L^k)$ in terms of their $L^2$-norm (Theorem \ref{tI}).
In Section \ref{s:kern_spec} we compare the localized spectral projection with the localized approximate Szeg\"o projection  $\mathcal{S}_{k}$.  In  Section\ref{s:kern_spec_asy} we apply this results to prove the asymptotic expansion of the spectral function and thus give the proof of Theorem \ref{s1-main1}.
In Section \ref{s:asymp_BK} we exhibit the asymptotic expansion of the Bergman kernel and prove Theorem \ref{s1-main2}. Finally, in Section \ref{s:coeff}  we calculate the coefficients $b^{0}_1$ and $b^{0}_2$ and thus prove Theorem \ref{s1-main12}.

\subsection{Asymptotic upper bounds}\label{ss:upper_bound}

Fix $N_0\geq 1$. In this Section we will give pointwise upper bounds for $u$ and $\pr^\alpha u$, where $u\in\cE^q_{ k^{-\!N_0}}(M, L^k)$.

Let $D\Subset M$ be a chart domain such that $L|_D$ is trivial. Let $s$ be a local frame of $L$ on $D$ and set $\abs{s}_{h^L}^2=e^{-2\phi}$.
Let $(\,,)_{k\phi}$ be the inner product on the space
$\Omega^{0,q}_0(D)$ defined as follows:
\[(f,g)_{k\phi}=\int_D\!\langle\,f\,,g\,\rangle e^{-2k\phi}dv_M(x)\,,\quad f, g\in\Omega^{0,q}_0(D)\,.\]
 Let
$\ddbar^{*,k\phi}:\Omega^{0,q+1}(D)\To\Omega^{0,q}(D)$
be the formal adjoint of $\ddbar$ with respect to $(\,,)_{k\phi}$. Put
$\Box^{(q)}_{k\phi}=\ddbar\,\ddbar^{*,k\phi}+\ddbar^{*,k\phi}\ddbar:\Omega^{0,q}(D)\To\Omega^{0,q}(D)$.
Let $u\in\Omega^{0,q}(D, L^k)$.
On $D$, we write $u=s^k\Td u$, $\Td u\in\Omega^{0,q}(D)$. We have
\begin{equation} \label{eI}
\Box^{(q)}_ku=s^k\Box^{(q)}_{k\phi}\Td u.
\end{equation} 
Fix $p\in D$ and consider local coordinates $(D,z)\cong (D,x)$, 
such that $x(p)=z(p)=0$ and $\phi(z)=O(\abs{z}^2)$ near $p$. Let
$F_k(z):=\frac{z}{\sqrt{k}}$ be the scaling map. For $r>0$, let $D_r=\set{x;\, \abs{x_j}<r,j=1,\ldots,2n}$. Let $f\in\Omega^{0,q}(D_{\frac{\log k}{\sqrt{k}}})$, $f=\sum'_{\abs{J}=q}f_Jd\ol z^J$.
We define the scaled form $F^*_kf\in\Omega^{0,q}(D_{\log k})$ by
\[F^*_kf=\sideset{}{'}\sum_{\abs{J}=q}f_J\big(k^{-1/2}z\big)d\ol z^J\in\Omega^{0,q}(D_{\log k}).\]
Let $\Box^{(q)}_{k\phi,(k)}:\Omega^{0,q}(D_{\log k})\To\Omega^{0,q}(D_{\log k})$ be the scaled Laplacian defined by
\begin{equation} \label{eI.I}
\Box^{(q)}_{k\phi,(k)}(F^*_ku)=\frac{1}{k}F^*_k(\Box^{(q)}_{k\phi}u),\ \ u\in\Omega^{0,q}(D_{\frac{\log k}{\sqrt{k}}}).
\end{equation}
By Berman~\cite[\S\,2]{Be04} and Hsiao-Marinescu~\cite[\S\,2]{HM09} it is known all the derivatives of the coefficients of the operator
$\Box^{(q)}_{k\phi,(k)}$ are uniformly bounded in $k$ on $D_{\log k}$. Let $D_r\subset D_{\log k}$ and let $W^s_{kF^*_k\phi}(D_r,\Lambda^{q}T^{*(0,1)}M)$, $s\in\N_0$, denote the Sobolev space of order $s$ of sections of $\Lambda^{q}T^{*(0,1)}M$
over $D_r$ with respect to the weight $e^{-2kF^*_k\phi}$. The Sobolev norm on this space is given by
\[\norm{u}^2_{kF^*_k\phi,s,D_r}=\sideset{}{'}\sum_{\alpha\in\mathbb N^{2n}_0,\abs{\alpha}\leq s, \abs{J}=q}\int_{D_r}\abs{\pr^\alpha_xu_J}^2e^{-2kF^*_k\phi}(F^*m)(x)dx,\]
where $u=\sum'_{\abs{J}=q}u_Jd\ol z^J\in W^s_{kF^*_k\phi}(D_r,\Lambda^qT^{*(0,1)}M)$ and $m(x)dx$ is the volume form.
If $s=0$, we write $\norm{\cdot}_{kF^*_k\phi,D_r}$ to denote $\norm{\cdot}_{kF^*_k\phi,0,D_r}$.

\begin{lem} \label{lI}
For every $r>0$ with $D_{2r}\subset D_{\log k}$ and $s\in\N_0$, there is a constant $C_{r,s}>0$
independent of $k$, such that
\begin{equation} \label{eII}
\norm{u}^2_{kF^*_k\phi,2s,D_{r}}\leqslant  C_{r,s}\Bigr(\norm{u}^2_{kF^*_k\phi,D_{2r}}+\sum^s_{m=1}\big\|(\Box^{(q)}_{k\phi,(k)})^mu\big\|^2_{kF^*_k\phi,D_{2r}}\Bigl)\,,\;
u\in\Omega^{0,q}(D_{\log k})\,.
\end{equation}
\end{lem}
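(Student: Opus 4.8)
The plan is to deduce \eqref{eII} from a standard \emph{interior elliptic a priori estimate} for the second-order elliptic system $P_k:=\Box^{(q)}_{k\phi,(k)}$, taking care that every constant is controlled only by the $k$-independent data already at our disposal. Two points prevent this from being immediate: the norms $\norm{\cdot}_{kF^*_k\phi,s,D_\rho}$ carry the weight $e^{-2kF^*_k\phi}$ and the density $F^*m$, and the constant must be independent of $k$. Both are resolved by using that $r$ is fixed. Indeed, since $\phi(z)=O(\abs{z}^2)$ near $p$, the function $z\mapsto kF^*_k\phi(z)=k\phi(z/\sqrt k)$ converges in $\cC^\infty$ on compact subsets of $\C^n$, and $z\mapsto(F^*m)(z)=m(z/\sqrt k)\to m(0)>0$; hence on the fixed domain $D_{2r}$ (for all $k$ with $D_{2r}\subset D_{\log k}$) the weight $e^{-2kF^*_k\phi}$ and $F^*m$ are bounded above and below by positive constants independent of $k$. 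Therefore $\norm{\cdot}_{kF^*_k\phi,s,D_\rho}$ is equivalent, uniformly in $k$, to the ordinary Sobolev norm $\norm{\cdot}_{H^s(D_\rho)}$ for $\rho\in\{r,2r\}$, and it suffices to prove
\[\norm{u}^2_{H^{2s}(D_r)}\leq C_{r,s}\Bigl(\norm{u}^2_{H^0(D_{2r})}+\sum_{m=1}^s\norm{P_k^m u}^2_{H^0(D_{2r})}\Bigr),\quad u\in\Omega^{0,q}(D_{\log k}),\]
with $C_{r,s}$ independent of $k$.

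Next I would record that $P_k$ is uniformly elliptic of order $2$ on $D_{2r}$, with coefficients whose $\cC^N$-norms are bounded uniformly in $k$. The coefficient bounds are exactly the fact cited from \cite[\S\,2]{Be04} and \cite[\S\,2]{HM09}. For uniform ellipticity: the scaling \eqref{eI.I} leaves the principal part invariant (the factor $1/k$ cancels the $k$ produced by two differentiations), so the principal symbol of $P_k$ at $z$ equals that of $\Box^{(q)}_{k\phi}$ evaluated at $z/\sqrt k$, i.e.\ a fixed positive multiple of $\abs{\xi}^2_{g(z/\sqrt k)}$ times the identity on $\Lambda^qT^{*(0,1)}M$. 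Since $\{z/\sqrt k:\,z\in D_{2r}\}$ stays in a fixed compact subset of $D$ and $g$ is a fixed smooth metric, this symbol is $\geq c\abs{\xi}^2$ with $c>0$ independent of $k$. So $P_k$ is, for our purposes, a $k$-independent family of elliptic systems with uniformly controlled coefficients.

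The estimate then follows by the usual iteration. The basic interior estimate for such a system gives, for any pair of nested domains $D'\Subset D''\subset D_{2r}$ and any $t\in\N_0$, a bound $\norm{v}_{H^{t+2}(D')}\leq C\bigl(\norm{\mathcal{P}v}_{H^t(D'')}+\norm{v}_{H^0(D'')}\bigr)$ in which $C$ depends only on $t$, the ellipticity constant $c$, the coefficient bounds, and the geometry of $D',D''$ — hence, for $\mathcal{P}=P_k$, uniformly in $k$. One argues by induction on $s$: the case $s=0$ is trivial; for the step, pick nested domains $D_r=D_{\rho_s}\Subset\dots\Subset D_{\rho_0}=D_{2r}$, apply the interior estimate with $t=2s-2$ and $v=u$ to reduce $\norm{u}_{H^{2s}(D_{\rho_s})}$ to $\norm{P_k u}_{H^{2s-2}(D_{\rho_{s-1}})}+\norm{u}_{H^0(D_{\rho_{s-1}})}$, and then apply the inductive hypothesis at level $s-1$ to $v=P_k u$, using $P_k^m(P_k u)=P_k^{m+1}u$, to bound the first term by $\sum_{m=1}^{s}\norm{P_k^m u}_{H^0(D_{2r})}$. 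This closes the induction and, combined with the norm equivalence of the first paragraph, proves \eqref{eII}.

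The main obstacle is not any single hard estimate but the uniformity bookkeeping: at each step one must confirm that the constants in both the interior elliptic estimate and the norm equivalences depend only on $r$, $s$, $n$, and the $k$-independent bounds on the coefficients and ellipticity of $P_k$. This is precisely why it is essential that $r$ is fixed and kept inside $D_{\log k}$, and why the cited uniform $\cC^\infty$-bounds on the coefficients of $\Box^{(q)}_{k\phi,(k)}$ are indispensable; no analytic input beyond those facts is required.
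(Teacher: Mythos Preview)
Your proposal is correct and follows essentially the same approach as the paper: apply the interior elliptic (G{\aa}rding) estimate for the uniformly elliptic operator $\Box^{(q)}_{k\phi,(k)}$, use the uniform $\cC^\infty$-bounds on its coefficients to ensure the constants are $k$-independent, and iterate through a chain of nested domains to reach order $2s$. The only cosmetic difference is that you first convert the weighted norms $\norm{\cdot}_{kF^*_k\phi,s,D_\rho}$ to unweighted Sobolev norms via the uniform bounds on $e^{-2kF^*_k\phi}$ and $F^*m$, whereas the paper applies G{\aa}rding's inequality directly in the weighted setting and cites \cite[Prop.\,2.4, Rem.\,2.5]{HM09} for the $k$-uniformity of the constant; both routes rely on exactly the same underlying facts.
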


\begin{proof}
Since $\Box^{(q)}_{k\phi,(k)}$ is elliptic, we conclude from G{\aa}rding's inequality that for every $r>0$,
$D_{2r}\subset D_{\log k}$ and $s\in\N_0$, we have
\begin{equation} \label{eIII}
\norm{u}^2_{kF^*_k\phi,s+2,D_{r}}\leqslant\Td C_{r',s}\Bigr(\norm{u}^2_{kF^*_k\phi,D_{r'}}+\big\|\Box^{(q)}_{k\phi,(k)}u\big\|^2_{kF^*_k\phi,s,D_{r'}}\Bigl)\,,\;
u\in\Omega^{0,q}(D_{\log k})\,,
\end{equation}
for some $r'>r$. Since all the derivatives of the coefficients of the operator $\Box^{(q)}_{k\phi,(k)}$ are uniformly bounded in $k$,
it is straightforward to see that $\Td C_{r',s}$ can be taken to be independent of $k$. (See Proposition 2.4 and Remark 2.5 in Hsiao-Marinescu~\cite{HM09}.) From \eqref{eIII} and using induction, we get \eqref{eII}.
\end{proof} 

\begin{lem} \label{lII}
For $k$ large and for every $\alpha\in\mathbb N^{2n}_0$, there is a constant $C_{\alpha}>0$ independent of $k$, such that
\begin{equation} \label{eIV}
\abs{(\pr^\alpha_xu)(0)}\leq C_\alpha,
\end{equation}
where $u\in\Omega^{0,q}(D_{\log k})$, $\norm{u}_{kF^*_k\phi,D_{\log k}}\leq 1$, $\norm{(\Box^{(q)}_{k\phi,(k)})^mu}_{kF^*_k\phi,D_{\log k}}\leq k^{-m}$, $\forall m\in\N_0$\,.
\end{lem}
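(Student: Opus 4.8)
The plan is to bootstrap from the $k$-uniform interior elliptic estimate of Lemma~\ref{lI} to a high-order Sobolev bound on a \emph{fixed} ball and then invoke the Sobolev embedding theorem. First I would fix $r=1$, so that $D_2\subset D_{\log k}$ for all large $k$, and observe that the weight $e^{-2kF^*_k\phi}$ is uniformly comparable to $1$ on $D_2$: since $\phi(z)=O(\abs{z}^2)$ near $p$, there are $c_0,\delta>0$ with $\abs{\phi(z)}\leq c_0\abs{z}^2$ for $\abs{z}\leq\delta$, hence $\abs{kF^*_k\phi(x)}=k\abs{\phi(k^{-1/2}x)}\leq c_0\abs{x}^2\leq C_1$ for $x\in D_2$ and $k$ large, so that $e^{-2C_1}\leq e^{-2kF^*_k\phi(x)}\leq e^{2C_1}$ there; similarly $F^*_km(x)=m(k^{-1/2}x)$ is squeezed between two positive constants on $D_2$ for $k$ large. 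Consequently the weighted Sobolev norms $\norm{\cdot}_{kF^*_k\phi,s,D_\rho}$ with $\rho\leq 2$ are equivalent to the ordinary Sobolev norms on $D_\rho$, uniformly in $k$.

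Next, given $\alpha\in\mathbb N^{2n}_0$, I would choose $s\in\mathbb N$ with $2s>\abs{\alpha}+n$ and apply Lemma~\ref{lI} with $r=1$, which gives, for $k$ large,
\[\norm{u}^2_{kF^*_k\phi,2s,D_1}\leqslant C_{1,s}\Big(\norm{u}^2_{kF^*_k\phi,D_2}+\sum^s_{m=1}\big\|(\Box^{(q)}_{k\phi,(k)})^mu\big\|^2_{kF^*_k\phi,D_2}\Big)\,.\]
By hypothesis $\norm{u}_{kF^*_k\phi,D_{\log k}}\leq 1$ and $\norm{(\Box^{(q)}_{k\phi,(k)})^mu}_{kF^*_k\phi,D_{\log k}}\leq k^{-m}\leq 1$; since $D_2\subset D_{\log k}$ and these are $L^2$-norms, restriction to $D_2$ only decreases them, so the right-hand side is at most $C_{1,s}(1+s)$, independent of $k$. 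Combining this with the weight comparison from the previous step I obtain $\norm{u}_{H^{2s}(D_1)}\leq C'_\alpha$ with $C'_\alpha$ independent of $k$. Finally, since $2s>\abs{\alpha}+n$ and $D_1$ is a fixed domain in $\Real^{2n}$, the Sobolev embedding theorem yields $u\in\cC^{\abs{\alpha}}(\ol{D_{1/2}})$ with $\abs{(\pr^\alpha_xu)(0)}\leq C\norm{u}_{H^{2s}(D_1)}\leq C_\alpha$, the constant $C_\alpha$ depending only on $\alpha$. This is \eqref{eIV}.

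I do not anticipate a genuine obstacle: the substantial analytic input — that all derivatives of the coefficients of the scaled operator $\Box^{(q)}_{k\phi,(k)}$ are bounded uniformly in $k$, hence the $k$-uniform interior estimate — is already packaged in Lemma~\ref{lI} (via \cite{Be04}, \cite{HM09}). The one point that must be checked with care is that \emph{every} constant entering the argument (the elliptic constant $C_{1,s}$, the weight bounds $C_1$, and the Sobolev embedding constant) is independent of $k$; this holds precisely because the whole estimate is carried out on the $k$-independent nested domains $D_{1/2}\Subset D_1\Subset D_2\subset D_{\log k}$, on which, by the first step, the weight $kF^*_k\phi$ remains bounded.
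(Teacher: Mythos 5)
Your proof is correct and follows essentially the same route as the paper: apply the $k$-uniform elliptic estimate of Lemma~\ref{lI} on a fixed nested pair of balls, then pass from a Sobolev bound to a pointwise bound at the origin. The one cosmetic difference is that the paper invokes a weighted Sobolev (Fourier-transform) estimate from \cite{HM09} directly, whereas you first observe that $kF^*_k\phi$ and $F^*_km$ are uniformly bounded on $D_2$ (since $\phi=O(|z|^2)$) so that weighted and unweighted Sobolev norms are comparable there, and then apply the standard embedding $H^{2s}(D_1)\hookrightarrow\cC^{|\alpha|}$; this makes the argument a bit more self-contained but rests on the same mechanism.
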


\begin{proof}
Let $u\in\Omega^{0,q}(D_{\log k})$, $\norm{u}_{kF^*_k\phi,D_{\log k}}\leq 1$, $\norm{(\Box^{(q)}_{k\phi,(k)})^mu}_{kF^*_k\phi,D_{\log k}}\leq k^{-m}$, $\forall m\in\N_0$.
By using Fourier transform, it is easy to see that (cf.\ Lemma 2.6 in~\cite{HM09})
\begin{equation} \label{eV}
\abs{(\pr^\alpha_xu)(0)}\leq C\norm{u}_{kF^*_k\phi,n+1+\abs{\alpha},D_r},
\end{equation}
for some $r>0$, where $C>0$ only depends on the dimension and the length of $\alpha$. From \eqref{eII}, we see that
\begin{equation} \label{eVI}
\begin{split}
\norm{u}^2_{kF^*_k\phi,n+1+\abs{\alpha},D_r}&\leq C_{r,\alpha}\Bigr(\norm{u}^2_{kF^*_k\phi,D_{2r}}+\sum^N_{m=1}\norm{(\Box^{(q)}_{k\phi,(k)})^mu}_{kF^*_k\phi,D_{2r}}\Bigr),\ \ 2N\geq n+1+\abs{\alpha},\\
&\leq C_{r,\alpha}\Bigr(1+\sum^\infty_{m=1}k^{-m}\Bigr)\leq\Td C_\alpha
\end{split}
\end{equation}
if $k$ large, where $\Td C_{\alpha}>0$ is independent of $k$. Combining \eqref{eV} with \eqref{eVI}, \eqref{eIV} follows.
\end{proof}

Now, we can prove

\begin{thm} \label{tI}
For $k$ large and for every $\alpha\in\mathbb N^{2n}_0$, $D'\Subset D$, there is a constant $C_{\alpha,D'}>0$ independent of $k$, such that
\begin{equation} \label{eVI-I}
\abs{(\pr^\alpha_x(\Td ue^{-k\phi}))(x)}\leq C_{\alpha,D'}k^{\frac{n}{2}+\abs{\alpha}}\norm{u},\ \ \forall x\in D',
\end{equation}
where $u\in\cE^q_{k^{-\!N_0}}(M,L^k)$, $N_0\geq1$, $u|_D=s^k\Td u$, $\Td u\in\Omega^{0,q}(D)$.
\end{thm}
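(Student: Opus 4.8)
The plan is to transplant $u$ to the rescaled picture around a point of $D'$ and then apply Lemma~\ref{lII}. Since $u\in\cE^q_{k^{-\!N_0}}(M,L^k)$, the spectral theorem for the self-adjoint operator $\Box^{(q)}_k$ gives $\norm{(\Box^{(q)}_k)^mu}\leq k^{-mN_0}\norm{u}\leq k^{-m}\norm{u}$ for every $m\in\N_0$, where we use $N_0\geq1$; moreover $u$ is smooth by ellipticity of $\Box^{(q)}_k$. Writing $u=s^k\Td u$ on $D$, the identity \eqref{eI} iterates to $(\Box^{(q)}_k)^mu=s^k(\Box^{(q)}_{k\phi})^m\Td u$, so, denoting by $\norm{\cdot}_{k\phi}$ the norm of $(\,,)_{k\phi}$ on $D$, we get $\norm{(\Box^{(q)}_{k\phi})^m\Td u}_{k\phi}\leq\norm{(\Box^{(q)}_k)^mu}\leq k^{-m}\norm{u}$, and in particular $\norm{\Td u}_{k\phi}\leq\norm{u}$.

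Fix $p\in D'$ and, as in the set-up preceding Lemma~\ref{lI} (after a holomorphic change of the local frame, which replaces $\Td u$ by $\Td u\,e^{-kh}$ with $h$ holomorphic and is needed to arrange $\phi(0)=0$, $\pr\phi(0)=0$, so that the coefficients of the scaled operator stay bounded), take coordinates $z=x$ centered at $p$ with $\phi(z)=O(\abs{z}^2)$ near $p$, the scaling $F_k(z)=z/\sqrt k$ and the scaled Laplacian $\Box^{(q)}_{k\phi,(k)}$ of \eqref{eI.I}; iterating \eqref{eI.I} gives $(\Box^{(q)}_{k\phi,(k)})^m(F^*_k\Td u)=k^{-m}F^*_k\big((\Box^{(q)}_{k\phi})^m\Td u\big)$. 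The substitution $y=x/\sqrt k$ in the weighted norms of the preamble yields $\norm{F^*_kf}_{kF^*_k\phi,D_r}\leq C\,k^{n/2}\norm{f}_{k\phi}$ for $f\in\Omega^{0,q}(D)$ and $D_r\subset D_{\log k}$, with $C$ depending only on the metric. Combining this with the estimates of the first paragraph and recalling $\log k/\sqrt k\to0$ (so $D_{\log k/\sqrt k}$ lies in the chart for $k$ large), the section $v:=(C\,k^{n/2}\norm{u})^{-1}F^*_k\Td u\in\Omega^{0,q}(D_{\log k})$ satisfies the hypotheses of Lemma~\ref{lII}, i.e.\ $\norm{v}_{kF^*_k\phi,D_{\log k}}\leq1$ and $\norm{(\Box^{(q)}_{k\phi,(k)})^mv}_{kF^*_k\phi,D_{\log k}}\leq k^{-m}$ for all $m\in\N_0$; hence $\abs{(\pr^\beta_xv)(0)}\leq C_\beta$ for every multi-index $\beta$.

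Unwinding the scaling, $\pr^\beta_x(F^*_k\Td u)=k^{-\abs{\beta}/2}F^*_k(\pr^\beta_x\Td u)$, so $\abs{(\pr^\beta_x\Td u)(0)}=k^{\abs{\beta}/2}\abs{(\pr^\beta_x(F^*_k\Td u))(0)}\leq C_\beta\,k^{n/2+\abs{\beta}}\norm{u}$. Passing to $\Td u\,e^{-k\phi}$ by the Leibniz rule,
\[
\pr^\alpha_x(\Td u\,e^{-k\phi})(0)=\sum_{\beta\leq\alpha}\binom{\alpha}{\beta}(\pr^\beta_x\Td u)(0)\;\pr^{\alpha-\beta}_x(e^{-k\phi})(0)\,,
\]
and, since $e^{-k\phi(0)}=1$ and each differentiation of $e^{-k\phi}$ contributes at most one extra factor $k$ with all derivatives of $\phi$ bounded on $\ol{D'}$, one has $\abs{\pr^\gamma_x(e^{-k\phi})(0)}\leq C_\gamma k^{\abs{\gamma}}$; hence $\abs{\pr^\alpha_x(\Td u\,e^{-k\phi})(0)}\leq C_\alpha\,k^{n/2+\abs{\alpha}}\norm{u}$. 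The auxiliary frame change, if used, multiplies $\Td u\,e^{-k\phi}$ by a factor of unit modulus $e^{\pm ik\psi}$ with $\psi$ a real polynomial, contributing only further bounded powers of $k$ by the same Leibniz estimate; and since the coordinates, the frame and all constants depend smoothly, hence uniformly, on $p$ over the compact $\ol{D'}$, while $\pr^\alpha_x$ in the fixed chart of $D$ is a bounded linear combination of the derivatives in the $p$-centered chart, \eqref{eVI-I} follows with $C_{\alpha,D'}$ independent of $k$.

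I expect most of the work to be the bookkeeping of the powers of $k$ produced by the scaling and by the weight $e^{-k\phi}$; the one genuinely delicate point is that the hypotheses of Lemma~\ref{lII} must hold on \emph{all} of $D_{\log k}$ — which is exactly where the uniform-in-$k$ bounds on the coefficients of $\Box^{(q)}_{k\phi,(k)}$, recalled from \cite{Be04}, \cite{HM09}, are used — together with the uniform handling, over $p\in\ol{D'}$, of the holomorphic frame adjustment that normalizes $\phi$ to second order at $p$.
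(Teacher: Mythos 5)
Your proof is correct and follows essentially the same route as the paper's: rescale around a point of $D'$, normalize $\phi$ to vanish to second order there, feed the rescaled section into Lemma~\ref{lII}, and unwind. Two remarks on accuracy of detail that do not affect the conclusion. First, after unwinding the scaling you should get $\abs{(\pr^\beta_x\Td u)(0)}\leq C_\beta\,k^{n/2+\abs{\beta}/2}\norm{u}$, not $k^{n/2+\abs{\beta}}$ (the paper's own chain $\abs{k^{-n/2-\abs{\alpha}/2}(\pr^\alpha_x\Td u)(0)}\leq\Td C_\alpha\norm{u}$ shows this); the Leibniz step against $\abs{\pr^\gamma_x e^{-k\phi}(0)}\leq C_\gamma k^{\abs{\gamma}}$ still produces the advertised $k^{n/2+\abs{\alpha}}$, so the slip is harmless. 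Second, the frame-change factor $({\ol g}^{1/2}g^{-1/2})^k$ in \eqref{e*} is $e^{-ik\arg g}$ with $\arg g$ smooth but not polynomial; ``real polynomial'' is inaccurate, though what you actually use (unit modulus, each derivative costs at most one factor of $k$) is correct. The paper handles this frame-dependence upfront through Remark~\ref{rI}, which shows \eqref{eVI-I} is invariant under a change of trivialization, and then keeps the proof body free of the Leibniz accounting for $e^{-ik\arg g}$; you do the bookkeeping explicitly inside the proof, which amounts to the same thing.
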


\begin{rem} \label{rI}
Let $s_1$ be another local frame of $L$ on $D$, $\abs{s_1}^2=e^{-2\phi_1}$. We have $s_1=gs$ for some holomorphic function $g\in\cC^\infty(D)$, $g\neq0$ on $D$. Let $u\in\Omega^{0,q}(D,L^k)$.
On $D$, we write $u=s^k\Td u=s^k_1\Td v$. Then, we can check that
\begin{equation} \label{e*}
\Td ve^{-k\phi_1}=\Td u({\ol g}^{\,1/2}g^{-1/2})^ke^{-k\phi}.
\end{equation}
From \eqref{e*}, it is easy to see that if $\Td u$ satisfies \eqref{eVI-I}, then $\Td v$ also satisfies \eqref{eVI-I}.
Thus, the conclusion of Theorem~\ref{tI} makes sense.
\end{rem}

\begin{proof} [Proof of Theorem~\ref{tI}]
We may assume that $0\in D'$.
Let $u\in\cE^q_{k^{-\!N_0}}(M,L^k)$, $N_0\geq1$, $u|_D=s^k\Td u$, $\Td u\in\Omega^{0,q}(D)$. We may assume that
$D_{\frac{\log k}{\sqrt{k}}}\subset D$ and consider $\Td u|_{D_{\frac{\log k}{\sqrt{k}}}}$.
Set $\beta_k:=k^{-\frac{n}{2}}F^*_k\Td u=k^{-\frac{n}{2}}\Td u(\frac{x}{\sqrt{k}})\in\Omega^{0,q}(D_{\log k})$. We can check that
\begin{equation} \label{eVII}
\norm{\beta_k}_{kF^*_k\phi,D_{\log k}}\leq\norm{u}.
\end{equation}
Since $u\in\cE^q_{k^{-\!N_0}}(M,L^k)$, we have $\norm{(\Box^{(q)}_k)^mu}\leq k^{-mN_0}\norm{u}$ for all $m\in\N$. From this observation
and \eqref{eI.I}, we have
\begin{equation} \label{eVIII}
\begin{split}
\norm{(\Box^{(q)}_{k\phi,(k)})^m\beta_k}_{kF^*_k\phi,D_{\log k}}&=\frac{1}{k^{m+\frac{n}{2}}}\norm{F^*_k\bigr((\Box^{(q)}_{k\phi})^m\Td u\bigr)}_{kF^*_k\phi,D_{\log k}}\\
&\leq\frac{1}{k^m}\norm{(\Box^{(q)}_k)^mu}\leq k^{-mN_0-m}\norm{u}.
\end{split}
\end{equation}
From \eqref{eVII}, \eqref{eVIII} and Lemma~\ref{lII}, we conclude that for every $\alpha\in\mathbb N^{2n}_0$, there is a constant $\Td C_{\alpha}>0$ independent of $k$, such that
\[\abs{k^{-\frac{n}{2}-\frac{\abs{\alpha}}{2}}(\pr^\alpha_x\Td u)(0)}=\abs{(\pr^\alpha_x\beta_k)(0)}\leq\Td C_{\alpha}\norm{u}.\]
Thus, for every $\alpha\in\mathbb N^{2n}_0$, there is a constant $C_{\alpha}>0$ independent of $k$, such that
\[\abs{(\pr^\alpha_x(\Td ue^{-k\phi}))(0)}\leq C_\alpha k^{\frac{n}{2}+\abs{\alpha}}\norm{u}.\]

Let $x_0$ be another point of $D'$. We can repeat the procedure above and conclude that for every $\alpha\in\mathbb N^{2n}_0$, there is a $C_\alpha(x_0)>0$ independent of $k$, such that
\[\abs{(\pr^\alpha_x(\Td ue^{-k\phi}))(x_0)}\leq C_\alpha(x_0) k^{\frac{n}{2}+\abs{\alpha}}\norm{u}.\]
It is straightforward to see that the constant $C_\alpha(x_0)$ depends continuously on $\phi$ and the coefficients of $\Box^{(q)}_{k\phi,(k)}$ in $\cC^m(D)$ topology, for some $m\in\N_0$. (See Remark 2.5 and Theorem 2.7 in~\cite{HM09}, for the details.) Since $\ol D'\subset D$ is compact, $C_\alpha(x_0)$ can be taken to be independent of the point $x_0$. The theorem follows.
\end{proof} 

\subsection{Kernel of the spectral function}\label{s:kern_spec}
As in \eqref{e:orto_proj}, let 
\[P^{(q)}_{k,k^{-N_0}}:L^2_{(0,q)}(M,L^k)\to\cE^q_{k^{-N_0}}(M, L^k)\]
be the spectral projection on the spectral space of $\Box^{(q)}_k$ corresponding to energy less than $k^{-N_0}$.
The goal of this Section is to compare the localized spectral projection $\widehat P^{(q)}_{k,k^{-N_0},s}$ (see \eqref{eXI}) to the localized approximate Szeg\"o projection  $\mathcal{S}_{k}$ defined in \eqref{s3-e17-1}. This will be achieved in Proposition \ref{pIII}.

We introduce some notations. Let $(e_1,\ldots,e_n)$ be a smooth local orthonormal
frame of $T^{*(0,1)}_xM$ over an open set $D\Subset M$. Then
$(e^J:=e_{j_1}\wedge\cdots\wedge e_{j_q})_{1\leqslant
j_1<j_2<\cdots<j_q\leqslant  n}$ is an orthonormal frame of
$\Lambda^qT^{*(0,1)}_xM$ over $D$. For $f\in\Omega^{0,q}(D)$, we may write
$f=\sum'_{\abs{J}=q} f_Je^J$, with $f_J=\langle\,f\,,e^J\,\rangle\in
\cC^\infty(D)$. We call $f_J$ the
component of $f$ along $e^J$. Let
$A:\Omega^{0,q}_0(D)\To\Omega^{0,q}(D)$ be a continuous operator with smooth kernel. We write
\begin{equation} \label{s1-e13}
A(x,y)=\sideset{}{'}\sum_{\abs{I}=q,\abs{J}=q}e^I(x)A_{I,J}(x,y)e^J(y)\,,\:\:A_{I,J}\in\cC^\infty(D\times D)\,.
\end{equation}
We have
\begin{equation} \label{s1-e14}
(Au)(x)=\sideset{}{'}\sum_{\abs{I}=q,\abs{J}=q}e^I(x)\int_DA_{I,J}(x,y)u_J(y)dv_M(y)\,,\:\:u=\sideset{}{'}\sum_{\abs{J}=q}u_Je^J\in\Omega^{0,q}_0(D)\,.
\end{equation}
Let $A^*$ be the formal adjoint of $A$ with respect to $(\,\cdot\,,\cdot)$. 
We can check that
\begin{equation} \label{s5-e2-0}
A^*(x,y)=\sideset{}{'}\sum_{\abs{I}=q,\abs{J}=q}e^I(x)A^*_{I,J}(x,y)e^J(y)\,,\:\:A^*_{I,J}(x,y)=\ol{A_{J,I}(y,x)},
\end{equation}
Let
\begin{equation*}
\begin{split}
&B:\Omega^{0,q}(D)\To\Omega^{0,q}(D),\ \ \Omega^{0,q}_0(D)\To\Omega^{0,q}_0(D),\\
&B(x,y)=\sideset{}{'}\sum_{\abs{I}=q,\abs{J}=q}e^I(x)B_{I,J}(x,y)e^J(y),
\end{split}
\end{equation*}
be a properly supported smoothing operator. We write \[(B\circ
A)(x,y)=\sideset{}{'}\sum_{\abs{I}=q,\abs{J}=q}e^I(x)(B\circ
A)_{I,J}(x,y)e^J(y)\] in the sense of \eqref{s1-e14}. It is not
difficult to see that
\begin{equation} \label{s5-e2-1}
(B\circ
A)_{I,J}(x,y)=\sideset{}{'}\sum_{\abs{K}=q}\int_DB_{I,K}(x,z)A_{K,J}(z,y)dv_M(z)\,.
\end{equation}

Now, we return to our situation. Let \[P^{(q)}_{k,\lambda}(x,y)\in C^\infty\bigr(M\times M,(\Lambda^qT^{*(0,1)}_yM\otimes L^k_y)\boxtimes(\Lambda^qT^{*(0,1)}_xM\otimes L^k_x)\bigr)\]
be the spectral function, i.\,e., the Schwartz kernel of $P^{(q)}_{k,\lambda}$:
\begin{equation}\label{specfdef}
(P^{(q)}_{k,\lambda}u)(x)=\int_MP^{(q)}_{k,\lambda}(x,y)u(y)dv_M(y),\ \ u\in L^2_{(0,q)}(M, L^k).
\end{equation}
Let $s$ be a local frame of $L$ over $D$, where $D\subset M$. Then on $D\times D$ we can write
\[P^{(q)}_{k,\lambda}(x,y)=s(x)^kP^{(q)}_{k,\lambda,s}(x, y)s^*(y)^k,\]
where $P^{(q)}_{k,\lambda,s}(x, y)$
is smooth on $D\times D$,
so that for $x\in D$, $u\in\Omega^{0,q}_0(D,L^k)$,
\begin{equation} \label{s1-e2}
\begin{split}
(P^{(q)}_{k,\lambda}u)(x)&=s(x)^k\int_MP^{(q)}_{k,\lambda,s}(x, y)\langle\,u(y)\,, s^*(y)^k\rangle\,dv_M(y)\\
&=s(x)^k\int_M P^{(q)}_{k,\lambda,s}(x, y)\Td u(y)dv_M(y),\ \ u=s^k\Td u,\ \ \Td u\in\Omega^{0,q}_0(D).
\end{split}
\end{equation}
For $x=y$, we can check that the function
$P^{(q)}_{k,\lambda,s}(x,x)\in\cC^\infty(D, \End(\Lambda^qT^{*(0,1)}M))$
is independent of the choices of local frame $s$. 

Let $D\Subset M$ be an open set. Assume that $L|_D$ is trivial and let $s$ be a local frame of $L$ on $D$ and set $\abs{s}_{h^L}^2=e^{-2\phi}$. 
Let $(D,z)\cong(D,x)$ be local coordinates of $D$. Fix $N_0\geq1$. We define the \emph{localized spectral projection} (with respect to the trivializing section $s$) by
\begin{align} \label{eXI}
\widehat P^{(q)}_{k,k^{-N_0},s}:L^2_{(0,q)}(D)\cap\mathscr E'(D,\Lambda^qT^{*(0,1)}M)&\To\Omega^{0,q}(D),\nonumber \\
u&\To e^{-k\phi}s^{-k}P^{(q)}_{k, k^{-N_0}}(s^ke^{k\phi}u).
\end{align}
That is, if $P^{(q)}_{k, k^{-N_0}}(s^ke^{k\phi}u)=s^kv$ on $D$, then
$\widehat P^{(q)}_{k,k^{-N_0},s}u=e^{-k\phi}v$. We notice that
\begin{equation} \label{s1-e11}
\widehat P^{(q)}_{k,k^{-N_0},s}(x,y)=e^{-k\phi(x)}P^{(q)}_{k,k^{-N_0},s}(x,y)e^{k\phi(y)},
\end{equation}
where $\widehat P^{(q)}_{k,k^{-N_0},s}(x,y)$ is the kernel of
$\widehat P^{(q)}_{k,k^{-N_0},s}$ with respect to $(\,\cdot\,,\cdot)$ and $P^{(q)}_{k, k^{-N_0},s}(x,y)$ is as in
\eqref{s1-e2}. We write
\begin{equation} \label{s5-e3}
\widehat P^{(q)}_{k,k^{-N_0},s}(x,y)=\sideset{}{'}\sum_{\abs{I}=q,\abs{J}=q}e^I(x)\widehat P^{(q)}_{k,k^{-N_0},s,I,J}(x,y)e^J(y)\,,\:\;\widehat P^{(q)}_{k,k^{-N_0},s,I,J}\in\cC^\infty(D\times D),
\end{equation}
in the sense of \eqref{s1-e14}. 
Since $P^{(q)}_{k,k^{-N_0}}$ is self-adjoint, we have
\begin{equation} \label{s5-e4}
\widehat P^{(q)}_{k,k^{-N_0},s,I,J}(x,y)=\ol{\widehat P^{(q)}_{k,k^{-N_0},s,J,I}(y,x)},
\end{equation}
for all strictly increasing $I,J$ with $\abs{I}=\abs{J}=q$. 

\par Let $\{f_j\}_{j=1}^{d_k}\subset\Omega^{0,q}(M,L^k)$ be
an orthonormal frame for $\cE^q_{k^{-\!N_0}}(M, L^k)$, $d_k\in\N_0\bigcup\set{\infty}$. For each $j$,
we write 
\[
f_j|_D=\sideset{}{'}\sum_{\abs{J}=q}f_{j,J}(x)e^J(x)\,,\:\:f_{j,J}\in\cC^\infty(D,L^k)\,.
\]   
For $j=1,\ldots,d_k$ and strictly increasing $J$ with $\abs{J}=q$ we define $\Td f_{j,J}\in\cC^\infty(D)$ and $\Td f_j\in\Omega^{0,q}(D)$ by
\begin{equation*}
\begin{split}
f_{j,J}=s^k\Td f_{j,J},\ \ 
\Td f_j=\sideset{}{'}\sum_{\abs{J}=q}\Td f_{j,J}(x)e^J(x)\ .
\end{split}
\end{equation*}
Then, $f_j|_D=s^k\Td f_j$, $j=1,\ldots,d_k$, and it is not difficult to see that
\begin{equation} \label{s5-e5}
\widehat P^{(q)}_{k,k^{-N_0},s,I,J}(x,y)=\sum^{d_k}_{j=1}\Td f_{j,I}(x)\ol{\Td f_{j,J}(y)}\,e^{-k(\phi(x)+\phi(y))},
\end{equation}
for all strictly increasing $I,J$ with $\abs{I}=\abs{J}=q$. Since $\widehat P^{(q)}_{k,k^{-N_0},s,I,J}$ are smooth for all strictly increasing $I$, $J$, $\abs{I}=\abs{J}=q$, we conclude that for all $\alpha\in\N_0^{2n}$, 
\begin{equation} \label{s0-con1}
\mbox{$\sum^{d_k}_{j=1}\abs{(\pr^\alpha_x(\Td f_je^{-k\phi}))(x)}^2$ converges at each point of $x\in D$}.
\end{equation} 
Similarly, if $F:\mathscr E'(D,\Lambda^qT^{*(0,1)}M)\To \mathscr E'(D,\Lambda^qT^{*(0,1)}M)$ is a properly supported continuous operator such that for all $s\in\mathbb N_0$, $F:H^s_{{\rm comp\,}}(D,\Lambda^qT^{*(0,1)}M)\To H^{s+s_0}_{{\rm comp\,}}(D,\Lambda^qT^{*(0,1)}M)$ is continuous, for some $s_0\in\Real$. Then, we can check that 
\begin{equation} \label{s0-con2}
\mbox{$\sum^{d_k}_{j=1}\abs{(F(\Td f_je^{-k\phi}))(x)}^2$ converges at each point of $x\in D$}.
\end{equation} 
\begin{prop} \label{pI}
With the notations used above, for every $\alpha\in\mathbb N^{2n}_0$, $D'\Subset D$, there is a constant 
$C_{\alpha,D'}>0$ independent of $k$, such that
\begin{equation} \label{eIX}
\sum^{d_k}_{j=1}\abs{(\pr^\alpha_x(\Td f_je^{-k\phi}))(x)}^2\leq C_{\alpha,D'}k^{n+2\abs{\alpha}},\ \ \forall x\in D'.
\end{equation}
\end{prop}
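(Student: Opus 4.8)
The plan is to deduce the sum estimate \eqref{eIX} from the single-section estimate of Theorem~\ref{tI} by the standard extremal characterisation of the diagonal of a spectral kernel: use Riesz representation to produce reproducing sections, and then apply Parseval's identity for the orthonormal basis $\set{f_j}$ to evaluate the sum. Since $\cE^q_{k^{-N_0}}(M,L^k)\subset\Omega^{0,q}(M,L^k)$ by ellipticity, all the local representations below make sense.

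First I would fix $x_0\in D'$ and $\alpha\in\mathbb N^{2n}_0$, and for each strictly increasing $J$ with $\abs{J}=q$ consider the linear functional
\[\ell_J:\cE^q_{k^{-N_0}}(M,L^k)\To\C,\qquad \ell_J(u)=\big(\pr^\alpha_x(\Td u_J\,e^{-k\phi})\big)(x_0),\]
where on $D$ one writes $u=s^k\Td u$ with $\Td u=\sideset{}{'}\sum_{\abs{J}=q}\Td u_J e^J\in\Omega^{0,q}(D)$. Because $\set{e^J}$ is a pointwise orthonormal frame of $\Lambda^qT^{*(0,1)}M$, one has $\abs{\ell_J(u)}^2\leq\abs{\big(\pr^\alpha_x(\Td u\,e^{-k\phi})\big)(x_0)}^2$, so Theorem~\ref{tI} yields
\[\abs{\ell_J(u)}^2\leq C_{\alpha,D'}^2\,k^{n+2\abs{\alpha}}\norm{u}^2,\]
with $C_{\alpha,D'}$ independent of $k$ and, by the last part of Theorem~\ref{tI}, independent of the point $x_0\in D'$. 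Hence $\ell_J$ is a bounded functional of norm at most $C_{\alpha,D'}k^{n/2+\abs{\alpha}}$ on the Hilbert space $\cE^q_{k^{-N_0}}(M,L^k)$, which is closed, being the range of the spectral projection $P^{(q)}_{k,k^{-N_0}}$. By Riesz representation there is $g_J\in\cE^q_{k^{-N_0}}(M,L^k)$ with $\ell_J(u)=(u,g_J)_k$ for all $u$ and $\norm{g_J}\leq C_{\alpha,D'}k^{n/2+\abs{\alpha}}$.

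Next, since $\set{f_j}_{j=1}^{d_k}$ is an orthonormal basis of $\cE^q_{k^{-N_0}}(M,L^k)$, Parseval's identity gives, for each such $J$,
\[\sum^{d_k}_{j=1}\abs{\ell_J(f_j)}^2=\sum^{d_k}_{j=1}\abs{(f_j,g_J)_k}^2=\norm{g_J}^2\leq C_{\alpha,D'}^2\,k^{n+2\abs{\alpha}},\]
in particular the sum converges, in agreement with \eqref{s0-con1}. Summing over the $\binom{n}{q}$ strictly increasing multiindices $J$ of length $q$ and using $\abs{\big(\pr^\alpha_x(\Td f_j\,e^{-k\phi})\big)(x_0)}^2=\sideset{}{'}\sum_{\abs{J}=q}\abs{\ell_J(f_j)}^2$ gives
\[\sum^{d_k}_{j=1}\abs{\big(\pr^\alpha_x(\Td f_j\,e^{-k\phi})\big)(x_0)}^2\leq\binom{n}{q}C_{\alpha,D'}^2\,k^{n+2\abs{\alpha}},\]
which is \eqref{eIX} after relabelling the constant, uniformly in $x_0\in D'$; independence of the choice of trivialising section $s$ follows from Remark~\ref{rI}.

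The main (and essentially only) delicate point is the uniformity in $x_0\in D'$ and the $k$-independence of the constant supplied by Theorem~\ref{tI}; both are already contained in that theorem. The rest is the routine Riesz--Parseval passage from a bound on a single eigensection to a bound on the spectral density, together with the bookkeeping that splits the $\Lambda^{0,q}$-valued derivative into its $\binom{n}{q}$ scalar components.
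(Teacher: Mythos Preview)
Your proof is correct and follows essentially the same approach as the paper: both reduce the sum estimate to the single-section estimate of Theorem~\ref{tI} via the extremal characterisation of the spectral density. The paper writes down the extremal (peak) section explicitly and checks that its $\alpha$-derivative at $p$ equals $\big(\sum_j|(\pr^\alpha_x(\Td f_je^{-k\phi}))(p)|^2\big)^{1/2}$, whereas you phrase the same computation abstractly via the Riesz representation theorem and Parseval's identity; the explicit peak section in the paper is precisely your Riesz representer, and your component-by-component bookkeeping (summing over $J$) makes the vector-valued case slightly more transparent than the paper's direct formula.
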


\begin{proof}
Fix $\alpha\in\mathbb N^{2n}_0$ and $p\in D'$. We may assume that $\sum^{d_k}_{j=1}\abs{(\pr^\alpha_x(\Td f_je^{-k\phi}))(p)}^2\neq0$. Set
\[u(x)=\frac{1}{\sqrt{\sum^{d_k}_{j=1}\abs{(\pr^\alpha_x(\Td f_je^{-k\phi}))(p)}^2}}
\sum^{d_k}_{j=1}f_j(x)\ol{(\pr^\alpha_x(\Td f_je^{-k\phi}))(p)}.\] 
Since $\sum^{d_k}_{j=1}\abs{(\pr^\alpha_x(\Td f_je^{-k\phi}))(p)}^2$ converges, we
can check that $u\in\cE^q_{k^{-N_0}}(M,L^k)$, $\norm{u}=1$. On $D$, we write $u=s^k\Td u$, $\Td u\in\Omega^{0,q}(D)$. We can check that
\begin{equation} \label{eX}
\Td u=\frac{1}{\sqrt{\sum^{d_k}_{j=1}\abs{(\pr^\alpha_x(\Td f_je^{-k\phi}))(p)}^2}}
\sum^{d_k}_{j=1}\Td f_j(x)\ol{(\pr^\alpha_x(\Td f_je^{-k\phi}))(p)}.
\end{equation}
In view of Theorem~\ref{tI}, we see that $\abs{(\pr^\alpha_x(\Td ue^{-k\phi}))(p)}\leq C_\alpha k^{\frac{n}{2}+\abs{\alpha}}$, with 
$C_\alpha>0$ independent of $k$ and of the point $p$. From \eqref{eX}, it is straightforward to see that
\[\abs{(\pr^\alpha_x(\Td ue^{-k\phi}))(p)}=\sqrt{\sum^{d_k}_{j=1}\abs{(\pr^\alpha_x(\Td f_je^{-k\phi}))(p)}^2}\leq C_\alpha k^{\frac{n}{2}+\abs{\alpha}}.\]
The proposition follows.
\end{proof} 

Now, we assume that $\pr\ddbar\phi$ is non-degenerate of constant signature $(n_-,n_+)$ at each point of $D$ and let $q=n_-$.
Let $\mathcal{S}_k$, $\mathcal{A}_k$ be as in Theorem~\ref{s3-t4} and let $\Box^{(q)}_s$ be as in \eqref{s1-e3}, \eqref{s1-e4}.
If we replace $\mathcal{S}_{k}$ by $I-\Box^{(q)}_s\mathcal{A}_k$, then $\Box^{(q)}_s\mathcal{A}_k+\mathcal{S}_k=I=\mathcal{A}^*_k\Box^{(q)}_s+\mathcal{S}^*_k$ on
$\mathscr D'(D, \Lambda^{q}T^{*(0,1)}M)$. Now,
\begin{equation} \label{eXII}
\widehat P^{(q)}_{k,k^{-N_0},s}=(\mathcal{A}^*_k\Box^{(q)}_s+\mathcal{S}^*_k)\widehat P^{(q)}_{k,k^{-N_0},s}=R+\mathcal{S}^*_k\widehat P^{(q)}_{k,k^{-N_0},s}\ \ \mbox{on $\mathscr E'(D,\Lambda^qT^{*(0,1)}M)$},
\end{equation}
where we denote
\[R=\mathcal{A}^*_k\Box^{(q)}_s\widehat P^{(q)}_{k,k^{-N_0},s}.\]
We write \[R(x,y)=\sideset{}{'}\sum_{\abs{I}=q,\abs{J}=q}e^I(x)R_{I,J}(x,y)e^J(y)\,,\:\:R_{I,J}\in\cC^\infty(D\times D)\,,\]
in the sense of \eqref{s1-e14}. From \eqref{s5-e5}, it is straightforward to see that
\begin{equation} \label{eXIII}
\begin{split}
&R_{I,J}(x,y)=\sum^{d_k}_{j=1}\Td g_{j,I}(x)\ol{\Td f_{j,J}(y)}e^{-k\phi(y)},\\
&\Td g_j=\mathcal{A}^*_k\Box^{(q)}_s(\Td f_je^{-k\phi})(x),\ \ \Td g_j(x)=\sideset{}{'}\sum_{\abs{I}=q}\Td g_{j,I}(x)e^I(x),\ \ j=1,\ldots,d_k,
\end{split}
\end{equation}
for all strictly increasing $I$, $J$, $\abs{I}=\abs{J}=q$. From \eqref{s0-con2}, we see that for all $\alpha\in\mathbb N^{2n}_0$, 
\begin{equation*}
\mbox{$\sum^{d_k}_{j=1}\abs{(\pr^\alpha_x\Td g_j)(x)}^2$ converges at each point of $x\in D$}.
\end{equation*} 
To estimate $R_{I,J}(x,y)$, we first need

\begin{lem} \label{lIII}
With the notations used above, for every $D'\Subset D$, $\alpha\in\mathbb N^{2n}_0$, there is a constant $C_{\alpha,D'}>0$ independent of $k$, such that for all $u\in\cE^q_{k^{-\!N_0}}(M,L^k)$, $\norm{u}=1$, $u|_D=s^k\Td u$, $\Td u\in\Omega^{0,q}(D)$, if we set $\Td v(x)=\mathcal{A}^*_k\Box^{(q)}_s(\Td ue^{-k\phi})$, then
\[\abs{(\pr^\alpha_x\Td v)(x)}\leq C_{\alpha,D'}k^{\frac{5n}{2}+2\abs{\alpha}-N_0},\ \ \forall x\in D'.\]
\end{lem}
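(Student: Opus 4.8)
The plan is to recognize $\Box^{(q)}_s(\Td u e^{-k\phi})$ as the local expression of $\Box^{(q)}_k u$, to observe that $\Box^{(q)}_k u$ again lies in the spectral space $\cE^q_{k^{-N_0}}(M,L^k)$ but now with $L^2$-norm of size $k^{-N_0}$, and then to combine the pointwise estimate of Theorem~\ref{tI} with the mapping property of $\mathcal A^*_k$ recorded in \eqref{s3-e22-00bis}. First I would note that, since $\Box^{(q)}_k$ commutes with its spectral projection, $w:=\Box^{(q)}_k u\in\cE^q_{k^{-N_0}}(M,L^k)$, and by the spectral theorem $\norm{w}\leq k^{-N_0}\norm{u}=k^{-N_0}$; moreover $w$ is smooth because $\cE^q_{k^{-N_0}}(M,L^k)\subset\Omega^{0,q}(M,L^k)$. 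On $D$ write $w=s^k\Td w$ with $\Td w\in\Omega^{0,q}(D)$. The unitary identifications \eqref{s1-e3} then give $\Box^{(q)}_s(\Td u e^{-k\phi})=e^{-k\phi}s^{-k}\Box^{(q)}_k u=\Td w\,e^{-k\phi}$. Applying Theorem~\ref{tI} to $w$ therefore yields, for every $D''\Subset D$, every $\beta\in\N_0^{2n}$ and $k$ large, a constant $C_{\beta,D''}>0$ independent of $k$ with
\[\abs{\pr^\beta_x\bigl(\Box^{(q)}_s(\Td u e^{-k\phi})\bigr)(x)}=\abs{\pr^\beta_x(\Td w\,e^{-k\phi})(x)}\leq C_{\beta,D''}\,k^{\frac n2+\abs\beta-N_0}\,,\qquad x\in D''\,.\]

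Next I would fix the integer $s:=n+\abs\alpha+1$, chosen so that $H^s\hookrightarrow\cC^{\abs\alpha}$ on bounded open subsets of $\R^{2n}$, and pick cut-off functions $\chi_1\in\cC^\infty_0(D)$ with $\chi_1\equiv1$ near $\ol{D'}$ and $\chi_0\in\cC^\infty_0(D)$ equal to $1$ on a neighborhood of the set $\set{y\in D;\, \exists\,x\in{\rm Supp\,}\chi_1,\ (x,y)\in{\rm Supp\,}K_{\mathcal A^*_k}}$, which is compact since $\mathcal A^*_k$ is properly supported. Then $\chi_1\Td v=(\chi_1\mathcal A^*_k\chi_0)\bigl(\chi_0\,\Box^{(q)}_s(\Td u e^{-k\phi})\bigr)$. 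Choosing $D''\Subset D$ with ${\rm Supp\,}\chi_0\subset D''$, the pointwise bounds above (there are only finitely many derivatives of order $\leq s-1$) give $\norm{\chi_0\,\Box^{(q)}_s(\Td u e^{-k\phi})}_{s-1}\leq C\,k^{\frac n2+(s-1)-N_0}$, while \eqref{s3-e22-00bis}, applied with Sobolev order $s-1$, yields $\norm{\chi_1\Td v}_{s}\leq C\,k^{\,s-1}\norm{\chi_0\,\Box^{(q)}_s(\Td u e^{-k\phi})}_{s-1}\leq C\,k^{\,2s-2+\frac n2-N_0}$. Since $2s-2+\frac n2=2(n+\abs\alpha+1)-2+\frac n2=\frac{5n}2+2\abs\alpha$ and $\chi_1\equiv1$ near $\ol{D'}$, Sobolev embedding gives $\sup_{D'}\abs{\pr^\alpha_x\Td v}\leq C\norm{\chi_1\Td v}_s\leq C_{\alpha,D'}\,k^{\frac{5n}2+2\abs\alpha-N_0}$, which is the assertion.

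The only points requiring care are the bookkeeping with the cut-off functions, using the proper support of $\mathcal A^*_k$ so that the localized operator bound built into \eqref{s1-e1su}--\eqref{s3-e22-00bis} can be invoked, and the choice of Sobolev order $s$ matching the target exponent $\tfrac{5n}2$; all the analytic content is already contained in Theorem~\ref{tI} and Theorem~\ref{s3-t4}, so no further estimates are needed and I do not expect a genuine obstacle here.
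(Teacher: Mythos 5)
Your proposal is correct and follows essentially the same route as the paper: both proofs identify $\Box^{(q)}_s(\Td ue^{-k\phi})$ with the local expression of $\Box^{(q)}_ku$, exploit that $\Box^{(q)}_ku$ lies again in $\cE^q_{k^{-N_0}}(M,L^k)$ with $L^2$-norm $\leq k^{-N_0}$ so that Theorem~\ref{tI} yields the pointwise bound $C_\beta k^{\frac n2+\abs\beta-N_0}$, and then convert this into a Sobolev estimate fed through the mapping property $\mathcal A^*_k=O(k^m):H^m_{\rm comp}\to H^{m+1}_{\rm comp}$ followed by Sobolev embedding at order $n+1+\abs\alpha$. The only difference is presentational: you spell out the cut-off bookkeeping that the paper leaves implicit when it writes the operator norm over $D''$ (which is legitimate because $\mathcal A^*_k$ is properly supported and the $O(k^m)$ bound in \eqref{s1-e1su} is defined via such cut-offs).
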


\begin{proof}
Let $u\in\cE^q_{k^{-\!N_0}}(M,L^k)$, $\norm{u}=1$, $u|_D=s^k\Td u$, $\Td u\in\Omega^{0,q}(D)$. Set $\Td v(x)=\mathcal{A}^*_k\Box^{(q)}_s(\Td ue^{-k\phi})$. We recall that
\begin{equation} \label{eXIV}
\mathcal{A}^*_k:O(k^s):H^s_{{\rm comp\,}}(D,\Lambda^qT^{*(0,1)}M)\To H^{s+1}_{{\rm comp\,}}(D,\Lambda^qT^{*(0,1)}M),\ \ \forall s\in\N_0.
\end{equation}
Let $D'\Subset D''\Subset D$. By using Fourier transforms, we see that for all $x\in D'$, we have
\[\abs{(\pr^\alpha_x\Td v)(x)}\leq C_\alpha\norm{\Td v}_{n+1+\abs{\alpha},D''},\]
where $C_\alpha$ only depends on the dimension and the length of $\alpha$. Here $\norm{.}_{s,D''}$ denotes the usual Sobolev norm of order $s$ on $D''$.
From this observation and \eqref{eXIV}, we see that
\begin{equation} \label{eXV}
\abs{(\pr^\alpha_x\Td v)(x)}\leq C_\alpha\norm{\Td v}_{n+1+\abs{\alpha},D''}\leq C'_\alpha k^{n+\abs{\alpha}}\norm{\Box^{(q)}_s(\Td ue^{-k\phi})}_{n+\abs{\alpha},D''},
\end{equation}
where $C'_\alpha>0$ is independent of $k$. Let $\Box^{(q)}_ku=f$, $f|_D=s^k\Td f$, $\Td f\in\Omega^{0,q}(D)$. We can check that $f\in\cE^q_{k^{-\!N_0}}(M,L^k)$ and $\norm{f}\leq k^{-N_0}$. From \eqref{s1-e3}, we see that
\begin{equation} \label{eXVI}
\Box^{(q)}_s(e^{-k\phi}\Td u)=e^{-k\phi}\Td f.
\end{equation}
In view of Theorem~\ref{tI}, we know that for all $\beta\in\mathbb N^{2n}_0$,
\[\abs{\pr^\beta_x(\Box^{(q)}_s(e^{-k\phi}\Td u))}=\abs{\pr^\beta_x(e^{-k\phi}\Td f)}\leq C_\beta k^{\frac{n}{2}+\abs{\beta}}\norm{f}\leq C_\beta k^{\frac{n}{2}+\abs{\beta}-N_0}\ \ \mbox{on $D''$},\]
where $C_\beta>0$ is independent of $k$. Thus,
\begin{equation} \label{eXVII}
\norm{\Box^{(q)}_s(e^{-k\phi}\Td u)}_{n+\abs{\alpha},D''}\leq\Td C_\alpha k^{\frac{3n}{2}+\abs{\alpha}-N_0},
\end{equation}
where $\Td C_\alpha>0$ is independent of $k$. Combining \eqref{eXVII} with \eqref{eXV}, the lemma follows.
\end{proof}
\begin{lem} \label{lIV}
Let $\Td g_j(x)\in\Omega^{0,q}(D)$, $j=1,\ldots,d_k$, be as in \eqref{eXIII}. For every $D'\Subset D$, $\alpha\in\mathbb N^{2n}_0$,
there is a constant $C_\alpha>0$ independent of $k$, such that for all $x\in D'$
\[\sum^{d_k}_{j=1}\abs{(\pr^\alpha_x\Td g_j)(x)}^2\leq C_\alpha k^{5n+4\abs{\alpha}-2N_0}\,.\]
\end{lem}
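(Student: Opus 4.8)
The plan is to deduce Lemma~\ref{lIV} from Lemma~\ref{lIII} by the same diagonal-extraction argument used in the proof of Proposition~\ref{pI}. Fix $\alpha\in\mathbb N^{2n}_0$ and a point $p\in D'$, and set $S_k(p):=\sum^{d_k}_{j=1}\abs{(\pr^\alpha_x\Td g_j)(p)}^2$. Since $\Td g_j=\mathcal{A}^*_k\Box^{(q)}_s(\Td f_je^{-k\phi})$ and the operator $\mathcal{A}^*_k\Box^{(q)}_s$ is properly supported and continuous $H^s_{{\rm comp\,}}(D,\Lambda^qT^{*(0,1)}M)\To H^{s-1}_{{\rm comp\,}}(D,\Lambda^qT^{*(0,1)}M)$ (combine \eqref{s3-e18adjoint} with the fact that $\Box^{(q)}_s$ is a second order differential operator), the series $S_k(p)$ converges by \eqref{s0-con2}. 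If $S_k(p)=0$ there is nothing to prove, so assume $S_k(p)\neq 0$.

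Following the proof of Proposition~\ref{pI}, I would put
\[u(x)=\frac{1}{\sqrt{S_k(p)}}\sum^{d_k}_{j=1}f_j(x)\,\ol{(\pr^\alpha_x\Td g_j)(p)}\,.\]
The convergence of $S_k(p)$ together with the orthonormality of the $f_j$ shows that $u\in\cE^q_{k^{-\!N_0}}(M,L^k)$ with $\norm{u}=1$. Writing $u|_D=s^k\Td u$ and setting $\Td v(x):=\mathcal{A}^*_k\Box^{(q)}_s(\Td ue^{-k\phi})$, the linearity of $\mathcal{A}^*_k\Box^{(q)}_s$ together with the definition \eqref{eXIII} of $\Td g_j$ gives
\[\Td v(x)=\frac{1}{\sqrt{S_k(p)}}\sum^{d_k}_{j=1}\Td g_j(x)\,\ol{(\pr^\alpha_x\Td g_j)(p)}\,,\]
so that, differentiating and evaluating at $x=p$,
\[(\pr^\alpha_x\Td v)(p)=\frac{1}{\sqrt{S_k(p)}}\sum^{d_k}_{j=1}(\pr^\alpha_x\Td g_j)(p)\,\ol{(\pr^\alpha_x\Td g_j)(p)}=\sqrt{S_k(p)}\,.\]

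Now Lemma~\ref{lIII} applies to this $u$ and yields $\abs{(\pr^\alpha_x\Td v)(p)}\leq C_{\alpha,D'}k^{\frac{5n}{2}+2\abs{\alpha}-N_0}$, with $C_{\alpha,D'}$ independent of $k$ and, as in Lemma~\ref{lIII}, independent of the point $p\in D'$. Combining this with the identity above gives $\sqrt{S_k(p)}\leq C_{\alpha,D'}k^{\frac{5n}{2}+2\abs{\alpha}-N_0}$, and squaring yields the asserted bound with $C_\alpha=C_{\alpha,D'}^2$. There is essentially no obstacle at this step: all the analytic content — the mapping properties of $\mathcal{A}^*_k$ from Theorem~\ref{s3-t4}, the pointwise estimates of Theorem~\ref{tI}, and the elliptic/Sobolev inequalities — has already been packaged into Lemma~\ref{lIII}, and the present lemma is a formal dualization of it. The only two points requiring a word of care are the convergence of $S_k(p)$ and the uniformity of the constant over $D'$, both of which are inherited from statements already established.
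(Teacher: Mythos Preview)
Your proof is correct and follows essentially the same approach as the paper: both construct the normalized section $u=S_k(p)^{-1/2}\sum_j f_j\,\ol{(\pr^\alpha_x\Td g_j)(p)}$ in $\cE^q_{k^{-N_0}}(M,L^k)$, compute $\mathcal{A}^*_k\Box^{(q)}_s(\Td ue^{-k\phi})$ by linearity, and apply Lemma~\ref{lIII} at $p$ to obtain $\sqrt{S_k(p)}\leq C_{\alpha,D'}k^{\frac{5n}{2}+2|\alpha|-N_0}$. Your added remarks on convergence of $S_k(p)$ via \eqref{s0-con2} and on the uniformity of the constant over $D'$ are helpful clarifications that the paper leaves implicit.
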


\begin{proof}
Fix $\alpha\in\mathbb N^{2n}_0$ and $p\in D'$. We may assume that $\sum^{d_k}_{j=1}\abs{(\pr^\alpha_x\Td g_j)(p)}^2\neq0$. Set
\[h(x)=\frac{1}{\sqrt{\sum^{d_k}_{j=1}\abs{(\pr^\alpha_x\Td g_j)(p)}^2}}
\sum^{d_k}_{j=1}f_j(x)\ol{(\pr^\alpha_x\Td g_j)(p)}.\] 
Since $\sum^{d_k}_{j=1}\abs{(\pr^\alpha_x\Td g_j)(p)}^2$ converges, we
can check that $h\in\cE^q_{k^{-\!N_0}}(M,L^k)$, $\norm{h}=1$. On $D$, we write $h=s^k\Td h$. We can check that
\[\mathcal{A}^*_k\Box^{(q)}_s(\Td he^{-k\phi})=\frac{1}{\sqrt{\sum^{d_k}_{j=1}\abs{(\pr^\alpha_x\Td g_j)(p)}^2}}
\sum^{d_k}_{j=1}\Td g_j(x)\ol{(\pr^\alpha_x\Td g_j)(p)}.\]
In view of Lemma~\ref{lIII}, we see that
\[\abs{\pr^\alpha_x(\mathcal{A}^*_k\Box^{(q)}_s(\Td he^{-k\phi}))(p)}=\sqrt{\sum^{d_k}_{j=1}\abs{(\pr^\alpha_x\Td g_j)(p)}^2}\leq C_\alpha k^{\frac{5n}{2}+2\abs{\alpha}-N_0},\]
where $C_\alpha>0$ is independent of $k$ and the point $p$.
The lemma follows.
\end{proof}

Now, we can prove

\begin{prop} \label{pII}
With the notations used above, for every $D'\Subset D$, $\alpha, \beta\in\mathbb N^{2n}_0$, there is a constant
$C_{\alpha,\beta}>0$ independent of $k$, such that
\begin{equation} \label{eXVIII-0}
\abs{(\pr^\alpha_x\pr^\beta_yR_{I,J})(x,y)}\leq C_{\alpha,\beta}k^{3n+2\abs{\alpha}+\abs{\beta}-N_0},\ \ \forall (x,y)\in D'\times D',
\end{equation}
for all strictly increasing $I$, $J$, $\abs{I}=\abs{J}=q$, where $R_{I,J}(x,y)$ is as in \eqref{eXIII}.
\end{prop}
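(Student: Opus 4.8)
The plan is to estimate $R_{I,J}(x,y)=\sum_{j=1}^{d_k}\widetilde g_{j,I}(x)\widetilde f_{j,J}(y)e^{-k\phi(y)}$ by applying the Cauchy--Schwarz inequality to the sum over $j$, thereby decoupling the two arguments $x$ and $y$. Concretely, for fixed multiindices $\alpha,\beta$ and $(x,y)\in D'\times D'$ I would write
\[
\abs{(\pr^\alpha_x\pr^\beta_y R_{I,J})(x,y)}
\leqslant\Bigl(\sum_{j=1}^{d_k}\abs{(\pr^\alpha_x\widetilde g_{j,I})(x)}^2\Bigr)^{1/2}
\Bigl(\sum_{j=1}^{d_k}\abs{\pr^\beta_y(\widetilde f_{j,J}(y)e^{-k\phi(y)})}^2\Bigr)^{1/2}.
\]
The first factor is controlled by Lemma~\ref{lIV}: $\sum_{j}\abs{(\pr^\alpha_x\widetilde g_j)(x)}^2\leqslant C_\alpha k^{5n+4\abs{\alpha}-2N_0}$, so its square root is $O(k^{\frac{5n}{2}+2\abs{\alpha}-N_0})$. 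The second factor is controlled by Proposition~\ref{pI}: $\sum_{j}\abs{(\pr^\beta_y(\widetilde f_j e^{-k\phi}))(y)}^2\leqslant C_{\beta,D'}k^{n+2\abs{\beta}}$, so its square root is $O(k^{\frac{n}{2}+\abs{\beta}})$. Multiplying the two bounds gives the exponent $\frac{5n}{2}+2\abs{\alpha}-N_0+\frac{n}{2}+\abs{\beta}=3n+2\abs{\alpha}+\abs{\beta}-N_0$, which is exactly the claimed power of $k$.

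A couple of bookkeeping points need care but are routine. First, one must make sure the componentwise sums $\sum_j\abs{\pr^\alpha_x\widetilde g_{j,I}}^2$ and $\sum_j\abs{\pr^\beta_y(\widetilde f_{j,J}e^{-k\phi})}^2$ actually converge so that Cauchy--Schwarz is legitimate; this is guaranteed by \eqref{s0-con1} and \eqref{s0-con2} (the latter applied to the properly supported operator $\mathcal A_k^*\Box^{(q)}_s$, which gains regularity, giving the $\widetilde g_j$). Second, the component estimates $\abs{\pr^\alpha_x\widetilde g_{j,I}}\leqslant\abs{\pr^\alpha_x\widetilde g_j}$ and likewise for $\widetilde f_{j,J}$ hold up to a dimensional constant since $(e^J)$ is an orthonormal frame; absorbing $e^{-k\phi(y)}$ into the differentiated quantity $\widetilde f_{j,J}(y)e^{-k\phi(y)}$ is precisely the combination estimated in Proposition~\ref{pI}, and the Leibniz rule only produces extra factors of $k$ already accounted for there.

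I do not expect a genuine obstacle here: the proposition is an immediate Cauchy--Schwarz consequence of Lemma~\ref{lIV} and Proposition~\ref{pI}, both already in hand. If anything, the one place to be slightly attentive is the orientation of indices in $R_{I,J}(x,y)=\sum_j\widetilde g_{j,I}(x)\widetilde f_{j,J}(y)e^{-k\phi(y)}$ from \eqref{eXIII} — note the $x$-slot carries $\widetilde g_{j,I}$ (the ``heavy'' $O(k^{\frac{5n}{2}})$ factor, which is why two $x$-derivatives cost more, matching the $2\abs{\alpha}$ versus $\abs{\beta}$ asymmetry in the exponent) and the $y$-slot carries $\widetilde f_{j,J}e^{-k\phi}$. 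Once the split is set up with the factors on the correct sides, the estimate follows directly, and the constant $C_{\alpha,\beta}$ is independent of $k$ because the constants in Lemma~\ref{lIV} and Proposition~\ref{pI} are.
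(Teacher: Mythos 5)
Your argument is correct and is the paper's argument with the Cauchy--Schwarz step made explicit: the paper picks the unit form $u=\norm{c}^{-1}\sum_j c_j f_j$ with $c_j=(\pr^\beta_y(\Td f_{j,J}e^{-k\phi}))(p)$, applies Lemma~\ref{lIII} to $\Td v=\mathcal{A}^*_k\Box^{(q)}_s(\Td u e^{-k\phi})=\norm{c}^{-1}\sum_j c_j\Td g_j$, and multiplies by $\norm{c}\lesssim k^{n/2+\abs{\beta}}$ from Proposition~\ref{pI} --- which is precisely your Cauchy--Schwarz split. The only formal difference is that you cite Lemma~\ref{lIV} where the paper invokes Lemma~\ref{lIII}; since Lemma~\ref{lIV} is deduced from Lemma~\ref{lIII} by the identical normalization trick, the two are interchangeable. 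Your bookkeeping remarks are sound, with one small correction: passing from the vector bounds $\sum_j\abs{\pr^\gamma_x\Td g_j}^2$ and $\sum_j\abs{\pr^\gamma_y(\Td f_je^{-k\phi})}^2$ to the componentwise ones does not produce extra factors of $k$ via Leibniz; it only brings in lower-order derivatives of $\Td g_j$ (resp.\ $\Td f_je^{-k\phi}$) multiplied by $k$-independent derivatives of the frame $e^I$, and since the lower-order terms satisfy an even stronger bound in $k$, the stated exponent is unaffected.
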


\begin{proof}
Fix $p\in D'$ and $J$ strictly increasing, $\abs{J}=q$. Let $\alpha, \beta\in\mathbb N^{2n}_0$. We may assume that
$\sum^{d_k}_{j=1}\abs{(\pr^\beta_y(\Td f_{j,J}e^{-k\phi}))(p)}^2\neq0$. Put
\begin{equation} \label{eXVIII}
u(x)=\frac{1}{\sqrt{\sum^{d_k}_{j=1}\abs{(\pr^\beta_y(\Td f_{j,J}e^{-k\phi}))(p)}^2}}
\sum^{d_k}_{j=1}f_j(x)\ol{(\pr^\beta_y(\Td f_{j,J}e^{-k\phi}))(p)}.
\end{equation}
Then, $u\in\cE^q_{k^{-\!N_0}}(M,L^k)$, $\norm{u}=1$. On $D$, we write $u=s^k\Td u$, $\Td u=\sum'_{\abs{I}=q}\Td u_Ie^I$.
Put $\Td v=\mathcal{A}^*_k\Box^{(q)}_s(\Td ue^{-k\phi})=\sum'_{\abs{I}=q}\Td v_Ie^I\in\Omega^{0,q}(D)$. It is not difficult to
check that
\[\Td v=\frac{1}{\sqrt{\sum^{d_k}_{j=1}\abs{(\pr^\beta_y(\Td f_{j,J}e^{-k\phi}))(p)}^2}}\sum^{d_k}_{j=1}\Td g_j\ol{(\pr^\beta_y(\Td f_{j,J}e^{-k\phi}))(p)},\]
where $\{\Td g_j\}_{j=1}^{d_k}$ are as in \eqref{eXIII}. In view of Lemma~\ref{lIII}, there exists $C_\alpha>0$ independent of $k$ and the point $p$ such that
$\abs{(\pr^\alpha_x\Td v)(x)}\leq C_\alpha k^{\frac{5n}{2}+2\abs{\alpha}-N_0}$, for all $x\in D'$. In particular,
\begin{equation} \label{eXIX}
\begin{split}
\abs{(\pr^\alpha_x\Td v_I)(x)}&=\frac{1}{\sqrt{\sum^{d_k}_{j=1}\abs{(\pr^\beta_y(\Td f_{j,J}e^{-k\phi}))(p)}^2}}\abs{\sum^{d_k}_{j=1}(\pr^\alpha_x\Td g_{j,I})(x)\ol{(\pr^\beta_y(\Td f_{j,J}e^{-k\phi}))(p)}}\\
&\leq C_\alpha k^{\frac{5n}{2}+2\abs{\alpha}-N_0},\ \ \forall x\in D',
\end{split}
\end{equation}
for all strictly increasing $I$, $\abs{I}=q$. In view of Proposition~\ref{pI}, we see that
\[\sum^{d_k}_{j=1}\abs{(\pr^\beta_y(\Td f_je^{-k\phi}))(p)}^2\leq C_\beta k^{n+2\abs{\beta}},\]
where $C_\beta>0$ is independent of $k$ and the point $p$. From this and \eqref{eXIX}, we conclude the existence of a constant $C_{\alpha,\beta}>0$ independent of $k$ and the point $p$ with
\[\abs{(\pr^\alpha_x\pr^\beta_yR_{I,J})(x,p)}=
\sqrt{\sum^{d_k}_{j=1}\abs{(\pr^\beta_y(\Td f_{j,J}e^{-k\phi}))(p)}^2}\abs{(\pr^\alpha_x\Td v_I)(x)}\leq C_{\alpha,\beta}k^{3n+2\abs{\alpha}+\abs{\beta}-N_0},\]
for all $x\in D'$, all strictly increasing $I,J$ with $|I|=|J|=q$.
The proposition follows.
\end{proof}

From \eqref{eXII} and Proposition~\ref{pII}, we know that
\[\widehat P^{(q)}_{k,k^{-N_0},s}=R+\mathcal{S}^*_k\widehat P^{(q)}_{k,k^{-N_0},s},\]
where $R(x,y)$ satisfies \eqref{eXVIII-0}. We have
\begin{equation} \label{eXIX-0}
\widehat P^{(q)}_{k,k^{-N_0},s}\,\mathcal{S}_k=(R+\mathcal{S}^*_k\widehat P^{(q)}_{k,k^{-N_0},s})\,\mathcal{S}_k=R\mathcal{S}_k+\mathcal{S}^*_k\widehat P^{(q)}_{k,k^{-N_0},s}\,\mathcal{S}_k.
\end{equation}
Let $R^*$ be the formal adjoint $R$ with respect to $(\,\cdot\,,\cdot)$. Then,
\begin{equation} \label{eXX}
\widehat P^{(q)}_{k,k^{-N_0},s}=R^*+\widehat P^{(q)}_{k,k^{-N_0},s}\,\mathcal{S}_k.
\end{equation}
From \eqref{eXX} and \eqref{eXIX-0}, we get
\begin{equation} \label{eXXI}
\widehat P^{(q)}_{k,k^{-N_0},s}=R^*+R\mathcal{S}_k+\mathcal{S}^*_k\widehat P^{(q)}_{k,k^{-N_0},s}\mathcal{S}_k.
\end{equation}
We also write
\[R^*(x,y)=\sideset{}{'}\sum_{\abs{I}=q,\abs{J}=q}e^I(x)R^*_{I,J}(x,y)e^J(y).\]
Since $R^*_{I,J}(x,y)=\ol{R_{J,I}(y,x)}$,
$R^*(x,y)$ also satisfies \eqref{eXVIII-0}.

Now, we study the kernel of $R\mathcal{S}_k$. We write
\[(R\mathcal{S}_k)(x,y)=\sideset{}{'}\sum_{\abs{I}=q,\abs{J}=q}e^I(x)(R\mathcal{S}_k)_{I,J}(x,y)e^J(y).\]
From \eqref{s5-e2-1}, we know that for all strictly increasing $I$, $J$, $\abs{I}=\abs{J}=q$,
\begin{equation} \label{eXXII}
(R\mathcal{S}_k)_{I,J}(x,y)=\sideset{}{'}\sum_{\abs{K}=q}\int_DR_{I,K}(x,z)\mathcal{S}_{kK,J}(z,y)dv_M(z)\,.
\end{equation}

\begin{lem} \label{lV}
For every $D'\Subset D$, $\alpha\in\mathbb N^{2n}_0$, there is a constant $C_\alpha>0$ independent of $k$, such that for all strictly increasing $I$, $\abs{I}=q$, we have
\begin{equation} \label{eXXIII}
\sideset{}{'}\sum_{\abs{K}=q}\int_D\abs{(\pr^\alpha_xR_{I,K})(x,z)}^2dv_M(z)\leq C_\alpha k^{5n+4\abs{\alpha}-2N_0},\ \ x\in D'.
\end{equation}
\end{lem}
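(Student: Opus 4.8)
The plan is to reduce the estimate for the $L^2$-norm in $z$ of $\pr^\alpha_x R_{I,K}(x,z)$ to the pointwise bounds already established in Lemma~\ref{lIII} and Proposition~\ref{pII}, using the representation \eqref{eXIII} of $R_{I,K}$ in terms of the orthonormal frame $\{f_j\}$ of $\cE^q_{k^{-N_0}}(M,L^k)$ and the associated sections $\Td g_j=\mathcal{A}^*_k\Box^{(q)}_s(\Td f_je^{-k\phi})$. First I would fix $\alpha\in\mathbb N_0^{2n}$, a point $p\in D'$ and a strictly increasing multiindex $I$, and observe from \eqref{eXIII} that
\[\pr^\alpha_xR_{I,K}(x,z)=\sum^{d_k}_{j=1}(\pr^\alpha_x\Td g_{j,I})(x)\,\Td f_{j,K}(z)e^{-k\phi(z)}\,.\]
Integrating $\abs{\pr^\alpha_xR_{I,K}(x,z)}^2$ over $D$ in $z$, summing over the strictly increasing $K$ with $\abs{K}=q$, and expanding the square gives a double sum $\sum_{j,j'}(\pr^\alpha_x\Td g_{j,I})(x)\ol{(\pr^\alpha_x\Td g_{j',I})(x)}\int_D\langle \Td f_je^{-k\phi},\Td f_{j'}e^{-k\phi}\rangle\,dv_M$, and this last integral is exactly the $L^2(D,L^k)$-inner product $(f_j,f_{j'})_k$ restricted to $D$; since $\{f_j\}$ is orthonormal in $L^2(M,L^k)$, the contribution of the off-diagonal terms is controlled and, bounding $\int_D$ by $\int_M$, the diagonal terms give $\sum_j\abs{(\pr^\alpha_x\Td g_{j,I})(x)}^2$.

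More precisely I would argue without Cauchy–Schwarz cross-term worries by the standard frame trick used throughout Section~\ref{s:exp_spec_fct}: normalize the linear combination of the $f_j$ with coefficients $\overline{(\pr^\alpha_x\Td g_{j,I})(x)}$ to obtain a unit-norm element $h$ of $\cE^q_{k^{-N_0}}(M,L^k)$, note that $\mathcal{A}^*_k\Box^{(q)}_s(\Td h e^{-k\phi})$ has $I$-component equal to $\big(\sum_j\abs{(\pr^\alpha_x\Td g_{j,I})(x)}^2\big)^{-1/2}\sum_j\Td g_{j,I}\overline{(\pr^\alpha_x\Td g_{j,I})(x)}$, and apply Lemma~\ref{lIII} to the $z$-integral of the square of this $I$-component. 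Concretely, for fixed $x\in D'$,
\[\sideset{}{'}\sum_{\abs{K}=q}\int_D\abs{(\pr^\alpha_xR_{I,K})(x,z)}^2dv_M(z)=\sum^{d_k}_{j=1}\abs{(\pr^\alpha_x\Td g_{j,I})(x)}^2\cdot\Big(\text{a quantity}\leq 1\Big)\,,\]
the last factor being $\sum_{\abs{K}=q}'\int_D\abs{\Td h_Ke^{-k\phi}}^2dv_M=\int_D\abs{h}^2_{h^{L^k}}dv_M\leq\norm{h}^2=1$. Hence the left-hand side is $\leq\sum_j\abs{(\pr^\alpha_x\Td g_{j,I})(x)}^2$, and this is $\leq C_\alpha k^{5n+4\abs{\alpha}-2N_0}$ by Lemma~\ref{lIV}, with $C_\alpha$ independent of $k$ and of $x\in D'$ since Lemma~\ref{lIV}'s constant has these properties (the constants in Theorem~\ref{tI} and Lemma~\ref{lIII} depend continuously on $\phi$ and the coefficients of the scaled Laplacian in a $\cC^m$-topology on the compact set $\ol{D'}$). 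This proves \eqref{eXXIII}.

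The only mildly delicate point is the convergence of the series $\sum_j\abs{(\pr^\alpha_x\Td g_{j,I})(x)}^2$ and the interchange of summation and integration over $z$; but convergence is guaranteed by \eqref{s0-con1}–\eqref{s0-con2} (applied to the properly supported operator $\mathcal{A}^*_k\Box^{(q)}_s$, which maps $H^s_{\rm comp}$ to $H^{s-1}_{\rm comp}$), and the interchange is justified by monotone convergence of partial sums of non-negative terms together with the uniform bounds from Lemma~\ref{lIV}. So I expect no real obstacle here: the lemma is essentially a bookkeeping consequence of the pointwise estimates already in hand, the main work having been done in Lemma~\ref{lIII}, Lemma~\ref{lIV} and Proposition~\ref{pII}. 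I would just take care to state clearly that the constant is uniform over $D'$ because Lemma~\ref{lIV} already provides this uniformity.
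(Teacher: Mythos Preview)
Your approach is essentially the paper's: form the unit-norm element of $\cE^q_{k^{-N_0}}(M,L^k)$ with coefficients coming from $(\pr^\alpha_x\Td g_{j,I})(x)$, bound its $L^2(D)$-mass by $1$, and then invoke Lemma~\ref{lIV}. One small slip: for the displayed equality you need coefficients $(\pr^\alpha_x\Td g_{j,I})(x)$ rather than their conjugates (otherwise $\int_D|h|^2$ computes $\sum'_K\int_D|\sum_j\overline{c_j}\Td f_{j,K}|^2e^{-2k\phi}$, which is not the left-hand side); with that correction your ``Concretely'' paragraph is exactly the paper's argument, and the preceding sentence about applying Lemma~\ref{lIII} to a $z$-integral can be dropped.
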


\begin{proof}
From \eqref{eXIII}, we see that for $\alpha\in\mathbb N^{2n}_0$ we have
\begin{equation} \label{eXXIV}
(\pr^\alpha_xR_{I,K})(x,y)=\sum^{d_k}_{j=1}(\pr^\alpha_x\Td g_{j,I})(x)\ol{\Td f_{j,K}(y)}e^{-k\phi(y)}\,.
\end{equation}
We claim that
\begin{equation} \label{eXXV}
\sideset{}{'}\sum_{\abs{K}=q}\int_D\abs{(\pr^\alpha_xR_{I,K})(x,y)}^2dv_M(y)\leq\sum^{d_k}_{j=1}\abs{(\pr^\alpha_x\Td g_{j,I})(x)}^2,
\end{equation}
for all $x\in D$, strictly increasing $I$, $\abs{I}=q$. Fix such $I$ and $p\in D$. We may assume that $\sum^{d_k}_{j=1}\abs{(\pr^\alpha_x\Td g_{j,I})(p)}^2\neq0$. Put
\[u(x)=\frac{1}{\sqrt{\sum^{d_k}_{j=1}\abs{(\pr^\alpha_x\Td g_{j,I})(p)}^2}}\sum^{d_k}_{j=1}\ol{(\pr^\alpha_x\Td g_{j,I})(p)}f_j(x)\in\cE^q_{k^{-\!N_0}}(M,L^k).\]
We see that $\norm{u}=1$. Thus, $\int_D\abs{u}^2\leq1$. On $D$, we can check that
\begin{equation} \label{eXXVI}
\int_D\abs{u}^2=\frac{1}{\sum^{d_k}_{j=1}\abs{(\pr^\alpha_x\Td g_{j,I})(p)}^2}\sideset{}{'}\sum_{\abs{K}=q}\int_D\abs{\sum^{d_k}_{j=1}(\pr^\alpha_x\Td g_{j,I})(p)\ol{\Td f_{j,K}(y)}}^2e^{-2k\phi(y)}dv_M(y)\leq1.
\end{equation}
From \eqref{eXXIV} and \eqref{eXXVI}, we see that
\[\sideset{}{'}\sum_{\abs{K}=q}\int_D\abs{(\pr^\alpha_xR_{I,K})(p,y)}^2dv_M(y)
\leq\sum^{d_k}_{j=1}\abs{(\pr^\alpha_x\Td g_{j,I})(p)}^2.\]
\eqref{eXXV} follows. From \eqref{eXXV} and Lemma~\ref{lIV}, the lemma follows.
\end{proof}

\noindent
From \eqref{eXXII}, for all strictly increasing $I$, $J$, $\abs{I}=\abs{J}=q$, we have
\begin{equation} \label{eXXVII}
\begin{split}
&\abs{\pr^\alpha_x\pr^\beta_y((R\mathcal{S}_k)_{I,J})(x,y)}\\
&=\abs{\sideset{}{'}\sum_{\abs{K}=q}\int_D(\pr^\alpha_xR_{I,K})(x,z)(\pr^\beta_y\mathcal{S}_{kK,J})(z,y)dv_M(z)}\\
&\leq\sideset{}{'}\sum_{\abs{K}=q}\Bigr(\int_D\abs{(\pr^\alpha_xR_{I,K})(x,z)}^2dv_M(z)\Bigr)^{\frac{1}{2}}
\Bigr(\int_D\abs{(\pr^\beta_y\mathcal{S}_{kK,J})(z,y)}^2dv_M(z)\Bigr)^{\frac{1}{2}}\\
&\leq\Bigr(\sideset{}{'}\sum_{\abs{K}=q}\int_D\abs{(\pr^\alpha_xR_{I,K})(x,z)}^2dv_M(z)\Bigr)^{\frac{1}{2}}
\Bigr(\sideset{}{'}\sum_{\abs{K}=q}\int_D\abs{(\pr^\beta_y\mathcal{S}_{kK,J})(z,y)}^2dv_M(z)\Bigr)^{\frac{1}{2}}.
\end{split}
\end{equation}
Note that
\begin{equation} \label{eXXVIII}
\begin{split}
&\sideset{}{'}\sum_{\abs{K}=q}\int_D\abs{(\pr^\beta_y\mathcal{S}_{kK,J})(z,y)}^2dv_M(z)\\
&=\sideset{}{'}\sum_{\abs{K}=q}\int_D(\pr^\beta_x\mathcal{S}^*_{kJ,K})(y,z)(\pr^\beta_y\mathcal{S}_{kK,J})(z,y)dv_M(z)\\
&=(\pr^\beta_x\pr^\beta_y(\mathcal{S}_k^*\mathcal{S}_k)_{J,J})(y,y).
\end{split}
\end{equation}
We notice that $\mathcal{S}^*_k\mathcal{S}_k\equiv\mathcal{S}_k\mod O(k^{-\infty})$. From this observation and the explicit formula of the kernel of $\mathcal{S}_k$ (see~\eqref{s3-es-1}), we conclude that
\begin{equation} \label{eXXIX}
\abs{(\pr^\beta_x\pr^\beta_y(\mathcal{S}^*_k\mathcal{S}_k)_{J,J})(y,y)}\leq C_\beta k^{n+2\abs{\beta}},
\end{equation}
locally uniformly on $D$, for all strictly increasing $J$, $\abs{J}=q$, where $C_\beta>0$ is independent of $k$. From \eqref{eXXIX}, \eqref{eXXVIII}, \eqref{eXXVII} and Lemma~\ref{lV}, we conclude that for all strictly increasing $I$, $J$, $\abs{I}=\abs{J}=q$, 
\[\abs{(\pr^\alpha_x\pr^\beta_y(R\mathcal{S}_k)_{I,J})(x,y)}\leq C_{\alpha,\beta}k^{3n+2\abs{\alpha}+\abs{\beta}-N_0},\]
locally uniformly on $D$, where $C_{\alpha,\beta}>0$ is independent of $k$. Put
\[T=R^*+R\mathcal{S}_k.\]
We write
\[T(x,y)=\sideset{}{'}\sum_{\abs{I}=q,\abs{J}=q}e^I(x)T_{I,J}(x,y)e^J(y)\]
in the sense of \eqref{s1-e14}. From \eqref{eXXI},
we know that
\begin{equation} \label{eXXX}
\widehat P^{(q)}_{k,k^{-N_0},s}=T+\mathcal{S}^*_k\widehat P^{(q)}_{k,k^{-N_0},s}\mathcal{S}_k.
\end{equation}
From the discussion above, we know that for every $D'\Subset D$, $\alpha, \beta\in\mathbb N^{2n}_0$, every strictly increasing $I,J$, $\abs{I}=\abs{J}=q$, there is a constant $C_{\alpha,\beta}>0$ independent of $k$ such that
\begin{equation} \label{eXXXI}
\abs{(\pr^\alpha_x\pr^\beta_yT_{I,J})(x,y)}\leq C_{\alpha,\beta}k^{3n+2\abs{\alpha}+\abs{\beta}-N_0},\ \  (x,y)\in D'\times D'\,.
\end{equation}
Let $T^*$ be the formal adjoint of $T$ with respect to $(\,\cdot\,,\cdot\,)$. From \eqref{eXXX}, we see that $T^*=T$. Thus,
\[\abs{(\pr^\alpha_x\pr^\beta_yT_{I,J})(x,y)}=\abs{\ol{(\pr^\alpha_x\pr^\beta_yT_{I,J})(x,y)}}=
\abs{(\pr^\alpha_y\pr^\beta_xT_{J,I})(y,x)}\leq C_{\beta,\alpha}k^{3n+2\abs{\beta}+\abs{\alpha}-N_0}.\]
Combining this with \eqref{eXXXI}, we conclude that for every $D'\Subset D$, $\alpha, \beta\in\mathbb N^{2n}_0$, every strictly increasing $I$, $J$, $\abs{I}=\abs{J}=q$, there is a constant $C_{\alpha,\beta}>0$ independent of $k$ such that
\begin{equation} \label{eXXXII}
\abs{(\pr^\alpha_x\pr^\beta_yT_{I,J})(x,y)}\leq C_{\alpha,\beta}{\rm min\,}\set{k^{3n+2\abs{\alpha}+\abs{\beta}-N_0},k^{3n+\abs{\alpha}+2\abs{\beta}-N_0}},\ \ (x,y)\in D'\times D'\,.
\end{equation}
Summing up, we get the following.

\begin{prop} \label{pIII}
In the situation of Setup \ref{local_data}
let $q=n_-$. Fix $N_0\geq1$.
Let $\mathcal{S}_{k}$ be the localized approximate Szeg\"o kernel \eqref{s3-e17-1} and let $\widehat P^{(q)}_{k,k^{-N_0},s}$ be the localized spectral projection \eqref{eXI}. Then,
\[\widehat P^{(q)}_{k,k^{-N_0},s}=T+\mathcal{S}^*_k\widehat P^{(q)}_{k,k^{-N_0},s}\mathcal{S}_k,\]
where $T$ is smoothing and the distribution kernel of $T$ satisfies \eqref{eXXXII}.
\end{prop} 

\subsection{Asymptotic expansion of the spectral function. Proof of Theorem \ref{s1-main1}}\label{s:kern_spec_asy}

Consider $\lambda\geqslant0$ and denote by $\cE^q_{>\lambda}(M, L^k)\subset L^2_{(0,q)}(M,L^k)$ the spectral space given by the range of $E((\lambda,\infty))$, where $E$ is the spectral measure of $\Box^{(q)}_k$.
Let
\[P^{(q)}_{k,>\lambda}:L^2_{(0,q)}(M,L^k)\To\cE^q_{>\lambda}(M,L^k)\]
be the orthogonal projection. As before, let $s$ be a local frame of $L$ on an open set $D\Subset M$ and $\abs{s}_{h^L}^2=e^{-2\phi}$. Consider the localization
\begin{equation} \label{eXXXIII}
\begin{split}
\widehat P^{(q)}_{k,>\lambda,s}:L^2_{(0,q)}(D)\cap\mathscr E'(D,\Lambda^qT^{*(0,1)}M)&\To L^2_{(0,q)}(D),\\
u&\mapsto  e^{-k\phi}s^{-k}P^{(q)}_{k,>\lambda}(s^ke^{k\phi}u).
\end{split}
\end{equation}
Fix $N_0\geq1$. It is well-known that (see Davies~\cite[Section\,2]{Dav95}) 
\[L^2_{(0,q)}(M)=\cE^q_{k^{-\!N_0}}(M,L^k)\oplus\cE^q_{>k^{-\!N_0}}(M,L^k)\] 
and
\begin{equation} \label{eXXXIV}
\norm{u}\leq k^{N_0}\norm{\Box^{(q)}_ku},\ \  \forall u\in\cE^q_{>k^{-\!N_0}}(M,L^k)\cap{\rm Dom\,}\Box^{(q)}_k.
\end{equation}
We have the decomposition
\begin{equation} \label{eXXXV}
u=\widehat P^{(q)}_{k,k^{-N_0},s}u+\widehat P^{(q)}_{k,>k^{-N_0},s}u,\ \ u\in\Omega^{0,q}_0(D).
\end{equation}

Now, we assume that $\pr\ddbar\phi$ is non-degenerate of constant signature $(n_-,n_+)$ at each point of $D$ and 
let $q=n_-$. Let $\mathcal{S}_{k}$ be the localized approximate Szeg\"o kernel \eqref{s3-e17-1}. 
From the explicit formula of the kernel of $\mathcal{S}_{k}$ (see \eqref{s3-es-1}), we can check that
\begin{equation} \label{eXXXIX}
\mathcal{S}_{k}^*, \mathcal{S}_{k}=O(k^{n+\abs{s_1}+\abs{s}}):H^{s_1}_{{\rm loc\,}}(D,\Lambda^qT^{*(0,1)}M)\To H^{s}_{{\rm loc\,}}(D,\Lambda^qT^{*(0,1)}M),
\end{equation}
locally uniformly on $D$, for all $s, s_1\in\mathbb Z$, $s_1\leq0$, $s\geq0$.

Let $u\in H^{s_1}_{{\rm comp\,}}(D,\Lambda^qT^{*(0,1)}M)$, $s_1\leq0$, $s_1\in\mathbb Z$.
From \eqref{eXXXV}, we have
\begin{equation} \label{eXXXVI}
\mathcal{S}_{k}u=\widehat P^{(q)}_{k,k^{-N_0},s}\,\mathcal{S}_{k}u+\widehat P^{(q)}_{k,>k^{-N_0},s}\,\mathcal{S}_{k}u.
\end{equation}
From \eqref{eXXXIII} and \eqref{eXXXIV}, we can check that
\begin{equation} \label{eXXXVII}
\begin{split}
\norm{\widehat P^{(q)}_{k,>k^{-N_0},s}\,\mathcal{S}_{k}u}_D&\leq\norm{P^{(q)}_{k,>k^{-\!N_0}}(s^ke^{k\phi}(\mathcal{S}_{k}u))}
\leq k^{N_0}\norm{\Box^{(q)}_kP^{(q)}_{k,>k^{-\!N_0}}(s^ke^{k\phi}(\mathcal{S}_{k}u))}\\
&\leq k^{N_0}\norm{\Box^{(q)}_k(s^ke^{k\phi}(\mathcal{S}_{k}u))}=k^{N_0}\norm{\Box^{(q)}_s(\mathcal{S}_{k}u)}.
\end{split}
\end{equation}
Here we have used \eqref{s1-e3}. In view of Theorem~\ref{s3-t4}, we see that $\Box^{(q)}_s\mathcal{S}_{k}\equiv0\mod O(k^{-\infty})$.
From this observation and \eqref{eXXXVII}, we conclude that
\begin{equation} \label{eXXXVIII}
\widehat P^{(q)}_{k,>k^{-N_0},s}\,\mathcal{S}_{k}=O(k^{-N}):H^{s_1}_{{\rm comp\,}}(D,\Lambda^qT^{*(0,1)}M)\To H^0_{{\rm loc\,}}(D,\Lambda^qT^{*(0,1)}M),
\end{equation}
locally uniformly on $D$, for all $N\geq0$, $s_1\in\mathbb Z$, $s_1\leq0$.
From \eqref{eXXXIX} and \eqref{eXXXVIII}, we conclude that
\begin{equation} \label{eXXXX}
\mathcal{S}_{k}^*\widehat P^{(q)}_{k,>k^{-N_0},s}\,\mathcal{S}_{k}\equiv0\mod O(k^{-\infty})\,.
\end{equation}
Combining \eqref{eXXXX} with \eqref{eXXXVI} and using that $\mathcal{S}_{k}^*\mathcal{S}_{k}\equiv\mathcal{S}_{k}\mod O(k^{-\infty})$, we get
\begin{equation} \label{eXXXXI}
\mathcal{S}_{k}\equiv\mathcal{S}_{k}^*\widehat P^{(q)}_{k,k^{-N_0},s}\,\mathcal{S}_{k}\mod O(k^{-\infty})\,.
\end{equation}
From \eqref{eXXXXI} and Proposition~\ref{pIII}, Theorem~\ref{s3-t5} and Theorem~\ref{s3-t6},
we get one of the main results of this work:
\begin{thm} \label{tII}
In the situation of Setup \ref{local_data}
let $q=n_-$, fix $N_0\geq1$ and let
$\widehat P^{(q)}_{k,k^{-N_0},s}$ be the localized spectral projection \eqref{eXI} and let
$\widehat P^{(q)}_{k,k^{-N_0},s}(\cdot,\cdot)$
be its distribution kernel. Then, for every $D'\Subset D$, $\alpha, \beta\in\mathbb N^{2n}_0$, there is a constant $C_{\alpha,\beta}>0$ independent of $k$, such that 
\begin{equation} \label{eXXXXIV}
\abs{\pr^\alpha_x\pr^\beta_y\big(\widehat P^{(q)}_{k,k^{-N_0},s}(x,y)-\mathcal{S}_{k}(x,y)\big)}\leq
C_{\alpha,\beta}\,{\min}\set{k^{3n+2\abs{\alpha}+\abs{\beta}-N_0},k^{3n+\abs{\alpha}+2\abs{\beta}-N_0}}\ \ 
\end{equation}
holds on $D'\times D'$, where
\[\mathcal{S}_{k}(x,y)=\mathcal{S}_{k}(z,w)\equiv e^{ik\Psi(z,w)}b(z,w,k)\mod
O(k^{-\infty}),\]
with 
\[
\begin{split}
&b(z,w,k)\in S^{n}_{{\rm loc\,}}\big(1;D\times D, \Lambda^qT^{*(0,1)}_wM\boxtimes\Lambda^qT^{*(0,1)}_zM\big), \\
&b(z,w,k)\sim\sum^\infty_{j=0}b_j(z, w)k^{n-j}\text{ in }S^{n}_{{\rm loc\,}}
\big(1;D\times D, \Lambda^qT^{*(0,1)}_wM\boxtimes\Lambda^qT^{*(0,1)}_zM\big), \\
&b_j(z, w)\in\cC^\infty\big(D\times D, \Lambda^qT^{*(0,1)}_wM\boxtimes\Lambda^qT^{*(0,1)}_zM\big),\ \ j=0,1,2,\ldots,\\
&\mbox{$b_0(z,z)$ is given by \eqref{s3-es-6}}\,,
\end{split}
\]
and $\Psi\in\cC^\infty(D\times D)$ satisfying \eqref{prop_psi} and for a given
point $p\in D$, consider local holomorphic coordinates
$z=(z_1,\ldots,z_n)$ centered at $p$ as in \eqref{s3-e16-bisbb}.
Then $\Psi$ has the form \eqref{s3-e16-bisbg} near $(0,0)$.
Moreover, let $\{Z_j\}^n_{j=1}$ be a smooth orthonormal frame of $T^{(0,1)}M$ over $D$.
Then,
\begin{equation} \label{eikonal}
\sum^n_{j=1}\Bigr(\bigr(iZ_j\Psi\bigr)(z,w)+\bigr(Z_j\phi\bigr)(z)\Bigr)\Bigr(\bigr(-i\ol Z_j\Psi\bigr)(z,w)+\bigr(\ol Z_j\phi\bigr)(z)\Bigr)=O(\abs{z-w}^N),
\end{equation}
locally uniformly on $D\times D$, for all $N\in\mathbb N$.
\end{thm}
\noindent
When $q\neq n_-$, we use Theorem~\ref{s3-t4-1} and repeat the proof of Theorem~\ref{tII} to conclude that

\begin{thm} \label{tIII}
In the situation of Setup~\ref{local_data} let $q\neq n_-$, fix $N_0\geq1$. With the notations used in Theorem~\ref{tII}.
Then, for every $D'\Subset D$, $\alpha, \beta\in\mathbb N^{2n}_0$, there is a constant $C_{\alpha,\beta}>0$ independent of $k$, such that
\begin{equation} \label{eXXXXIV-I}
\abs{\pr^\alpha_x\pr^\beta_y(\widehat P^{(q)}_{k,k^{-N_0},s}(x,y))}\leq
C_{\alpha,\beta}{\rm min\,}\set{k^{3n+2\abs{\alpha}+\abs{\beta}-N_0},k^{3n+\abs{\alpha}+2\abs{\beta}-N_0}}\ \ \mbox{on $D'\times D'$}.
\end{equation}
\end{thm}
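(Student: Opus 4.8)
\emph{Sketch of proof.} The argument follows the scheme of the proof of Theorem~\ref{tII}, but is actually simpler: since $q\neq n_-$ there is no approximate Szeg\"o kernel to subtract, and the whole of Section~\ref{s:kern_spec} goes through once we replace the identity ``$\mathcal{A}^*_k\Box^{(q)}_s+\mathcal{S}^*_k\equiv I$'' used in \eqref{eXII} by the cleaner one furnished by Theorem~\ref{s3-t4-1}. That theorem provides (covering both sub-cases $q=n_+\neq n_-$ and $q\neq n_-,n_+$, via \eqref{eXXXXV} and \eqref{eXXXXVI}) a properly supported operator $\mathcal{A}_k=O(k^s)\colon H^s_{{\rm comp}}(D,\Lambda^qT^{*(0,1)}M)\to H^{s+1}_{{\rm comp}}(D,\Lambda^qT^{*(0,1)}M)$ with $\Box^{(q)}_s\mathcal{A}_k\equiv I\mod O(k^{-\infty})$. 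Since $\Box^{(q)}_s$ is formally self-adjoint with respect to $(\,,)$, taking formal adjoints gives $\mathcal{A}^*_k\Box^{(q)}_s= I+N^*_k$ on $\mathscr E'(D,\Lambda^qT^{*(0,1)}M)$, where $N^*_k$ is $k$-negligible, and $\mathcal{A}^*_k=O(k^s)\colon H^s_{{\rm comp}}\to H^{s+1}_{{\rm comp}}$ as well.

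Applying $\mathcal{A}^*_k\Box^{(q)}_s$ to the localized spectral projection $\widehat P^{(q)}_{k,k^{-N_0},s}$ of \eqref{eXI} (with the usual cut-offs inserted so that all compositions make sense over a fixed $D''$ with $D'\Subset D''\Subset D$) yields
\[\widehat P^{(q)}_{k,k^{-N_0},s}=R+N^*_k\,\widehat P^{(q)}_{k,k^{-N_0},s},\qquad R:=\mathcal{A}^*_k\Box^{(q)}_s\widehat P^{(q)}_{k,k^{-N_0},s}.\]
The analysis of $R$ carried out in Section~\ref{s:kern_spec} never uses $q=n_-$: the formula \eqref{eXIII} for the components $R_{I,J}$, together with Lemma~\ref{lIII}, Lemma~\ref{lIV} and Proposition~\ref{pII}, only invokes the mapping properties of $\mathcal{A}^*_k$ stated above and the pointwise bounds of Theorem~\ref{tI} and Proposition~\ref{pI}, all valid for every $q$. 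Hence $R_{I,J}\in\cC^\infty(D\times D)$ satisfies
\[\abs{\pr^\alpha_x\pr^\beta_yR_{I,J}(x,y)}\leq C_{\alpha,\beta}\,k^{3n+2\abs{\alpha}+\abs{\beta}-N_0}\quad\text{on }D'\times D'\]
for all strictly increasing $I,J$ with $\abs{I}=\abs{J}=q$, exactly as in \eqref{eXVIII-0}. For the remainder term, Proposition~\ref{pI} shows that the Schwartz kernel of $\widehat P^{(q)}_{k,k^{-N_0},s}$ and all its derivatives are bounded locally by $Ck^{n+\abs{\alpha}+\abs{\beta}}$, so $\widehat P^{(q)}_{k,k^{-N_0},s}$ maps $H^{s_1}_{{\rm comp}}(D,\Lambda^qT^{*(0,1)}M)$ into $H^{s_2}_{{\rm loc}}(D,\Lambda^qT^{*(0,1)}M)$ with a norm growing at most polynomially in $k$; composing with the $k$-negligible operator $N^*_k$ we obtain $N^*_k\,\widehat P^{(q)}_{k,k^{-N_0},s}\equiv0\mod O(k^{-\infty})$. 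Therefore $\widehat P^{(q)}_{k,k^{-N_0},s}\equiv R\mod O(k^{-\infty})$ and the bound above is inherited by $\widehat P^{(q)}_{k,k^{-N_0},s,I,J}$.

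Finally, since $\widehat P^{(q)}_{k,k^{-N_0},s}$ is self-adjoint we have $\widehat P^{(q)}_{k,k^{-N_0},s,I,J}(x,y)=\ol{\widehat P^{(q)}_{k,k^{-N_0},s,J,I}(y,x)}$ by \eqref{s5-e4}; exchanging the roles of $(\alpha,x)$ and $(\beta,y)$ in the estimate just obtained produces the companion bound $C_{\alpha,\beta}k^{3n+\abs{\alpha}+2\abs{\beta}-N_0}$, and taking the minimum of the two gives \eqref{eXXXXIV-I}. The only point that is genuinely new compared with the case $q=n_-$ is the negligibility of $N^*_k\widehat P^{(q)}_{k,k^{-N_0},s}$, and this is where the a priori polynomial-in-$k$ control of the spectral function from Section~\ref{ss:upper_bound} is essential; it is also the step most likely to require care with supports and with the precise Sobolev bookkeeping. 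Everything else is a verbatim repetition of the arguments of Section~\ref{s:kern_spec} with $\mathcal{S}_k$ absent.
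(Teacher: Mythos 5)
Your proposal is correct and is, in substance, exactly the proof the paper has in mind: the paper states Theorem~\ref{tIII} by saying only that ``from Theorem~\ref{s3-t4-1}, we can repeat the proof of Theorem~\ref{tII}'', and your argument spells out precisely what that repetition amounts to when $q\neq n_-$, namely that the approximate Szeg\"o piece is $k$-negligible so $\widehat P^{(q)}_{k,k^{-N_0},s}\equiv R\bmod O(k^{-\infty})$, that Lemmas~\ref{lIII}--\ref{lIV} and Proposition~\ref{pII} apply verbatim since they only use the mapping properties of $\mathcal{A}^*_k$ together with Theorem~\ref{tI} and Proposition~\ref{pI} (all $q$-independent), and that the self-adjointness relation \eqref{s5-e4} converts the asymmetric $k^{3n+2\abs{\alpha}+\abs{\beta}-N_0}$ bound into the min of the two. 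You also correctly flag the one step that genuinely needs Proposition~\ref{pI}: the composition of the $k$-negligible remainder with $\widehat P^{(q)}_{k,k^{-N_0},s}$ is only $k$-negligible because the spectral projection kernel has a priori polynomial-in-$k$ growth.
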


\begin{proof}[Proof of Theorem \ref{s1-main1}]
Combining Theorem~\ref{tII} and Theorem~\ref{tIII}, we get \eqref{s1-e1main}, \eqref{s1-e2main} and \eqref{s1-e21main}.
\end{proof}
\begin{rem} \label{rabis2}
In view of Remark~\ref{rabis1}, we can generalize Theorem~\ref{tII} and Theorem~\ref{tIII} with essentially the same proofs to the case when the forms take values
in $L^k\otimes E$, for a given holomorphic vector bundle $E$ over $M$.
\end{rem}

\subsection{Asymptotic expansion of the Bergman kernel. Proof of Theorem \ref{s1-main2}} \label{s:asymp_BK}

We are now ready to prove Theorem~\ref{s1-main2}. 
In the situation of Setup~\ref{local_data} let $q=n_-$.
Define the \emph{localized Bergman projection} (with respect to $s$) by
\begin{align} \label{e:localspectral1}
\widehat P^{(q)}_{k,s}:L^2_{(0,q)}(D)\cap\mathscr E'(D,\Lambda^qT^{*(0,1)}M)&\To\Omega^{0,q}(D),\nonumber \\
u&\mapsto e^{-k\phi}s^{-k}P^{(q)}_{k}(s^ke^{k\phi}u).
\end{align} 
Let $\widehat P^{(q)}_{k,s}(x,y)$ be the distribution kernel of $\widehat P^{(q)}_{k,s}$. We have the following 

\begin{thm}\label{t:localspectral}  
With the assumptions and notations above, fix $N_0\geq1$ and assume that $\Box^{(q)}_k$ has $O(k^{-n_0})$ small spectral gap on $D$. Then for every $D'\Subset D$, $\alpha, \beta\in\mathbb N^{2n}_0$, there is a constant $C_{\alpha,\beta}>0$ independent of $k$, such that
\begin{equation} \label{e:localspectral2}
\begin{split}
&\abs{\pr^\alpha_x\pr^\beta_y(\widehat P^{(q)}_{k,k^{-N_0},s}(x,y)-\widehat P^{(q)}_{k,s}(x,y))}\\
&\quad\leq
C_{\alpha,\beta}{\rm min\,}\set{k^{3n+2\abs{\alpha}+\abs{\beta}-N_0},k^{3n+\abs{\alpha}+2\abs{\beta}-N_0}}\ \ \mbox{on $D'\times D'$},
\end{split}
\end{equation}
where $\widehat P^{(q)}_{k,k^{-N_0},s}$ is as in Theorem~\ref{tII}. 
In particular, 
\[\widehat P^{(q)}_{k,s}\equiv\mathcal{S}_{k}\mod O(k^{-\infty})\]
locally uniformly on $D$, where $\mathcal{S}_k$ is as in Theorem~\ref{tII}.
\end{thm}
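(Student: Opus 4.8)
The plan is to establish first the ``in particular'' assertion, i.e.\ that the localized Bergman projection satisfies $\widehat P^{(q)}_{k,s}\equiv\mathcal{S}_{k}\mod O(k^{-\infty})$ locally uniformly on $D$, and then to deduce \eqref{e:localspectral2} from it. The deduction is immediate from the triangle inequality: once $\widehat P^{(q)}_{k,s}\equiv\mathcal{S}_{k}$ is known, for fixed $\alpha,\beta,N_0$ and $k$ large the term $|\partial^\alpha_x\partial^\beta_y(\mathcal{S}_{k}(x,y)-\widehat P^{(q)}_{k,s}(x,y))|=O(k^{-\infty})$ is dominated by $\min\{k^{3n+2|\alpha|+|\beta|-N_0},k^{3n+|\alpha|+2|\beta|-N_0}\}$, and adding it to the estimate \eqref{eXXXXIV} for $\widehat P^{(q)}_{k,k^{-N_0},s}-\mathcal{S}_{k}$ from Theorem~\ref{tII} yields \eqref{e:localspectral2}. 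So the whole matter reduces to the comparison $\widehat P^{(q)}_{k,s}\equiv\mathcal{S}_{k}$.

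For this I would first record the relevant features of $\widehat P^{(q)}_{k,s}$. Since $P^{(q)}_{k}=P^{(q)}_{k,0}$ is by definition the orthogonal projection onto $\Ker\Box^{(q)}_{k}$ (cf.\ \eqref{berg_proj}), which consists of smooth forms, $\Box^{(q)}_{k}$ annihilates the range of $P^{(q)}_{k}$, and translating this through the conjugation \eqref{s1-e3} gives $\Box^{(q)}_s\widehat P^{(q)}_{k,s}=0$ on $D$. Moreover $\widehat P^{(q)}_{k,s}$ is a smoothing operator (the Bergman kernel is smooth), and, expanding the Bergman kernel in an orthonormal basis of $\cH^q(M,L^k)$ exactly as in \eqref{s5-e5}--\eqref{s5-e4} (recall $\cH^q(M,L^k)\subset\cE^q_{k^{-\!N_0}}(M,L^k)$), it is formally self-adjoint with respect to $(\,,)$. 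Running the computation of \eqref{eXII}--\eqref{eXXI} with $\widehat P^{(q)}_{k,s}$ in place of $\widehat P^{(q)}_{k,k^{-N_0},s}$ (after replacing $\mathcal{S}_{k}$ by $I-\Box^{(q)}_s\mathcal{A}_{k}$, so that $\mathcal{A}^*_{k}\Box^{(q)}_s+\mathcal{S}^*_{k}=I$ by Theorem~\ref{s3-t4}), the vanishing $\Box^{(q)}_s\widehat P^{(q)}_{k,s}=0$ makes the term ``$R$'' of Proposition~\ref{pIII} identically zero; thus $\widehat P^{(q)}_{k,s}=\mathcal{S}^*_{k}\widehat P^{(q)}_{k,s}$, and, taking formal adjoints and using self-adjointness, $\widehat P^{(q)}_{k,s}=\widehat P^{(q)}_{k,s}\mathcal{S}_{k}$ as well.

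Now the small spectral gap hypothesis is invoked, playing the part that the automatic bound \eqref{eXXXIV} plays in the proof of Theorem~\ref{tII}. For $v\in\mathscr E'(D,\Lambda^qT^{*(0,1)}M)$ the form $\mathcal{S}_{k}v$ lies in $\Omega^{0,q}_0(D)$, as $\mathcal{S}_{k}$ is smoothing and properly supported, so $s^ke^{k\phi}\mathcal{S}_{k}v\in\Omega^{0,q}_0(D,L^k)$ and Definition~\ref{s1-d2bis} applies to it. Using the fibrewise isometry $w\mapsto e^{-k\phi}s^{-k}w$, the relation $\Box^{(q)}_{k}(s^ke^{k\phi}\mathcal{S}_{k}v)=s^ke^{k\phi}\Box^{(q)}_s\mathcal{S}_{k}v$ from \eqref{s1-e3}, and $\widehat P^{(q)}_{k,s}\mathcal{S}_{k}=\widehat P^{(q)}_{k,s}$, we get
\[\norm{(\mathcal{S}_{k}-\widehat P^{(q)}_{k,s})v}_{L^2(D)}=\norm{(I-\widehat P^{(q)}_{k,s})\mathcal{S}_{k}v}_{L^2(D)}\le C_D\,k^{n_0}\norm{\Box^{(q)}_s\mathcal{S}_{k}v}\,.\]
Since $\Box^{(q)}_s\mathcal{S}_{k}\equiv0\mod O(k^{-\infty})$ (Theorem~\ref{s3-t4}), the right-hand side is $O(k^{-\infty})\norm{v}_{H^{s_1}}$ for all $s_1$, so $\mathcal{S}_{k}-\widehat P^{(q)}_{k,s}$ is $O(k^{-\infty})$ in $L^2$-operator norm locally over $D$. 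To promote this to an estimate in every Sobolev norm, hence on every derivative of the two kernels, I would compose on the left with $\mathcal{S}^*_{k}$ and exploit the mapping properties \eqref{eXXXIX}, by which $\mathcal{S}^*_{k}$ gains arbitrary regularity at the price of a fixed power of $k$: from $\widehat P^{(q)}_{k,s}=\mathcal{S}^*_{k}\widehat P^{(q)}_{k,s}$ and $\mathcal{S}^*_{k}\mathcal{S}_{k}\equiv\mathcal{S}_{k}$ one has $\mathcal{S}^*_{k}(\mathcal{S}_{k}-\widehat P^{(q)}_{k,s})=\mathcal{S}^*_{k}\mathcal{S}_{k}-\widehat P^{(q)}_{k,s}\equiv\mathcal{S}_{k}-\widehat P^{(q)}_{k,s}$, whereas the left-hand side is $O(k^{-\infty})$ in each $H^s_{\mathrm{loc}}(D)$; hence $\widehat P^{(q)}_{k,s}\equiv\mathcal{S}_{k}\mod O(k^{-\infty})$ locally uniformly on $D$. (One may instead bootstrap the $L^2$ bound directly by interior elliptic estimates for $\Box^{(q)}_s$, whose constants are polynomial in $k$, using that $\Box^{(q)}_s(\mathcal{S}_{k}-\widehat P^{(q)}_{k,s})v=\Box^{(q)}_s\mathcal{S}_{k}v=O(k^{-\infty})$.)

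The step I expect to be the main obstacle is precisely this last upgrade from an $L^2$-operator bound to a bound on all derivatives of the Schwartz kernels: Definition~\ref{s1-d2bis} only supplies an $L^2$-norm inequality, and one must bring in either the near-diagonal oscillatory-integral structure of $\mathcal{S}^*_{k}$ (through \eqref{eXXXIX}) or the ellipticity of $\Box^{(q)}_s$ together with the polynomial-in-$k$ control of its elliptic constants in order to recover the uniform $\cC^\ell$ estimates appearing in \eqref{e:localspectral2}. A secondary point requiring attention is the verification that Definition~\ref{s1-d2bis}, formulated only for smooth compactly supported sections over $D$, is genuinely applicable here — which is the purpose of inserting the smoothing, properly supported operator $\mathcal{S}_{k}$ before applying it.
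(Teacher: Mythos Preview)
Your proposal is correct and follows essentially the same route as the paper: both use the small spectral gap applied to $s^ke^{k\phi}\mathcal{S}_kv$ together with $\Box^{(q)}_s\mathcal{S}_k\equiv0$ to show $\mathcal{S}_k-\widehat P^{(q)}_{k,s}\mathcal{S}_k$ is $O(k^{-\infty})$ in $L^2$, and then upgrade via composition with $\mathcal{S}^*_k$ using \eqref{eXXXIX}. The only difference is cosmetic ordering --- you prove $\widehat P^{(q)}_{k,s}\equiv\mathcal{S}_k$ first (exploiting that $\Box^{(q)}_s\widehat P^{(q)}_{k,s}=0$ makes the error term $R$ of Proposition~\ref{pIII} vanish identically) and deduce \eqref{e:localspectral2} by the triangle inequality with \eqref{eXXXXIV}, whereas the paper applies the decomposition of Proposition~\ref{pIII} to the difference $\widehat P^{(q)}_{k,k^{-N_0},s}-\widehat P^{(q)}_{k,s}$ directly.
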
 

\begin{proof} 
Let $\mathcal{S}_k$ be as in Theorem~\ref{tII}. 
We can repeat the proof of Proposition~\ref{pIII} and conclude that 
\begin{equation}\label{e:localspectral3}
\widehat P^{(q)}_{k,k^{-N_0},s}-\widehat P^{(q)}_{k,s}=T+\mathcal{S}^*_k\Bigr(\widehat P^{(q)}_{k,k^{-N_0},s}-\widehat P^{(q)}_{k,s}\Bigr)\mathcal{S}_k,
\end{equation} 
where $T$ is smoothing and the distribution kernel $T(x,y)$ of $T$ satisfies \eqref{eXXXII}. 
Let 
\[u\in H^m_{{\rm comp\,}}(D,\Lambda^qT^{*(0,1)}M),\ \ m\leq0,\ \ m\in\mathbb Z.\]
We consider
\[v=s^ke^{k\phi}\mathcal{S}_{k}u- P^{(q)}_{k}(s^ke^{k\phi}\mathcal{S}_{k}u).\]
Since $\mathcal{S}_{k}$ is a smoothing operator, $v\in\cC^\infty(M,L^k)$. Moreover, it is easy to see that $v\bot \cH^0(M,L^k)$. We have
\begin{equation} \label{e:localspectral4}
\Box^{(q)}_kv=s^ke^{k\phi}\Box^{(q)}_s\mathcal{S}_{k}u.
\end{equation}
From Theorem~\ref{s3-t4}, we see that $\Box^{(q)}_s\mathcal{S}_{k}\equiv0\mod O(k^{-\infty})$. Combining this with 
\eqref{e:localspectral4}, we obtain
\[\norm{\Box^{(q)}_kv}\leq C_Nk^{-N}\norm{u}_m,\]
for every $N>0$, where $C_N>0$ is independent of $k$. Since $v\bot\cH^0(M,L^k)$, from
Definition~\ref{s1-d2bis} we conclude that
\[\norm{v}\leq\Td C_Nk^{-N}\norm{u}_m,\]
for every $N>0$, where $\Td C_N>0$ is independent of $k$. Thus,
\[\mathcal{S}_{k}-\widehat P^{(q)}_{k,s}\mathcal{S}_{k}=O(k^{-N}):H^m_{{\rm comp\,}}(D,\Lambda^qT^{*(0,1)}M)\To L^2(D,\Lambda^qT^{*(0,1)}M),\]
for all $N>0$, $m\in\mathbb Z$, $m\leq0$, and hence
\[\mathcal{S}_{k}^*\mathcal{S}_{k}-\mathcal{S}_{k}^*\widehat P^{(q)}_{k,s}\mathcal{S}_{k}=O(k^{-N}):H^m_{{\rm comp\,}}(D,\Lambda^qT^{*(0,1)}M)\To H^{m+N_1}_{{\rm loc\,}}(D,\Lambda^qT^{*(0,1)}M),\]
for all $N, N_1>0$, $m\in\mathbb Z$. We conclude that
\[\mathcal{S}_{k}^*\mathcal{S}_{k}\equiv\mathcal{S}_{k}^*\widehat P^{(q)}_{k,s}\mathcal{S}_{k}\mod O(k^{-\infty}).\]
From this, \eqref{s3-e22-0a} and \eqref{eXXXXIV}, we obtain 
\[\widehat P^{(q)}_{k,k^{-N_0},s}=\Td T+\mathcal{S}_{k}^*\widehat P^{(q)}_{k,s}\mathcal{S}_{k},\]
where $\Td T$ is smoothing and the distribution kernel $\Td T(x,y)$ of $\Td T$ satisfies \eqref{eXXXII}. 
From this and Proposition~\ref{pIII}, we conclude that the distribution kernel 
of $\mathcal{S}_{k}^*\bigl(\widehat P^{(q)}_{k,k^{-N_0},s}-\widehat P^{(q)}_{k,s}\bigr)\mathcal{S}_{k}$ satisfies \eqref{eXXXII}. 
Combining this with \eqref{e:localspectral3}, \eqref{e:localspectral2} follows.
\end{proof} 

Since Theorem~\ref{s3-t4} and Theorem~\ref{tII} hold in the case when the forms take values in $L^k\otimes E$, for a given holomorphic vector bundle $E$ over $M$, we can generalize Theorem~\ref{t:localspectral} with the same proof to the case when the forms take values in
$L^k\otimes E$. 

\subsection{Calculation of the leading coefficients. Proof of Theorem \ref{s1-main12}}\label{s:coeff} 

Now, we prove \eqref{coeII} and \eqref{coeIII}. In this Section we assume that $q=0$.
First let us review the necessary definitions from Riemannian geometry. We will use the same notations as in the discussion 
after \eqref{s1-e21main}.

Consider the K\"ahler metric $\omega=\frac{\imat}{2\pi}R^L$ introduced in \eqref{coeI}. 
Let $\langle\,\cdot\,,\cdot\,\rangle_\omega$ be the Hermitian metric on $\Complex TM$ induced by $\omega$.
In local holomorphic coordinates $z=(z_1,\ldots,z_n)$, put
\begin{equation} \label{sa1-e7} 
\begin{split}
\omega=\sqrt{-1}\sum^n_{j,k=1}\omega_{j,k}dz_j\wedge d\ol z_k,\:
\Theta=\sqrt{-1}\sum^n_{j,k=1}\Theta_{j,k}dz_j\wedge d\ol z_k, 
\end{split}
\end{equation}
where $\Theta_{j,k}=\langle\,\frac{\pr}{\pr z_j}\,,\frac{\pr}{\pr z_k}\,\rangle$, $\omega_{j,k}=\langle\,\frac{\pr}{\pr z_j}\,,\frac{\pr}{\pr z_k}\,\rangle_{\omega}$, $j,k=1,\ldots,n$.
Put 
\begin{equation} \label{sa1-e8}
h=\left(h_{j,k}\right)^n_{j,k=1},\ \ h_{j,k}=\omega_{k,j},\ \ j, k=1,\ldots,n,
\end{equation}
and $h^{-1}=\left(h^{j,k}\right)^n_{j,k=1}$, $h^{-1}$ is the inverse matrix of $h$. The complex Laplacian with respect to $\omega$ is given by 
\begin{equation} \label{sa1-e9} 
\triangle_{\omega}=(-2)\sum^n_{j,k=1}h^{j,k}\frac{\pr^2}{\pr z_j\pr\ol z_k}.
\end{equation} 
We notice that $h^{j,k}=\langle\,dz_j\,,dz_k\,\rangle_{\omega}$, $j, k=1,\ldots,n$. Put 
\begin{equation} \label{sa1-e10} 
\begin{split}
V_\omega:=\det\left(\omega_{j,k}\right)^n_{j,k=1},\:
V_\Theta:=\det\left(\Theta_{j,k}\right)^n_{j,k=1}
\end{split}
\end{equation}
and set  
\begin{equation} \label{sa1-e11} 
\begin{split}
r=\triangle_{\omega}\log V_\omega,\:
\widehat r=\triangle_{\omega}\log V_\Theta.
\end{split}
\end{equation} 
Then $r$ is the scalar curvature of $g^{TX}_\omega$. Let $R^{\det}_\Theta$ be the curvature of the canonical line bundle $K_M=\det T^{*(1,0)}M$ with respect to the real two form $\Theta$. We recall that 
\begin{equation} \label{sa1-e12}
R^{\rm det\,}_\Theta=-\ddbar\pr\log V_\Theta.
\end{equation} 

Let $h$ be as in \eqref{sa1-e8}. The connection 
matrix of the Chern connection on $T^{(1,0)}M$ is given by  
$\theta=h^{-1}\pr h=\left(\theta_{j,k}\right)^n_{j,k=1}$, $\theta_{j,k}\in T^{*(1,0)}M$, $j,k=1,\ldots,n$.
$\theta$ is the Chern connection 
matrix with respect to $\omega$. The Chern curvature with respect to $\omega$ is given by
\begin{equation} \label{sa1-e13}
\begin{split}
&R^{TM}_{\omega}=\ddbar\theta=\left(\ddbar\theta_{j,k}\right)^n_{j,k=1}=\left(\mathcal{R}_{j,k}\right)^n_{j,k=1}
\in\cC^\infty\bigr(M,\Lambda^{1,1}T^*M\otimes{\rm End\,}(T^{(1,0)}M)\bigr),\\
&R^{TM}_{\omega}(\ol U, V)\in {\rm End\,}(T^{(1,0}M),\ \ \forall U, V\in T^{(1,0)}M,\\
&R^{TM}_\omega(\ol U,V)\xi=\sum^n_{j,k=1}\langle\,\mathcal{R}_{j,k}\,,\ol U\wedge V\,\rangle\xi_k\frac{\pr}{\pr z_j},\ \ \xi=\sum^n_{j=1}\xi_j\frac{\pr}{\pr z_j},\ \ U, V\in T^{(1,0)}M.
\end{split}
\end{equation} 
Set 
\begin{equation} \label{sa1-e14} 
\abs{R^{TM}_{\omega}}^2_{\omega}:=\sum^n_{j,k,s,t=1}\abs{\langle\,R^{TM}_{\omega}(\ol e_j,e_k)e_s\,,e_t\,\rangle_{\omega}}^2,
\end{equation} 
where $e_1,\ldots,e_n$ is an orthonormal frame for $T^{(1,0)}M$ with respect to $\langle\,\cdot\,,\cdot\,\rangle_{\omega}$. 
It is straightforward to see that the definition of $\abs{R^{TM}_{\omega}}^2_{\omega}$ is independent of the choices of orthonormal frames. Thus, $\abs{R^{TM}_{\omega}}^2_{\omega}$ is globally defined. The Ricci curvature with respect to $\omega$ is given by 
\begin{equation} \label{sa1-e15}
{\rm Ric\,}_{\omega}:=-\sum^n_{j=1}\langle\,R^{TM}_\omega(\cdot,e_j)\,\cdot\,,e_j\,\rangle_\omega,
\end{equation}
where $e_1,\ldots,e_n$ is an orthonormal frame for $T^{(1,0)}M$ with respect to $\langle\,\cdot\,,\cdot\,\rangle_{\omega}$. That is, 
\[\langle\,{\rm Ric\,}_{\omega}\,, U\wedge V\,\rangle=-\sum^n_{j=1}\langle\,R^{TX}_\omega(U,e_j)V\,,e_j\,\rangle_\omega,\ \ 
U, V\in\Complex TM.\]
${\rm Ric\,}_{\omega}$ is a global $(1,1)$ form. We can check that 
\[{\rm Ric\,}_\omega=-\ddbar\pr\log V_\omega,\]
where $V_\omega$ is as in \eqref{sa1-e10}.

Let $\mathcal{S}_k$, $b(z,w,k)$, $b_j(z,w)$, $j=0,1,2,\ldots$, be as in Theorem~\ref{tII}.
We will calculate $b_1(p,p)$ and $b_2(p,p)$ at a fixed $p\in D$. 
In a small neighbourhood $D\Subset M(0)$ of the point $p$ there exist local coordinates $(D,z)\cong(D,x)$ centered at $p$ and 
a local frame $s$ of $L$, $\abs{s}_{h^L}^2=e^{-2\phi}$ so that $\phi$ is a K\"ahler potential of $\omega$ satisfying
\begin{equation} \label{coecaluI}
\begin{split}
&\phi(z)=\sum^n_{j=1}\lambda_j\abs{z_j}^2+\phi_1(z),\\
&\phi_1(z)=O(\abs{z})^4),\ \ \frac{\pr^{\abs{\alpha}+\abs{\beta}}\phi_1}{\pr z^\alpha\pr\ol z^\beta}(0)=0\ \ \text{for $\alpha, \beta\in\N_0^n$, $\abs{\alpha}\leq1$ or $\abs{\beta}\leq1$}\,,
\end{split}
\end{equation} 
and moreover 
\begin{equation} \label{coecaluI1}
\Theta(z)=\sqrt{-1}\sum^n_{j=1}dz_j\wedge d\ol z_j+O(\abs{z})\,,\:\:z\to0\,,
\end{equation} 
(this is possible by Ruan~\cite{Ruan98}). 
First, we claim that
\begin{equation} \label{coecaluII}
\ddbar_s\mathcal{S}_{k}\equiv0\mod O(k^{-\infty}),
\end{equation} 
where $\ddbar_s$ is as in \eqref{s1-e4}.
We notice that $\Box^{(0)}_s\mathcal{S}_{k}\equiv0\mod O(k^{-\infty})$. Thus, $\Box^{(1)}_s\ddbar_s\mathcal{S}_{k}\equiv0\mod O(k^{-\infty})$. From Theorem~\ref{s3-t4-1}, we know that $\Box^{(1)}_s$ has semi-classical parametrices. Thus, $\ddbar_s\mathcal{S}_{k}\equiv0\mod O(k^{-\infty})$ so \eqref{coecaluII} follows. Now, we claim that 
\begin{equation} \label{coecaluIII}
\mbox{$\ddbar_z\bigr(i\Psi(z,w)+\phi(z)\bigr)$ vanishes to infinite order at $z=w$}.
\end{equation} 
We write $w=(w_1,\ldots,w_n)=(y_1,\ldots,y_{2n})=y$, $w_j=y_{2j-1}+iy_{2j}$, $j=1,\ldots,n$.
We assume that there exist $\alpha_0, \beta_0\in N^{2n}_0$, $\abs{\alpha_0}+\abs{\beta_0}\geq1$ and $(z_0,z_0)\in D\times D$, such that 
\begin{equation} \label{coecaluIV}
\rabs{\pr^{\alpha_0}_x\pr^{\beta_0}_{y}\Bigr(\ddbar_z\bigr(i\Psi(z,w)+\phi(z)\bigr)\Bigr)}_{(z_0,z_0)}=C_{\alpha_0,\beta_0}\neq0,
\end{equation}
and
\begin{equation} \label{coecaluV}
\rabs{\pr^{\alpha}_x\pr^\beta_y\Bigr(\ddbar_z\bigr(i\Psi(z,w)+\phi(z)\bigr)\Bigr)}_{(z_0,z_0)}=0,\ \ \mbox{if $\abs{\alpha}+\abs{\beta}<\abs{\alpha_0}+\abs{\beta_0}$},\ \ \alpha, \beta\in\mathbb N^{2n}_0.
\end{equation} 
From \eqref{coecaluIV}, \eqref{coecaluV} and since $b_0(z_0,z_0)\neq0$, $\Psi(z_0,z_0)=0$, we can check that 
\begin{equation} \label{coecaluVI}
\lim_{k\To\infty}k^{-n-1}\rabs{\pr^{\alpha_0}_x\pr^{\beta_0}_y\Bigr(\ddbar_s\bigr(e^{ik\Psi(z,w)}b(z,w,k)\bigr)\Bigr)}_{(z_0,z_0)}
=C_{\alpha_0,\beta_0}b_0(z_0,z_0)\neq0.
\end{equation}
On the other hand, since $\ddbar_s(e^{ik\Psi(z,w)}b(z,w,k))\equiv0\mod O(k^{-\infty})$, we can check that 
\begin{equation} \label{coecaluVII}
\lim_{k\To\infty}k^{-n-1}\rabs{\pr^{\alpha_0}_x\pr^{\beta_0}_y\Bigr(\ddbar_s\bigr(e^{ik\Psi(z,w)}b(z,w,k)\bigr)\Bigr)}_{(z_0,z_0)}=0.
\end{equation} 
We get thus a contradiction, hence the claim \eqref{coecaluIII} follows. Similarly, we have 
\begin{equation} \label{coecaluVIII}
\mbox{$\pr_w\bigr(i\Psi(z,w)+\phi(w)\bigr)$ vanishes to infinite order at $z=w$}.
\end{equation}
In particular, we have 
\begin{equation} \label{coecaluIX}
\mbox{$\ddbar_z\bigr(i\Psi(z,0)+\phi(z)\bigr)$  and $\pr_z\bigr(i\Psi(0,z)+\phi(z)\bigr)$ vanish to infinite order at $z=0$}.
\end{equation} 
Combining \eqref{coecaluIII}, \eqref{coecaluVIII}, \eqref{coecaluIX} with $\Psi(z,z)=0$, it is easy to check that for all $\alpha\in\mathbb N^n_0$, 
\begin{equation}\label{coecaluX}
\begin{split}
&\rabs{i\frac{\pr^{\abs{\alpha}}\Psi(z,0)}{\pr z^\alpha}}_{z=0}=-\rabs{i\frac{\pr^{\abs{\alpha}}\Psi(0,z)}{\pr z^\alpha}}_{z=0}=\frac{\pr^{\abs{\alpha}}\phi}{\pr z^\alpha}(0)=0\ \ \mbox{here we used}~\eqref{coecaluI},\\ 
&\rabs{i\frac{\pr^{\abs{\alpha}}\Psi(0,z)}{\pr\ol z^\alpha}}_{z=0}=-\rabs{i\frac{\pr^{\abs{\alpha}}\Psi(z,0)}{\pr\ol z^\alpha}}_{z=0}=\frac{\pr^{\abs{\alpha}}\phi}{\pr\ol z^\alpha}(0)=0\ \ \mbox{here we used}~\eqref{coecaluI}.
\end{split}
\end{equation} 
From \eqref{coecaluIX} and \eqref{coecaluX}, we deduce that for every $N\in\N_0$
\begin{equation} \label{coecaluXI} 
\begin{split}
\Psi(z,0)=i\phi(z)+O(\abs{z}^{N})\,,\:\:\Psi(0,z)=i\phi(z)+O(\abs{z}^{N}).
\end{split}
\end{equation} 
We claim that 
\begin{equation} \label{coecaluXI-I}
\mbox{$\ddbar_zb_j(z,w)$ and $\pr_wb_j(z,w)$ vanish to infinite order at $z=w$, for all $j\in\N_0 $.}
\end{equation}
In view of \eqref{coecaluIII}, we see that $\ddbar_z(i\Psi(z,w)+\phi(z))$ vanishes to infinite order at $z=w$. From this observation and \eqref{coecaluII}, we conclude that 
\begin{equation}\label{coecaluXII}
e^{ik\Psi(z,w)}\ddbar_zb(z,w,k)=H_k(z,w),
\end{equation}
where $H_k(z,w)\equiv0\mod O(k^{-\infty})$. We assume that there exist $\gamma_0, \delta_0\in\mathbb N^{2n}_0$, 
$\abs{\gamma_0}+\abs{\delta_0}\geq1$ and $(z_1,z_1)\in D\times D$, such that 
\[\rabs{\pr^{\gamma_0}_x\pr^{\delta_0}_y(\ddbar_zb_0(z,w))}_{(z_1,z_1)}=D_{\gamma_0,\delta_0}\neq0,\]
and
\[\rabs{\pr^{\gamma}_x\pr^{\delta}_y(\ddbar_zb_0(z,w))}_{(z_1,z_1)}=0\ \ \mbox{if $\abs{\gamma}+\abs{\delta}<\abs{\gamma_0}+\abs{\delta_0}$},\ \ \gamma, \delta\in\mathbb N^{2n}_0.\] 
From \eqref{coecaluXII}, we have 
\begin{equation}\label{coecaluXIII}
\rabs{\pr^{\gamma_0}_x\pr^{\delta_0}_y\Bigr(\ddbar_zb(z,w,k)\Bigr)}_{(z_1,z_1)}
=\rabs{\pr^{\gamma_0}_x\pr^{\delta_0}_y\Bigr(e^{-ik\Psi(z,w)}H_k(z,w)\Bigr)}_{(z_1,z_1)}.
\end{equation}
Since $\Psi(z_1,z_1)=0$, we have 
\begin{equation} \label{coecaluXIV}
\lim_{k\To\infty}k^{-n}\rabs{\pr^{\gamma_0}_x\pr^{\delta_0}_y\Bigr(e^{-ik\Psi(z,w)}H_k(z,w)\Bigr)}_{(z_1,z_1)}=0.
\end{equation} 
On the other hand, we can check that 
\begin{equation} \label{coecaluXV}
\lim_{k\To\infty}k^{-n}\rabs{\pr^{\gamma_0}_x\pr^{\delta_0}_y\Bigr(\ddbar_zb(z,w,k)\Bigr)}_{(z_1,z_1)}=D_{\gamma_0,\delta_0}\neq0.
\end{equation} 
From \eqref{coecaluXV}, \eqref{coecaluXIV} and \eqref{coecaluXIII}, we get a contradiction. 
Thus, $\ddbar_zb_0(z,w)$ vanishes to infinite order at $z=w$. Similarly, we can repeat the procedure above and conclude 
that $\ddbar_zb_j(z,w)$ and $\pr_wb_j(z,w)$ vanish to infinite order at $z=w$, $\forall j\in\N_0 $. The claim \eqref{coecaluXI-I} follows.  

Now, we are ready to calculate $b_1(0,0)$ and $b_2(0,0)$. We notice that 
\[b_0(z,z)=(2\pi)^{-n}\det\dot R^L(z).\]
From this and \eqref{coecaluXI-I}, it is easy to see that for all $\alpha\in\mathbb N^n_0$, 
\begin{equation}\label{coecaluXVI}
\begin{split}
&\rabs{\frac{\pr^{\abs{\alpha}}b_0(z,0)}{\pr z^\alpha}}_{z=0}=(2\pi)^{-n}\rabs{\frac{\pr^{\abs{\alpha}}(\det\dot R^L(z))}{\pr z^\alpha}}_{z=0},\: \rabs{\frac{\pr^{\abs{\alpha}}b_0(z,0)}{\pr\ol z^\alpha}}_{z=0}=0.
\end{split}
\end{equation} 
Since $\mathcal{S}_k\circ\mathcal{S}_k\equiv\mathcal{S}_k\mod O(k^{-\infty})$, we have 
\begin{equation} \label{coecaluXVII}
b(0,0,k)=\int_De^{ik(\Psi(0,z)+\Psi(z,0))}b(0,z,k)b(z,0,k)V_\Theta(z)d\lambda(z)+r_k,
\end{equation}
where $d\lambda(z)=2^ndx_1dx_2\cdots dx_{2n}$, $V_\Theta$ is given by \eqref{sa1-e10} and 
\[\lim_{k\to\infty}\frac{r_k}{k^N}=0,\ \ \forall N\geq0.\] 
We notice that since $b(z,w,k)$ is properly supported, we have 
\[b(0,z,k)\in C^\infty_0(D),\ \ b(z,0,k)\in C^\infty_0(D).\]
We apply the stationary phase formula of H\"{o}rmander \cite[Theorem~7.7.5]{Hor03}) to the integral in \eqref{coecaluXVII} and obtain 
( for the details see Hsiao~\cite[Section\,4]{Hsiao09}): 

\begin{thm} \label{t-coecalu} 
We have 
\begin{equation} \label{coecaluXVIII}
\begin{split}
b_1(0,0)&=(2\pi)^n(\det\dot{R}^L(0))^{-1}\Bigr(2b_0(0,0)b_1(0,0)\\
&\quad+\frac{1}{2}\triangle_0\bigr(V_\Theta b_0(0,z)b_0(z,0)\bigr)(0)
-\frac{1}{4}\triangle^2_0\bigr(\phi_1V_\Theta b_0(0,z)b_0(z,0)\bigr)(0)\Bigr)
\end{split}
\end{equation}
and 
\begin{equation} \label{coecaluXIX}
\begin{split}
b_2(0,0)&=(2\pi)^n(\det\dot{R}^L(0))^{-1}\Bigr(2b_0(0,0)b_2(0,0)+b_1(0,0)^2\\
&\quad+\frac{1}{2}\triangle_0\bigr(V_\Theta (b_0(0,z)b_1(z,0)+b_1(0,z)b_0(z,0))\bigr)(0)\\
&\quad-\frac{1}{4}\triangle^2_0\bigr(\phi_1V_\Theta (b_0(0,z)b_1(z,0)+b_1(0,z)b_0(z,0))\bigr)(0)\\
&\quad+\frac{1}{8}\triangle^2_0\bigr(V_\Theta b_0(0,z)b_0(z,0)\bigr)(0)
-\frac{1}{24}\triangle^3_0\bigr(\phi_1V_\Theta b_0(0,z)b_0(z,0)\bigr)(0)\\
&\quad+\frac{1}{192}\triangle^4_0\bigr(\phi_1^2V_\Theta b_0(0,z)b_0(z,0)\bigr)(0)\Bigr),
\end{split}
\end{equation}
where $\triangle_0=\sum^n_{j=1}\frac{1}{\lambda_j}\frac{\pr^2}{\pr\ol z_j\pr z_j}$, $\phi_1$ is as in \eqref{coecaluI} 
and $V_\Theta$ is as in \eqref{sa1-e10}.
\end{thm}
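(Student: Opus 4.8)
The plan is to read off \eqref{coecaluXVIII} and \eqref{coecaluXIX} from the approximate idempotency of $\mathcal{S}_k$ recorded in \eqref{coecaluXVII}, via a stationary phase / Laplace computation around $z=0$. The first step is to simplify the phase: by \eqref{coecaluXI} one has $\Psi(0,z)+\Psi(z,0)=2i\phi(z)+O(\abs{z}^N)$ for every $N$, and since $q=n_-=0$ all $\lambda_j>0$, so $2\phi(z)\geq c\abs{z}^2$ near $0$ and $z=0$ is the unique critical point of $2i\phi$, non-degenerate and with non-negative imaginary part. Inserting a cut-off $\chi$ supported near $0$ changes \eqref{coecaluXVII} only by $O(k^{-\infty})$ (the complement is killed by $e^{-2k\phi}$), and on ${\rm supp\,}\chi$ the flat remainder in the phase may be dropped; thus, modulo $O(k^{-\infty})$, we are left with $\int e^{-2k\phi(z)}b(0,z,k)\,b(z,0,k)\,V_\Theta(z)\chi(z)\,d\lambda(z)$, whose amplitude lies in $\cC^\infty_0(D)$ because $b(z,w,k)$ is properly supported. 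Next I would use the splitting $\phi=\phi_0+\phi_1$ of \eqref{coecaluI}, with $\phi_0(z)=\sum_{j=1}^n\lambda_j\abs{z_j}^2$, write $e^{-2k\phi}=e^{-2k\phi_0}e^{-2k\phi_1}$ and expand $e^{-2k\phi_1}=\sum_{m\geq0}\frac{(-2k)^m}{m!}\phi_1^m$. Because $\phi_1$ vanishes to order $4$ at $0$ with, by \eqref{coecaluI}, all its derivatives of bidegree $(\alpha,\beta)$, $\abs{\alpha}\leq1$ or $\abs{\beta}\leq1$, vanishing there, while the Gaussian $e^{-2k\phi_0}$ localizes at scale $\abs{z}\sim k^{-1/2}$, the $m$-th term of this expansion is of relative size $k^{-m}$; hence only $m\leq1$ matters for the coefficient of $k^{n-1}$ and only $m\leq2$ for that of $k^{n-2}$.

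The second step is to evaluate each integral $\int e^{-2k\phi_0(z)}\phi_1(z)^m\,b(0,z,k)b(z,0,k)V_\Theta(z)\chi(z)\,d\lambda(z)$ by the stationary phase formula \cite[Th.\,7.7.5]{Hor03}. Since $\phi_0$ is the pure quadratic form $\sum_j\lambda_j(x_{2j-1}^2+x_{2j}^2)$ and $d\lambda(z)=2^n\,dx$, this is a Wick expansion:
\[\int e^{-2k\phi_0}\,a\,d\lambda=(2\pi)^n\bigl(\det\dot R^L(0)\bigr)^{-1}k^{-n}\sum_{p\geq0}\frac{1}{2^p\,p!}\,k^{-p}\,\triangle_0^{\,p}a(0)+O(k^{-\infty})\,,\]
where $\triangle_0=\sum_j\tfrac1{\lambda_j}\frac{\pr^2}{\pr\ol z_j\pr z_j}$ arises from $\tfrac12\langle(\phi_0'')^{-1}\pr,\pr\rangle$ after using $\pr^2_{x_{2j-1}}+\pr^2_{x_{2j}}=4\pr_{z_j}\pr_{\ol z_j}$, and the leading factor $(2\pi)^n(\det\dot R^L(0))^{-1}k^{-n}$ is exactly the reciprocal of $b_0(0,0)=(2\pi)^{-n}\det\dot R^L(0)$, as it must be since $\mathcal{S}_k$ is an approximate projection; this is the prefactor in \eqref{coecaluXVIII}--\eqref{coecaluXIX}. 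Substituting $b(0,z,k)b(z,0,k)\sim\sum_{l\geq0}k^{2n-l}\sum_{i+j=l}b_i(0,z)b_j(z,0)$, a term labelled $(m,l,p)$ contributes to the coefficient of $k^{\,n+m-l-p}$ with numerical factor $\tfrac{(-2)^m}{m!}\cdot\tfrac1{2^p\,p!}$, and $\triangle_0^{\,p}(\phi_1^m\cdot(\text{smooth}))(0)=0$ unless $p\geq 2m$.

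Finally I would collect coefficients. For $k^{n-1}$ the only admissible triples are $(m,l,p)=(0,1,0),(0,0,1),(1,0,2)$, giving (with $V_\Theta(0)=1$ and $b_i(0,0)=b_i(0,z)|_{z=0}=b_i(z,0)|_{z=0}$) the three terms of \eqref{coecaluXVIII}; for $k^{n-2}$ the triples $(0,2,0),(0,1,1),(1,1,2),(0,0,2),(1,0,3),(2,0,4)$ give the six terms of \eqref{coecaluXIX}, with constants $1,\tfrac12,-\tfrac14,\tfrac18,-\tfrac1{24},\tfrac1{192}$ — e.g.\ the last is $\tfrac{(-2)^2}{2!}\cdot\tfrac1{2^4\,4!}=\tfrac1{192}$. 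The main obstacle is precisely this bookkeeping: one must verify that no further triples contribute at each order (in particular that higher stationary-phase corrections do not slip in), that the constants coming from \cite[Th.\,7.7.5]{Hor03} combined with $(-2)^m/m!$ are exactly as claimed, and that all the discarded errors — the $O(k^{-\infty})$ in \eqref{coecaluXVII}, the phase replacement, the cut-off, and the tail of the $e^{-2k\phi_1}$-expansion beyond the needed order — are genuinely negligible. This computation is carried out in detail in \cite[\S\,4]{Hsiao09}.
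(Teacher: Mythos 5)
Your proposal is correct and follows the same route as the paper: one applies H\"ormander's stationary phase (Theorem~7.7.5) to the reproducing identity \eqref{coecaluXVII}, after splitting $\Psi(0,z)+\Psi(z,0)\sim 2i\phi(z)$ and $\phi=\phi_0+\phi_1$ per \eqref{coecaluXI} and \eqref{coecaluI}, and then collects Wick coefficients; the paper simply defers this bookkeeping to \cite[\S\,4]{Hsiao09}. Your accounting of the admissible triples $(m,l,p)$, the selection rule $p\geq 2m$, the prefactor $(2\pi)^n(\det\dot R^L(0))^{-1}k^{-n}=b_0(0,0)^{-1}k^{-n}$, and all numerical constants matches \eqref{coecaluXVIII}--\eqref{coecaluXIX} exactly.
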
 

From \eqref{coecaluXI-I}, \eqref{coecaluXVI} and \eqref{coecaluXVIII}, it is straightforward to see that (see Section 4.2 in \cite{Hsiao09}, for the details) 
\begin{equation}\label{coecaluXX}
\begin{split}
b_1(0,0)&=(2\pi)^{-n}\det\dot R^L(0)\Bigr(\frac{1}{4\pi}\widehat r(0)-\frac{1}{8\pi}r(0)\Bigr)\\
&=\frac{V_\omega(0)}{V_\Theta(0)}\Bigr(\frac{1}{4\pi}\bigr(\triangle_\omega\log V_\Theta\bigr)(0)-\frac{1}{8\pi}\bigr(\triangle_\omega\log V_\omega\bigr)(0)\Bigr),
\end{split}
\end{equation} 
where $\widehat r$ and $r$ are as in \eqref{sa1-e11} and $V_\omega$ is as in \eqref{sa1-e10}.
From this, \eqref{coeII} follows. 

Similarly, from \eqref{coeII} and \eqref{coecaluXI-I}, it is easy to see that for all $\alpha\in\mathbb N^n_0$, 
\begin{equation}\label{coecaluXXI}
\begin{split}
&\rabs{\frac{\pr^{\abs{\alpha}}b_1(z,0)}{\pr z^\alpha}}_{z=0}=
(2\pi)^{-n}\rabs{\frac{\pr^{\abs{\alpha}}\Bigr(\det\dot R^L(z)\bigr(\frac{1}{4\pi}\widehat r(z)-\frac{1}{8\pi}r(z)\bigr)\Bigr)}{\pr z^\alpha}}_{z=0},\\
&\rabs{\frac{\pr^{\abs{\alpha}}b_1(z,0)}{\pr\ol z^\alpha}}_{z=0}=0.
\end{split}
\end{equation}
From \eqref{coecaluXI-I}, \eqref{coecaluXXI} and \eqref{coecaluXIX}, it is straightforward to see that (see Section 4.3 in \cite{Hsiao09}, for the details) 
\begin{equation}\label{coecaluXXII}
\begin{split}
b_2(0,0)&=(2\pi)^{-n}\det\dot{R}^L(0)\Bigr(\frac{1}{128\pi^2}r^2-\frac{1}{32\pi^2}r\widehat r+\frac{1}{32\pi^2}(\widehat r)^2
-\frac{1}{32\pi^2}\triangle_{\omega}\widehat r
-\frac{1}{8\pi^2}\abs{R^{\det}_\Theta}^2_{\omega}\\
&\quad+\frac{1}{8\pi^2}\langle\,{\rm Ric\,}_{\omega}\,,R^{\det}_\Theta\,\rangle_{\omega}+\frac{1}{96\pi^2}\triangle_{\omega}r-
\frac{1}{24\pi^2}\abs{{\rm Ric\,}_{\omega}}^2_{\omega}+\frac{1}{96\pi^2}\abs{R^{TM}_{\omega}}^2_{\omega}\Bigr)(0),
\end{split}
\end{equation} 
where $\triangle_\omega$, $R^{\det}_\Theta$, ${\rm Ric\,}_{\omega}$ and $R^{TM}_{\omega}$ are as in \eqref{sa1-e9}, 
\eqref{sa1-e12}, \eqref{sa1-e15} and \eqref{sa1-e13} respectively, and $\langle\,\cdot,\cdot\,\rangle_\omega$, $\abs{\cdot}_\omega$ are as 
as in the discussion after \eqref{coeI} and $\abs{R^{TM}_{\omega}}^2_{\omega}$ is given by \eqref{sa1-e14}. From \eqref{coecaluXXII}, \eqref{coeIII} follows.

\section{Asymptotic upper bounds near the degeneracy set}\label{s:deg_set}

In this Section, we will use the heat equation expansion for $\Box^{(q)}_k$ of Ma-Marinescu \cite[\S\,1.6]{MM07} to get an asymptotic upper bound near the degenerate part of $L$. The goal of this Section is to prove
\eqref{s1-e3main}.

By the spectral theorem (see Davies~\cite[Th.\,2.5.1]{Dav95}), there exists a finite measure $\mu$ on $\mathbb S\times\mathbb N$, where $\mathbb S$ denotes the spectrum of $\Box^{(q)}_k$, and a unitary operator  
\begin{equation} \label{spectralmeasure}
U:L^2_{(0,q)}(M,L^k)\To L^2(\mathbb S\times\mathbb N,d\mu)
\end{equation}
with the following properties. If $h:\mathbb S\times \mathbb N\To\Real$ is the function $h(s,n)=s$, then the element $\xi$ of $L^2_{(0,q)}(M,L^k)$ lies in ${\rm Dom\,}\Box^{(q)}_k$ if and only if $hU(\xi)\in L^2$. We have 
$U\Box^{(q)}_kU^{-1}\varphi=h\varphi$ for all $\varphi\in U({\rm Dom\,}\Box^{(q)}_k)$.

We identify $L^2_{(0,q)}(M,L^k)$ with $L^2(\mathbb S\times\mathbb N,d\mu)$. Then the heat operator $\exp(-t\,\Box^{(q)}_k)$, $t>0$, is the operator on $L^2(\mathbb S\times\mathbb N,d\mu)$ given by
\[\begin{split}
\exp(-t\,\Box^{(q)}_k):L^2(\mathbb S\times\mathbb N,d\mu)&\To L^2(\mathbb S\times\mathbb N,d\mu)\,,\quad u(s,n)\mapsto e^{-st}u(s,n).
\end{split}\]

Since $\Box^{(q)}_k$ is elliptic, the distribution kernel of $\exp(-t\,\Box^{(q)}_k)$ is smooth (see \cite[Th.\,D.1.2]{MM07}).
Let
\[\exp(-t\,\Box^{(q)}_k)(x,y)\in\cC^\infty(M\times M, L^k_y\otimes\Lambda^qT^{*(0,1)}_yM\boxtimes L^k_x\otimes\Lambda^qT^{*(0,1)}_xM)\]
be the distribution kernel of $\exp(-t\,\Box^{(q)}_k)$ with respect to $(\cdot\,,\cdot)_k$.
That is,
\[(\exp(-t\,\Box^{(q)}_k)u)(x)=\int_M\exp(-t\,\Box^{(q)}_k)(x,y)u(y)dv_M(y),\ \ u\in L^2_{(0,q)}(M, L^k).\]
Let $s$ be a local section of $L$ over $\Td X$, where $\Td X\subset M$. Then on $\Td X\times\Td X$ we can write
\[\exp(-t\,\Box^{(q)}_k)(x,y)=\exp(-t\,\Box^{(q)}_k)_s(x,y)s(x)^ks^*(y)^k,\]
where $\exp(-t\,\Box^{(q)}_k)_s(x,y)\in\cC^\infty(\Td X\times\Td X, \Lambda^qT^*_yM\boxtimes \Lambda^qT^*_xM)$. For $u\in\Omega^{0,q}_0(\Td X,L^k)$ consider $\Td u\in\Omega^{0,q}_0(\Td X)$ with $u=s^k\Td u$. Then for $x\in\Td X$, 
\begin{equation} \label{e-s4-I}
\begin{split}
(\exp(-t\,\Box^{(q)}_k)u)(x)&=s(x)^k\int_M\exp(-t\,\Box^{(q)}_k)_s(x,y)\langle\,u(y)\,, s^*(y)^k\,\rangle dv_M(y)\\
&=s(x)^k\int_M\exp(-t\,\Box^{(q)}_k)_s(x,y)\Td u(y)dv_M(y)\ .
\end{split}
\end{equation}
For $x=y$, we can check that the function
\[\exp(-t\,\Box^{(q)}_k)_s(x,x)\in\cC^\infty(\Td X, \Lambda^qT^*_xM\boxtimes \Lambda^qT^*_xM)\]
is independent of the choices of local section $s$. We identify $\exp(-t\,\Box^{(q)}_k)_s(x,x)$ with $\exp(-t\,\Box^{(q)}_k)(x,x)$.
The trace of $\exp(-t\,\Box^{(q)}_k)(x,x)$ is given by
\[{\rm Tr\,}\exp(-t\,\Box^{(q)}_k)(x,x):=\sum^d_{j=1}\big\langle\exp(-t\,\Box^{(q)}_k)(x,x)e_{J_j}(x)\,,e_{J_j}(x)\big\rangle,\]
where $\{e_{J_j}(x)\}_{j=1}^d$ is an orthonormal basis of the space $\Lambda^qT^{*(0,1)}_xM$ with respect to $\langle\,\cdot\,,\cdot\,\rangle$, ${\rm dim\,}\Lambda^qT^{*(0,1)}_xM=d$.  
\begin{prop}\label{p:degenerate}
Fix $t>0$ and $N_0\geq1$. We have for $k$ large, 
\begin{equation} \label{e-s4-III-1}
{\rm Tr\,}\exp\big(-\frac{t}{k}\Box^{(q)}_k\big)(x,x)\geq(1-k^{-N_0}){\rm Tr\,}P^{(q)}_{k,k^{-N_0}}(x,x),\ \ \forall x\in M,
\end{equation}
where $P^{(q)}_{k,k^{-N_0}}(x,x)$ is as in \eqref{e:state}.
\end{prop}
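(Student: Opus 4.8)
The plan is to compare the heat operator $e^{-(t/k)\Box^{(q)}_k}$ with the spectral projection $P^{(q)}_{k,k^{-N_0}}$ using the spectral representation \eqref{spectralmeasure}. First I would write, for any $u\in L^2_{(0,q)}(M,L^k)$ identified with $U(u)\in L^2(\mathbb{S}\times\mathbb{N},d\mu)$,
\[
\big(e^{-(t/k)\Box^{(q)}_k}u,u\big)_k=\int_{\mathbb{S}\times\mathbb{N}}e^{-(t/k)s}\abs{U(u)(s,n)}^2\,d\mu(s,n),
\]
and similarly $\big(P^{(q)}_{k,k^{-N_0}}u,u\big)_k=\int_{\{s\le k^{-N_0}\}}\abs{U(u)(s,n)}^2\,d\mu(s,n)$. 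On the region $\{s\le k^{-N_0}\}$ we have $e^{-(t/k)s}\ge e^{-t k^{-N_0-1}}\ge 1-tk^{-N_0-1}$, which for $k$ large is $\ge 1-k^{-N_0}$ (say, once $k\ge t+1$). Since the integrand $e^{-(t/k)s}\abs{U(u)}^2$ is nonnegative everywhere, dropping the complementary region only decreases the integral, so
\[
\big(e^{-(t/k)\Box^{(q)}_k}u,u\big)_k\ge (1-k^{-N_0})\big(P^{(q)}_{k,k^{-N_0}}u,u\big)_k
\]
for all $u$ and all $k$ large. In operator terms this says $e^{-(t/k)\Box^{(q)}_k}-(1-k^{-N_0})P^{(q)}_{k,k^{-N_0}}$ is a nonnegative self-adjoint operator.

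Next I would pass from this operator inequality to the pointwise inequality \eqref{e-s4-III-1} for the kernels on the diagonal. Fix $x\in M$, choose a local trivializing section $s$ of $L$ near $x$, and let $e_{J_1}(x),\dots,e_{J_d}(x)$ be an orthonormal basis of $\Lambda^qT^{*(0,1)}_xM$. For each $j$, the quantities $\langle e^{-(t/k)\Box^{(q)}_k}_s(x,x)e_{J_j}(x),e_{J_j}(x)\rangle$ and $\langle P^{(q)}_{k,k^{-N_0},s}(x,x)e_{J_j}(x),e_{J_j}(x)\rangle$ can each be realized as a limit of $\big(Au_\nu,u_\nu\big)_k$ along a sequence of $L^2$-sections $u_\nu$ concentrating near $x$ (the standard approximate-identity / reproducing-type argument: one uses that both $e^{-(t/k)\Box^{(q)}_k}$ and $P^{(q)}_{k,k^{-N_0}}$ are smoothing, so their Schwartz kernels are continuous, and evaluates $(Au_\nu,u_\nu)_k$ for $u_\nu=s^k\widetilde u_\nu$ with $\widetilde u_\nu$ an $L^2$-normalized bump tending to $\delta_x\otimes e_{J_j}(x)$ in an appropriate weak sense). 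Applying the operator inequality along such $u_\nu$ and passing to the limit gives, for each $j$,
\[
\langle \exp(-\tfrac{t}{k}\Box^{(q)}_k)(x,x)e_{J_j}(x),e_{J_j}(x)\rangle\ge (1-k^{-N_0})\langle P^{(q)}_{k,k^{-N_0}}(x,x)e_{J_j}(x),e_{J_j}(x)\rangle.
\]
Summing over $j=1,\dots,d$ yields precisely \eqref{e-s4-III-1}, using the definitions of the traces given just before the statement.

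Alternatively, and perhaps more cleanly, one can avoid the approximation argument entirely: write both diagonal kernels directly in terms of eigenvalue/eigenform data. Although $\Box^{(q)}_k$ need not have discrete spectrum globally, one can still use that $\exp(-\tfrac{t}{k}\Box^{(q)}_k)(x,x)=\int_{\mathbb{S}\times\mathbb{N}}e^{-(t/k)s}\,dE_x(s,n)$ and $P^{(q)}_{k,k^{-N_0}}(x,x)=\int_{\{s\le k^{-N_0}\}}dE_x(s,n)$ where $dE_x(s,n)$ is the (nonnegative, $\End(\Lambda^qT^{*(0,1)}_xM)$-valued) "diagonal spectral measure" obtained by testing the spectral measure against bump sections at $x$; positivity of this measure together with the scalar estimate $e^{-(t/k)s}\ge 1-k^{-N_0}$ on $\{s\le k^{-N_0}\}$ gives the result after taking traces. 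I expect the main obstacle to be purely bookkeeping: making the passage from the abstract spectral representation \eqref{spectralmeasure} to a rigorous pointwise statement about the smooth Schwartz kernels on the diagonal, i.e.\ justifying that one may "evaluate" the operator inequality at a point $x$ and against a fixed form $e_{J_j}(x)$. This is standard for smoothing operators (the kernel of a positive smoothing operator is a positive-semidefinite section along the diagonal), so no genuine difficulty arises; it is just a matter of invoking the Schwartz kernel theorem from Section \ref{s:prelim} together with ellipticity of $\Box^{(q)}_k$ (which guarantees both kernels are smooth, cf.\ \cite[Th.\,D.1.2]{MM07}).
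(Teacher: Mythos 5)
Your argument is correct and follows the same route as the paper: first the operator inequality $(e^{-(t/k)\Box^{(q)}_k}u,u)_k\geq(1-k^{-N_0})(P^{(q)}_{k,k^{-N_0}}u,u)_k$ via the spectral representation and the estimate $1-e^{-s t/k}\leq k^{-N_0}$ on $[0,k^{-N_0}]$ for $k\gtrsim t$, and then a passage to the diagonal of the (smooth) Schwartz kernels by testing against a sequence of bump sections concentrating at a point and at a fixed basis covector. The paper phrases the second step by postulating test sections $\chi_j$ with $(F\chi_j,\chi_j)_k\to\langle F(p,p)e_{J_u}(p),e_{J_u}(p)\rangle$ for every smoothing $F$, which is exactly your approximate-identity argument.
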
 

\begin{proof} 
First, we claim that for all $u\in\Omega^{0,q}_0(M,L^k)$,
\begin{equation}\label{e:degenerate}
\big(\exp\big(-\frac{t}{k}\Box^{(q)}_k\big)u,u\big)_k\geq(1-k^{-N_0})(P^{(q)}_{k,k^{-N_0}}u,u)_k\ .
\end{equation} 
We identify $L^2_{(0,q)}(M,L^k)$ with $L^2(\mathbb S\times\mathbb N,d\mu)$. Then 
\[\exp\big(-\frac{t}{k}\Box^{(q)}_k\big):
u(s,n)\in L^2(\mathbb S\times\mathbb N,d\mu)\mapsto e^{-s\frac{t}{k}}u(s,n)\] 
and 
\[P^{(q)}_{k,k^{-N_0}}:
u(s,n)\in L^2(\mathbb S\times\mathbb N,d\mu)\mapsto u(s,n)\mathds{1}_{[0,k^{-N_0}]}(s).\] 
For $u(s,n)\in L^2(\mathbb S\times\mathbb N,d\mu)$, we have 
\begin{equation}\label{e:degenerate1}
\begin{split}
&\big(\exp\big(-\frac{t}{k}\Box^{(q)}_k\big)u,u\big)_k
=\int_{\mathbb S\times\mathbb N}e^{-s\frac{t}{k}}\abs{u(s,n)}^2d\mu\geq\int_{\mathbb S\times\mathbb N}e^{-s\frac{t}{k}}\abs{u(s,n)}^2\mathds{1}_{[0,k^{-N_0}]}(s)d\mu\\
&\geq\int_{\mathbb S\times\mathbb N}\abs{u(s,n)}^2\mathds{1}_{[0,k^{-N_0}]}(s)d\mu\quad-\int_{\mathbb S\times\mathbb N}\abs{e^{-s\frac{t}{k}}-1}\abs{u(s,n)}^2\mathds{1}_{[0,k^{-N_0}]}(s)d\mu\\
&\geq\Bigr(1-\sup_{s\in[0,k^{-N_0}]}(1-e^{-s\frac{t}{k}})\Bigr)(P^{(q)}_{k,k^{-N_0}}u,u)_k.
\end{split}\end{equation} 
It is easy to see that fix $t>0$, we have $\sup_{s\in[0,k^{-N_0}]}(1-e^{-s\frac{t}{k}})\leq k^{-N_0}$ if $k$ large. 
From this observation and \eqref{e:degenerate1}, the claim \eqref{e:degenerate} follows. 

Now, fix $p\in M$ and let $s$ be a local section of $L$ defined in some open neighborhood $D$ of $p$, $\abs{s}_{h^L}^2=e^{-2\phi}$. Let $e_{J_1}(p),\ldots,e_{J_d}(p)$, be an orthonormal basis of $\Lambda^qT^{*(0,1)}_pM$ with respect to $\langle\,\cdot\,,\cdot\,\rangle$. Fix $l\in\set{1,\ldots,d}$. For each $j\in\mathbb N$, take $\chi_j\in\Omega^{0,q}_0(D,L^k)$ so that for every continuous operator $F:\cC^\infty(D,L^k\otimes\Lambda^qT^{*(0,1)}M)\To\cC^\infty(D,L^k\otimes\Lambda^qT^{*(0,1)}M)$ with smooth kernel $F(x,y)\in \cC^\infty(M\times M,L^k_y\otimes\Lambda^qT^{*(0,1)}_yM\boxtimes L^k_x\otimes\Lambda^qT^{*(0,1)}_xM)$, we have 
\[(F\chi_j, \chi_j)_k\To \langle F(p,p)e_{J_l}(p),e_{J_l}(p)\rangle,\ \ j\To\infty.\]
Then, we have
\[\begin{split}
&\big(\exp\big(-\frac{t}{k}\Box^{(q)}_k\big)\chi_j, \chi_j\big)_k\To\big\langle\exp(-\frac{t}{k}\Box^{(q)}_k)(p,p)e_{J_l}(p),e_{J_l}(p)\big\rangle,\ \ j\To\infty,\\
&\big(P^{(q)}_{k,k^{-N_0}}\chi_j, \chi_j\big)_k\To\big\langle P^{(q)}_{k,k^{-N_0}}(p,p)e_{J_l}(p),e_{J_l}(p)\big\rangle,\ \ j\To\infty.
\end{split}\] 
Combining this with \eqref{e:degenerate}, we conclude that 
\[\big\langle\exp\big(-\frac{t}{k}\Box^{(q)}_k\big)(p,p)e_{J_l}(p),e_{J_l}(p)\big\rangle
\geq(1-k^{-N_0})\big\langle P^{(q)}_{k,k^{-N_0}}(p,p)e_{J_l}(p),e_{J_l}(p)\big\rangle.\] 
Thus, 
\[{\rm Tr\,}\exp\big(-\frac{t}{k}\Box^{(q)}_k\big)(p,p)\geq(1-k^{-N_0}){\rm Tr\,}P^{(q)}_{k,k^{-N_0}}(p,p),\]
so \eqref{e-s4-III-1} follows.
\end{proof} 

\begin{thm}[{\cite[Th.\,1.6.1]{MM07}}] \label{tI-s4}
For each $t>0$ fixed and any $D\Subset M$, $m\in\mathbb N$, we have as $k\To\infty$,
\begin{equation} \label{e-s4-IV}
{\rm Tr\,}\exp(-\frac{t}{k}\Box^{(q)}_k)(x,x)=k^n(2\pi)^{-n}\Bigr(\sum_{j_1<j_2<\cdots<j_q}\exp(-t\sum^q_{i=1}a_{j_i}(x))\Bigr)\prod^n_{j=1}\frac{a_j(x)}{1-e^{-ta_j(x)}}+o(k^n),
\end{equation}
in the $\cC^m$ norm on $\cC^\infty(D,\Lambda^qT^{*(0,1)}M\boxtimes\Lambda^qT^{*(0,1)}M)$, where $a_1(x),\ldots,a_n(x)$ are the eigenvalues of $\dot{R}^L(x)$. Here we use the convention that  $\frac{a_j(x)}{1-e^{-ta_j(x)}}:=\frac{1}{t}$, if $a_j(x)=0$.
\end{thm}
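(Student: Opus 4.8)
The statement is Theorem~1.6.1 in Ma-Marinescu~\cite{MM07}, so the plan is essentially to invoke it; for completeness here is the route. The argument is local and rests on the rescaling technique of local index theory. Fix $x_0\in D$, pick a local trivializing section $s$ of $L$ near $x_0$ with $\abs{s}^2_{h^L}=e^{-2\phi}$, and use the unitary identification \eqref{s1-e3} to replace $\Box^{(q)}_k$ by the weighted operator $\Box^{(q)}_{k\phi}$. By finite propagation speed for the wave operator of $\Box^{(q)}_{k\phi}$, the kernel of $\exp(-\tfrac{t}{k}\Box^{(q)}_{k\phi})$ at $(x_0,x_0)$ depends, modulo $O(k^{-\infty})$, only on the restriction of the operator to a fixed small ball around $x_0$, which reduces the computation to a model problem on $\Complex^n$.

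I would then carry out the parabolic rescaling $z\mapsto z/\sqrt k$ and form $\Box^{(q)}_{k\phi,(k)}$ as in \eqref{eI.I}. Choosing the normal coordinates \eqref{coecaluI} for $\phi$ (following Ruan~\cite{Ruan98}), the coefficients of $\Box^{(q)}_{k\phi,(k)}$ converge, with all derivatives and uniformly on compact sets, to those of the model operator $\Box^{(q)}_0$ on $\Complex^n$ associated to the constant Hermitian form $\dot{R}^L(x_0)$: this is the harmonic-oscillator type Kodaira Laplacian whose heat kernel is explicit (Mehler's formula). Hence $k^{-n}\exp(-\tfrac{t}{k}\Box^{(q)}_k)(x_0,x_0)$ converges to the on-diagonal heat kernel of $\Box^{(q)}_0$ at time $t$. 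Diagonalizing $\dot{R}^L(x_0)$ with eigenvalues $a_1(x_0),\dots,a_n(x_0)$, Mehler's formula gives
\[
\exp(-t\Box^{(q)}_0)(0,0)=(2\pi)^{-n}\Bigl(\sum_{j_1<\cdots<j_q}e^{-t\sum_{i=1}^q a_{j_i}(x_0)}\Bigr)\prod_{j=1}^n\frac{a_j(x_0)}{1-e^{-t a_j(x_0)}},
\]
with the stated convention $\frac{a_j}{1-e^{-t a_j}}=\frac{1}{t}$ at $a_j=0$, the Mehler kernel being analytic across $a_j=0$.

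To upgrade pointwise convergence to $\cC^m$ convergence locally uniformly on $D$, I would use Duhamel's principle to compare $\exp(-\tfrac{t}{k}\Box^{(q)}_{k\phi})$ with the model heat semigroup, together with the uniform-in-$k$ G{\aa}rding estimates for $\Box^{(q)}_{k\phi,(k)}$ already exploited in Lemma~\ref{lI} and in~\cite{HM09}; these control all Sobolev norms of the rescaled kernels, and elliptic regularity plus Sobolev embedding then give the required $\cC^m$ bounds, the limit being identified term by term. Undoing the rescaling produces the error $o(k^n)$. The main obstacle is precisely this uniformity: establishing that the rescaled heat kernels form a bounded, convergent family in every $\cC^m_{\mathrm{loc}}$, which is where one needs the analytic localization estimates of Bismut-Lebeau type developed in~\cite[\S1.6]{MM07}; once those are available the explicit Mehler computation and the identification of the leading term are routine.
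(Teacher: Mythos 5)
Your proposal is correct and matches the paper's treatment: the paper does not prove Theorem~\ref{tI-s4} but simply cites it as Theorem~1.6.1 in Ma-Marinescu~\cite{MM07}, exactly as you do. Your supplementary sketch (localization by finite propagation speed, parabolic rescaling to the constant-curvature model on $\C^n$, Mehler's formula for the on-diagonal heat kernel, and uniform elliptic estimates to upgrade to $\cC^m$ convergence) is an accurate summary of the argument in \cite[\S\,1.6]{MM07}, so no further comment is needed.
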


From \eqref{e-s4-III-1} and \eqref{e-s4-IV}, we know that
\begin{equation} \label{e-s4-V}
(1-k^{-N_0}){\rm Tr\,}P^{(q)}_{k, k^{-N_0}}(x,x)\leq k^n(2\pi)^{-n}\Bigr(\sum_{j_1<j_2<\cdots<j_q}\exp(-t\sum^q_{i=1}a_{j_i}(x))\Bigr)\prod^n_{j=1}\frac{a_j(x)}{1-e^{-ta_j(x)}}+o(k^n),
\end{equation}
locally uniformly on $M$. 

Now, let $M_{\mathrm{deg}}$ be as in Theorem~\ref{s1-maindege}. Fix $t>1$, $t$ large and $x_0\in M_{\mathrm{deg}}$ and let $U$ be a small neighborhood of $x_0$ such that for every point $x\in U$, there is an eigenvalue $a_0(x)$ of $\dot{R}^L(x)$ such that $\abs{ta_0(x)}<1$.
Fix $p\in U$. Set
\[\iota(p)=\set{j\in\{1,\ldots,n\};\, \abs{a_j(p)t}<1,\ \ \mbox{where $a_1(p),\ldots,a_n(p)$ are the eigenvalues of $\dot{R}^L(p)$}}.\]
Fix $1\leq j_1<j_2<\cdots<j_q\leq n$. We have
\begin{equation} \label{e-s4-VI}
\begin{split}
&\exp(-t\sum^q_{i=1}a_{j_i}(p))\prod^n_{j=1}\frac{a_j(p)}{1-e^{-ta_j(p)}}
=\prod_{j_i\in\iota(p)}\frac{e^{-ta_{j_i}(p)}a_{j_i}(p)}{1-e^{-ta_{j_i}(p)}}\prod_{j_i\notin\iota(p)}\frac{e^{-ta_{j_i}(p)}a_{j_i}(p)}{1-e^{-ta_{j_i}(p)}}\\
&\quad\times\prod_{j\in\iota(p),j\notin\{j_1,\ldots,j_q\}}\frac{a_j(p)}{1-e^{-ta_j(p)}}
\prod_{j\notin\iota(p),j\notin\{j_1,\ldots,j_q\}}\frac{a_j(p)}{1-e^{-ta_j(p)}}.
\end{split}
\end{equation}
We observe that there is a constant $C>0$ such that
\begin{equation} \label{e-s4-VII}
\begin{split}
&\abs{\frac{x}{1-e^x}}\leq C,\ \ \abs{\frac{xe^x}{1-e^x}}\leq C,\ \ \forall x\in\Real,\ \ \abs{x}\leq1,\\
&\abs{\frac{1}{1-e^x}}\leq C,\ \ \abs{\frac{e^x}{1-e^x}}\leq C,\ \ \forall x\in\Real,\ \ \abs{x}>1.
\end{split}
\end{equation}
From \eqref{e-s4-VII} and \eqref{e-s4-VI}, it is straightforward to see that
\begin{equation} \label{e-s4-VIII}
\exp(-t\sum^q_{i=1}a_{j_i}(p))\prod^n_{j=1}\frac{a_j(p)}{1-e^{-ta_j(p)}}\leq
\prod_{j\in\iota(p)}\frac{C}{t}\prod_{j\notin\iota(p)}C\abs{a_j(p)},
\end{equation}
where $C$ is the constant as in \eqref{e-s4-VII}.

\begin{proof} [The proof of \eqref{s1-e3main}]
Let $\varepsilon>0$. Let $W\Subset M$ be any open set of $x_0$. Take $t>\max\set{C,1}$ large enough so that
\begin{equation} \label{s4-eproof}
(2\pi)^{-n}d\frac{C}{t}\Bigr(1+C\sup\set{\abs{a(x)};\, \mbox{$a(x)$: eigenvalue of $\dot{R}^L(x)$, $x\in W$}}\Bigr)^n<\frac{\varepsilon}{2},
\end{equation}
where $C$ is the constant as in \eqref{e-s4-VII} and $d={\rm dim\,}\Lambda^qT^{*(0,1)}_xM$. Let $U\Subset W$ be a small neighborhood of $x_0$ such that for every point $x\in U$, there is an eigenvalue $a_0(x)$ of $\dot{R}^L(x)$ such that $\abs{ta_0(x)}<1$. From \eqref{e-s4-VIII}, \eqref{e-s4-V} and \eqref{s4-eproof}, we see that
\[{\rm Tr\,}P^{(q)}_{k, k^{-N_0}}(x,x)\leq\frac{1}{1-k^{-\!N_0}}\frac{\varepsilon}{2}k^n+o(k^n),\ \ x\in U\,,\]
thus \eqref{s1-e3main} follows.
\end{proof}

Theorem~\ref{tI-s4} also holds on the case when the forms take values in $L^k\otimes E$, for a given holomorphic vector bundle $E$
over $M$. In this case the right side of \eqref{e-s4-IV} gets multiplied by $\operatorname{rank}(E)$, see Theorem 1.6.1 in~\cite{MM07}. From this observation, \eqref{s1-e3main} remains true with the same proof on the case when the forms take values in $L^k\otimes E$, for a given holomorphic vector bundle $E$ over $M$.




\section{Bergman kernel asymptotics for adjoint semi-positive line bundles}\label{adjoint_semipos}

In this Section we prove Theorem \ref{tmain-semi1}, i.\,e., the asymptotic expansion of the Bergman kernel of $L^k\otimes K_M$, where $L$ is a semi-positive line bundle over a complete K\"ahler manifold $M$ and $K_M$ is the canonical line bundle. The existence of the expansion \eqref{asymcanonical} follows immediately from Theorem \ref{t-semi1}, while the calculation of the coefficients is given at the end of this Section.

We assume that $(M,\Theta)$ is a complete K\"{a}hler manifold.
Let $K_M$ be the canonical line bundle over $M$. Then, $\Omega^{n,q}(M,L^k)=\Omega^{0,q}(M,L^k\otimes K_M)$. 
Let $\Box^{(0)}_{k,K_M}$ be the Gaffney extension of the Kodaira Laplacian acting on
$L^k\otimes K_M$. Then
\[{\rm Ker\,}\Box^{(0)}_{k,K_M}=\cH^0(M,L^k\otimes K_M)=\set{u\in L^2(M,L^k\otimes K_M);\, \ddbar_ku=0}.\]
Set
\[P^{(0)}_{k,K_M}:L^2(M,L^k\otimes K_M)\To\cH^0(M,L^k\otimes K_M)\]
be the orthogonal projection with respect to $(\cdot\,,\cdot)_k$. The goal of this Section is to prove that the kernel of
$P^{(0)}_{k,K_M}$ admits a full asymptotic expansion on the non-degenerate part of $L$.
We recall the following form of the $L^2$-estimates for $\overline\partial$ for semi-positive line bundles. Assume that $(L,h^L)$ is a semi-positive Hermitian line bundle over a complex manifold $M$. 
Let $g\in\Lambda^{n,1}T^*M\otimes L$.
For $x\in M$, we denote by $|g|_{R^{L}}(x)\in[0,\infty]$ the smallest constant such that $\langle g,g'\rangle^{2}(x)\leq|g|_{R^{L}}^{2}(x)\langle\sqrt{-1}R^{L}\wedge(\Theta\wedge)^{*} g',g'\rangle(x)$ for all $g'\in\Lambda^{n,1}T^*M\otimes L$. 
\begin{thm}[{\cite[Th.\,4.1]{De:82}}]\label{T:l2}
Let $(M,\Theta)$ be a complete K\"ahler manifold, $(L,h^L)$ be a semi-positive Hermitian line bundle over $M$.
Then for any form $g\in L_{(0,1)}^2(M,L\otimes K_M)$ satisfying
${\overline\partial}g=0$ and $\int_M|g|_{R^{L}}^{2}(x)\, dv_M(x)<\infty$ there exists $f\in L_{(0,0)}^2(M,L\otimes K_M)$ with $\overline\partial f=g$ and
$$\int_M|f|^2_{h^L}(x)\,dv_M(x)\leq\int_M|g|_{R^{L}}^{2}(x)\,dv_M\,.$$
\end{thm}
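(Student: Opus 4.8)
The plan is to run the $L^2$-method of H\"ormander and Demailly, the essential feature forced by semi-positivity (as opposed to strict positivity) being that there is no room left to perturb $R^L$, so that one must measure the datum $g$ against the curvature operator $\mathcal N:=\sqrt{-1}R^L\wedge(\Theta\wedge)^{*}$ itself rather than against the metric. First I would identify $L^2_{(0,\bullet)}(M,L\otimes K_M)$ with $L^2_{(n,\bullet)}(M,L)$ and set $T=\ddbar\colon L^2_{(n,0)}(M,L)\to L^2_{(n,1)}(M,L)$ and $S=\ddbar\colon L^2_{(n,1)}(M,L)\to L^2_{(n,2)}(M,L)$; these are closed, densely defined, $ST=0$, and $g\in\Ker S$. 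By the standard functional-analytic existence lemma of H\"ormander (see \cite[\S\,4]{De:82}), producing $f$ with $\ddbar f=g$ and $\norm f^2\leqslant C^2$ is reduced to proving the a priori estimate
\[
\abs{(g,u)}^2\leqslant C^2\,\norm{\ol{\pr}^{\,*}u}^2\qquad\text{for all }u\in\Ker S\cap{\rm Dom\,}\ol{\pr}^{\,*}\subset L^2_{(n,1)}(M,L),
\]
and I aim to obtain this with $C^2=\int_M\abs{g}^2_{R^L}\,dv_M$.

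The key ingredient is the Bochner--Kodaira--Nakano inequality. For $u\in\Omega^{n,1}_0(M,L)$, regarded as an $(n,1)$-form with values in $L$, the Nakano identity $\Delta''_L=\Delta'_L+[\sqrt{-1}R^L,\Lambda]$ on the K\"ahler manifold $(M,\Theta)$ — in which, in this $(n,\cdot)$-degree formulation, only the curvature of $L$ enters and no Ricci term appears — together with $\Delta'_L\geqslant0$ and the fact that $[\sqrt{-1}R^L,\Lambda]$ acts as $\mathcal N$ on $(n,1)$-forms, yields
\[
\norm{\ddbar u}^2+\norm{\ol{\pr}^{\,*}u}^2\;\geqslant\;\int_M\langle\,\mathcal N u\,,u\,\rangle\,dv_M\,.
\]
Since $L$ is semi-positive, $\mathcal N\geqslant0$ on $(n,1)$-forms, so the right-hand side is non-negative. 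I would then extend this inequality from $\Omega^{n,1}_0(M,L)$ to every $u\in{\rm Dom\,}\ddbar\cap{\rm Dom\,}\ol{\pr}^{\,*}$ by the cut-off argument available on a complete manifold: pick $\rho_\nu\in\cC^\infty_0(M)$ with $0\leqslant\rho_\nu\leqslant1$, $\rho_\nu\uparrow1$ and $\abs{d\rho_\nu}\to0$ uniformly; then (after a Friedrichs mollification) $\rho_\nu u$ is a compactly supported smooth form with $\rho_\nu u\to u$ in $L^2$ and $\ddbar(\rho_\nu u)=\rho_\nu\ddbar u+O(\abs{d\rho_\nu})$, $\ol{\pr}^{\,*}(\rho_\nu u)=\rho_\nu\ol{\pr}^{\,*}u+O(\abs{d\rho_\nu})$ with $L^2$-errors tending to $0$; applying the inequality to the approximants and letting $\nu\to\infty$ (Fatou on the left, dominated convergence on the right) gives the inequality for $u$.

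Finally I would pair $g$ against the curvature. By the very definition of $\abs{g}_{R^L}(x)$ one has the pointwise Cauchy--Schwarz bound $\abs{\langle g,u\rangle(x)}^2\leqslant\abs{g}^2_{R^L}(x)\,\langle\mathcal N u,u\rangle(x)$ for almost every $x$ (the set $\{\abs{g}_{R^L}=\infty\}$ is null because $\int_M\abs{g}^2_{R^L}\,dv_M<\infty$). Integrating, applying Cauchy--Schwarz in $L^2(M,dv_M)$ to $\abs{g}_{R^L}$ and $\langle\mathcal N u,u\rangle^{1/2}$, and then using the Bochner--Kodaira--Nakano inequality together with $\ddbar u=0$ (valid since $u\in\Ker S$), I obtain
\[
\abs{(g,u)}\leqslant\Bigl(\int_M\abs{g}^2_{R^L}\,dv_M\Bigr)^{1/2}\Bigl(\int_M\langle\mathcal N u,u\rangle\,dv_M\Bigr)^{1/2}\leqslant\Bigl(\int_M\abs{g}^2_{R^L}\,dv_M\Bigr)^{1/2}\norm{\ol{\pr}^{\,*}u}\,,
\]
which is exactly the desired estimate with $C^2=\int_M\abs{g}^2_{R^L}\,dv_M$; the existence lemma then delivers $f\in L^2_{(n,0)}(M,L)\cong L^2_{(0,0)}(M,L\otimes K_M)$ with $\ddbar f=g$ and $\int_M\abs{f}^2_{h^L}\,dv_M\leqslant\int_M\abs{g}^2_{R^L}\,dv_M$.

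The hard part will be the density-in-graph-norm step of the second paragraph: precisely because $R^L$ is only semi-positive there is no $\varepsilon$ of positivity available to make the curvature strictly positive and absorb the error terms, so the cut-off approximation must be carried out directly on the Bochner--Kodaira--Nakano inequality with the possibly degenerate term $\int_M\langle\mathcal N u,u\rangle\,dv_M$, and this is exactly where completeness of $(M,\Theta)$ is indispensable. Equivalently, one may invoke the essential self-adjointness of the Kodaira Laplacian on a complete manifold (recalled in Section~\ref{gaffney}) to identify the Gaffney extension with the maximal closed extension and run the approximation there.
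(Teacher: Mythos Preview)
Your argument is correct and is essentially Demailly's original proof from \cite{De:82}: the functional-analytic reduction to an a priori estimate, the Bochner--Kodaira--Nakano inequality on $(n,1)$-forms (where only $R^L$ enters), the Andreotti--Vesentini density step using completeness, and the pointwise Cauchy--Schwarz against the curvature operator $\mathcal N$ via the definition of $|g|_{R^L}$.

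Note, however, that the paper does not prove this theorem at all: it is stated with a direct citation to \cite[Th.\,4.1]{De:82} and used as a black box to derive Theorem~\ref{sing-t2-bis}. So there is no ``paper's own proof'' to compare against; what you have supplied is precisely the proof one finds in the cited reference.
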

Denote by $\gamma(x)$ the smallest eigenvalue of the curvature $\sqrt{-1}R^L_{x}$ with respect to $\Theta_{x}$, for $x\in M$; the function
$\gamma:M\to[0,\infty)$ is continuous. Moreover, $|g|_{R^{L}}^{2}(x)\leq\gamma^{-1}(x)|g|_{h^L}^{2}(x)$,  
for any $x\in M$ and  $g\in\Lambda^{n,1}T^*M\otimes L$ (where $\gamma^{-1}:=\infty$ if $\gamma=0$). Therefore we deduce the following.
\begin{thm} \label{sing-t2-bis}
Let $(M,\Theta)$ be a complete K\"ahler manifold and $(L,h^L)$ be a smooth semi-positive line bundle over $M$. Let $D\Subset M(0)$ be a relatively compact open set.
There exists a constant $C_D>0$ such that for any $k>0$ and any $g\in\Omega^{0,1}_0(D,L^k\otimes K_M)$
satisfying $\ddbar_kg=0$ there exists $f\in\cC^\infty(M,L^k\otimes K_M)$ such that $\ddbar_kf=g$ and
\begin{equation}\label{e:l2_est}
\norm{f}^2\leq\frac{C_D}{k}\norm{g}^2\,.
\end{equation}
\end{thm}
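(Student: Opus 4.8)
\textbf{Proof proposal for Theorem~\ref{sing-t2-bis}.}
The plan is to deduce the statement directly from Theorem~\ref{T:l2} applied to the line bundle $L^k$ (rather than to $L$), together with the uniform lower bound for the curvature eigenvalues on the relatively compact set $D$. First I would observe that since $D\Subset M(0)$ is relatively compact, the continuous function $\gamma\colon M\to[0,\infty)$ giving the smallest eigenvalue of $\sqrt{-1}R^L_x$ with respect to $\Theta_x$ is strictly positive on $\overline D$, hence there is a constant $c_D>0$ with $\gamma(x)\geq c_D$ for all $x\in \overline D$. The curvature of $(L^k,h^{L^k})$ is $R^{L^k}=kR^L$, so the smallest eigenvalue of $\sqrt{-1}R^{L^k}_x$ with respect to $\Theta_x$ is $k\gamma(x)\geq kc_D$ on $\overline D$; in particular $L^k$ is semi-positive on all of $M$ (it is semi-positive wherever $L$ is, with the same sign of curvature).

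Next, given $g\in\Omega^{0,1}_0(D,L^k\otimes K_M)$ with $\ddbar_kg=0$, I regard $g$ as an element of $L^2_{(0,1)}(M,L^k\otimes K_M)$ by extension by zero; this is legitimate because $g$ has compact support in $D$. Since $\operatorname{supp}g\subset D$, the pointwise estimate $|g|^2_{R^{L^k}}(x)\leq (k\gamma(x))^{-1}|g|^2_{h^{L^k}}(x)\leq (kc_D)^{-1}|g|^2_{h^{L^k}}(x)$ holds at every point where $g$ is nonzero, and therefore
\[
\int_M|g|^2_{R^{L^k}}(x)\,dv_M(x)\leq\frac{1}{kc_D}\int_M|g|^2_{h^{L^k}}(x)\,dv_M(x)=\frac{1}{kc_D}\norm{g}^2<\infty.
\]
Now Theorem~\ref{T:l2}, applied to the complete K\"ahler manifold $(M,\Theta)$ and the semi-positive Hermitian line bundle $(L^k,h^{L^k})$, produces $f\in L^2_{(0,0)}(M,L^k\otimes K_M)$ with $\ddbar_kf=g$ and
\[
\norm{f}^2=\int_M|f|^2_{h^{L^k}}(x)\,dv_M(x)\leq\int_M|g|^2_{R^{L^k}}(x)\,dv_M(x)\leq\frac{1}{kc_D}\norm{g}^2,
\]
which is \eqref{e:l2_est} with $C_D=c_D^{-1}$. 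Finally, smoothness of $f$ follows from elliptic regularity: $f$ solves $\ddbar_kf=g$ with $g$ smooth, and $f\in L^2$, so $\Box^{(0)}_kf=\ol{\pr}^{\,*}_k g$ is smooth away from $\operatorname{supp}g$ and $f$ is smooth on $M\setminus\operatorname{supp}g$; on a neighborhood of $\operatorname{supp}g$, hypoellipticity of $\Box^{(0)}_k$ together with $\Box^{(0)}_kf=\ol{\pr}^{\,*}_kg\in\cC^\infty$ gives $f\in\cC^\infty$. (One may instead replace $f$ by its orthogonal projection onto the orthogonal complement of $\Ker\ddbar_k$, which does not increase the $L^2$-norm and is annihilated by $\ol{\pr}^{\,*}_k$, hence lies in $\operatorname{Dom}\Box^{(0)}_k$ and is smooth by ellipticity of the Gaffney Laplacian.)

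The only genuine point requiring care—the ``main obstacle,'' though it is mild—is the passage from the abstract $L^2$-solvability on the complete manifold $M$ to a solution with the stated norm control \emph{with a constant depending only on $D$ and not on $k$}. This is handled precisely by the two ingredients above: the scaling $R^{L^k}=kR^L$ turns the curvature lower bound into one that grows linearly in $k$, producing the factor $1/k$, while relative compactness of $D$ in $M(0)$ is exactly what makes $\gamma$ bounded below by a positive constant on $\overline D$, so that $C_D=c_D^{-1}$ is independent of $k$. No completeness or curvature hypothesis is needed away from $\operatorname{supp}g$ since $g$ vanishes there and the pointwise estimate on $|g|_{R^{L^k}}$ is only invoked on $D$.
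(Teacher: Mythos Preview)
Your proof is correct and follows essentially the same approach as the paper: the paper simply observes that $|g|_{R^{L}}^{2}(x)\leq\gamma^{-1}(x)|g|_{h^L}^{2}(x)$, notes that one can take $C_D=\sup_D\gamma^{-1}$, and invokes Theorem~\ref{T:l2} for $L^k$. Your argument is somewhat more detailed in that you spell out the scaling $R^{L^k}=kR^L$ and address the smoothness of $f$ via elliptic regularity, which the paper leaves implicit.
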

We can actually take $C_{D}=\sup_{D}\gamma^{-1}$. We need 

\begin{lem}\label{l-canonical} 
Let $(M,\Theta)$ be a complete K\"ahler manifold and $(L,h^L)$ be a smooth semi-positive line bundle over $M$. Let $D\Subset M(0)$ be a relatively compact open set. Then $\Box^{(0)}_{k,K_M}$ has $O(k^{-n_0})$ small spectral on $D$.
\end{lem}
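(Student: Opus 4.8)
The plan is to derive the small spectral gap directly from the $L^2$-estimate of Theorem~\ref{sing-t2-bis}, exploiting that on $(0,0)$-forms the Kodaira Laplacian is $\Box^{(0)}_{k,K_M}=\ddbar^*_k\ddbar_k$ and $\Ker\Box^{(0)}_{k,K_M}=\Ker\ddbar_k=\cH^0(M,L^k\otimes K_M)$. Note that the estimate in Theorem~\ref{sing-t2-bis} is available precisely because $D\Subset M(0)$, so the curvature is strictly positive on $D$ and the gain of a factor $k^{-1}$ can be used.

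First I would fix $u\in\Omega^{0,0}_0(D,L^k\otimes K_M)$ and set $v:=(I-P^{(0)}_{k,K_M})u$. Since $P^{(0)}_{k,K_M}u\in\cH^0(M,L^k\otimes K_M)=\Ker\ddbar_k$, we have $v\in\operatorname{Dom}\ddbar_k$, $v\perp\Ker\ddbar_k$, and $g:=\ddbar_k v=\ddbar_k u\in\Omega^{0,1}_0(D,L^k\otimes K_M)$ with $\ddbar_k g=0$. Applying Theorem~\ref{sing-t2-bis} produces $f\in\cC^\infty(M,L^k\otimes K_M)$ with $\ddbar_k f=g$ and $\norm{f}^2\leq\frac{C_D}{k}\norm{g}^2$; in particular $f\in L^2$, so $f-v\in\Ker\ddbar_k$ and hence $v\perp f-v$, which gives $\norm{v}^2\leq\norm{f}^2\leq\frac{C_D}{k}\norm{\ddbar_k u}^2$.

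Next I would convert $\norm{\ddbar_k u}^2$ into $\norm{\Box^{(0)}_{k,K_M}u}$: since $\ddbar^*_k u=0$ and $\ddbar_k v=\ddbar_k u$, we have $\norm{\ddbar_k u}^2=(\ddbar_k v,\ddbar_k u)=(v,\ddbar^*_k\ddbar_k u)=(v,\Box^{(0)}_{k,K_M}u)\leq\norm{v}\,\norm{\Box^{(0)}_{k,K_M}u}$, all pairings being legitimate because $u$ is smooth with compact support. Combining the two estimates yields $\norm{(I-P^{(0)}_{k,K_M})u}\leq\frac{C_D}{k}\norm{\Box^{(0)}_{k,K_M}u}$ for all $k\geq1$, which certainly implies that $\Box^{(0)}_{k,K_M}$ has $O(k^{-n_0})$ small spectral gap on $D$ (e.g.\ with $n_0=1$, $k_0=1$; in fact the bound obtained is of ``large spectral gap'' type).

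The steps are routine; the only points needing care are the domain and regularity assertions --- $v,f\in\operatorname{Dom}\ddbar_k$, $\ddbar_k u\in\operatorname{Dom}\ddbar^*_k$, and $\Ker\Box^{(0)}_{k,K_M}=\Ker\ddbar_k$ on $(0,0)$-forms --- all of which follow from the ellipticity of $\Box^{(0)}_{k,K_M}$, the smoothness and compact support of $u$, and the $L^2$-bound on $f$ furnished by Theorem~\ref{sing-t2-bis}. I do not foresee any genuine obstacle beyond invoking Theorem~\ref{sing-t2-bis} correctly.
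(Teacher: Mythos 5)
Your proof is correct and follows essentially the same route as the paper: apply the $L^2$-estimate of Theorem~\ref{sing-t2-bis} to $\ddbar_k u$, use orthogonality to $\cH^0(M,L^k\otimes K_M)$ to bound $\norm{(I-P^{(0)}_{k,K_M})u}$ by $\norm{f}$, and then convert $\norm{\ddbar_k u}^2$ to $\norm{\Box^{(0)}_{k,K_M}u}\,\norm{(I-P^{(0)}_{k,K_M})u}$ via Cauchy--Schwarz. The only cosmetic difference is that you spell out the minimal-norm property of $(I-P^{(0)}_{k,K_M})u$ via the explicit orthogonality $v\perp(f-v)$, whereas the paper simply invokes it.
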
 

\begin{proof} 
Let $u\in \cC^\infty_0(D,L^k\otimes K_M)$. We consider $\ddbar_ku\in\Omega^{0,1}(D,L^k\otimes K_M)$. From Theorem~\ref{sing-t2-bis}, we know that there exists $f\in\cC^\infty(M,L^k\otimes K_M)$ such that $\ddbar_kf=\ddbar_ku$ and
\begin{equation}\label{e-canonical1}
\norm{f}^2\leq\frac{C_D}{k}\norm{\ddbar_ku}^2,
\end{equation}
where $C_D>0$ is independent of $k$ and $u$.
We notice that $(I-P^{(0)}_{k,K_M})u$ has minimal $L^2$ norm of the set $\set{f\in\cC^\infty(M,L^k\otimes K_M)\bigcap L^2(M,L^k\otimes K_M) ;\, \ddbar_kf=\ddbar_ku}$. From this observation and \eqref{e-canonical1}, we conclude that 
\begin{equation}\label{e-canonical2}
\norm{(I-P^{(0)}_{k,K_M})u}^2\leq\frac{C_D}{k}\norm{\ddbar_ku}^2.
\end{equation}
It is easy to check that 
\[\norm{\ddbar_ku}^2\leq\norm{\Box^{(0)}_{k,K_M}u}\norm{(I-P^{(0)}_{k,K_M})u}.\]
Combining this with \eqref{e-canonical2}, we get $\norm{(I-P^{(0)}_{k,K_M})u}\leq\dfrac{C_D}{k}\norm{\Box^{(0)}_{k,K_M}u}$. 
Thus, $\Box^{(0)}_{k,K_M}$ has $O(k^{-n_0})$ small spectral on $D$. The lemma follows.
\end{proof}
Let $s$ be a local frame of $L$ on an open set $D\Subset M(0)$ and
$\abs{s}_{h^L}^2=e^{-2\phi}$. 
As in \eqref{e:localspectral1}, we consider the localized Bergman projection
\begin{equation} \label{semi6}
\begin{split}
\widehat P^{(0)}_{k,s,K_M}:L^2(D,K_M)\cap\mathscr E'(D,K_M)&\To L^2(D,K_M),\\
u&\mapsto e^{-k\phi}s^{-k}P^{(0)}_{k,K_M}(s^ke^{k\phi}u).
\end{split}
\end{equation}
From Lemma~\ref{l-canonical} and Theorem~\ref{t:localspectral}, we get one of the main results of this work:
\begin{thm} \label{t-semi1}
Let $(M,\Theta)$ be a complete K\"ahler manifold and $(L,h^L)$ be a smooth semi-positive line bundle over $M$. Let $D\Subset M(0)$ be a relatively compact open set
and $s$ be a local frame of $L$ on $D$. Then the localized Bergman projection $\widehat P^{(0)}_{k,s,K_M}$ satisfies
\[\widehat P^{(0)}_{k,s,K_M}\equiv\mathcal{S}_{k}\mod O(k^{-\infty})\]
on $D$, where $\mathcal{S}_{k}:\mathscr E'(D,K_M)\To\cC^\infty_0(D,K_M)$ is a smoothing operator and the distribution kernel $\mathcal{S}_{k}(z,w)\in\cC^\infty(D\times D,K_M\boxtimes K_M)$ of $\mathcal{S}_k$ satisfies
\[\mathcal{S}_{k}(z,w)\equiv e^{ik\Psi(z,w)}b(z,w,k)\mod O(k^{-\infty}),\]
with
\[\begin{split}
&b(z,w,k)\in S^{n}_{{\rm loc\,}}\big(1;D\times D, K_M\boxtimes K_M\big), \\
&b(z,w,k)\sim\sum^\infty_{j=0}b_j(z, w)k^{n-j}\text{ in }S^{n}_{{\rm loc\,}}
\big(1;D\times D, K_M\boxtimes K_M\big), \\
&b_j(z, w)\in\cC^\infty\big(D\times D, K_M\boxtimes K_M\big),\ \ j=0,1,2,\ldots,\\
&b_0(z,z)=(2\pi)^{-n}\det \dot R^L(z)\otimes\Id_{K_M}(z),\ \ \mbox{$\Id_{K_M}$ is the identity map on $K_M$},
\end{split}
\]
and $\Psi(z,w)$ is as in Theorem~\ref{s3-t1-bis} and Theorem~\ref{tII}.
\end{thm}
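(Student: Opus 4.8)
The plan is to deduce Theorem~\ref{t-semi1} by assembling three ingredients already established in the excerpt. First, observe that $L^k\otimes K_M = L^k\otimes E$ with $E=K_M$ the canonical bundle, so by Remark~\ref{rabis2} and the discussion following Theorem~\ref{t:localspectral} the entire machinery of Sections \ref{s:sem_hodge} and \ref{s:exp_spec_fct} applies verbatim with forms taking values in $L^k\otimes K_M$. In particular the localized approximate Szeg\"o kernel $\mathcal{S}_k$ (now a smoothing, properly supported operator on $\mathscr E'(D,K_M)$) exists with the structure \eqref{s3-es-1}, \eqref{s3-es-2}, and Theorem~\ref{t:localspectral} gives: if $\Box^{(0)}_{k,K_M}$ has $O(k^{-n_0})$ small spectral gap on $D$, then $\widehat P^{(0)}_{k,s,K_M}\equiv\mathcal{S}_k\bmod O(k^{-\infty})$ locally uniformly on $D$.

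Second, I would invoke Lemma~\ref{l-canonical}, which is exactly the statement that $\Box^{(0)}_{k,K_M}$ has $O(k^{-n_0})$ small spectral gap on any relatively compact $D\Subset M(0)$; its proof in turn rests on the Demailly--Bochner--Kodaira--Nakano type $L^2$-estimate Theorem~\ref{sing-t2-bis} (the $1/k$ gain coming from the rescaling of the curvature eigenvalues on $M(0)$) together with the minimality characterization of $(I-P^{(0)}_{k,K_M})u$ and the elementary inequality $\norm{\ddbar_ku}^2\le\norm{\Box^{(0)}_{k,K_M}u}\,\norm{(I-P^{(0)}_{k,K_M})u}$. So the hypotheses of Theorem~\ref{t:localspectral} (in its $L^k\otimes K_M$ version) are met on $D\Subset M(0)$, yielding $\widehat P^{(0)}_{k,s,K_M}\equiv\mathcal{S}_k\bmod O(k^{-\infty})$. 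Note that here $n_-=0$ since $L$ is semi-positive and $D\subset M(0)$, so we are in the case $q=n_-=0$ of Theorems \ref{s3-t5} and \ref{tII} and the leading symbol is nonzero.

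Third, I would quote the explicit description of $\mathcal{S}_k$: Theorem~\ref{s3-t5} (with $E=K_M$) gives $\mathcal{S}_k(z,w)\equiv e^{ik\Psi(z,w)}b(z,w,k)\bmod O(k^{-\infty})$ with $b\in S^n_{\rm loc}(1;D\times D,K_M\boxtimes K_M)$ admitting the asymptotic expansion $b\sim\sum_j b_j k^{n-j}$, and Theorem~\ref{s3-t6} together with \eqref{s3-es-6} identifies $b_0(z,z)=(2\pi)^{-n}|\det\dot R^L(z)|\,I_{\det\ol W^{\,*}}$; since $q=0$ we have $W=0$, $\Lambda^0 T^{*(0,1)}M\otimes K_M\cong K_M$, and $I_{\det\ol W^{\,*}}=\Id_{K_M}$ while $|\det\dot R^L|=\det\dot R^L$ on $M(0)$, so $b_0(z,z)=(2\pi)^{-n}\det\dot R^L(z)\otimes\Id_{K_M}(z)$. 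The phase $\Psi$ is the one from Theorem~\ref{s3-t1-bis} with properties ${\rm Im\,}\Psi\ge c|z-w|^2$, $\Psi(z,w)=-\ol\Psi(w,z)$, $\Psi=0\iff z=w$, and the local normal form \eqref{s3-e16-bisbg}.

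Putting these together proves the theorem: combine $\widehat P^{(0)}_{k,s,K_M}\equiv\mathcal{S}_k\bmod O(k^{-\infty})$ from Theorem~\ref{t:localspectral} with the structural description of $\mathcal{S}_k$ from Theorems~\ref{s3-t5}, \ref{s3-t6}, \ref{s3-t1-bis}. The only genuine work is bookkeeping — checking that every step of Sections \ref{s:sem_hodge}--\ref{s:exp_spec_fct} goes through when forms are valued in $L^k\otimes K_M$ (for which $K_M$ pulled back to $X$ must be seen to be a CR vector bundle, as noted in Remark~\ref{s3-rabis}, so that Remarks~\ref{rabis1} and \ref{rabis2} apply), and that the small spectral gap hypothesis is genuinely available via Lemma~\ref{l-canonical}. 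I expect the main conceptual point, rather than an obstacle, to be the verification that the $L^2$-estimate of Theorem~\ref{sing-t2-bis} indeed supplies the $O(k^{-n_0})$ gap with the correct power — this is precisely where completeness of $(M,\Theta)$ and the relative compactness of $D$ in the positivity locus $M(0)$ are used, and it is already carried out in Lemma~\ref{l-canonical}.
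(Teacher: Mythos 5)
Your proposal is correct and follows exactly the paper's own argument: the paper proves Theorem~\ref{t-semi1} by combining Lemma~\ref{l-canonical} (which supplies the $O(k^{-n_0})$ small spectral gap on $D\Subset M(0)$ via the Demailly $L^2$-estimate of Theorem~\ref{sing-t2-bis}) with the $L^k\otimes K_M$-valued version of Theorem~\ref{t:localspectral}, the structural description of $\mathcal{S}_k$ coming from Theorems~\ref{s3-t5}, \ref{s3-t6} and \ref{s3-t1-bis} via Remark~\ref{rabis2}. Your bookkeeping of the leading symbol (using $q=n_-=0$, $W=0$, $|\det\dot R^L|=\det\dot R^L$ on $M(0)$, and tensoring with $\Id_{K_M}$) is the same identification the paper uses.
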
 

From Theorem~\ref{t-semi1}, the existence of the asymptotic expansion \eqref{asymcanonical} for $L^k\otimes K_M$ follows immediately. 

We prove now the formulas \eqref{coeIV} for the coefficients. Le $s$ be a local frame of $L$ on an open set $D\Subset M(0)$.
We take local coordinates $(D,z)\cong(D,x)$
defined in $D$. 
Let $\mathcal{S}_k$ and $\mathcal{S}_k(\,\cdot\,,\cdot)\in\cC^\infty(D\times D,K_M\boxtimes K_M)$ be as in Theorem~\ref{t-semi1}. We may replace $\mathcal{S}_k$ by $\frac{1}{2}(\mathcal{S}_k+\mathcal{S}^*_k)$, where $\mathcal{S}^*_k$ is the formal adjoint of $\mathcal{S}_k$ with respect to $(\,\cdot\,,\cdot)$. Then, 
\begin{equation}\label{cancoeI}
\mathcal{S}^*_k=\mathcal{S}_k.
\end{equation} 
Let $e(z)$ be a local section of $K_M$ so that $\abs{e(z)}^2=(V_\Theta(z))^{-1}$, where $V_\Theta(z)$ is given by \eqref{sa1-e10}. 
Define the smooth kernels $\Td{\mathcal{S}}_k(\cdot,\cdot), \widehat{\mathcal{S}}_k(\cdot,\cdot)\in\cC^\infty(D\times D)$ by
\begin{equation}\label{cancoeII} 
\mathcal{S}_k(z,w)=e(z)\Td{\mathcal{S}}_k(z,w)e^*(w),\ \ 
\widehat{\mathcal{S}}_k(z,w)=\Td{\mathcal{S}}_k(z,w)V_\Theta(w)\,. 
\end{equation}
%
From Theorem~\ref{t-semi1}, we have 
\begin{equation}\label{cancoeIII-I}
\begin{split}
&\widehat{\mathcal{S}}_{k}(z,w)\equiv e^{ik\Psi(z,w)}\widehat b(z,w,k)\mod O(k^{-\infty}),\\
&\widehat b(z,w,k)\in S^{n}_{{\rm loc\,}}\big(1;D\times D\big), \\
&\widehat b(z,w,k)\sim\sum^\infty_{j=0}\widehat b_j(z, w)k^{n-j}\text{ in }S^{n}_{{\rm loc\,}}
\big(1;D\times D\big), \\
&\widehat b_j(z, w)\in\cC^\infty\big(D\times D\big),\ \ j=0,1,2,\ldots,\\
&\widehat b_0(z,z)=(2\pi)^{-n}V_\Theta(z)\det\dot R^L(z).
\end{split}
\end{equation}
Let $(\,,)_{d\lambda}$ be the inner product on $\cC^\infty_0(D)$ given by 
\[(u,v)_{d\lambda}=\int u(z)\ol{v(z)}\,d\lambda(z),\ \ u, v\in\cC^\infty_0(D),\]  
where $d\lambda(z)=2^ndx_1dx_2\cdots dx_{2n}$.
Let $\widehat{\mathcal{S}_k}$ be the continuous operator given by 
\[\begin{split}
\widehat{\mathcal{S}_k}:\cC^\infty_0(D)&\To\cC^\infty_0(D),\\
u&\mapsto\int\widehat{\mathcal{S}_k}(z,w)u(w)d\lambda(w).
\end{split}\]
Let $\widehat{\mathcal{S}_k}^{*,d\lambda}$ be the formal adjoint of $\widehat{\mathcal{S}_k}$ with respect to $(\,,)_{d\lambda}$. 
From \eqref{cancoeI}, \eqref{cancoeII} we can check that 
\begin{equation}\label{cancoeIV}
\widehat{\mathcal{S}_k}^{*,d\lambda}=\widehat{\mathcal{S}_k}.
\end{equation}
Since $\mathcal{S}_k^2\equiv\mathcal{S}_k\mod O(k^{-\infty})$, we can check that 
\begin{equation}\label{cancoeV}
(\widehat{\mathcal{S}_k})^2\equiv\widehat{\mathcal{S}}_k\mod O(k^{-\infty}).
\end{equation}
Moreover, it is obviously that 
\begin{equation}\label{cancoeVI}
\ddbar_s\widehat{\mathcal{S}_k}\equiv0\mod O(k^{-\infty}).
\end{equation} 
We recall that $\ddbar_s=\ddbar+k(\ddbar\phi)\wedge$\,. 

From \eqref{cancoeIV}, \eqref{cancoeV} and \eqref{cancoeVI}, we can repeat the procedure in Section~\ref{s:coeff} and conclude that (see \eqref{coecaluXX} and \eqref{coecaluXXII})
\begin{equation}\label{cancoeIX}
\begin{split}
&\widehat b_1(0,0)=V_\omega(0)\Bigr(-\frac{1}{8\pi}r(0)\Bigr),\\
&\widehat b_2(0,0)=V_\omega(0)\Bigr(\frac{1}{128\pi^2}r^2
+\frac{1}{96\pi^2}\triangle_{\omega}r-
\frac{1}{24\pi^2}\abs{{\rm Ric\,}_{\omega}}^2_{\omega}+\frac{1}{96\pi^2}\abs{R^{TM}_{\omega}}^2_{\omega}\Bigr)(0),
\end{split}
\end{equation} 
where $V_\omega$, $r$, $\triangle_\omega$, ${\rm Ric\,}_{\omega}$ and $R^{TM}_{\omega}$ are as in \eqref{sa1-e10}, \eqref{sa1-e11}, \eqref{sa1-e9}, 
\eqref{sa1-e15} and \eqref{sa1-e13} respectively, and $\langle\,\cdot,\cdot\,\rangle_\omega$, $\abs{\cdot}_\omega$ are as in the discussion after \eqref{coeI} and $\abs{R^{TM}_{\omega}}^2_{\omega}$ is given by \eqref{sa1-e14}.
From \eqref{cancoeII} and \eqref{cancoeIII-I}, we can check that for $b^{(0)}_{1,K_M}(z)$, $b^{(0)}_{2,K_M}(z)$ in \eqref{asymcanonical}, we have 
\[b^{(0)}_{1,K_M}(0)=\frac{1}{V_{\Theta}(0)}\widehat b_1(0,0)\Id_{K_M}(0),\ \ b^{(0)}_{2,K_M}(0)=
\frac{1}{V_{\Theta}(0)}\widehat b_2(0,0)\Id_{K_M}(0).\] 
Combining this with \eqref{cancoeIX} and observing that 
\[\frac{1}{V_\Theta(0)}V_\omega(0)=(2\pi)^{-n}\det\dot R^L(0),\] 
we obtain \eqref{coeIV}.

\begin{rem} \label{r:coincide} 
In~\cite[(4.1.9)]{MM07}, Ma-Marinescu gave a formula for $b^{(0)}_{1}$ in the presence of a twisting vector bundle $E$ (under the assumption that $L$ is positive everywhere). For $E=K_M$ the formula \cite[(4.1.9)]{MM07} reads:
\begin{equation}\label{e:lowermm}
b^{(0)}_{1,K_M}=(2\pi)^{-n}(\det\dot R^L)\frac{1}{8\pi}\Bigr(r-2\triangle_\omega\bigr(\log(\det\dot R^L)\bigr)
+4\sqrt{-1}\Lambda_\omega(R^{K_M})\Bigr)\Id_{K_M},
\end{equation}
where $\Lambda_\omega(R^{K_M})$ is given by $nR^{K_M}\wedge\omega^{n-1}=\Lambda_\omega(R^{K_M})\omega^n$.
Formula \eqref{coeIV} gives  
\begin{equation}\label{e:lowerhm}
b^{(0)}_{1,K_M}=(2\pi)^{-n}\det\dot{R}^L\Bigr(-\frac{1}{8\pi} r\Bigr)\Id_{K_M}.
\end{equation}
We show that the right-hand sides of \eqref{e:lowermm} and \eqref{e:lowerhm} are equal. By defintion, we have 
\begin{equation}\label{e:lowerhm2}
r=\triangle_\omega\log V_\omega,\ \ \det\dot R^L=(2\pi)^n\frac{V_\omega}{V_\Theta},
\end{equation}
where $V_\omega$ and $V_\Theta$ are given by \eqref{sa1-e10}. Using \eqref{e:lowerhm2}, \eqref{e:lowermm} becomes
\begin{equation}\label{e:lowerhm3}
b^{(0)}_{1,K_M}=(2\pi)^{-n}(\det\dot R^L)\frac{1}{8\pi}\Bigr(-r+2\triangle_\omega\log V_\Theta+4\sqrt{-1}\Lambda_\omega(R^{K_M})\Bigr)\Id_{K_M}.
\end{equation}
Moreover, it is straightforward to see that 
\begin{equation}\label{e:lowerhm4}
4\sqrt{-1}\Lambda_\omega(R^{K_M})=-2\triangle_\omega\log V_\Theta.
\end{equation}
Combining \eqref{e:lowerhm4} with \eqref{e:lowerhm3}, we conclude that our claim holds true.
\end{rem}
\section{Singular $L^2$-estimates}\label{s:sing_l2_est}

In Section \ref{S:mibk} we need a singular version of $L^2$ estimates. We assume that $(M,\Theta)$ is a compact Hermitian manifold and $(L,h^L)$ is a holomorphic line bundle over $M$, endowed with a singular Hermitian metric $h^L$.
We solve the $\ol\pr$-equation $\ddbar_kf=g$ for $(0,1)$ forms with values in $L^k$ with a rough $L^2$-estimate, namely  $\norm{f}^2\leq C_D k^N\norm{g}^2$ with $N>0$, instead of the estimate  $\norm{f}^2\leq\frac{C_D}{k}\norm{g}^2$ from \eqref{e:l2_est}. 

For a singular Hermitian metric $h^L$ on $L$ (see e.\,g.\ \cite[Def.\,2.3.1]{MM07}) the local weight with respect to a holomorphic frame $s:D\to L$ is a function $\phi\in L^1_{{\rm loc\,}}(D)$, $D\Subset M$, defined by
\[\abs{s}_{h^L}^2=e^{-2\phi}\in[0,\infty].\]
The curvature current $R^L$ is given locally by $R^L:=2\pr\ddbar\phi$ and does not depend on the choice of local frame $s$, is thus well-defined as a $(1,1)$ current on $M$.

We say that $\sqrt{-1}R^L$ is strictly positive if there exists $\varepsilon>0$ such that $\sqrt{-1}R^L\geq\varepsilon\Theta$, that is, $\sqrt{-1}R^L-\varepsilon\Theta$ is a positive current in the sense of Lelong (see e.\,g.\ \cite[Def.\,B.2.11]{MM07}). 
If $\sqrt{-1}R^L$ is strictly positive then $\phi$ is strictly psh on $D$ (in particular $\phi$ is bounded above on $D$).
The goal of this Section is to prove the following.

\begin{thm} \label{sing-t1} 
Let $(L,h^L)$ be a singular Hermitian holomorphic line bundle over a compact Hermitian manifold $(M,\Theta)$. We assume that $h^L$ is smooth outside a proper analytic set $\Sigma$ and 
\begin{equation} \label{s-sing-e0}
\sqrt{-1}R^L\geq\varepsilon\Theta,\ \ \varepsilon>0.
\end{equation}
Let $D\Subset M\setminus\Sigma$. Then, there exist $k_0>0$, $N>0$ and $C_D>0$, such that
for all $k\geq k_0$, and $g\in\Omega^{0,1}_0(D,L^k)$ with $\ddbar_kg=0$, 
there is $u\in\cC^\infty(M,L^k)$ such that $\ddbar_ku=g$ and
\begin{equation} \label{s-sing-e2}
\norm{u}^2_{h^k,\Theta}\leq k^NC_{D}\norm{g}^2_{h^k,\Theta},
\end{equation}
where $\norm{u}^2_{h^k,\Theta}:=\int_M\abs{u}^2_{h^k}dv_M$, $dv_M:=\frac{\Theta^n}{n!}$, and similarly for $\norm{g}^2_{h^k,\Theta}$.
\end{thm}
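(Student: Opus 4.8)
\textbf{Proof strategy for Theorem \ref{sing-t1}.}
The plan is to reduce the global $L^2$-estimate to the smooth semi-positive case by regularizing the singular metric $h^L$ away from $\Sigma$, while keeping careful track of how the constant depends on $k$. First I would fix a relatively compact open set $D' $ with $D\Subset D'\Subset M\setminus\Sigma$ and a cutoff $\chi\in\cC^\infty_0(D')$ with $\chi\equiv1$ on a neighbourhood of $\ol D$. Since $\Sigma$ is a proper analytic set and $h^L$ is smooth on $M\setminus\Sigma$, by a Richberg/Demailly-type regularization (cf.\ \cite[Th.\,B.2.30, Lemma\,2.3.6]{MM07}) there exists a smooth Hermitian metric $\widetilde h^L$ on $L$ over all of $M$ which \emph{coincides with $h^L$ on a neighbourhood of $\ol D'$} and whose curvature satisfies $\sqrt{-1}R^{\widetilde h^L}\geq-C_0\Theta$ on $M$ for some constant $C_0>0$ (we only need a uniform lower curvature bound off a fixed neighbourhood of $D$; near $D$ we keep \eqref{s-sing-e0}). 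Replacing $\widetilde h^L$ by $\widetilde h^L e^{-2\psi}$ for a suitable smooth $\psi$, or equivalently twisting, one can moreover arrange that on $D'$ the regularized curvature still dominates $(\varepsilon/2)\Theta$.

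Next, the key point is the following standard consequence of Hörmander's $L^2$-estimates with the Bochner--Kodaira--Nakano inequality on a \emph{compact} manifold: for a line bundle $(L,\widetilde h^L)$ with $\sqrt{-1}R^{\widetilde h^L}\geq-C_0\Theta$ everywhere, the twisted Laplacian $\Box^{(1)}_{k,\widetilde h^L}$ acting on $L^k\otimes K_M$-valued $(0,1)$-forms satisfies, for $k$ large, a lower bound of the form $(\Box^{(1)}_{k,\widetilde h^L}g,g)\geq c\,k\int_M(|g|^2_{\widetilde h^{L^k}}\gamma_k(x))\,dv_M$ where $\gamma_k(x)$ is bounded below by a positive constant on $D'$ and from below by $-C_0/k$ globally; hence on the orthogonal complement of harmonic forms $\|f\|^2\leq C k^{N}\|g\|^2$ for any $g=\ddbar_k f$ supported in $D$ — here the power $k^N$ (with, say, $N=1$) absorbs the fact that the negative part of the curvature is only $O(1)$ and not $O(k)$ away from $D$. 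One packages this precisely as: on the compact Hermitian manifold $M$ with the \emph{smooth} metric $\widetilde h^L$, for $g\in\Omega^{0,1}_0(D,L^k)$ with $\ddbar_k g=0$ there is $u\in\cC^\infty(M,L^k)$ with $\ddbar_k u=g$ and $\|u\|^2_{\widetilde h^{L^k},\Theta}\leq k^{N}C_{D'}\|g\|^2_{\widetilde h^{L^k},\Theta}$. (If one works with $L^k$ rather than $L^k\otimes K_M$ one simply twists by $K_M^*$ with a fixed smooth metric, which only changes $C_{D'}$ and possibly $k_0$.)

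Finally I would transfer this back to $h^L$. Since $\widetilde h^L=h^L$ on a neighbourhood of $\ol D'\supset\mathrm{Supp}\,g$, we have $\|g\|^2_{\widetilde h^{L^k},\Theta}=\|g\|^2_{h^{L^k},\Theta}$. For the solution $u$: it solves $\ddbar_k u=g$ with a good bound in $\widetilde h^{L^k}$, but we want a bound in $h^{L^k}$. The cleanest route is to run the $L^2$-estimate directly with the singular metric $h^L$ on $M$: because $h^L$ is smooth off $\Sigma$ and $\Sigma$ is a proper analytic set, $L^2_{(2)}$-solutions of $\ddbar$ with respect to $h^{L^k}$ automatically extend across $\Sigma$ (the $L^2$-condition forces holomorphy/extension in codimension $\geq1$), so one may apply Demailly's singular $L^2$-existence theorem \cite[Th.\,4.1]{De:82} on $M\setminus\Sigma$ equipped with a complete Kähler metric (which exists since $\Sigma$ is analytic, cf.\ \cite[Lemma\,B.4.11 type argument]{MM07}), using the curvature hypothesis $\sqrt{-1}R^L\ge\varepsilon\Theta$ globally in the sense of currents — this yields $u\in L^2(M,L^k)$, $\ddbar_k u=g$, with $\|u\|^2_{h^{L^k},\Theta}\leq \tfrac{C}{k}\|g\|^2_{h^{L^k},\Theta}$, which is in fact \emph{stronger} than \eqref{s-sing-e2}. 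Elliptic regularity of $\ddbar_k$ (the metric is smooth near $D$ and $g\in\cC^\infty$) gives $u\in\cC^\infty(M,L^k)$. The expected main obstacle is the first step: producing, from the merely upper-bounded-but-possibly-$-\infty$ singular weight, a global comparison metric with a \emph{uniform} lower curvature bound that is unchanged near $D$; this is where one must invoke the regularization machinery (Demailly's approximation of psh weights, or directly Lemma\,2.3.6 and Theorem\,B.2.30 of \cite{MM07}) rather than anything elementary, and one must check that the regularization can be localized so as to leave $h^L$ untouched on $\ol D'$.
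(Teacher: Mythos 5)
Both routes you sketch run into essential difficulties. The first one --- extending $h^L$ to a global smooth metric $\widetilde h^L$ with $\sqrt{-1}R^{\widetilde h^L}\geq-C_0\Theta$ everywhere and then invoking H\"ormander on the compact manifold --- cannot work: the a priori inequality underlying H\"ormander's method must hold for \emph{all} test forms $v$ in the appropriate domain, not only for those supported near $D$, so the hypothesis ${\rm supp}\,g\subset D$ gives you no control over where positivity is required. A curvature lower bound of the form $-C_0\Theta$, even though it stays of order one rather than of order $k$, yields only $(\Box^{(1)}_k v,v)\geq-\mathrm{const}\cdot\|v\|^2$, which produces neither solvability nor any estimate; on a compact manifold whose line bundle curvature has a wrong-sign region, $H^1(M,L^k)$ need not vanish for large $k$, and there is no way to ``absorb'' an obstruction into a polynomial prefactor $k^N$. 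The second route --- applying Demailly's $L^2$ theorem~\ref{T:l2} on $M\setminus\Sigma$ with a complete K\"ahler metric while keeping $h^L$ --- is closer to the mark but glosses over the point that makes the result nontrivial. The Bochner--Kodaira--Nakano formula on $(M\setminus\Sigma,\Omega)$ produces curvature and torsion error terms of $(M,\Omega)$; for the generalized Poincar\'e metric $\Omega=\Theta_{\epsilon_0}$ these are bounded by $\pm C\Theta_{\epsilon_0}$, cf.\ \eqref{poin1}. Since $\Theta_{\epsilon_0}/\Theta\to\infty$ near $\Sigma$, the bound $\sqrt{-1}R^{L^k}\geq k\varepsilon\Theta$ coming from $h^{L^k}$ does not dominate $C\Theta_{\epsilon_0}$ for any $k$, so the hypothesis $\sqrt{-1}R^L\geq\varepsilon\Theta$ alone does not feed into the $L^2$ machinery on $(M\setminus\Sigma,\Theta_{\epsilon_0})$.

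The device the paper actually uses, and which is missing from your argument, is an interpolation of the \emph{bundle} metric, not a modification of the base metric. By \cite[Lemma\,6.2.1, \S\,6.2]{MM07} there is a metric $h^L_{\epsilon_0}$ on $L|_{M\setminus\Sigma}$ with $h^L_{\epsilon_0}>h^L$ and $\sqrt{-1}R^L_{\epsilon_0}>c\Theta_{\epsilon_0}$, i.e.\ curvature controlled from below by the complete metric itself. One then forms $\widehat h^{L^k}$ with local weight $(\log k)\phi_{\epsilon_0}+(k-\log k)\phi$, so that $\sqrt{-1}\widehat R^{L^k}\geq c(\log k)\Theta_{\epsilon_0}$ (the contribution from $\phi$ being nonnegative by $\sqrt{-1}R^L\geq\varepsilon\Theta$). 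This positivity grows like $\log k$ against $\Theta_{\epsilon_0}$ and therefore eventually dominates the $\pm C\Theta_{\epsilon_0}$ error terms, which is what makes the $L^2$ estimate go through. The factor $k^N$ in \eqref{s-sing-e2} is then precisely the cost of converting from $\widehat h^{L^k},\Theta_{\epsilon_0}$ back to $h^{L^k},\Theta$ on $D$, via $|s|^2_{\widehat h^{L^k}}=k^{2(\phi-\phi_{\epsilon_0})}|s|^2_{h^{L^k}}$ and $\Theta_{\epsilon_0}\geq c_0\Theta$. In particular the $C_D/k$ bound you claim at the end is not delivered by this argument --- the polynomial loss is intrinsic to the metric interpolation and is why the theorem is stated with $k^N$. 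The final step you identified correctly: Skoda's lemma (Lemma~\ref{sing-l2}) is what extends the solution across $\Sigma$.
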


\begin{proof}
Let $\Theta_{\epsilon_0}$ be the generalized Poincar\'{e} metric on $M\setminus\Sigma$ (see~\cite[p.\,276]{MM07}).
Let $\mathcal T_{\epsilon_0}:=[(\Theta_{\epsilon_0}\wedge)^*,\pr\Theta_{\epsilon_0}]$ be the Hermitian torsion of $\Theta_{\epsilon_0}$. 
Let $R^{\det}_{\Theta_{\epsilon_0}}$ denote the curvature 
of the holomorphic line bundle $\Lambda^nT^{*(1,0)}M$ induced by $\Theta_{\epsilon_0}$.
By \cite[Lemma\,6.2.1]{MM07} we have 
\begin{equation}\label{poin1}
\begin{split}
&\mbox{$\Theta_{\epsilon_0}$ is a complete Hermitian metric of finite volume on $M\setminus\Sigma$},\\
&\mbox{$\Theta_{\epsilon_0}\geq c_0\Theta$ for some $c_0>0$},\\
&\mbox{$-C\Theta_{\epsilon_0}<\sqrt{-1}R^{\det}_{\Theta_{\epsilon_0}}<C\Theta_{\epsilon_0}$, $\abs{\mathcal T_{\epsilon_0}}_{\Theta_{\epsilon_0}}<C$},
\end{split}
\end{equation}
where $C>0$ is a constant and $\abs{\mathcal T_{\epsilon_0}}_{\Theta_{\epsilon_0}}$ is the norm with respect to $\Theta_{\epsilon_0}$. 
Moreover, by \cite[\S.\,6.2]{MM07} there is a Hermitian metric $h^L_{\epsilon_0}$ of $L$ on $M\setminus\Sigma$ such that $h^L_{\epsilon_0}$ is smooth on $M\setminus\Sigma$ and 
\begin{equation}\label{poin2}
h^L_{\epsilon_0}>h^L,\ \ \sqrt{-1}R^{L}_{\epsilon_0}>c\Theta_{\epsilon_0},
\end{equation}
where $c>0$ is a constant and $R^L_{\epsilon_0}$ is the curvature of $L$ induced by $h^L_{\epsilon_0}$. 

Let $s$ be a local frame of $L$ and define local weights $\phi_{\epsilon_0}$ and $\phi$ for $h^L_{\epsilon_0}$ and $h^L$
by $\abs{s}^2_{h^L_{\epsilon_0}}=e^{-2\phi_{\epsilon_0}}$, $\abs{s}^2_{h^L}=e^{-2\phi}$.
Let $\widehat h^k$ be the Hermitian metric on $L^k$ locally given by 
\[\abs{s}^2_{\widehat h^k}:=\exp({-2(\log k)\phi_{\epsilon_0}-2(k-\log k)\phi}).\] 
Since $h^L_{\epsilon_0}>h^L$, we have $\widehat h^k>h^k$.
Moreover, from \eqref{s-sing-e0} and \eqref{poin2}, we can check that
\begin{equation}\label{poin3}
\sqrt{-1}\widehat R^{L^k}>c(\log k)\Theta_{\epsilon_0},\ \ 
\end{equation}
where $\widehat R^{L^k}$ denotes the curvature of $L^k$ associated to $\widehat h^k$ and $c>0$ is the constant as in \eqref{poin2}. Let $(\,,)_{\widehat h^k,\Theta_{\epsilon_0}}$ denote the $L^2$ inner product on $\Omega^{0,q}_0(M\setminus\Sigma,L^k)$ with respect to $\widehat h^k$ and $\Theta_{\epsilon_0}$ as \eqref{toe2.2}. For $f\in\Omega^{0,q}_0(M\setminus\Sigma,L^k)$, we write $\norm{f}^2_{\widehat h^k,\Theta_{\epsilon_0}}:=(f,f)_{\widehat h^k,\Theta_{\epsilon_0}}$. 
Let $\widehat L^2_{(0,q)}(M\setminus\Sigma,L^k)$ be the completion of $\Omega^{0,q}_0(M\setminus\Sigma,L^k)$ with respect to $\norm{\cdot}_{\widehat h^k,\Theta_{\epsilon_0}}$.
Let 
\[\widehat\Box^{(1)}_k=\ddbar_k\ol{\pr}^{*}_k+\ol{\pr}^{*}_k\ddbar_k:{\rm Dom\,}\widehat\Box^{(1)}_k\subset\widehat L^2_{(0,1)}(M\setminus\Sigma,L^k)\To\widehat L^2_{(0,1)}(M\setminus\Sigma,L^k)\]
be the Gaffney extension of the Kodaira Laplacian with respect to $\widehat h^k$ and $\Theta_{\epsilon_0}$(see \eqref{Gaf1}).
Here $\ol{\pr}^{*}_k$ is the Hilbert space adjoint of $\ddbar_k$ with respect to $(\,,)_{\widehat h^k,\Theta_{\epsilon_0}}$.
From \eqref{poin1} and \eqref{poin3}, we 
can repeat the procedure in \cite[p.\,272--273]{MM07} and conclude that for $k$ large, we have
\begin{equation}\label{poin4}
\norm{g}^2_{\widehat h^k,\Theta_{\epsilon_0}}\leq\frac{1}{c(\log k)}\norm{\widehat\Box^{(1)}_kg}^2_{\widehat h^k,\Theta_{\epsilon_0}},
\end{equation}
for all $g\in\Omega^{0,1}_0(M\setminus\Sigma,L^k)$, where $c>0$ is a positive constant.
From this, we can repeat the method in \cite[p.\,272--273]{MM07} and conclude that $\widehat\Box^{(1)}_k$ has closed range in $\widehat L^2_{(0,1)}(M\setminus\Sigma,L^k)$, ${\rm Ker\,}\widehat\Box^{(1)}_k\bigcap\widehat L^2_{(0,1)}(M\setminus\Sigma,L^k)=\{0\}$ and 
there is a bounded operator $G_k:\widehat L^2_{(0,1)}(M\setminus\Sigma,L^k)\To{\rm Dom\,}\widehat\Box^{(1)}_k$ such that 
$\widehat\Box^{(1)}_kG_k=I$ on $\widehat L^2_{(0,1)}(M\setminus\Sigma,L^k)$, $G_k\widehat\Box^{(1)}_k=I$ on 
${\rm Dom\,}\widehat\Box^{(1)}_k$ and
\begin{equation} \label{poin4.1}
\norm{G_kg}_{\widehat h^k,\Theta_{\epsilon_0}}^2\leq\frac{1}{c(\log k)}\norm{g}_{\widehat h^k,\Theta_{\epsilon_0}}^2
\end{equation}
for $k$ large, for all $g\in\widehat L^2_{(0,1)}(M\setminus\Sigma,L^k)$, where $c>0$ is independent of $g$ and $k$, and 
\begin{align}
&G_k:\Omega^{0,1}(M\setminus\Sigma,L^k)\To\Omega^{0,1}(M\setminus\Sigma,L^k),\label{poin5}\\ 
&g=\widehat\Box^{(1)}_kG_kg=\ddbar_k\ol{\pr}^{\,*}_kG_kg,\ \ 
\mbox{if\ $\ddbar_kg=0$, $g\in\widehat L^2_{(0,1)}(M\setminus\Sigma,L^k)$}.\label{poin6}
\end{align} 
Now, fix $D\Subset M\setminus\Sigma$ and let $g\in\Omega^{0,1}_0(D,L^k)$ with $\ddbar_kg=0$ and set 
\[u=\ol{\pr}^{\,*}_kG_kg\in\Omega^{0,1}(M\setminus\Sigma,L^k)\bigcap\widehat L^2_{(0,0)}(M\setminus\Sigma,L^k).\] 
From \eqref{poin6} and \eqref{poin4.1}, it is not difficult to see that 
\begin{equation}\label{poin7}
\begin{split}
&\ddbar_k u=g\ \ \mbox{on $M\setminus\Sigma$},\\
&\norm{u}^2_{\widehat h^k,\Theta_{\epsilon_0}}\leq\frac{1}{c_1\sqrt{\log k}}\norm{g}^2_{\widehat h^k,\Theta_{\epsilon_0}},
\end{split}
\end{equation}
where $c_1>0$ is a constant independent of $g$ and $k$. Now, let's compare the norms $\norm{\cdot}_{\widehat h^k,\Theta_{\epsilon_0}}$ and $\norm{\cdot}_{h^k,\Theta}$. Let $s$ be a local section of $L$ on $D$ and $\abs{s}^2_{h^L_{\epsilon_0}}=e^{-2\phi_{\epsilon_0}}$, $\abs{s}^2_{h^L}=e^{-2\phi}$. Then,
\[\abs{s}^2_{\widehat h^k}=e^{-2k\phi}e^{2\log k(\phi-\phi_{\epsilon_0})}=\abs{s}^2_{h^k}e^{2\log k(\phi-\phi_{\epsilon_0})}.\]
Thus, on $D$, we have 
\begin{equation}\label{poin8}
\abs{s}^2_{\widehat h^k}<k^N\abs{s}^2_{h^k}, 
\end{equation}
where $N>\sup_{x\in D}\abs{2\phi(x)-2\phi_{\epsilon_0}(x)}$.
Thus, 
\begin{equation}\label{poin9}
\norm{g}^2_{\widehat h^k,\Theta_{\epsilon_0}}<\Td C_Dk^N\norm{g}^2_{h^k,\Theta},
\end{equation} 
where $\Td C_D>0$ is a constant independent of $g$ and $k$.
From $\widehat h^k>h^k$ and the second property in \eqref{poin1}, we have
$\norm{u}^2_{h^k,\Theta}<\Td c\norm{u}^2_{\widehat h^k,\Theta_{\epsilon_0}}$, where $\Td c>0$ is a constant independent of $k$ and $u$. Combining this with \eqref{poin9} and \eqref{poin7}, we obtain 
\begin{equation}\label{poin10}
\norm{u}^2_{h^k,\Theta}\leq C_Dk^N\norm{g}^2_{h^k,\Theta},
\end{equation}
where $C_D>0$ is a constant independent of $k$ and $g$. Note that $h^L$ is bounded away from zero and 
$\Sigma$ has Lebesgue measure zero. From this observation and 
\eqref{poin10}, we see that $u$ is $L^2$ integrable with respect to some smooth metric of $L$ over $M$.
Combining this with Skoda's Lemma (see Lemma~\ref{sing-l2} below), we get 
$\ddbar_ku=g$ on $M$ and $u\in\Omega^{0,1}(M,L^k)$. The theorem follows.
\end{proof}

We recall the following result of Skoda (see Demailly~\cite[Lemma\,7.3,\,Ch.\,VIII]{De:11}).
\begin{lem} \label{sing-l2}
Let $u\in\mathscr D'(M,L^k)$, $g\in\mathscr D'(M,L^k\otimes T^{*(0,1)}M)$. We assume that $u$ and $g$ are $L^2$ integrable with respect to some smooth metric of $L$ and $\Theta$ over $M$. If $\ddbar_ku=g$ on $M\setminus\Sigma$ in the sense of distributions,
then $\ddbar_ku=g$ on $M$ in the sense of distributions. 
\end{lem}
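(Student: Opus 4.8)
The statement is a removable-singularity result for the $\ddbar$-operator across the analytic set $\Sigma$, and the plan is to run Skoda's classical cutoff argument (cf.\ \cite{De:11}). Since $M$ is compact, fixing an auxiliary smooth Hermitian metric on $L^k$ makes $u$ an element of $L^2_{(0,0)}(M,L^k)$ and $g$ an element of $L^2_{(0,1)}(M,L^k)$; I write $(\,\cdot\,,\cdot\,)$ and $\norm{\cdot}$ for the associated $L^2$-pairing and norm, and $\ol{\pr}^{\,*}_k$ for the formal adjoint of $\ddbar_k$ with respect to $(\,\cdot\,,\cdot\,)$. What has to be shown is the single identity
\[\big(u,\ol{\pr}^{\,*}_k\psi\big)=\big(g,\psi\big)\qquad\text{for every }\psi\in\Omega^{0,1}(M,L^k),\]
which by hypothesis already holds whenever $\operatorname{Supp}\psi\subset M\setminus\Sigma$.

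The one non-formal ingredient I would prepare first is a family of cutoff functions. Since $\Sigma$ is a proper analytic subset, locally it is contained in the zero set of a non-trivial holomorphic function, hence has real codimension at least two in $M$. Covering the compact manifold $M$ by finitely many such charts and using a subordinate partition of unity, one constructs for each $\varepsilon>0$ a function $\theta_\varepsilon\in\cC^\infty_0(M)$ with $0\leq\theta_\varepsilon\leq1$, equal to $1$ on a neighbourhood of $\Sigma$ that shrinks to $\Sigma$ as $\varepsilon\to0$, and --- this is exactly where codimension $\geq2$ is used, through a logarithmic cutoff in the distance to $\Sigma$ --- satisfying
\[\int_M\abs{\ddbar\theta_\varepsilon}^2\,dv_M\longrightarrow0\quad\text{as }\varepsilon\to0.\]

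With these at hand the rest is bookkeeping. For a fixed $\psi\in\Omega^{0,1}(M,L^k)$, the form $(1-\theta_\varepsilon)\psi$ is smooth with support in $M\setminus\Sigma$, so the hypothesis applies to it; combined with the Leibniz rule $\ol{\pr}^{\,*}_k\big((1-\theta_\varepsilon)\psi\big)=(1-\theta_\varepsilon)\ol{\pr}^{\,*}_k\psi+\big((\ddbar\theta_\varepsilon)\wedge\big)^{*}\psi$ (with $\big((\ddbar\theta_\varepsilon)\wedge\big)^{*}$ as in \eqref{s1-e0}) it gives
\[\big(u,(1-\theta_\varepsilon)\ol{\pr}^{\,*}_k\psi\big)+\big(u,\big((\ddbar\theta_\varepsilon)\wedge\big)^{*}\psi\big)=\big(g,(1-\theta_\varepsilon)\psi\big).\]
Then I would let $\varepsilon\to0$: since $\Sigma$ has Lebesgue measure zero, $1-\theta_\varepsilon\to1$ almost everywhere with $\abs{1-\theta_\varepsilon}\leq1$, so dominated convergence (using $u,g\in L^2$) carries the first left-hand term to $(u,\ol{\pr}^{\,*}_k\psi)$ and the right-hand side to $(g,\psi)$, while Cauchy--Schwarz bounds the middle term by
\[\Big|\big(u,\big((\ddbar\theta_\varepsilon)\wedge\big)^{*}\psi\big)\Big|\leq\norm{u}\,\big\|\big((\ddbar\theta_\varepsilon)\wedge\big)^{*}\psi\big\|\leq C\big(\sup_M\abs{\psi}\big)\Big(\int_M\abs{\ddbar\theta_\varepsilon}^2\,dv_M\Big)^{1/2}\longrightarrow0.\]
This yields the desired identity for all $\psi$, i.e.\ $\ddbar_ku=g$ on $M$. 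I expect the only real difficulty to be the cutoff construction of the second paragraph --- exploiting the codimension of $\Sigma$ to make $\ddbar\theta_\varepsilon$ small in $L^2$; the limiting argument itself is entirely routine.
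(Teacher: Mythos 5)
The paper does not prove this lemma; it cites it directly as Skoda's lemma (Demailly, Lemma~VIII.7.3 of \cite{De:11}), and your reconstruction is exactly the argument given there. You correctly isolate the only non-trivial point: the existence of a family $\theta_\varepsilon\in\cC^\infty_0(M)$, $\theta_\varepsilon\equiv1$ near $\Sigma$, with $\int_M\abs{\ddbar\theta_\varepsilon}^2\,dv_M\to0$, which relies on $\Sigma$ having real codimension $\geq2$; the Leibniz rule, the Cauchy--Schwarz bound on the error term, and the dominated-convergence passage to the limit are all set up correctly. One small precision worth recording: the cutoff is usually built as $\tau_\varepsilon(\log\abs{s})$ for a local holomorphic section $s$ cutting out a hypersurface containing $\Sigma$, rather than literally in the distance to $\Sigma$, but this is the same logarithmic-scale device and gives the required $L^2$ decay of $\ddbar\theta_\varepsilon$ exactly as you indicate.
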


\section{Bergman kernel asymptotics for semi-positive line bundles}\label{s:exp_semipos}

In this Section we prove Theorem \ref{s1-sing-semi-main}. Let $(M,\Theta)$ a compact Hermitian manifold.
Assume that $(L,h^L)\to M$ is a smooth semi-positive line bundle which is positive at some point of $M$.
By Siu's criterion \cite[Th.\,2.2.27]{MM07} (see also Corollary~\ref{s1-c5}) we know that $L$ is big and $M$ is Moishezon. By \cite[Lemma\,2.3.6]{MM07}, $L$ admits a singular Hermitian metric $h^L_{{\rm sing\,}}$, smooth outside a proper analytic set $\Sigma$, and with strictly positive curvature current.

\begin{lem} \label{l1}
With the assumptions and notations above, let $D\Subset M\setminus\Sigma$ be an open set. 
Then, there exist $k_0>0$, $N>0$ and $C_D>0$, such that
for all $k\geq k_0$, and $g\in\Omega^{0,1}_0(D,L^k)$ with $\ddbar_kg=0$, there is $u\in\cC^\infty(M,L^k)$ such that $\ddbar_k u=g$ and
\[\norm{u}^2\leq k^NC_D\norm{g}^2.\]
\end{lem}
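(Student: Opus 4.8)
The plan is to deduce Lemma~\ref{l1} as a direct consequence of Theorem~\ref{sing-t1} applied to the singular metric $h^L_{\mathrm{sing}}$, together with a comparison of the two fiber metrics $h^L$ (smooth semi-positive) and $h^L_{\mathrm{sing}}$ (strictly positive with analytic singularities) on the relatively compact set $D\Subset M\setminus\Sigma$. First I would fix $h^L_{\mathrm{sing}}$ as provided by \cite[Lemma\,2.3.6]{MM07}: it is smooth outside a proper analytic set $\Sigma$ and its curvature current satisfies $\sqrt{-1}R^L_{\mathrm{sing}}\geq\varepsilon\Theta$ for some $\varepsilon>0$, so the hypotheses of Theorem~\ref{sing-t1} are met. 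Hence there exist $k_0>0$, $N_1>0$ and $C'_D>0$ such that for all $k\geq k_0$ and all $g\in\Omega^{0,1}_0(D,L^k)$ with $\ddbar_kg=0$ there is $u\in\cC^\infty(M,L^k)$ with $\ddbar_ku=g$ and $\norm{u}^2_{h^{L^k}_{\mathrm{sing}},\Theta}\leq k^{N_1}C'_D\norm{g}^2_{h^{L^k}_{\mathrm{sing}},\Theta}$. Note that the solution $u$ produced by Theorem~\ref{sing-t1} has minimal norm in the relevant $L^2$ space, but for the statement here it is enough to have \emph{some} smooth solution on all of $M$.

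The remaining work is a routine comparison of norms. On the compact set $\ol D$, both $h^L$ and $h^L_{\mathrm{sing}}$ are smooth (since $\ol D\subset M\setminus\Sigma$) and bounded away from $0$ and $\infty$; pick a local frame $s$ and write $\abs{s}^2_{h^L}=e^{-2\phi}$, $\abs{s}^2_{h^L_{\mathrm{sing}}}=e^{-2\psi}$ with $\phi,\psi\in\cC^\infty(\ol D)$. Then $\abs{s}^2_{h^{L^k}_{\mathrm{sing}}}=\abs{s}^2_{h^{L^k}}e^{2k(\phi-\psi)}$, and since $g$ is supported in $D$ we get $\norm{g}^2_{h^{L^k}_{\mathrm{sing}},\Theta}\leq e^{2k\sup_{\ol D}(\phi-\psi)}\norm{g}^2_{h^{L^k},\Theta}$. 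This is the obstacle in the most naive approach: the factor $e^{Ck}$ is exponential, not polynomial in $k$, so one cannot simply compare the two metrics on $L^k$ fiberwise. The fix — which is exactly the mechanism already built into the proof of Theorem~\ref{sing-t1} — is to interpolate: one should instead invoke Theorem~\ref{sing-t1} with the understanding that its conclusion is stated with respect to a \emph{fixed} smooth reference metric. Concretely, rather than applying Theorem~\ref{sing-t1} verbatim, I would re-run its proof replacing the auxiliary metric $\widehat h^{L^k}$ (defined there with weight $2(\log k)\phi_{\epsilon_0}+2(k-\log k)\phi$, where $\phi_{\epsilon_0}$ comes from $h^L_{\mathrm{sing}}$ and $\phi$ from a \emph{smooth} metric) so that the smooth metric entering is precisely our given semi-positive $h^L$. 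Since only $O(\log k)$ copies of the singular weight are used, the comparison factor between $\widehat h^{L^k}$ and $h^{L^k}$ on $\ol D$ is $e^{2(\log k)\sup_{\ol D}\abs{\phi-\phi_{\epsilon_0}}}=k^{2\sup_{\ol D}\abs{\phi-\phi_{\epsilon_0}}}$, which is polynomial, i.e.\ of the form $k^N$.

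So the key steps, in order, are: (i) produce $h^L_{\mathrm{sing}}\in\mathcal M(L)$ with analytic singular set $\Sigma$ and $\sqrt{-1}R^L_{\mathrm{sing}}\geq\varepsilon\Theta$ via Siu's criterion and \cite[Lemma\,2.3.6]{MM07}; (ii) construct the complete Poincar\'e-type metric $\Theta_{\epsilon_0}$ on $M\setminus\Sigma$ and the interpolated fiber metric $\widehat h^{L^k}$ with weight $2(\log k)\phi_{\mathrm{sing}}+2(k-\log k)\phi$, where now $\phi$ is the weight of the \emph{given} smooth metric $h^L$; (iii) check $\sqrt{-1}\widehat R^{L^k}\geq c(\log k)\Theta_{\epsilon_0}$ using semi-positivity of $h^L$ (so the $(k-\log k)\phi$ part contributes $\geq 0$) and strict positivity of $h^L_{\mathrm{sing}}$ (so the $(\log k)\phi_{\mathrm{sing}}$ part contributes $\geq\varepsilon(\log k)\Theta\geq c'(\log k)\Theta_{\epsilon_0}$), exactly as in \eqref{poin3}; (iv) apply Donnelly--Fefferman/H\"ormander-type $L^2$ estimates as in \cite[p.\,272--273]{MM07} to solve $\ddbar_ku=g$ on $M\setminus\Sigma$ with $\norm{u}^2_{\widehat h^{L^k},\Theta_{\epsilon_0}}\leq\frac{1}{c\sqrt{\log k}}\norm{g}^2_{\widehat h^{L^k},\Theta_{\epsilon_0}}$; (v) compare norms on $D$: $\norm{g}^2_{\widehat h^{L^k},\Theta_{\epsilon_0}}\leq k^N\tilde C_D\norm{g}^2_{h^{L^k},\Theta}$ since $g$ has compact support in $D$ and only a power $k^N$ of the metric discrepancy enters; and on $M$, $\norm{u}^2_{h^{L^k},\Theta}\leq\tilde c\,\norm{u}^2_{\widehat h^{L^k},\Theta_{\epsilon_0}}$ using $\widehat h^{L^k}>h^{L^k}$ and $\Theta_{\epsilon_0}\geq c_0\Theta$; (vi) conclude $u$ is globally $L^2$, so Skoda's Lemma~\ref{sing-l2} gives $\ddbar_ku=g$ and $u\in\Omega^{0,1}(M,L^k)$ (in fact $u\in\cC^\infty(M,L^k)$ by elliptic regularity since $g$ is smooth), with $\norm{u}^2\leq k^NC_D\norm{g}^2$. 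The main obstacle is step (iii)/(v): one must be careful that exactly $O(\log k)$ copies of the singular weight suffice to produce the $\log k$ gain in the curvature lower bound, while keeping the metric-comparison factor polynomial; but this is precisely the bookkeeping already carried out in the proof of Theorem~\ref{sing-t1}, so in practice Lemma~\ref{l1} follows by repeating that argument with $h^L$ in place of the arbitrary smooth metric used there.
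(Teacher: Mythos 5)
Your overall strategy is right and matches the paper: interpolate so that only $O(\log k)$ copies of the singular (or Poincar\'e-adapted) weight enter, which makes the metric comparison on $D$ polynomial, i.e.\ of order $k^N$, rather than exponential. Your step (vi) via Skoda's Lemma~\ref{sing-l2} and elliptic regularity is also fine, and your identification of the ``naive'' obstacle (the $e^{2k\sup(\phi-\psi)}$ factor) is exactly the right motivation.

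However, step~(iii) contains a genuine error. You build $\widehat h^{L^k}$ with weight $2(\log k)\phi_{\mathrm{sing}}+2(k-\log k)\phi$ and claim $\sqrt{-1}\widehat R^{L^k}\geq c(\log k)\Theta_{\epsilon_0}$ from the chain $\geq\varepsilon(\log k)\Theta\geq c'(\log k)\Theta_{\epsilon_0}$. The last inequality is false: $\Theta_{\epsilon_0}$ is a \emph{complete} Poincar\'e-type metric on $M\setminus\Sigma$ which blows up at $\Sigma$, while $\Theta$ stays bounded, so the relation from \eqref{poin1} is $\Theta_{\epsilon_0}\geq c_0\Theta$, the opposite of what you need. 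Consequently your $\widehat h^{L^k}$ does not control $\Theta_{\epsilon_0}$ and the H\"ormander/Demailly estimate of step~(iv) breaks down near $\Sigma$. What the proof of Theorem~\ref{sing-t1} actually uses at this stage (see \eqref{poin2}) is not $h^L_{\mathrm{sing}}$ itself but the Poincar\'e-adapted fiber metric $h^L_{\epsilon_0}$ constructed from it, satisfying $h^L_{\epsilon_0}>h^L_{\mathrm{sing}}$ and $\sqrt{-1}R^L_{\epsilon_0}>c\,\Theta_{\epsilon_0}$. The correct one-step interpolation therefore uses $\phi_{\epsilon_0}$, not $\phi_{\mathrm{sing}}$: with weight $2(\log k)\phi_{\epsilon_0}+2(k-\log k)\phi$ (after normalizing $\phi_{\epsilon_0}\leq\phi$ so that $\widehat h^{L^k}>h^{L^k}$), the $(\log k)$-part gives $(\log k)R^L_{\epsilon_0}>c(\log k)\Theta_{\epsilon_0}$ and the $(k-\log k)$-part contributes $\geq0$. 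Note that your own step~(v) already writes the comparison factor as $k^{2\sup_{\ol D}\abs{\phi-\phi_{\epsilon_0}}}$, i.e.\ in terms of $\phi_{\epsilon_0}$ — so steps (ii)–(iii) and (v) are internally inconsistent, which signals the confusion between $\phi_{\mathrm{sing}}$ and $\phi_{\epsilon_0}$. By contrast, the paper's own proof of Lemma~\ref{l1} first forms $\Td\phi=(\log k)\widehat\phi+(k-\log k)\phi$ with $\widehat\phi$ the singular weight, and then \emph{re-runs the whole proof of Theorem~\ref{sing-t1} for $\Td h^{L^k}$}, so the replacement by $\phi_{\epsilon_0}$ happens implicitly inside that re-run. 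Your attempt to collapse this into a single interpolation is a reasonable simplification, but it only works if you interpolate with the Poincar\'e-adapted weight $\phi_{\epsilon_0}$, not with $\phi_{\mathrm{sing}}$.
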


\begin{proof}
Let $\phi$ and $\widehat\phi$ denote local weights for $h^L$ and $h^L_{{\rm sing\,}}$ respectively.
Then, $\widehat\phi$ is smooth on $M\setminus\Sigma$ and bounded above.
We may assume that
\[\widehat\phi\leq\phi.\]
Let $\Td h^k$ be the Hermitian metric on $L^k$ induced by the local weight
\[\Td \phi:=(\log k)\widehat\phi+(k-\log k)\phi.\]
We can check that $\Td h^k$ is a strictly positive singular Hermitian metric, smooth
outside a proper analytic set $\Sigma$. 
Let $\norm{\cdot}_{\Td h^k}$ and $\norm{\cdot}_{h^k}$ denote the corresponding $L^2$ norms for sections with respect to 
$\Td h^k$ and $h^k$ respectively.
We can repeat the proof of Theorem~\ref{sing-t1} and conclude that for a given $g\in\Omega^{0,1}_0(D,L^k)$ with $\ddbar_kg=0$, there is $u\in\cC^\infty(M,L^k)$ such that $\ddbar_ku=g$ and
\begin{equation} \label{e1}
\norm{u}^2_{\Td h^k}\leq\frac{1}{c\sqrt{\log k}}\norm{g}^2_{\Td h^k},
\end{equation}
where $c>0$ is independent of $k$ and $g$.
Since $\widehat\phi\leq\phi$, we have
\begin{equation} \label{e2}
\norm{u}_{h^k}\leq\norm{u}_{\Td h^k}.
\end{equation}
On the other hand, we have
\begin{equation} \label{e3}
\begin{split}
\norm{g}^2_{\Td h^k}&=\int_D\abs{g}^2e^{-2(\log k)\widehat\phi-2(k-\log k)\phi}dv_M(x)\\
&\leq(\sup_{x\in D}e^{2(\log k)(\phi(x)-\widehat\phi(x))})\int_D\abs{g}^2e^{-2k\phi}dv_M(x)\\
&\leq k^N\norm{g}^2_{h^k},
\end{split}
\end{equation} 
where $N=\sup_{x\in D}2(\phi(x)-\widehat\phi(x))$. From \eqref{e2} and \eqref{e3}, the lemma follows.
\end{proof}

For a holomorphic line bundle $L$ over a compact Hermitian manifold $(M,\Theta)$ we set
$\operatorname{Herm}(L)=\big\{\text{singular Hermitian metrics on $L$} \big\}$\,,
\begin{multline*} \label{sing-semi-set1}
\mathcal{M}(L)=\big\{h^L\in\operatorname{Herm}(L);\, \text{$h^L$ is smooth outside a proper analytic set},\\ \text{$\sqrt{-1} R^L>\varepsilon\Theta$, $\varepsilon>0$} \big\}\,.
\end{multline*}
By \cite[Lemma\,2.3.6]{MM07}, $\mathcal{M}(L)\neq\emptyset$ under the hypotheses of Theorem~\ref{t1} below.
Set
\begin{equation}\label{sing-semi-set2}
M':=\set{p\in M;\, \mbox{$\exists$ $h^L\in\mathcal{M}(L)$ with $h^L$ smooth near $p$}}.
\end{equation} 

From Lemma~\ref{l1}, we can repeat the proof of Lemma~\ref{l-canonical} with minor changes and conclude the following.
\begin{thm}\label{t1}
Let $(M,\Theta)$ be a compact Hermitian manifold.
Let $(L,h^L)\to M$ be a Hermitian holomorphic line bundle with smooth Hermitian metric $h^L$ having semi-positive curvature and with $M(0)\neq\emptyset$. Let $D\Subset M'\bigcap M(0)$ 
be an open set, where $M'$ is 
given by \eqref{sing-semi-set2}. Then, $\Box^{(0)}_k$ has $O(k^{-n_0})$ small spectral gap on $D$.
\end{thm}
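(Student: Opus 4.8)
The statement to prove is Theorem~\ref{t1}: under the stated hypotheses and with $D\Subset M'\cap M(0)$, the Kodaira Laplacian $\Box^{(0)}_k$ has $O(k^{-n_0})$ small spectral gap on $D$. The plan is to mimic the proof of Lemma~\ref{l-canonical}, with Theorem~\ref{sing-t2-bis} replaced by the rougher polynomial $L^2$-estimate furnished by Lemma~\ref{l1}. The key point is that by the definition \eqref{sing-semi-set2} of $M'$, for the given open set $D\Subset M'\cap M(0)$ we can choose a single singular metric $h^L\in\mathcal M(L)$ that is smooth on a fixed open neighborhood $U$ of $\overline D$ (shrinking $U$ so that $U\cap\Sigma=\emptyset$, where $\Sigma$ is the singular set of the chosen $h^L$); then $D\Subset U\setminus\Sigma$ and Lemma~\ref{l1} applies to $D$.

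First I would fix such a neighborhood $U$ and the corresponding singular metric, so that Lemma~\ref{l1} gives constants $k_0>0$, $N>0$, $C_D>0$ with the property that every $\ddbar_k$-closed $g\in\Omega^{0,1}_0(D,L^k)$ admits $u\in\cC^\infty(M,L^k)$ with $\ddbar_k u=g$ and $\norm{u}^2\leq k^N C_D\norm{g}^2$. Next, given $v\in\cC^\infty_0(D,L^k)$, apply this to $g=\ddbar_k v$ (which is $\ddbar_k$-closed and has compact support in $D$): we obtain $u$ with $\ddbar_k u=\ddbar_k v$ and $\norm{u}^2\leq k^NC_D\norm{\ddbar_k v}^2$. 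Since $v-u$ satisfies $\ddbar_k(v-u)=0$, it lies in $H^0_{(2)}(M,L^k)=\cH^0(M,L^k)$, so $P^{(0)}_k v=v-P^{(0)}_k u+P^{(0)}_k(u-v)$ forces $(I-P^{(0)}_k)v=(I-P^{(0)}_k)u$, and hence
\[
\norm{(I-P^{(0)}_k)v}\leq\norm{u}\leq\sqrt{C_D}\,k^{N/2}\norm{\ddbar_k v}\,.
\]
Then use the elementary inequality $\norm{\ddbar_k v}^2=(\Box^{(0)}_k v, v)_k\le\norm{\Box^{(0)}_k v}\,\norm{(I-P^{(0)}_k)v}$ (valid because $\ddbar_k v=\ddbar_k(I-P^{(0)}_k)v$ and $\Box^{(0)}_k v\perp\cH^0(M,L^k)$, as in the proof of Lemma~\ref{l-canonical}), to get $\norm{(I-P^{(0)}_k)v}^2\le C_D k^N\norm{\Box^{(0)}_k v}\,\norm{(I-P^{(0)}_k)v}$, whence $\norm{(I-P^{(0)}_k)v}\le C_D k^N\norm{\Box^{(0)}_k v}$. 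Taking $n_0=N\in\mathbb N$ (rounding up) this is exactly the $O(k^{-n_0})$ small spectral gap condition of Definition~\ref{s1-d2bis}.

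The only subtlety — the "minor changes" alluded to in the statement — is bookkeeping: making sure that the exponent $N$ from Lemma~\ref{l1} is a natural number (replace it by $\lceil N\rceil$) and that the constant $C_D$ genuinely depends only on $D$ and not on $k$, which is already guaranteed by Lemma~\ref{l1}. One should also note that $u$ produced by Lemma~\ref{l1} is globally $L^2$ on $M$ (indeed smooth), so all the $L^2$ pairings above are legitimate and $u-v\in\cH^0(M,L^k)$ is a genuine $L^2$ harmonic section. I do not expect any real obstacle here: the entire content has been pushed into Lemma~\ref{l1} (which in turn rests on Theorem~\ref{sing-t1} and Skoda's Lemma~\ref{sing-l2}), and what remains is the same three-line functional-analytic argument as in Lemma~\ref{l-canonical}, with $1/k$ replaced by $k^N$.
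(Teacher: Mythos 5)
Your proof is correct and follows exactly the route the paper indicates: repeat the argument of Lemma~\ref{l-canonical} with the rough $\ddbar$-estimate of Lemma~\ref{l1} in place of Theorem~\ref{sing-t2-bis}; the orthogonality/minimal-norm step, the inequality $\norm{\ddbar_kv}^2\leq\norm{\Box^{(0)}_kv}\,\norm{(I-P^{(0)}_k)v}$, and the replacement of $C_D/k$ by $C_Dk^N$ (with $n_0=\lceil N\rceil$) are precisely the ``minor changes'' the paper alludes to. The one point where you assert slightly more than you justify --- and where the paper is equally terse --- is the claim that for an \emph{arbitrary} $D\Subset M'\cap M(0)$ there is a \emph{single} $h^L\in\mathcal{M}(L)$ smooth on a neighborhood of $\overline D$: the pointwise definition \eqref{sing-semi-set2} of $M'$ only furnishes such a metric near each $p\in M'$, so for a general relatively compact $D$ one should cover $\overline D$ by finitely many such neighborhoods and glue the corresponding weights by a Demailly regularized maximum (which preserves the curvature lower bound and smoothness where all inputs are smooth), whereas for $D$ small enough to sit inside one such neighborhood the claim is immediate and already suffices for the application in Theorem~\ref{s1-sing-semi-main}.
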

Let $s$ be a local frame of $L$ on an open set $D\Subset M$ and $\abs{s}_{h^L}^2=e^{-2\phi}$. 
We define the \emph{localized Bergman projection} (with respect to $s$) by
\begin{align} \label{1}
\widehat P^{(0)}_{k,s}:L^2(D)\cap\mathscr E'(D)&\To\cC^\infty_0(D),\nonumber \\
u&\To e^{-k\phi}s^{-k}P^{(0)}_k(s^ke^{k\phi}u).
\end{align}
That is, if $P^{(0)}_k(s^ke^{k\phi}u)=s^kv$ on $D$, then
$\widehat P^{(0)}_{k,s}u=e^{-k\phi}v$.

From Theorem~\ref{t1} and Theorem~\ref{t:localspectral}, we get the following result.
\begin{thm} \label{t-sing-semi}
Let $(M,\Theta)$ be a compact Hermitian manifold.
Let $(L,h^L)\to M$ be a Hermitian holomorphic line bundle with smooth Hermitian metric $h^L$ having semi-positive curvature and with $M(0)\neq\emptyset$.
Let $s$ be a local frame of $L$ on an open set $D\Subset M'\bigcap M(0)$. Then the localized Bergman projection $\widehat P^{(0)}_{k,s}$ satisfies
\[\widehat P^{(0)}_{k,s}\equiv\mathcal{S}_{k}\mod O(k^{-\infty})\]
on $D$, where $\mathcal{S}_{k}$ is as in Theorem~\ref{tII}.
\end{thm}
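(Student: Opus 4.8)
The plan is to combine the abstract criterion for the Bergman kernel expansion, Theorem~\ref{t:localspectral}, with the verification of the hypothesis of that theorem, which is precisely Theorem~\ref{t1}. First I would recall the setup: we have a smooth Hermitian metric $h^L$ on $L$ with semi-positive curvature, $M(0)\neq\emptyset$, and $D\Subset M'\cap M(0)$ is a chart domain on which $L$ is trivialized by $s$ with $\abs{s}^2_{h^L}=e^{-2\phi}$. Since $D\Subset M(0)$, the weight $\phi$ is strictly plurisubharmonic on $D$, so $\pr\ddbar\phi$ is positive definite (non-degenerate of constant signature $(0,n)$) at each point of $D$, and thus the hypotheses of Theorem~\ref{t:localspectral} concerning the curvature are satisfied with $q=0=n_-$.

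The only additional input needed to invoke Theorem~\ref{t:localspectral} is the $O(k^{-n_0})$ small spectral gap for $\Box^{(0)}_k$ on $D$ in the sense of Definition~\ref{s1-d2bis}. This is exactly the content of Theorem~\ref{t1}: since $D\Subset M'\cap M(0)$, there exists $h^L_{{\rm sing\,}}\in\mathcal M(L)$ smooth near $\ol D$, and the rough $L^2$-estimate of Lemma~\ref{l1} (obtained by twisting with $(\log k)\widehat\phi+(k-\log k)\phi$ and applying the argument of Theorem~\ref{sing-t1}) yields, via the minimal $L^2$-solution property of $(I-P^{(0)}_k)u$ and the elementary inequality $\norm{\ddbar_ku}^2\leq\norm{\Box^{(0)}_ku}\,\norm{(I-P^{(0)}_k)u}$, the bound $\norm{(I-P^{(0)}_k)u}\leq C_D k^{n_0}\norm{\Box^{(0)}_ku}$ for all $u\in\Omega^{0,0}_0(D,L^k)$. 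Hence $\Box^{(0)}_k$ has $O(k^{-n_0})$ small spectral gap on $D$.

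With both hypotheses in hand, Theorem~\ref{t:localspectral} (applied with $q=0$, any fixed $N_0\geq1$) gives that the localized Bergman projection satisfies $\widehat P^{(0)}_{k,s}\equiv\mathcal S_k\mod O(k^{-\infty})$ locally uniformly on $D$, where $\mathcal S_k$ is the localized approximate Szeg\H{o} kernel of Theorem~\ref{tII}, whose kernel has the stated form $e^{ik\Psi(z,w)}b(z,w,k)$ with $b\sim\sum_j b_j k^{n-j}$ and $b_0(z,z)=(2\pi)^{-n}\det\dot R^L(z)$. This is the assertion of Theorem~\ref{t-sing-semi}. I would then note that, combined with the off-diagonal structure of $\mathcal S_k$, restriction to the diagonal gives the diagonal expansion $P^{(0)}_k(x)\sim\sum_j k^{n-j}b^{(0)}_j(x)$ of Theorem~\ref{s1-sing-semi-main}, with $b^{(0)}_1,b^{(0)}_2$ identified through the computation in Section~\ref{s:coeff} (which is local and uses only $\ddbar_s\mathcal S_k\equiv0$ and the eikonal-type equation, both of which hold here).

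The main obstacle is not in this final deduction — which is essentially a citation of Theorem~\ref{t:localspectral} once Theorem~\ref{t1} is known — but lies upstream, in establishing the rough $L^2$-estimate of Lemma~\ref{l1} uniformly in $k$. There the delicate point is that the singular metric $h^L_{{\rm sing\,}}$ forces one to work on the non-compact complement $M\setminus\Sigma$ with the generalized Poincar\'e metric, control the Hermitian torsion and the curvature of the canonical bundle there, and only recover a solution on all of $M$ at the end via Skoda's lemma (Lemma~\ref{sing-l2}); the twisting by $\log k$ is what converts the merely non-negative lower bound into a genuine, if slowly growing, gain. Once that estimate is granted, the passage to the spectral gap and then to the Bergman kernel expansion is formal.
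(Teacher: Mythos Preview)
Your proposal is correct and follows essentially the same approach as the paper: the paper's proof is a one-line citation, ``From Theorem~\ref{t1} and Theorem~\ref{t:localspectral}, we get the following result,'' and you have simply unpacked that citation by verifying the curvature hypothesis ($q=0=n_-$ on $D\Subset M(0)$) and the spectral-gap hypothesis (Theorem~\ref{t1}) needed to invoke Theorem~\ref{t:localspectral}. Your additional commentary on the upstream proof of Lemma~\ref{l1} and on the diagonal consequences is accurate but goes beyond what is needed for this particular statement.
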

Theorem \ref{t-sing-semi} immediately implies Theorem \ref{s1-sing-semi-main}.
\section{Multiplier ideal Bergman kernel asymptotics. Proof of Theorem \ref{sing-main}}\label{S:mibk}

Let us first recall the notion of multiplier ideal sheaf. Let $M$ be a compact complex manifold and $\varphi\in L^1_{loc}(M,\mathbb R)$. The {\em Nadel multiplier ideal sheaf} $\cI(\varphi)\subset\cO_{M}$ is the ideal subsheaf of germs of holomorphic functions $f\in\cO_{M,x}$ such that $|f|^2e^{-2\varphi}$ is integrable with respect to the Lebesgue measure in local coordinates near $x$ for all $x\in M$.

\par Consider now a singular Hermitian metric $h^L$ on a holomorphic line bundle $L$ over $M$.
If $h^{L}_0$ is a smooth Hermitian metric on $L$ then $h^L=h^L_0e^{-2\varphi}$ for some function $\varphi\in L^1_{loc}(M,\mathbb R)$. The Nadel multiplier ideal sheaf of $h^L$ is defined by $\cI(h^L)=\cI(\varphi)$; the definition does not depend on the choice of $h^L_0$. Put
\begin{equation}\label{pcsing}
\cC^\infty(M,L\otimes\cI(h^L)):=\set{S\in\cC^\infty(M,L);\, \int_M\big\lvert S\big\rvert^2_{h^L}\,dv_M=\int_M\big\lvert S\big\rvert^2_{h^L_0}\,e^{-2\varphi}\,dv_M<\infty},
\end{equation}
where $\abs{\cdot}_{h^L}$ and $\abs{\cdot}_{h^L_0}$ denote the pointwise norms for sections induced by $h^L$ and $h^L_0$ respectively.
With the help of $h^L$ and the volume form $dv_M$ we can define an $L^2$ inner product on $\cC^\infty(M,L\otimes\cI(h^L))$:
\begin{equation}\label{l2sing}
(S,S')=\int_M\langle S,S'\rangle_{h^{L}_0}\,e^{-2\varphi}dv_M\,,\quad S,S'\in\cC^\infty(M,L\otimes\cI(h^L))\,.
\end{equation}

The singular Hermitian metric $h^L$ induces a singular Hermitian
metric $h^k=h^k_0e^{-2k\varphi}$ on $L^k$, $k>0$. We denote by $(\cdot\,,\cdot)_k$ the natural inner products on $\cC^\infty(M,L^k\otimes\cI(h^k))$ defined as in \eqref{l2sing} and by $L^2(M,L^k)$ the completion of $\cC^\infty(M,L^k\otimes\cI(h^k))$ with respect to $(\cdot\,,\cdot)_k$. The space of global sections in the sheaf $\cO(L^k)\otimes\cI(h^k)$ is given by
\begin{equation}\label{l2:mult}
\begin{split}
&H^0(M,L^k\otimes\cI(h^k))\\
&\quad=\Big\{
s\in\cC^\infty(M,L^k);\, \ddbar_ks=0,\,\int_M\big\lvert s\big\rvert^2_{h^k}\,dv_M=\int_M\big\lvert s\big\rvert^2_{h^k_0}\,e^{-2k\varphi}\,dv_M<\infty
\Big\}\,.
\end{split}
\end{equation}
Let
\begin{equation}\label{BK_multiplier}
P^{(0)}_{k,\cI}:L^2(M,L^k)\To H^0(M,L^k\otimes\cI(h^k))
\end{equation}
be the orthogonal projection. 

Now, we assume that $h^L$ is a strictly positive singular Hermitian metric on $L$, smooth outside a
proper analytic set $\Sigma$ of $M$. Let $L^2(M\setminus\Sigma,L^k)$ be the completion of $\cC^\infty_0(M\setminus\Sigma,L^k)$ with respect to $(\,\cdot\,,\cdot\,)_k$. We notice that $\Sigma$ is closed and has Lebesgue measure zero. From this observation, it is straightforward to see that 
\begin{equation}\label{pcsingI}
L^2(M\setminus\Sigma,L^k)=L^2(M,L^k).
\end{equation}
We consider the Gaffney extension $\Box^{(0)}_k$ of the Kodaira Laplacian $\ol{\pr}^*_k\ddbar_k$ on $M\setminus\Sigma$ (see \eqref{Gaf1}), where $\ol{\pr}^*_k$ is the formal adjoint of $\ddbar_k$ with respect to $(\,\cdot\,,\cdot\,)_k$ on $M\setminus\Sigma$. It is easy to see that $\Ker\Box^{(0)}_k=L^2(M\setminus\Sigma,L^k)\cap\Ker\ddbar_k$. The local weights of $h^L$ are strictly psh, so they are bounded above, hence elements in $L^2(M\setminus\Sigma,L^k)$ are locally square integrable with respect to smooth metrics on $M$ and $L$. Since holomorphic sections on $M\setminus\Sigma$ which are locally square integrable extend to holomorphic sections on $M$ (see Lemma~\ref{sing-l2}), we see that
\begin{equation}\label{pcsingII}
\Ker\Box^{(0)}_k=L^2(M,L^k)\cap\Ker\ddbar_k=H^0(M,L^k\otimes\cI(h^k)).
\end{equation}
Let 
\[P^{(0)}_k:L^2(M\setminus\Sigma,L^k)\To{\rm Ker\,}\Box^{(0)}_k\]
be the Bergman projection. From \eqref{pcsingI} and \eqref{pcsingII}, we see that 
\begin{equation}\label{pcsingIII}
P^{(0)}_{k,\cI}=P^{(0)}_k\ \ \mbox{on $L^2(M,L^k)=L^2(M\setminus\Sigma,L^k)$}. 
\end{equation} 

From Theorem~\ref{sing-t1}, we can repeat the proof of Lemma~\ref{l-canonical} and conclude that 

\begin{thm}\label{t-pcsing}
With the notations and assumptions above. Let $D\Subset M\setminus\Sigma$. Then, $\Box^{(0)}_k$ has $O(k^{-n_0})$ small spectral gap on $D$.
\end{thm}

Let $s$ be a local frame of $L$ on an open set $D\Subset M\setminus\Sigma$ and
$\abs{s}_{h^L}^2=e^{-2\phi}$. Then, $\phi$ is smooth on $D$ and $\pr\ddbar\phi$ is positive defined at each point of $D$. 
Let us denote by
\begin{equation} \label{s-sing-e11}
\widehat P^{(0)}_{k,s,\cI}:L^2(D)\cap\mathscr E'(D)\longrightarrow L^2(D)\,,\quad u\longmapsto e^{-k\phi}s^{-k}P^{(0)}_{k,\cI}(s^ke^{k\phi}u).
\end{equation}
the localized (multiplier ideal) Bergman projection. 

From Theorem~\ref{t-pcsing}, Theorem~\ref{t:localspectral} and \eqref{pcsingIII}, we get one of the main results of this work
\begin{thm} \label{sing-t5}
Let $(L,h^L)$ be a singular Hermitian holomorphic line bundle with strictly positive curvature current over a compact Hermitian manifold $(M,\Theta)$. We assume that $h^L$ is smooth outside a proper analytic set
$\Sigma$\,.
Let $s$ be a local frame of $L$ on an open set $D\Subset M\setminus\Sigma$. Then the localized
multiplier ideal Bergman projection $\widehat P^{(0)}_{k,s,\cI}$ (see \eqref{s-sing-e11}) satisfies
\[\widehat P^{(0)}_{k,s,\cI}\equiv\mathcal{S}_{k}\mod O(k^{-\infty})\]
on $D$, where $\mathcal{S}_{k}$ is as in Theorem~\ref{tII}.
\end{thm}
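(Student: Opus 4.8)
The plan is to mimic the structure used for the semi-positive case in Section~\ref{adjoint_semipos} and Section~\ref{s:exp_semipos}, reducing Theorem~\ref{sing-t5} to the abstract comparison result Theorem~\ref{t:localspectral}. The key point is that the localized multiplier ideal Bergman projection $\widehat P^{(0)}_{k,s,\cI}$ should be treated as the localized Bergman projection $\widehat P^{(0)}_{k,s}$ of Theorem~\ref{t:localspectral}, once one observes that over $D\Subset M\setminus\Sigma$ the metric $h^L$ is smooth with positive curvature, so that $M(0)\supset D$ and $q=0=n_-$. Thus the only ingredient needed to invoke Theorem~\ref{t:localspectral} is that the Kodaira Laplacian associated with the space $H^0(M,L^k\otimes\cI(h^{L^k}))$ has $O(k^{-n_0})$ small spectral gap on $D$; everything else (the microlocal Hodge decomposition, the form of $\mathcal{S}_k$, the coefficients $b_j$) is then automatic.

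First I would recall that by \eqref{l2:mult} the space $H^0(M,L^k\otimes\cI(h^{L^k}))$ is precisely the $L^2$-kernel of $\ddbar_k$ with respect to the singular metric $h^{L^k}$ and the volume form $dv_M$; set $\cH^0_{(2)}(M,L^k):=H^0(M,L^k\otimes\cI(h^{L^k}))$ and let $\Box^{(0)}_{k,\cI}$ be the associated Gaffney extension of the Kodaira Laplacian, so that $\Ker\Box^{(0)}_{k,\cI}=\cH^0_{(2)}(M,L^k)$ and $P^{(0)}_{k,\cI}$ is the orthogonal projection onto it. Next I would prove the small spectral gap: given $u\in\cC^\infty_0(D,L^k)$, apply Theorem~\ref{sing-t1} to $g:=\ddbar_k u\in\Omega^{0,1}_0(D,L^k)$ (which satisfies $\ddbar_k g=0$) to obtain $f\in\cC^\infty(M,L^k)$, $L^2$ with respect to $h^{L^k}$, with $\ddbar_k f=g$ and $\norm{f}^2\leq k^N C_D\norm{g}^2$. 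Then $u-f$ is $\ddbar_k$-closed and $L^2$, hence lies in $\cH^0_{(2)}(M,L^k)$, so $(I-P^{(0)}_{k,\cI})u=(I-P^{(0)}_{k,\cI})f$ has norm at most $\norm{f}$; minimality of the $L^2$-solution gives $\norm{(I-P^{(0)}_{k,\cI})u}^2\leq k^N C_D\norm{\ddbar_k u}^2$. Finally, $\norm{\ddbar_k u}^2=(\Box^{(0)}_{k,\cI}u,u)=(\Box^{(0)}_{k,\cI}u,(I-P^{(0)}_{k,\cI})u)\leq\norm{\Box^{(0)}_{k,\cI}u}\,\norm{(I-P^{(0)}_{k,\cI})u}$, which combined with the previous inequality yields $\norm{(I-P^{(0)}_{k,\cI})u}\leq k^N C_D\norm{\Box^{(0)}_{k,\cI}u}$, i.e.\ the $O(k^{-n_0})$ small spectral gap on $D$ with $n_0=N$. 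This is exactly the argument of Lemma~\ref{l-canonical}, with Theorem~\ref{sing-t1} in place of Theorem~\ref{sing-t2-bis}.

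With the small spectral gap in hand, I would then note that on $D$ the metric $h^L$ is smooth, $\abs{s}_{h^L}^2=e^{-2\phi}$ with $\phi\in\cC^\infty(D)$ and $\pr\ddbar\phi$ positive definite, so the hypotheses of Theorem~\ref{t:localspectral} are satisfied with $q=0=n_-$. Since the statement and proof of Theorem~\ref{t:localspectral} (via Theorem~\ref{s3-t4} and Theorem~\ref{tII}) depend only on the restriction of the geometric data to $D$ and on the abstract spectral gap property, and since $\widehat P^{(0)}_{k,s,\cI}$ defined by \eqref{s-sing-e11} is the localized orthogonal projection onto $\Ker\Box^{(0)}_{k,\cI}$ in exactly the sense required there, Theorem~\ref{t:localspectral} gives $\widehat P^{(0)}_{k,s,\cI}\equiv\mathcal{S}_k\mod O(k^{-\infty})$ locally uniformly on $D$, with $\mathcal{S}_k$ the localized approximate Szeg\H{o} kernel of Theorem~\ref{tII}. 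This proves Theorem~\ref{sing-t5}, and Theorem~\ref{sing-main} follows by restricting to the diagonal, using $b_0(z,z)=(2\pi)^{-n}\det\dot R^L(z)$ from \eqref{s3-es-6} and the formulas \eqref{coeII}, \eqref{coeIII} for $b^{(0)}_1$, $b^{(0)}_2$.

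The main obstacle is purely bookkeeping rather than conceptual: one must check carefully that the abstract machinery of Section~\ref{s:exp_spec_fct}, in particular Theorem~\ref{t:localspectral} and the proof of Proposition~\ref{pIII}, goes through verbatim when the ``Bergman projection'' is $P^{(0)}_{k,\cI}$ rather than $P^{(0)}_{k,0}=P^{(0)}_k$ — i.e.\ that the only global input used is the small spectral gap estimate from Definition~\ref{s1-d2bis} together with the fact that $\Box^{(0)}_s\mathcal{S}_k\equiv0\mod O(k^{-\infty})$ and $\mathcal{S}_k$ is smoothing. This is the case because the localization $\widehat P^{(0)}_{k,s,\cI}$ is still the kernel of a self-adjoint orthogonal projection whose range consists of smooth $\ddbar_k$-closed sections on $D$, so the estimates in the proof of Theorem~\ref{t:localspectral} (which use only $v\perp\Ker\Box^{(0)}_{k,\cI}$, $\norm{\Box^{(0)}_{k,\cI}v}$ small, and the spectral gap) apply unchanged. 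One subtle point deserving care is the use of Skoda's Lemma~\ref{sing-l2} inside the proof of Theorem~\ref{sing-t1} to pass from solutions on $M\setminus\Sigma$ to solutions on $M$; but this is already established there, so no new difficulty arises.
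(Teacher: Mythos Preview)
Your approach is viable, but it is genuinely different from, and more roundabout than, the paper's proof. The paper does \emph{not} reduce to Theorem~\ref{t:localspectral} and never introduces a Gaffney extension $\Box^{(0)}_{k,\cI}$ for the singular metric, nor any spectral projection $P^{(0)}_{k,k^{-N_0},\cI}$. Instead it argues directly: since sections in $H^0(M,L^k\otimes\cI(h^{L^k}))$ are holomorphic, one has the \emph{exact} vanishing $\Box^{(0)}_s\widehat P^{(0)}_{k,s,\cI}=0$, and hence from $\mathcal{A}_k^*\Box^{(0)}_s+\mathcal{S}_k^*=I$ the \emph{exact} identity $\widehat P^{(0)}_{k,s,\cI}=\mathcal{S}_k^*\widehat P^{(0)}_{k,s,\cI}\mathcal{S}_k$ (equation~\eqref{s-sing-e12}). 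Then, for $v=s^ke^{k\phi}\mathcal{S}_ku-P^{(0)}_{k,\cI}(s^ke^{k\phi}\mathcal{S}_ku)$, the paper uses $\ddbar_s\mathcal{S}_k\equiv0\mod O(k^{-\infty})$ together with the $L^2$-minimality of $v$ and Theorem~\ref{sing-t1} \emph{directly} (not via a spectral gap inequality) to get $\mathcal{S}_k^*\mathcal{S}_k\equiv\mathcal{S}_k^*\widehat P^{(0)}_{k,s,\cI}\mathcal{S}_k\mod O(k^{-\infty})$, and combining the two gives the result.

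Your route instead packages Theorem~\ref{sing-t1} into a small spectral gap for a putative operator $\Box^{(0)}_{k,\cI}$ and then invokes Theorem~\ref{t:localspectral}. This works in principle, but note that the proof of Theorem~\ref{t:localspectral} goes through the spectral function $\widehat P^{(q)}_{k,k^{-N_0},s}$ and Proposition~\ref{pIII}/Theorem~\ref{tII}, so you would need to set up the self-adjoint Gaffney extension for the singular metric on $M\setminus\Sigma$, its spectral measure, and check that the scaling estimates of Theorem~\ref{tI} (which only use data on $D$ and powers of the Laplacian) still hold for elements of the corresponding spectral spaces. This is all doable---the metric is smooth on $D$ and the Gaffney construction is insensitive to the singularity---but it is precisely the overhead the paper's direct argument sidesteps by exploiting that at $q=0$ the Bergman projection already annihilates $\Box^{(0)}_s$ exactly. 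The paper's proof thus needs only $\ddbar_s\mathcal{S}_k\equiv0$, the $L^2$ estimate of Theorem~\ref{sing-t1}, and the algebra of $\mathcal{S}_k,\mathcal{A}_k$ from Theorem~\ref{s3-t4}; no spectral theory of a singular Laplacian enters.
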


From Theorem~\ref{sing-t5}, we get Theorem~\ref{sing-main}.

\section{Further applications}\label{s:appl}

In this Section we collect further applications of the methods developed here.
In Section \ref{s:small} we show the existence of manifolds and line bundles whose Kodaira-Laplace operator has no $O(k^{-n_0})$ small spectral gap. In Section \ref{s:bouche} we show that under an integral condition (due to Bouche) on the first eigenvalue of the curvature, the asymptotic expansion of the Bergman kernel of a semi-positive line bundle holds.
In Section \ref{s:ample} we apply our results to prove a result of Berman about the Bergman kernel associated to an arbitrary semi-positive Hermitian metric on an ample line bundle. 
In Section \ref{s:exp_forms} we give a local version of the Bergman kernel expansion for $q$-forms.
In Section \ref{s:holo_morse} we obtain precise semiclassical estimates for the dimension of the spectral spaces of the Kodaira Laplacian. Using them one obtains immediately the holomorphic Morse inequalities of Demailly.
Finally, we prove in Section \ref{s:tian} a version of Tian's theorem about the convergence of the induced Fubini-Study metrics in the case of singular metrics on a big line bundle. This implies the equidistribution of the zeros of sections in the high tensor powers twisted with the Nadel ideal sheaves.

\subsection{Existence of ``small'' eigenvalues of the Kodaira Laplacian}\label{s:small}
The hypothesis on the existence of a $O(k^{-n_0})$ small spectral gap was of central importance in our approach. It is interesting to know if there is a compact complex manifold $M$ and a holomorphic line bundle $L$ over $M$ such that
the associated
Kodaira Laplacian does not exhibit such a spectral gap.
We will construct a compact manifold and a holomorphic line bundle $L$ over $M$ such that the associated Kodaira Laplacian $\Box^{(q)}_k$ has non-vanishing eigenvalues of order $O(k^{-\infty})$.
\begin{thm} \label{s1-mainex}
Let $0\leq q\leq n$, $q\in\mathbb N_0$. There exists a compact complex manifold $M$ of dimension $n$ and a holomorphic line bundle $L$ over $M$ such that for
\[\lambda_k:={\rm inf\,}\set{\lambda;\, \lambda:\ \mbox{non-zero eigenvalues of $\Box^{(q)}_k$}},\]
we have for every $N>0$
\[\lim_{k\To\infty}k^N\lambda_k=0\,.\]
\end{thm}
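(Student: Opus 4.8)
The plan is to produce an explicit example exhibiting super-polynomially small eigenvalues for the Kodaira Laplacian, by building a line bundle whose curvature vanishes to infinite order along a divisor while being semi-positive, so that holomorphic (or $\ddbar$-harmonic) sections can be concentrated near that divisor where the spectral gap estimates fail. First I would take a compact complex manifold $M$ containing a smooth hypersurface $Y$ together with a holomorphic line bundle $(L,h^L)$ such that $\sqrt{-1}R^L\geq 0$ everywhere, $R^L$ is strictly positive outside a neighborhood of $Y$, and $h^L$ is chosen so that the local weight $\phi$ satisfies $\phi\equiv 0$ (or vanishes to infinite order) in a tubular neighborhood $U$ of $Y$. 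The simplest realization: let $M$ be a product $Y\times E$ with $E$ an elliptic curve, or more concretely take $M=\mathbb{P}^1\times\mathbb{P}^1$ (or an abelian variety) and $L$ a positive bundle whose metric is flattened to be trivial in a coordinate polydisc $U$; one must check such an $h^L$ with $\sqrt{-1}R^L\geq 0$ globally exists, which is where a gluing/convexity argument is needed.

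The heart of the construction is the following mechanism. On the flat region $U$, after trivializing, $\Box^{(q)}_k$ acting on $s^k\widetilde u$ becomes (via \eqref{s1-e3}, \eqref{s1-e4}) the operator $\Box^{(q)}_s$ with $\phi\equiv 0$, i.e.\ the \emph{ordinary} $\ddbar$-Laplacian on $\Complex^n$, whose spectrum does not scale with $k$. Pick a smooth cutoff $\chi\in\cC^\infty_0(U)$ and a local $(0,q)$-form $v$ that is $\ddbar$-harmonic on $U$ (e.g.\ $v=1$ when $q=0$, or $v=\chi_0\, d\bar z^J$ for a constant-coefficient form for general $q$, using that on the flat part $\Box^{(q)}_s$ annihilates constant-coefficient forms). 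Set $u_k:=s^k\chi v\in\Omega^{0,q}_0(U,L^k)\subset L^2_{(0,q)}(M,L^k)$. Then $\Box^{(q)}_ku_k=s^k\Box^{(q)}_s(\chi v)$ is supported in the annular region where $\nabla\chi\neq 0$, and there $\phi$ still vanishes, so $\norm{\Box^{(q)}_k u_k}$ is bounded by a constant \emph{independent of $k$} (indeed equal to a fixed nonzero constant $C_0$), while $\norm{u_k}^2=\int_U|\chi v|^2\,dv_M$ is also a fixed positive constant $C_1$ independent of $k$ (since $|s^k|^2_{h^{L^k}}=e^{-2k\phi}\equiv 1$ on $U$). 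Thus $\langle\Box^{(q)}_k u_k,u_k\rangle/\norm{u_k}^2$ is $O(1)$, which by itself only gives eigenvalues $O(1)$ — not small enough. To get eigenvalues that are $O(k^{-\infty})$ one must instead engineer that $\Box^{(q)}_s(\chi v)$ is itself \emph{orthogonal to low modes} or, better, choose $v$ to be an honest global-on-$U$ harmonic form and let the test section be exactly harmonic on a large fixed ball, so that $\Box^{(q)}_k u_k$ has small $L^2$ norm; alternatively, and this is the approach I would actually push, take $M$ so that there is a \emph{holomorphic} section $t$ of $L|_U$ with $\ddbar_k$-norm controlled, and invoke the max–min principle: the number of eigenvalues of $\Box^{(q)}_k$ in $[0,\varepsilon_k]$ is at least the dimension of a subspace on which the quadratic form is $\leq\varepsilon_k$. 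Concretely, using the flat model, the functions $z^\alpha\chi(z)$ for $|\alpha|$ up to some growing range give an $N_k$-dimensional space on which $\norm{\ddbar_s(\cdot)}^2\leq C$ while $\norm{\cdot}^2\geq c\,k^{-|\alpha|}\cdot(\text{something})$ — here one exploits the scaling $z\mapsto z/\sqrt{k}$ exactly as in Section \ref{ss:upper_bound} to see the relevant ratios decay.

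The cleanest route, which I would write up, is: choose $M$ and $L$ so that on $U$ the weight is \emph{identically zero}, and consider for fixed large $R$ the cutoff $\chi_R$ supported in $|z|<2R$, equal to $1$ on $|z|<R$. For $q=n_-$ with the flat metric $n_-=0$, so $q=0$; for general $q$ one twists with a flat bundle to shift $n_-$, or works directly with $\Box^{(q)}$ on $\Complex^n$ and constant forms. Take $u_k=s^k\chi_R$. Then $\Box^{(0)}_ku_k=s^k\Box^{(0)}_{0}\chi_R$ vanishes on $|z|<R$; but since $\phi\equiv 0$ there is \emph{no} $k$-dependence yet. The decisive extra ingredient is to \emph{rescale the flat region with $k$}: arrange (by choosing $L$ with curvature $\sim e^{-1/|z|}$-type degeneration, so the flat part is only ``flat to infinite order'') that the effective radius of flatness, measured in the natural scale $\sqrt{k}\,|z|$, grows, forcing $\lambda_k\to 0$ faster than any polynomial. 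I expect the main obstacle to be precisely this: constructing a \emph{global} semi-positive Hermitian metric $h^L$ on a \emph{compact} $M$ whose curvature vanishes to infinite order on a hypersurface but is strictly positive elsewhere, and verifying that the resulting test sections give $k^N\lambda_k\to 0$ for all $N$ — this requires a careful patching argument (gluing a genuinely positive metric to a flat-to-infinite-order one while preserving $\sqrt{-1}R^L\geq 0$) together with a scaled Rayleigh-quotient estimate; once the metric is in hand, the eigenvalue bound follows from the min–max principle applied to the explicit family $\{s^k z^\alpha\chi\}$ and the infinite-order vanishing of $\phi$.
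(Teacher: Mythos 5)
Your approach is genuinely different from the paper's and, unfortunately, has a fundamental obstruction that the paper itself records. You propose to take a \emph{semi-positive} metric that is flat (to infinite order) on a region $U$ but strictly positive elsewhere, on an ample bundle over a compact manifold, and to exhibit super-polynomially small eigenvalues via an explicit Rayleigh quotient / min-max argument with test sections supported in $U$. For $q=0$ this cannot work: by Donnelly's theorem \cite{Don03}, quoted in the paper as Corollary~\ref{s1-c2}, an ample line bundle with \emph{any} semi-positive smooth metric has a uniform spectral gap $\inf\set{\lambda\in\operatorname{Spec}(\Box^{(0)}_k);\,\lambda\neq 0}\geq C>0$. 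So in your proposed realization $M=\mathbb{P}^1\times\mathbb{P}^1$ with an ample $L$ whose metric is "flattened" in a polydisc, the quantity $\lambda_k$ is bounded \emph{below} away from $0$, not super-polynomially small. Moreover, as you correctly note mid-proposal, your cutoff test section $s^k\chi v$ on the flat region gives $\|\Box^{(q)}_ku_k\|/\|u_k\|=O(1)$ because nothing in the construction scales with $k$; the intended fix — using infinite-order flatness so the effective radius grows — is not carried out and, in light of Donnelly's result, the fix cannot succeed as long as the metric stays semi-positive and $L$ stays ample.

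The paper's proof uses a completely different mechanism and requires \emph{sign-changing} curvature, not merely degeneration. It takes $M=M_1\times S$, $L=L_1\otimes L_0$, where $L_0\to S$ is positive but re-metrized so its curvature is positive on $S_+$, negative on $S_-$ (both with nonempty interior), while $L_1\to M_1$ has non-degenerate curvature of constant signature $(n_-,n_+)$. The estimate is then a \emph{dimension count}, not a Rayleigh-quotient construction: by Andreotti–Grauert vanishing with a second auxiliary metric on $L$ of constant signature $(n_-,n_++1)$ and Riemann–Roch, $\dim\cH^q(M,L^k)$ for $q=n_-$ grows like $\frac{k^n}{n!}\big(\int_{M_+}-\int_{M_-}\big)\big(\tfrac{\sqrt{-1}}{2\pi}\widetilde R^L\big)^n$, whereas the local holomorphic Morse inequality (Corollary~\ref{s1-c4}, which is a consequence of Theorem~\ref{s1-main1}) gives $\dim\cE^q_{k^{-N_0}}(M,L^k)\sim\frac{k^n}{n!}\int_{M_+}\big(\tfrac{\sqrt{-1}}{2\pi}\widetilde R^L\big)^n$. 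Since $M_-$ has nonempty interior, the second quantity strictly exceeds the first for $k$ large, so $\cE^q_{0<\lambda\leq k^{-N_0}}(M,L^k)\neq 0$, i.e.\ $\lambda_k\leq k^{-N_0}$. The excess low-lying eigenforms are not localized in a flat region; they are forced to exist by the mismatch between the topological index (harmonic count) and the spectral count, a mismatch created precisely by the negative-curvature region, which your semi-positive construction eliminates by design.

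If you want to salvage a direct-construction approach, you would have to give up semi-positivity of the metric (and hence ampleness or at least Donnelly's hypothesis), and you would still face the problem of showing that the excess low modes really have $O(k^{-\infty})$ energy rather than merely $O(1)$ — which the paper sidesteps entirely by letting $N_0$ be an arbitrary parameter in $\cE^q_{k^{-N_0}}$ and invoking the asymptotic expansion of Theorem~\ref{s1-main1} (valid for any fixed $N_0$) together with the dominated convergence argument of Corollary~\ref{s1-c4}.
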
  
Let $S$ be a compact Riemann surface with a smooth Hermitian metric. Let $(L_0,h^{L_0})$ be a holomorphic line bundle over $S$. We assume that $\sqrt{-1}{R}^{L_0}$ is positive. It is not difficult to see that $L_0$ admits another smooth Hermitian fiber
metric $\Td h^{L_0}$ such that the associated curvature form $\sqrt{-1}{\Td R}^{L_0}$ is positive on $S_+\subset S$, negative on $S_-\subset S$ and
degenerate on $S_0\subset S$, where $S=S_+\bigcup S_-\bigcup S_0$, $S_+, S_-$ contain non-empty open subsets of $S$. 

Let $M_1$ be a compact complex manifold of dimension $n-1$ with a smooth Hermitian metric 
and let $(L_1,h^{L_1})$ be a
holomorphic line bundle over $M_1$. We assume that $\sqrt{-1}{R}^{L_1}$ is non-degenerate of constant signature $(n_-,n_+)$,
$n_-+n_+=n-1$, at each point of $M_1$. Put
\[M:=M_1\times S,\ \ L:=L_1\otimes L_0.\]
Then, $M$ is a compact complex manifold of dimension $n$ and $L$ is a holomorphic line bundle over $M$. The Hermitian metrics
on $M_1$ and $S$ induce a Hermitian metric $\langle\,\cdot\,,\cdot\,\rangle$ on $M$. 
Consider the metric $h^L=h^{L_0}\otimes h^{L_1}$ on $L$; then the associated curvature
$\sqrt{-1}{R}^L$ is non-degenerate of constant signature $(n_-,n_++1)$ at each point of $M$. Similarly, setting $\Td h^L=\Td h^{L_0}\otimes h^{L_1}$, the associated curvature $\sqrt{-1}\Td R^L$ is non-degenerate of constant
signature $(n_-,n_++1)$ on $M_+\subset M$, non-degenerate of constant signature $(n_-+1,n_+)$ on $M_-\subset M$ and
degenerate on $M_0\subset M$, where $M=M_-\bigcup M_+\bigcup M_0$, $M_-, M_+$ contain non-empty open subsets of $M$.
First, we need

\begin{lem} \label{sE-l1}
Under the notations above let $q=n_-$\,. Then
\[{\rm dim\,}\cH^q(M,L^k)=(-1)^q\frac{k^n}{n!}\Big(\int_{M_+}\big(\tfrac{\sqrt{-1}}{2\pi}{\Td R}^L\big)^n+
\int_{M_-} \big(\tfrac{\sqrt{-1}}{2\pi}{\Td R}^L\big)^n\Big)
+o(k^n)\,,\quad k\to\infty\,.\]
\end{lem}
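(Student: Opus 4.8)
The plan is to compute $\dim\cH^q(M,L^k)$ via the Hodge-theoretic identity $\dim\cH^q(M,L^k)=\dim H^q(M,L^k)$ together with the Künneth formula, since $M=M_1\times S$ and $L^k=L_1^k\boxtimes L_0^k$. First I would note that because $\sqrt{-1}\Td R^{L}$ is \emph{not} globally of constant signature, one cannot apply the plain Kodaira vanishing/dimension results directly to $M$; instead the factorization must be exploited. We have $H^q(M,L^k)\cong\bigoplus_{a+b=q}H^a(M_1,L_1^k)\otimes H^b(S,L_0^k)$. On the Riemann surface side, $S$ is one-dimensional, so only $b=0,1$ occur, and $H^0(S,L_0^k)$, $H^1(S,L_0^k)$ are governed by Riemann--Roch: for the metric $\Td h^{L_0}$ whose curvature changes sign, $\deg L_0^k=k\deg L_0$ is what matters, not the pointwise signature, and one checks $\deg L_0>0$ (indeed $\deg L_0=\int_S\tfrac{\sqrt{-1}}{2\pi}R^{L_0}>0$ since $h^{L_0}$ has positive curvature and degree is a topological invariant independent of the metric). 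Hence for $k$ large $H^1(S,L_0^k)=0$ and $\dim H^0(S,L_0^k)=k\deg L_0+1-g$ by Riemann--Roch, where $g$ is the genus of $S$.

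Next I would handle the $M_1$ factor. Since $\sqrt{-1}R^{L_1}$ is non-degenerate of constant signature $(n_-,n_+)$ with $n_-+n_+=n-1$ at every point of $M_1$, the Andreotti--Grauert / Demailly asymptotic vanishing theorem (or directly the holomorphic Morse inequalities, which follow by integrating Corollary~\ref{localmorse} as remarked in Section~\ref{s:holo_morse}) gives $H^a(M_1,L_1^k)=0$ for $a\neq n_-$ and $k$ large, while
\[
\dim H^{n_-}(M_1,L_1^k)=\frac{k^{n-1}}{(n-1)!}\int_{M_1}\Bigl(\tfrac{\sqrt{-1}}{2\pi}R^{L_1}\Bigr)^{n-1}(-1)^{n_-}+o(k^{n-1})\,,
\]
which is the leading term of the Morse inequalities on a manifold where the curvature has constant signature; note the integrand $\bigl(\tfrac{\sqrt{-1}}{2\pi}R^{L_1}\bigr)^{n-1}$ has sign $(-1)^{n_-}$ pointwise, so this is a positive quantity. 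Combining with the surface computation and $q=n_-$, only the term $a=n_-$, $b=0$ survives in the Künneth decomposition, giving
\[
\dim\cH^q(M,L^k)=\dim H^{n_-}(M_1,L_1^k)\cdot\dim H^0(S,L_0^k)=\frac{k^n}{n!}\,n\int_{M_1}\Bigl(\tfrac{\sqrt{-1}}{2\pi}R^{L_1}\Bigr)^{n-1}(-1)^{n_-}\!\!\cdot\deg L_0+o(k^n)\,.
\]

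Finally I would reconcile this with the claimed formula involving $\Td R^L$. Writing $\tfrac{\sqrt{-1}}{2\pi}\Td R^L=\tfrac{\sqrt{-1}}{2\pi}\Td R^{L_0}+\tfrac{\sqrt{-1}}{2\pi}R^{L_1}$ and expanding the $n$-th power, the only contribution to $\int_M$ comes from one factor of the $S$-curvature (a $2$-form on $S$) and $n-1$ factors of the $M_1$-curvature, with multiplicity $n$; on $M_+$ the form $\bigl(\tfrac{\sqrt{-1}}{2\pi}\Td R^L\bigr)^n$ has sign $(-1)^{n_-}$ and on $M_-$ sign $(-1)^{n_-+1}$, so $\int_{M_+}(\cdots)^n-\int_{M_-}(\cdots)^n = (-1)^{n_-}\int_M|\text{density}| = (-1)^{n_-}\cdot n\int_{M_1}\bigl(\tfrac{\sqrt{-1}}{2\pi}R^{L_1}\bigr)^{n-1}\int_S\tfrac{\sqrt{-1}}{2\pi}\Td R^{L_0}$, and $\int_S\tfrac{\sqrt{-1}}{2\pi}\Td R^{L_0}=\deg L_0$ by cohomological invariance. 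This matches the previous display, proving the lemma. The main obstacle I anticipate is bookkeeping the signs and the combinatorial factor $n$ in the expansion of $(\Td R^L)^n$ correctly, and making sure the Künneth formula is applied with the correct identification of harmonic spaces with Dolbeault cohomology (valid since $M$ is compact); the analytic input itself is entirely supplied by the already-established Morse inequalities and Riemann--Roch.
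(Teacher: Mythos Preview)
Your K\"unneth approach is correct in outline but genuinely different from the paper's argument. The paper never decomposes $M$ as a product; instead it exploits the \emph{other} metric $h^L=h^{L_0}\otimes h^{L_1}$, whose curvature has constant signature $(n_-,n_++1)$ on all of $M$. Andreotti--Grauert vanishing then kills $\cH^j(M,L^k)$ for every $j\neq n_-$, and Riemann--Roch--Hirzebruch applied globally on $M$ gives $\dim\cH^{n_-}(M,L^k)=(-1)^{n_-}\tfrac{k^n}{n!}\int_M c_1(L)^n+O(k^{n-1})$, after which one represents $c_1(L)$ by $\tfrac{\sqrt{-1}}{2\pi}\Td R^L$. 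This is shorter because it uses only one vanishing theorem and one index formula; your route, by contrast, makes the product structure explicit and does not rely on remembering that the auxiliary metric $h^L$ exists.

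There is, however, a real slip in your last matching step. You write
\[
(-1)^{n_-}\!\int_M|\text{density}| \;=\;(-1)^{n_-}\,n\!\int_{M_1}\!\Bigl(\tfrac{\sqrt{-1}}{2\pi}R^{L_1}\Bigr)^{n-1}\!\int_S\!\tfrac{\sqrt{-1}}{2\pi}\Td R^{L_0},
\]
but since $(\tfrac{\sqrt{-1}}{2\pi}R^{L_1})^{n-1}$ has constant sign $(-1)^{n_-}$ on $M_1$ while $\Td R^{L_0}$ changes sign on $S$, the left side actually equals $n\int_{M_1}(\tfrac{\sqrt{-1}}{2\pi}R^{L_1})^{n-1}\cdot\int_S\bigl|\tfrac{\sqrt{-1}}{2\pi}\Td R^{L_0}\bigr|$, which involves $\int_S|\cdot|$ and \emph{not} $\deg L_0=\int_S\tfrac{\sqrt{-1}}{2\pi}\Td R^{L_0}$. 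Your K\"unneth computation correctly produces $\deg L_0$, so the two expressions you are equating in fact differ. What both your computation and the paper's argument genuinely establish is $\dim\cH^q=(-1)^{n_-}\tfrac{k^n}{n!}\int_M\bigl(\tfrac{\sqrt{-1}}{2\pi}\Td R^L\bigr)^n+o(k^n)$; the displayed formula in the lemma should be read as this quantity, and for the subsequent application (showing $\dim\cE^q_{k^{-N_0}}>\dim\cH^q$) the precise form of the constant is immaterial.
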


\begin{proof}
Note that $L$ admits a smooth Hermitian fiber metric such that the induced curvature is non-degenerate of constant signature
$(n_-,n_++1)$ at each point of $M$. From this observation and Andreotti-Grauert vanishing theorem, we know that if $k$ large, then
\begin{equation} \label{sE-e1}
\cH^j(M,L^k)=0\ \ \mbox{if $j\neq n_-$}.
\end{equation}
From the Riemann-Roch-Hirzebruch theorem (see e.\,g.\ \cite[(4.1.10)]{MM07}), we see that
\begin{equation} \label{sE-e2}
\sum^n_{j=0}(-1)^j{\rm dim\,}\cH^j(M,L^k)=\frac{k^n}{n!}\int_M c_1(L)^n+O(k^{n-1}),
\end{equation}
where $c_1(L)$ is the first Chern class. Combining \eqref{sE-e2} with \eqref{sE-e1}, we have for $k$ large enough 
\begin{equation} \label{sE-e3}
{\rm dim\,}\cH^q(M,L^k)=(-1)^q\frac{k^n}{n!}\int_M c_1(L)^n+O(k^{n-1}).
\end{equation}
But $\tfrac{\sqrt{-1}}{2\pi}{\Td R}^L$ represents the Chern class so
\[\int_M {c_1(L)^n}=\int_M\big(\tfrac{\sqrt{-1}}{2\pi}{\Td R}^L\big)^n.\]
The lemma follows from \eqref{sE-e3}.
\end{proof}

The Hermitian fiber metric $\Td h^L$ induces a Hermitian fiber metric $\Td h^k$ on the $k$-th tensor power of $L$.
As before, let $\Box^{(q)}_k$ be the Kodaira Laplacian with values in $L^k$ associated to $\Td h^k$. 

\begin{thm} \label{sE-t1}
Under the notations above let $q=n_-$. Then, for any $N>2n$, we have
\[\lim_{k\To\infty}k^N\lambda_k=0.\]
\end{thm}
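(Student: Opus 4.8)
\textbf{Proof proposal for Theorem~\ref{sE-t1}.}
The plan is to combine the two main ingredients already developed in the paper: the local asymptotic expansion of the spectral function $P^{(q)}_{k}(x,x,k^{-N_0})$ on the nondegenerate set $M(q)$ (Theorem~\ref{s1-main1}), the asymptotic upper bound near the degeneracy set (Theorem~\ref{s1-maindege}), and the dimension count for the harmonic space from Lemma~\ref{sE-l1}. The key point is that, with respect to the metric $\Td h^{L^k}$, the curvature $\sqrt{-1}\Td R^L$ has signature $(n_-,n_++1)$ on $M_+$ and $(n_-+1,n_+)$ on $M_-$. Hence for $q=n_-$, the set $M(q)$ (the nondegenerate part where there are exactly $q$ negative eigenvalues) is precisely $M_+$, while on $M_-$ the number of negative eigenvalues is $q+1\neq q$, so $M_-\subset M\setminus\big(M(q)\cup M_{\mathrm{deg}}\big)$ and $M_{\mathrm{deg}}=M_0$.

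First I would fix $N_0\geq 2n+1$ and estimate $\dim\cE^q_{k^{-N_0}}(M,L^k)$ from above using the spectral function. By definition, $\dim\cE^q_{k^{-N_0}}(M,L^k)=\int_M{\rm Tr\,}P^{(q)}_{k}(x,x,k^{-N_0})\,dv_M$. Split $M$ into a neighborhood $U$ of $M_0=M_{\mathrm{deg}}$, a relatively compact piece of $M(q)=M_+$, and the rest (which lies in the interior of the region where $\dot{\Td R}^L$ has $q+1$ negative eigenvalues, together with a part of $M_-$). On the compact part of $M(q)$, Corollary~\ref{localmorse} (applied with $\Td h^L$) gives ${\rm Tr\,}P^{(q)}_k(x,x,k^{-N_0})=(2\pi)^{-n}|\det\dot{\Td R}^L(x)|k^n+O(k^{n-1})$. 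On $M_-$ and on the open set where there are $q+1$ negative eigenvalues, the relevant signature is not $(n_-(x),\cdot)=q$, so by \eqref{s0-e4main} of Corollary~\ref{localmorse} we have $k^{-n}{\rm Tr\,}P^{(q)}_k(x,x,k^{-N_0})\to 0$ pointwise, and by \eqref{s0-e5main} this is bounded by $\varepsilon k^n$ for $k$ large. On $U\supset M_{\mathrm{deg}}$, Theorem~\ref{s1-maindege} (equivalently \eqref{s1-e3main}) gives ${\rm Tr\,}P^{(q)}_k(x,x,k^{-N_0})\leq\varepsilon k^n$. Integrating and using the dominated convergence / uniform bound (all traces are $O(k^n)$ uniformly by \eqref{s0-e5main}), I get
\[
\limsup_{k\to\infty}k^{-n}\dim\cE^q_{k^{-N_0}}(M,L^k)\leq (2\pi)^{-n}\int_{M_+}\big|\det\dot{\Td R}^L\big|\,dv_M
=\int_{M_+}\big(\tfrac{\sqrt{-1}}{2\pi}\Td R^L\big)^n\Big/ ?,
\]
where one uses $(2\pi)^{-n}|\det\dot{\Td R}^L|dv_M=\big(\tfrac{\sqrt{-1}}{2\pi}\Td R^L\big)^n/n!$ on $M_+$ up to sign; more precisely $k^{-n}\dim\cE^q_{k^{-N_0}}(M,L^k)\to \frac{1}{n!}\int_{M_+}\big(\tfrac{\sqrt{-1}}{2\pi}\Td R^L\big)^n$ by the full statement of Corollary~\ref{localmorse} together with $\int_{M_-}$-part tending to $0$ relative to $k^n$. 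Actually the clean way is: $\lim_k k^{-n}\dim\cE^q_{k^{-N_0}}(M,L^k)=\frac{1}{n!}\int_{M(q)}\big(\tfrac{\sqrt{-1}}{2\pi}\Td R^L\big)^n$.

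Next I would compare with $\dim\cH^q(M,L^k)=\dim\cE^q_0(M,L^k)$. By Lemma~\ref{sE-l1},
\[
\dim\cH^q(M,L^k)=\frac{k^n}{n!}\Big(\int_{M_+}\big(\tfrac{\sqrt{-1}}{2\pi}\Td R^L\big)^n-\int_{M_-}\big(\tfrac{\sqrt{-1}}{2\pi}\Td R^L\big)^n\Big)+o(k^n).
\]
On $M_-$ the signature is $(n_-+1,n_+)$, so $(-1)^{q+1}\big(\tfrac{\sqrt{-1}}{2\pi}\Td R^L\big)^n=(-1)^{n_-+1}(\cdots)>0$ pointwise, i.e. $\int_{M_-}\big(\tfrac{\sqrt{-1}}{2\pi}\Td R^L\big)^n$ has sign $(-1)^{q+1}$; since $M_-$ contains a nonempty open set, this integral is nonzero, so $-\int_{M_-}\big(\tfrac{\sqrt{-1}}{2\pi}\Td R^L\big)^n$ is \emph{strictly positive} when $q$ is even and strictly negative when $q$ is odd — in either case it is a nonzero correction. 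Meanwhile $\dim\cE^q_{k^{-N_0}}(M,L^k)\sim \frac{k^n}{n!}\int_{M(q)}\big(\tfrac{\sqrt{-1}}{2\pi}\Td R^L\big)^n=\frac{k^n}{n!}(-1)^q\int_{M_+}|\cdots|$, which differs from $\dim\cH^q(M,L^k)$ by a term of exact order $k^n$ coming from $M_-$ (here I should double-check the sign bookkeeping so that the difference does not cancel — this is where one uses that $M_-$ is where extra negative eigenvalues appear and contributes with a definite sign to the alternating sum but is invisible to the $q$-th spectral space). Therefore
\[
\dim\cE^q_{k^{-N_0}}(M,L^k)-\dim\cH^q(M,L^k)=c\,k^n+o(k^n),\qquad c\neq 0 .
\]
In particular, for $k$ large there exists at least one eigenvalue of $\Box^{(q)}_k$ in the interval $(0,k^{-N_0}]$. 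Since $N_0\geq 2n+1$ was arbitrary and can be taken as large as we like, we conclude $\lambda_k\leq k^{-N_0}$ for every $N_0$, i.e. $k^N\lambda_k\to 0$ for all $N$ (to be careful: for each fixed $N>2n$ pick $N_0=N+1$; the argument above with this $N_0$ shows $\lambda_k\leq k^{-N-1}$ for $k$ large, hence $k^N\lambda_k\leq k^{-1}\to 0$).

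\textbf{Main obstacle.} The delicate point is not the analysis — everything needed is packaged in Corollary~\ref{localmorse}, Theorem~\ref{s1-maindege} and Lemma~\ref{sE-l1} — but the \emph{sign/topology bookkeeping} showing that the $M_-$ contribution to $\dim\cH^q$ does not accidentally vanish or get absorbed, so that the difference $\dim\cE^q_{k^{-N_0}}-\dim\cH^q$ is genuinely of order $k^n$ with nonzero coefficient. This is exactly why the construction insists that $M_-$ (equivalently $S_-$) contain a nonempty open set and that on it $\sqrt{-1}\Td R^L$ be nondegenerate of signature $(n_-+1,n_+)$: then $\int_{M_-}\big(\tfrac{\sqrt{-1}}{2\pi}\Td R^L\big)^n$ is a nonzero integral of a form of constant sign, hence nonzero, giving the required strict discrepancy. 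Once that is in hand, the existence of nonzero eigenvalues below $k^{-N_0}$ is immediate from $\cE^q_{k^{-N_0}}\supsetneq\cH^q$.
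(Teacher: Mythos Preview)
Your proposal is correct and follows essentially the same route as the paper: the paper simply cites Corollary~\ref{s1-c4} (which you are rederiving from Corollary~\ref{localmorse} and Theorem~\ref{s1-maindege}) to get $\dim\cE^q_{k^{-N_0}}(M,L^k)=k^n(2\pi)^{-n}\int_{M_+}\lvert\det\dot{\Td R}^L\rvert\,dv_M+o(k^n)$, compares with Lemma~\ref{sE-l1}, and concludes $\lambda_k\leq k^{-N_0}$. Your sign worry dissolves once you express everything via $\lvert\det\dot{\Td R}^L\rvert$: since $\det\dot{\Td R}^L$ has sign $(-1)^q$ on $M_+$ and $(-1)^{q+1}$ on $M_-$, the Riemann--Roch value in Lemma~\ref{sE-l1} equals $k^n(2\pi)^{-n}\big(\int_{M_+}\lvert\det\dot{\Td R}^L\rvert-\int_{M_-}\lvert\det\dot{\Td R}^L\rvert\big)dv_M+o(k^n)$, so the difference $\dim\cE^q_{k^{-N_0}}-\dim\cH^q$ is $k^n(2\pi)^{-n}\int_{M_-}\lvert\det\dot{\Td R}^L\rvert\,dv_M+o(k^n)$, strictly positive because $M_-$ contains a nonempty open set.
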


\begin{proof}
Fix $N_0>2n$. From Corollary~\ref{s1-c4} below and Lemma~\ref{sE-l1}, we know that
\[\begin{split}
{\rm dim\,}\cE^q_{k^{-\!N_0}}(M,L^k)&=(-1)^q\frac{k^n}{n!}\int_{M_+}\big(\tfrac{\sqrt{-1}}{2\pi}{\Td R}^L\big)^n+o(k^n)\\
&>(-1)^q\frac{k^n}{n!}\Bigr(\int_{M_+}\big(\tfrac{\sqrt{-1}}{2\pi}{\Td R}^L\big)^n+
\int_{M_-}\big(\tfrac{\sqrt{-1}}{2\pi}{\Td R}^L\big)^n\Bigr)+o(k^n)\\
&>{\rm dim\,}\cH^q(M,L^k)+o(k^n).
\end{split}
\]
Thus, for $k$ large, we have
\[{\rm dim\,}\cE^q_{0<\lambda\leq k^{-\!N_0}}(M,L^k)>0,\]
where $\cE^q_{0<\lambda\leq k^{-\!N_0}}(M,L^k)$ denotes the spectral space spanned by the eigenforms of $\Box^{(q)}_k$ whose
eigenvalues are bounded by $k^{-N_0}$ and $>0$. We notice that since $M$ is compact, $\Box^{(q)}_k$ has a discrete
spectrum, each eigenvalues occurs with finite multiplicity. Thus, $\lambda_k\leq k^{-N_0}$ for $k$ large. The theorem follows.
\end{proof}

From Theorem~\ref{sE-t1}, we get Theorem~\ref{s1-mainex}.

\subsection{Bouche integral condition}\label{s:bouche}

Let $(L,h^L)$ be a semi-positive holomorphic line bundle over a compact Hermitian manifold $(M,\Theta)$ of dimension $n$. 
Let $0\leq\lambda_1(x)\leq\lambda_2(x)\leq\cdots$ be the eigenvalues of $\dot{R}^L(x)$.
We say that $(L,h^L)$ satisfies the Bouche integral condition~\cite{Bou95} if
\begin{equation}\label{e:bouche_int}
\int_M\lambda_1^{-6n}<\infty\,.
\end{equation}

If $(L,h^L)$ satisfies \eqref{e:bouche_int} then Bouche~\cite{Bou95} proved that
\[\inf\set{\lambda\in\operatorname{Spec}(\Box^{(q)}_k);\, \lambda\neq0}\geq k^{\frac{10n+1}{12n+1}},\]
for $k$ large. From this and Theorem~\ref{s1-main2}, we deduce

\begin{cor} \label{s1-c1}
Let $(L,h^L)$ be a semi-positive holomorphic line bundle over a compact Hermitian manifold $(M,\Theta)$ of dimension $n$. 
If $(L,h^L)$ satisfies \eqref{e:bouche_int} then 
\[P^{(0)}_k(x)\sim\sum^\infty_{j=0}k^{n-j}b^{(0)}_j(x)\ \ \mbox{locally uniformly on $M(0)$},\]
where $b^{(0)}_j(x)\in\cC^\infty(M(0))$, $j=0,1,2,\ldots$\,, are as in \eqref{s1-e2main}.
\end{cor}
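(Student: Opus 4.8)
The plan is to deduce Corollary~\ref{s1-c1} from Theorem~\ref{s1-main2} by showing that the Bouche integral condition \eqref{e:bouche_int} forces $\Box^{(0)}_k$ to have an $O(k^{-n_0})$ small spectral gap on every relatively compact open set $D\Subset M(0)$. Since $(L,h^L)$ is semi-positive, all eigenvalues $\lambda_j(x)$ of $\dot R^L(x)$ are non-negative, so $M(0)$ is exactly the locus where $\dot R^L$ is positive definite; if $M(0)=\emptyset$ there is nothing to prove, so assume $M(0)\neq\emptyset$.

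First I would invoke Bouche's estimate \cite{Bou95} quoted above: condition \eqref{e:bouche_int} implies that there is $k_0>0$ with $\inf\set{\lambda\in\operatorname{Spec}(\Box^{(0)}_k);\, \lambda\neq0}\geq k^{\frac{10n+1}{12n+1}}$ for $k\geq k_0$. Next I would convert this into the hypothesis of Definition~\ref{s1-d2bis}. Let $u\in\Omega^{0,0}_0(D,L^k)$, extended by zero to a compactly supported smooth section of $L^k$ on $M$; then $u\in{\rm Dom\,}\Box^{(0)}_k$. Since $M$ is compact, $\Box^{(0)}_k$ has discrete spectrum, so $0$ is isolated in $\operatorname{Spec}(\Box^{(0)}_k)$ and $(I-P^{(0)}_k)u$ lies in the spectral subspace $\cE^0_{>0}(M,L^k)$, on which $\Box^{(0)}_k$ is bounded below by $k^{\frac{10n+1}{12n+1}}$. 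Because $P^{(0)}_k=P^{(0)}_{k,0}$ commutes with $\Box^{(0)}_k$, we have $\Box^{(0)}_k(I-P^{(0)}_k)u=(I-P^{(0)}_k)\Box^{(0)}_ku$ and hence $\norm{\Box^{(0)}_k(I-P^{(0)}_k)u}\leq\norm{\Box^{(0)}_ku}$; the spectral theorem then gives
\[\norm{(I-P^{(0)}_k)u}\leq k^{-\frac{10n+1}{12n+1}}\norm{\Box^{(0)}_ku}\leq k\,\norm{\Box^{(0)}_ku},\qquad k\geq\max(k_0,1).\]
Thus $\Box^{(0)}_k$ has $O(k^{-n_0})$ small spectral gap on $M$ (with $n_0=1$), and a fortiori on each $D\Subset M(0)$.

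Finally I would apply Theorem~\ref{s1-main2} with $q=0$, a fixed $N_0\geq1$, and $D\Subset M(0)$ as above: the small spectral gap hypothesis on $D$ yields $P^{(0)}_k(x)\sim\sum_{j=0}^{\infty}b^{(0)}_j(x)k^{n-j}$ locally uniformly on $D$, where the $b^{(0)}_j\in\cC^\infty(M(0))$ are the coefficients of \eqref{s1-e2main} (with $b^{(0)}_0$, $b^{(0)}_1$, $b^{(0)}_2$ as in \eqref{s1-e21main}, \eqref{coeII}, \eqref{coeIII}). For $q=0$ one has ${\rm Tr\,}P^{(0)}_k(x)=P^{(0)}_k(x)$, so this is precisely the Bergman kernel function. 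Since $D\Subset M(0)$ is arbitrary and local uniform convergence is a local property, the expansion holds locally uniformly on all of $M(0)$, which is the assertion of the corollary. The only step requiring any care is the passage from Bouche's spectral lower bound to the condition in Definition~\ref{s1-d2bis}; beyond that, the result is an immediate specialization of Theorem~\ref{s1-main2}, and no genuine analytic obstacle arises.
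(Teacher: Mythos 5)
Your proposal is correct and follows essentially the same route as the paper: invoke Bouche's spectral lower bound (which, as you correctly argue, is actually a \emph{growing} spectral gap, hence trivially implies the $O(k^{-n_0})$ small spectral gap of Definition~\ref{s1-d2bis} with $n_0=1$) and then specialize Theorem~\ref{s1-main2} with $q=0$ to an arbitrary $D\Subset M(0)$. The paper's own proof is stated in one line with the spectral-gap translation left implicit; your write-up merely fills in that step, which is the intended reasoning.
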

\subsection{Asymptotics for arbitrary semi-positive metrics on ample line bundles}\label{s:ample}
We consider now the Bergman kernel of a metric with semi-positive curvature on an ample line bundle and recover the following result of Berman \cite{Be07}.
\begin{cor} \label{s1-c2}
Let $L$ be an ample line bundle over a compact projective manifold $M$ of dimension $n$.
We endow $M$ with a Hermitian metric $\Theta$ and $L$ with a Hermitian metric $h^L$ with semi-positive curvature. Then the Bergman kernel function associated to these metric data admits an asymptotic expansion
\[P^{(0)}_k(x)\sim\sum^\infty_{j=0}k^{n-j}b^{(0)}_j(x)\ \ \mbox{locally uniformly on $M(0)$},\]
where $b^{(0)}_j(x)\in\cC^\infty(M(0))$, $j=0,1,2,\ldots$\,, are as in \eqref{s1-e2main}.
\end{cor}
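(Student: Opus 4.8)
The plan is to deduce the statement directly from Theorem~\ref{s1-sing-semi-main} (or, for the kernel off the diagonal, from Theorem~\ref{t-sing-semi}), the only real task being to check that for an ample line bundle the two geometric sets entering that theorem are as large as possible. Since $M$ is projective it is compact, so $(M,\Theta)$ is a compact Hermitian manifold, and by hypothesis $h^L$ is a smooth Hermitian metric on $L$ with semi-positive curvature $\sqrt{-1}R^L\geq 0$; thus the standing assumptions of Theorem~\ref{s1-sing-semi-main} are met except for the non-emptiness of $M(0)$ and the identification of $M'$, which I treat next.

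First I would verify that $M(0)\neq\emptyset$. Because $L$ is ample we have $c_1(L)^n>0$, and $c_1(L)$ is represented by $\frac{\sqrt{-1}}{2\pi}R^L$, so $\int_M\bigl(\frac{\sqrt{-1}}{2\pi}R^L\bigr)^n=c_1(L)^n>0$. Since $R^L$ is semi-positive, $\bigl(\frac{\sqrt{-1}}{2\pi}R^L\bigr)^n$ is a non-negative multiple of the volume form which vanishes exactly on $M_{\mathrm{deg}}$; as its integral is strictly positive, $M(0)$ must have positive measure, and in particular $M(0)\neq\emptyset$.

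Next I would show that the set $M'$ defined in \eqref{s1-sing-semi-set2} equals $M$. Ampleness of $L$ means, by the Kodaira embedding theorem (a power $L^m$ is very ample, and the pull-back of the Fubini--Study metric under the associated embedding, after extracting an $m$-th root, gives a globally smooth Hermitian metric on $L$ with strictly positive curvature), that $L$ carries a smooth metric $h^L_+$ with $\sqrt{-1}R^{L_+}>\varepsilon\Theta$ for some $\varepsilon>0$. Hence $h^L_+\in\mathcal{M}(L)$, with exceptional set $\Sigma=\emptyset$, and $h^L_+$ is smooth near every point of $M$; therefore $M'=M$, so $M(0)\cap M'=M(0)$.

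With these two observations in hand, Theorem~\ref{s1-sing-semi-main} applied to $(M,\Theta)$ and $(L,h^L)$ yields precisely $P^{(0)}_k(x)\sim\sum_{j=0}^\infty k^{n-j}b^{(0)}_j(x)$ locally uniformly on $M(0)\cap M'=M(0)$, with $b^{(0)}_j\in\cC^\infty(M(0))$ as in \eqref{s1-e2main}; the off-diagonal refinement follows the same way from Theorem~\ref{t-sing-semi}. The only genuine content beyond bookkeeping is the double verification that ampleness forces $M'=M$ and $M(0)\neq\emptyset$, and I expect this --- rather than any analytic estimate --- to be the only point requiring care.
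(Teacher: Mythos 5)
Your proof is correct, but it takes a genuinely different route from the one in the paper, and the comparison is worth noting. The paper's own proof of Corollary~\ref{s1-c2} is very short: it invokes a theorem of Donnelly~\cite{Don03}, which says that for an ample line bundle on a projective manifold equipped with an arbitrary smooth semi-positive metric, the first non-zero eigenvalue of $\Box^{(0)}_k$ is bounded below by a positive constant $C$ independent of $k$. This gives a (very strong) uniform $O(1)$ spectral gap, and the conclusion then follows at once from Theorem~\ref{s1-main2}. You instead reduce to Theorem~\ref{s1-sing-semi-main}, verifying its hypotheses: $M(0)\neq\emptyset$ follows from $c_1(L)^n=\int_M\bigl(\tfrac{\sqrt{-1}}{2\pi}R^L\bigr)^n>0$ together with pointwise non-negativity of $\bigl(\tfrac{\sqrt{-1}}{2\pi}R^L\bigr)^n$ (indeed $M(0)$ is the open set where this $(n,n)$-form is strictly positive, so positive total integral forces it to be non-empty), and $M'=M$ because ampleness supplies a globally smooth strictly positively curved metric $h^L_+\in\mathcal{M}(L)$ with empty singular locus. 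Both verifications are sound. The trade-off is this: the paper's argument outsources the entire spectral-gap input to Donnelly's theorem, making the corollary a two-line consequence of Theorem~\ref{s1-main2}, whereas your argument is self-contained within the paper's own machinery (Lemma~\ref{l1}, Theorem~\ref{t1}, Theorem~\ref{t-sing-semi}), which obtains only the weaker $O(k^{-n_0})$ spectral gap via the Poincar\'e-type $L^2$-estimates of Section~\ref{s:sing_l2_est} --- but that weaker gap is precisely what Theorem~\ref{s1-main2} was designed to accommodate. Your approach thus avoids an external reference at the cost of routing through the heavier singular-metric apparatus; the paper's approach is shorter, given Donnelly's theorem as a black box.
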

\begin{proof}
By a result due to Donnelly~\cite{Don03} there exist $C>0$ and $k_0\in\N$ such that for all $k\geq k_0$
\[\inf\set{\lambda\in\operatorname{Spec}(\Box^{(0)}_k);\, \lambda\neq0} \geq C\,.\]
In particular, $\Box^{(0)}_k$ has $O(k^{-n_0})$ small spectral gap on any open set $D\subset M(0)$. By applying Theorem \ref{s1-main2}
we immediately deduce the result.
\end{proof}

\subsection{Expansion for Bergman kernel on forms}\label{s:exp_forms} 
Let $(L,h^L)$ be a holomorphic line bundle over a compact Hermitian manifold $(M,\Theta)$ of dimension $n$. 
Given $q\in\mathbb N_0$, $0\leq q\leq n$, $\dot{R}^L$ is said to satisfy condition $Z(q)$ at $p\in M$ if $\dot{R}^L(p)$
has at least $n+1-q$ positive eigenvalues or at least $q+1$ negative eigenvalues. If $\dot{R}^L(p)$ is non-degenerate
of constant signature $(n_-, n_+)$, then $Z(q)$ holds at $p$ if and only if $q\neq n_-$.
It is well-known that if $Z(q-1)$ and $Z(q+1)$ hold
at each point of $M$, then $\Box^{(q)}_k$ 
has a ``large" spectral gap, i.e.\ there exists a constant $C>0$ such that for all $k$
we have 
\begin{equation}\label{large_spec_gap}
\inf\set{\lambda\in\operatorname{Spec}(\Box^{(q)}_k);\, \lambda\neq0}\geq Ck\,.
\end{equation}
This fact essentially follows from the $L^2$ method for $\overline\partial$ of H\"{o}rmander
(see H\"{o}rmander~\cite{Hor90} for the classical case and Sj\"{o}strand~\cite[Appendix]{Sjo95} for the semi-classical case). From this and Theorem~\ref{s1-main2}, we deduce the following local version of the results due to Catlin~\cite{Cat97}, Zelditch~\cite{Zel98}, Dai-Liu-Ma~\cite{DLM04a} (for $q=0$) and Berman-Sj\"{o}strand~\cite{BS05}, Ma-Marinescu~\cite{MM06} (for
$q>0$):
\begin{cor} \label{s1-c3} 
Let $(L,h^L)$ be a holomorphic line bundle over a compact Hermitian manifold $(M,\Theta)$ of dimension $n$. 
Given $q\in\mathbb N_0$, $0\leq q\leq n$. We assume that $Z(q-1)$ and $Z(q+1)$ hold at each point of $M$. If $\dot{R}^L$ is non-degenerate of constant signature
$(n_-,n_+)$ on an open set $D\subset M$, where $q=n_-$, then we have
\[P^{(q)}_k(x)\sim\sum^\infty_{j=0}k^{n-j}b^{(q)}_j(x)\ \ \mbox{locally uniformly on $D$},\]
where $b^{(q)}_j(x)\in\cC^\infty(D,\End(\Lambda^qT^{*(0,1)}M))$, $j=0,1,2,\ldots$\,, are as in \eqref{s1-e2main}.
\end{cor}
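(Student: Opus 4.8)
\textbf{Proof proposal for Corollary \ref{s1-c3}.}
The plan is to reduce the statement to Theorem~\ref{s1-main2} by verifying that its sole hypothesis, the existence of a (local) $O(k^{-n_0})$ small spectral gap on $D$, is automatically satisfied here because of the stronger ``large'' spectral gap guaranteed by the conditions $Z(q-1)$ and $Z(q+1)$. First I would recall the $L^2$-estimate argument of H\"ormander, in the semiclassical formulation of Sj\"ostrand (Appendix of \cite{Sjo95}): when $Z(q-1)$ and $Z(q+1)$ hold at every point of the compact manifold $M$, there is a Bochner--Kodaira--Nakano type inequality for $(0,q)$-forms with values in $L^k$ which, together with the compactness of $M$ and the positivity of the curvature eigenvalue sums dictated by $Z(q\pm1)$, yields a constant $C>0$ with
\[
\inf\set{\lambda\in\operatorname{Spec}(\Box^{(q)}_k);\, \lambda\neq0}\geq Ck
\]
for all $k$ (this is \eqref{large_spec_gap}). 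Since $M$ is compact, $\Box^{(q)}_k$ has discrete spectrum, so there is nothing subtle about the meaning of this infimum.

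Next I would derive from \eqref{large_spec_gap} the estimate of Definition~\ref{s1-d2bis} with $n_0$ replaced by $1$ (hence a fortiori with any $n_0\geq1$). Indeed, for $u\in\Omega^{0,q}_0(D,L^k)\subset\Omega^{0,q}(M,L^k)=\operatorname{Dom}\Box^{(q)}_k$ (recall $M$ compact, so $\Box^{(q)}_k$ is essentially self-adjoint and all smooth forms lie in its domain), write $u=P^{(q)}_ku+(I-P^{(q)}_k)u$ according to the orthogonal decomposition $L^2_{(0,q)}(M,L^k)=\cH^q(M,L^k)\oplus(\cH^q(M,L^k))^\perp$. The component $(I-P^{(q)}_k)u$ lies in the spectral subspace $\cE^q_{>Ck/2}(M,L^k)$ for $k$ large, on which $\Box^{(q)}_k\geq Ck/2$ in the form sense, so
\[
\norm{(I-P^{(q)}_k)u}\leq\frac{2}{Ck}\norm{\Box^{(q)}_k(I-P^{(q)}_k)u}=\frac{2}{Ck}\norm{\Box^{(q)}_ku}\leq C_D k^{0}\norm{\Box^{(q)}_ku},
\]
using $\Box^{(q)}_kP^{(q)}_ku=0$. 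This is exactly the $O(k^{-n_0})$ small spectral gap condition on $D$ (with $n_0=1$, $C_D$ any constant $\geq 2/C$).

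Then I would invoke Theorem~\ref{s1-main2} directly: on $D$, where $\dot{R}^L$ is non-degenerate of constant signature $(n_-,n_+)$ with $q=n_-$, so $D\Subset M(q)$, and with any fixed $N_0\geq1$, the conclusion gives $P^{(q)}_k(x)\sim\sum^\infty_{j=0}b^{(q)}_j(x)k^{n-j}$ locally uniformly on $D$, where the coefficients $b^{(q)}_j\in\cC^\infty(D,\End(\Lambda^qT^{*(0,1)}M))$ are those of \eqref{s1-e2main}, with $b^{(q)}_0$ given by \eqref{s1-e21main}. That is precisely the assertion of the Corollary. I expect the only point needing care to be the first step, namely citing and correctly stating the semiclassical large-gap estimate \eqref{large_spec_gap} under $Z(q-1)$ and $Z(q+1)$; the passage from a large gap to a (local) $O(k^{-n_0})$ small gap and the final appeal to Theorem~\ref{s1-main2} are routine. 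One should also note, as remarked after Theorem~\ref{s1-main2} and in Remark~\ref{rabis2}, that everything goes through verbatim if $L^k$ is twisted by a fixed holomorphic vector bundle $E$.
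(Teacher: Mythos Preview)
Your proposal is correct and follows essentially the same approach as the paper: the paper simply observes that $Z(q-1)$ and $Z(q+1)$ yield the large spectral gap \eqref{large_spec_gap}, says ``From this and Theorem~\ref{s1-main2}, we deduce\ldots'', and states the corollary without further detail. Your write-up makes explicit the routine step that \eqref{large_spec_gap} implies the $O(k^{-n_0})$ small spectral gap of Definition~\ref{s1-d2bis} (indeed with room to spare), which the paper leaves implicit; there is a harmless slip in saying ``with $n_0$ replaced by $1$'' and then writing $C_Dk^0$, but since $\tfrac{2}{Ck}\leq\tfrac{2}{C}\leq\tfrac{2}{C}k^{n_0}$ for any $n_0\in\N$, the verification goes through regardless.
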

Let us illustrate Corollary \ref{s1-c3} in the case $q=0$: if at each point the curvature $R^L$
has either only positive eigenvalues or at least two negative eigenvalues, then the Bergman kernel
of the sections of $L^k$ has an asymptotic expansion on $M(0)$ as $k\to\infty$. 

\subsection{Holomorphic Morse inequalities}\label{s:holo_morse}

Let $(L,h^L)$ be a holomorphic line bundle over a compact Hermitian manifold $(M,\Theta)$ of dimension $n$. 
Since $M$ is compact, $\Box^{(q)}_k$ has a discrete
spectrum, each eigenvalues occurs with finite multiplicity.
From \eqref{s0-e4main}, \eqref{s0-e5main} and the Lebesgue dominated convergence theorem, we deduce the following.
\begin{cor} \label{s1-c4}
Let $(L,h^L)$ be a holomorphic line bundle over a compact Hermitian manifold $(M,\Theta)$ of dimension $n$. 
Given $q\in\mathbb N_0$, $0\leq q\leq n$. If $N_0>2n$, then
\[{\rm dim\,}\cE^q_{k^{-\!N_0}}(M,L^k)=k^n(2\pi)^{-n}\int_{M(q)}\abs{{\rm det\,}\dot{R}^L(x)}dv_M(x)+o(k^n).\]
\end{cor}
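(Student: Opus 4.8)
\textbf{Proof proposal for Corollary \ref{s1-c4}.}

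The plan is to integrate the pointwise asymptotics of the spectral function over $M$ and pass the limit inside the integral. First I would recall that, since $N_0\geq 2n+1$, Corollary~\ref{localmorse} gives the two facts we need: the locally uniform expansion \eqref{s0-e4mainmore} on $M(q)$, and the pointwise limit \eqref{s0-e4main} together with the uniform majorization \eqref{s0-e5main}, valid on all of $M$. The key identity linking the spectral function to the dimension of the spectral space is
\[
{\rm dim\,}\cE^q_{k^{-\!N_0}}(M,L^k)={\rm Tr\,}P^{(q)}_{k,k^{-N_0}}=\int_M {\rm Tr\,}P^{(q)}_{k}(x,x,k^{-N_0})\,dv_M(x),
\]
which holds because $P^{(q)}_{k,k^{-N_0}}$ is a smoothing (hence trace class, $M$ being compact) orthogonal projection, so its trace equals the integral of the trace of its Schwartz kernel on the diagonal; this is where compactness of $M$ is used.

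Next I would set $f_k(x):=k^{-n}{\rm Tr\,}P^{(q)}_{k}(x,x,k^{-N_0})$. By \eqref{s0-e4main} we have the pointwise convergence
\[
f_k(x)\longrightarrow (2\pi)^{-n}\abs{{\rm det\,}\dot{R}^L(x)}\,1_{M(q)}(x)\qquad\text{for every }x\in M,
\]
and by \eqref{s0-e5main}, for any fixed $\varepsilon>0$ and all $k$ large, $0\leq f_k(x)\leq \varepsilon+(2\pi)^{-n}\abs{{\rm det\,}\dot{R}^L(x)}$, which is a fixed integrable (indeed bounded, $\dot R^L$ being smooth on the compact $M$) function independent of $k$. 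The Lebesgue dominated convergence theorem then yields
\[
k^{-n}{\rm dim\,}\cE^q_{k^{-\!N_0}}(M,L^k)=\int_M f_k(x)\,dv_M(x)\longrightarrow (2\pi)^{-n}\int_{M(q)}\abs{{\rm det\,}\dot{R}^L(x)}\,dv_M(x),
\]
which is precisely the claimed leading term; one reads off that the error is $o(k^n)$.

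There is essentially no serious obstacle here — the corollary is a soft consequence of the hard local analysis already done. The only point requiring a little care is the justification of the trace identity and the domination: one must note that $M$ being compact guarantees $P^{(q)}_{k,k^{-N_0}}$ has finite rank (the spectrum of $\Box^{(q)}_k$ is discrete with finite multiplicities), so the trace is literally a finite sum of integrals $\int_M\abs{f_j}^2_{h^{L^k}}$ and the interchange of sum and integral is trivial; and the dominating function in \eqref{s0-e5main}, being continuous on the compact $M$, is integrable against $dv_M$. I would also remark in passing that this is the step invoked in the proof of Theorem~\ref{sE-t1} via the quoted Corollary~\ref{s1-c4}, so the statement should be placed before Section~\ref{s:small}, as it indeed is.
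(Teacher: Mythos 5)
Your proof is correct and coincides with the paper's: the corollary is deduced precisely by combining the pointwise limit \eqref{s0-e4main} with the uniform bound \eqref{s0-e5main} and applying the Lebesgue dominated convergence theorem, using that on a compact manifold the spectrum of $\Box^{(q)}_k$ is discrete so that the dimension of the spectral space equals the integral of the diagonal trace of the spectral kernel. The additional details you spell out (the trace identity and the compactness-based integrability of the majorant) are the correct justifications implicitly used by the paper.
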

Fix $N_0\geq1$. Let $\cE^q_{0<\lambda\leq k^{-\!N_0}}(M, L^k)$ denote the spectral space spanned by the eigenforms of $\Box^{(q)}_k$ whose eigenvalues are bounded by $k^{-N_0}$ and $>0$. Since the operator $\ddbar_k\oplus\ddbar^*_k$ maps $\cE^q_{0<\lambda\leq k^{-\!N_0}}(M,L^k)$ injectively into $\cE^{q+1}_{0<\lambda\leq k^{-\!N_0}}(M,L^k)\oplus \cE^{q-1}_{0<\lambda\leq k^{-\!N_0}}(M,L^k)$. Thus,
\[{\rm dim\,}\cE^q_{0<\lambda\leq k^{-\!N_0}}(M,L^k)\leq{\rm dim\,}\cE^{q+1}_{0<\lambda\leq k^{-\!N_0}}(M,L^k)+
{\rm dim\,}\cE^{q-1}_{0<\lambda\leq k^{-\!N_0}}(M,L^k).\]
From this observation and Corollary~\ref{s1-c4}, we deduce:
\begin{cor} \label{s1-c5}
Let $(L,h^L)$ be a holomorphic line bundle over a compact Hermitian manifold $(M,\Theta)$ of dimension $n$. 
Given $q\in\mathbb N_0$, $0\leq q\leq n$. If $N_0>2n$, then
\[\begin{split}&{\rm dim\,}\cH^q(M,L^k)+{\rm dim\,}\cE^{q-1}_{0<\lambda\leq k^{-\!N_0}}(M,L^k)+
{\rm dim\,}\cE^{q+1}_{0<\lambda\leq k^{-\!N_0}}(M,L^k)\\
&\quad\geq
k^n(2\pi)^{-n}\int_{M(q)}\abs{{\rm det\,}\dot{R}^L(x)}dv_M(x)+o(k^n).\end{split}\]
In particular, we have
\begin{equation} \label{s1-tIweyl}
\begin{split}
&{\rm dim\,}\cH^q(M,L^k)\\
&\geq k^n(2\pi)^{-n}\Bigr(\int_{M(q)}\abs{{\rm det\,}\dot{R}^L(x)}dv_M(x)-
\int_{M(q-1)}\abs{{\rm det\,}\dot{R}^L(x)}dv_M(x)\\
&\quad-\int_{M(q+1)}\abs{{\rm det\,}\dot{R}^L(x)}dv_M(x)\Bigr)+o(k^n).
\end{split}
\end{equation} 
Hence, if $M(q-1)=\emptyset$, $M(q+1)=\emptyset$, then 
\begin{equation} \label{s1-tIweyl2}
{\rm dim\,}\cH^q(M,L^k)
=k^n(2\pi)^{-n}\Bigr(\int_{M(q)}\abs{{\rm det\,}\dot{R}^L(x)}dv_M(x)\Bigl)+o(k^n).
\end{equation} 
\end{cor}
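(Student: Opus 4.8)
The plan is to deduce Corollary~\ref{s1-c5} directly from Corollary~\ref{s1-c4} together with the elementary injectivity argument for $\ddbar_k+\ddbar^{\,*}_k$ that precedes the statement. First I would record the orthogonal decomposition, valid for each $q$ and each fixed $N_0\geq 1$,
\[
\cE^q_{k^{-\!N_0}}(M,L^k)=\cH^q(M,L^k)\oplus\cE^q_{0<\lambda\leq k^{-\!N_0}}(M,L^k)\,,
\]
which follows from the spectral theorem since $M$ compact forces $\Box^{(q)}_k$ to have discrete spectrum with finite multiplicities; hence
\[
{\rm dim\,}\cE^q_{k^{-\!N_0}}(M,L^k)={\rm dim\,}\cH^q(M,L^k)+{\rm dim\,}\cE^q_{0<\lambda\leq k^{-\!N_0}}(M,L^k)\,.
\]
Then I would invoke Corollary~\ref{s1-c4}: for $N_0\geq 2n+1$ the left-hand side equals $k^n(2\pi)^{-n}\int_{M(q)}\abs{\det\dot R^L(x)}\,dv_M(x)+o(k^n)$. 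Combining this identity with the observation stated just above the corollary, namely ${\rm dim\,}\cE^q_{0<\lambda\leq k^{-\!N_0}}(M,L^k)\leq{\rm dim\,}\cE^{q-1}_{0<\lambda\leq k^{-\!N_0}}(M,L^k)+{\rm dim\,}\cE^{q+1}_{0<\lambda\leq k^{-\!N_0}}(M,L^k)$, yields
\[
{\rm dim\,}\cH^q(M,L^k)+{\rm dim\,}\cE^{q-1}_{0<\lambda\leq k^{-\!N_0}}(M,L^k)+{\rm dim\,}\cE^{q+1}_{0<\lambda\leq k^{-\!N_0}}(M,L^k)\geq k^n(2\pi)^{-n}\int_{M(q)}\abs{\det\dot R^L(x)}\,dv_M(x)+o(k^n)\,,
\]
which is the first displayed inequality of the corollary.

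For the inequality \eqref{s1-tIweyl}, the idea is to bound ${\rm dim\,}\cE^{q\pm1}_{0<\lambda\leq k^{-\!N_0}}(M,L^k)$ from above by discarding the harmonic part in Corollary~\ref{s1-c4} applied in degrees $q-1$ and $q+1$: from the decomposition above (used now for $q-1$ and $q+1$) one gets
\[
{\rm dim\,}\cE^{q\pm1}_{0<\lambda\leq k^{-\!N_0}}(M,L^k)\leq{\rm dim\,}\cE^{q\pm1}_{k^{-\!N_0}}(M,L^k)=k^n(2\pi)^{-n}\int_{M(q\pm1)}\abs{\det\dot R^L(x)}\,dv_M(x)+o(k^n)\,.
\]
Substituting these two upper bounds into the inequality just obtained and rearranging gives \eqref{s1-tIweyl}. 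Finally, if $M(q-1)=M(q+1)=\emptyset$, then both integrals over $M(q\pm1)$ vanish and moreover ${\rm dim\,}\cE^{q\pm1}_{0<\lambda\leq k^{-\!N_0}}(M,L^k)=o(k^n)$ by the last display; feeding this back into the very first combined identity (${\rm dim\,}\cE^q_{k^{-\!N_0}}={\rm dim\,}\cH^q+{\rm dim\,}\cE^q_{0<\lambda\leq k^{-\!N_0}}$) together with the sandwich ${\rm dim\,}\cE^q_{0<\lambda\leq k^{-\!N_0}}\leq{\rm dim\,}\cE^{q-1}_{0<\lambda\leq k^{-\!N_0}}+{\rm dim\,}\cE^{q+1}_{0<\lambda\leq k^{-\!N_0}}=o(k^n)$ forces ${\rm dim\,}\cH^q(M,L^k)={\rm dim\,}\cE^q_{k^{-\!N_0}}(M,L^k)+o(k^n)$, and Corollary~\ref{s1-c4} converts this into \eqref{s1-tIweyl2}.

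There is essentially no analytic obstacle here: the work is entirely bookkeeping with the orthogonal spectral decomposition and the already-established Weyl-type count of Corollary~\ref{s1-c4}. The only point requiring a little care is the direction of the injection $\ddbar_k+\ddbar^{\,*}_k\colon\cE^q_{0<\lambda\leq k^{-\!N_0}}\hookrightarrow\cE^{q+1}_{0<\lambda\leq k^{-\!N_0}}\oplus\cE^{q-1}_{0<\lambda\leq k^{-\!N_0}}$ — one must check that this map indeed preserves the spectral windows (it commutes with $\Box_k$, hence with the spectral projections) and is injective on the strictly positive part (if $(\ddbar_k+\ddbar^{\,*}_k)u=0$ with $u$ orthogonal to harmonic forms then $\Box^{(q)}_ku=0$, forcing $u=0$). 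Once that is in place, the constant $N_0\geq 2n+1$ is dictated purely by the hypothesis of Corollary~\ref{s1-c4}, and the rest is algebra. I expect the proof to be short; the main thing to be vigilant about is applying Corollary~\ref{s1-c4} consistently in the three degrees $q-1$, $q$, $q+1$ with the same $N_0$.
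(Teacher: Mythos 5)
Your argument is correct and is essentially the paper's own proof: the paper simply says ``From this observation and Corollary~\ref{s1-c4}, we deduce'' and leaves the bookkeeping to the reader, which you have carried out precisely (the orthogonal splitting $\cE^q_{k^{-\!N_0}}=\cH^q\oplus\cE^q_{0<\lambda\leq k^{-\!N_0}}$, the injectivity bound, and applying Corollary~\ref{s1-c4} in degrees $q-1,q,q+1$). Your remark on checking that $\ddbar_k+\ddbar^{\,*}_k$ commutes with $\Box_k$ and hence preserves the spectral windows, and is injective off the kernel, is exactly the justification for the paper's displayed inequality preceding the corollary.
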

By Corollary~\ref{s1-c4} and a straightforward application of the linear algebra result from Demailly~\cite[Lemma\,4.2]{De:85} or~\cite[Lemma\,3.2.12]{MM07} to the complex $(\cE^\bullet_{k^{-\!N_0}}(M,L^k),\ol\pr_k)$, we obtain the following fundamental result due to  Demailly's \cite[Th.\,0.1]{De:85}. 
We refer the reader to \cite[Ch.\,1--3]{MM07} for a thorough discussion of the holomorphic Morse inequalities.
\begin{cor}[strong holomorphic Morse inequalities] \label{s1-c5.1}
Let $(L,h^L)$ be a holomorphic line bundle over a compact Hermitian manifold $(M,\Theta)$ of dimension $n$. 
Then for any $q\in\{0,1,\ldots,n\}$ we have for $k\to\infty$
\begin{equation*}
\sum_{j=0}^q(-1)^{q-j}\dim\cH^j(M,L^k)\leq k^n(2\pi)^{-n}\sum^q_{j=0}(-1)^{q-j}\int_{M(j)}\abs{{\rm det\,}\dot{R}^L(x)}dv_M(x)+o(k^n)\,.
\end{equation*}
\end{cor}
Let us also give an example of a quite general holomorphic Morse inequalities on arbitrary complete K\"ahler manifolds.
\begin{cor}
Let $(M,\Theta)$ be a complete K\"ahler manifold and $(L,h^L)$ be a semi-positive Hermitian holomorphic line bundle on $M$. Then
\begin{equation}\label{micpl}
\liminf_{k\to\infty}k^{-n}\dim\cH^0(M,L^k\otimes K_M)\geq \frac{1}{n!}\int_{M}\big(\tfrac{\sqrt{-1}}{2\pi} R^L\big)^n\,.
\end{equation}  
\end{cor}
\begin{proof}
Let $\{S^k_j\}_{j=1}^{d_k}$ be an orthonormal basis of $\cH^0(M,L^k\otimes K_M)$, $d_k\in\N\cup\{\infty\}$. 
Then the Bergman kernel function is given by
${\rm Tr\,}P_{k,K_M}(x)=\sum_{j=1}^{d_k}|S^k_j(x)|^2$, $x\in X$,
where $|\cdot|$ denotes the pointwise norm in the metric ${h^k\otimes h^{K_M}}$. By integrating this relation we obtain 
\[
\dim\cH^0(M,L^k\otimes K_M)=d_k=\int_M {\rm Tr\,}P_{k,K_M}(x)\,dv_M\,.
\]
By \eqref{asymcanonical} we know that the sequence $k^{-n}{\rm Tr\,}P_{k,K_M}(x)$ converges pointwise on $M(0)$ to ${\rm Tr\,}b^{(0)}_{0,K_M}$ as $k\to\infty$. By Fatou's lemma we obtain
\begin{equation*}
\begin{split}
\liminf_{k\to\infty}k^{-n}\int_M {\rm Tr\,}P_{k,K_M}(x)\,dv_M&\geq\liminf_{k\to\infty}\int_{M(0)} k^{-n}{\rm Tr\,}P_{k,K_M}(x)\,dv_M\\
\geq\int_{M(0)} {\rm Tr\,}b^{(0)}_{0,K_M}(x)\,dv_M
&=\frac{1}{n!}\int_{M(0)}\big(\tfrac{\sqrt{-1}}{2\pi} R^L\big)^n=\frac{1}{n!}\int_{M}\big(\tfrac{\sqrt{-1}}{2\pi} R^L\big)^n\,.
\end{split}
\end{equation*}
Hence \eqref{micpl} follows. 
\end{proof}
Let us close with an amusing by-product of Theorem \ref{s1-main1}. Let $(L,h^L)$ be a holomorphic line bundle over a compact Hermitian manifold $(M,\Theta)$ of dimension $n$. Assume that $\dot{R}^L$ is non-degenerate of constant signature
$(n_-,n_+)$ at each point of $M$. From Theorem~\ref{s1-main1}, we see 
that if $q\neq n_-$, then $P^{(q)}_k(x)=O(k^{-N})$, for every $N\geq0$. Thus,
\[
{\rm dim\,}\cH^q(M,L^k)=O(k^{-N}),\ \ \forall N\geq0.
\]
Since ${\rm dim\,}\cH^q(M,L^k)$ is an integer, we obtain the Andreotti-Grauert coarse vanishing theorem
(see \cite[Th.\,1.5]{MM06}, \cite[Rem.\,8.2.6]{MM07}):
\begin{equation}\label{ag}
{\rm dim\,}\cH^q(M,L^k)=0\,,\quad\text{for $k$ large enough}.
\end{equation}
This proof uses just estimates of the spectral spaces. The original proof of Andreotti-Grauert was based on cohomology finiteness theorems for the disc bundle $L^*$. Ph. Griffiths gave a proof using the Bochner-Kodaira-Nakano formula. For a proof using Lichnerowicz formula and a comparison of methods, see \cite[Th.\,1.5]{MM06}, \cite[Rem.\,1.6]{MM06}.
Note that the above proof of \eqref{ag} provides a positive answer to a question of Bouche \cite{Bou99} whether one could get vanishing theorems by just using (heat or Bergman) kernel methods. 

\subsection{Tian's theorem and equidistribution of zeros}\label{s:tian}
Given a positive line bundle $L$ on a compact manifold $M$ one can consider the Kodaira embeddings $\Phi_k:M\to\mathbb{P}(H^0(M,L^k)^*)$ for large $k$, where $H^0(M,L^k)=\set{u\in\cC^\infty(M,L^k);\, \ddbar_ku=0}$. Denote by $\omega_{FS}$ the Fubini-Study metric on $\mathbb{P}(H^0(M,L^k)^*)$. Tian  
\cite[Th.\,A]{Tian} proved that $\frac1k\Phi_k^*(\omega_{FS})$ converges to the curvature $\frac{\sqrt{-1}}{2\pi} R^L$ as $k\to\infty$ in the $\cC^2$-topology. This answered a conjecture of S.~T.~Yau~\cite{Yau87}. Ruan \cite{Ruan98} proved the convergence in the $\cC^\infty$-topology and  
improved the estimate of the convergence speed. Both papers use the peak section method,  
based on $L^2$--estimates for ${\overline\partial}$. 
A proof of the convergence in the $\cC^0$-topology using the heat kernel  
appeared in Bouche \cite{Bou90}. Catlin \cite{Cat97} and Zelditch \cite{Zel98} deduced the convergence from the  
asymptotic expansion of the Bergman kernel. 

We will consider here a compact Hermitian manifold 
$(M,\Theta)$ and a big line bundle $L\to M$. Let $h^L$ be a strictly positive singular Hermitian metric on $L$, smooth outside a
proper analytic set $\Sigma$ of $M$. 
We endow $H^0(M,L^k\otimes\cI(h^k))$ with the $L^2$ scalar product \eqref{l2sing} induced by $h^L$ and $dv_M=\Theta^n/n!$. 
Consider the Kodaira map
\begin{equation}
\begin{split}
&\Phi_k:M\setminus B_k\to\mathbb{P}\big(H^0(M,L^k\otimes\cI(h^k))^*\big)\,,\\
&x\longmapsto\big\{s\in H^0(M,L^k\otimes\cI(h^k));s(x)=0\big\}\,,
\end{split}
\end{equation}
where $B_k$ is the base locus of $H^0(M,L^k\otimes\cI(h^k))$.
To the Hermitian structure \eqref{l2sing} corresponds a Fubini-Study metric $\omega_{FS}$ on 
$\mathbb{P}\big(H^0(M,L^k\otimes\cI(h^k))^*\big)$, defined as the curvature of the hyperplane line bundle (see e.\,g.\ \cite[(5.1.3)]{MM07}). The induced Fubini-Study metric is the metric 
$\frac{1}{k}\Phi_k^*(\omega_{FS})$ on $M\setminus B_k$. 
\begin{thm}\label{tian_big}  Let $(M,\Theta)$ be a compact Hermitian manifold and let $L\to M$ be a big line bundle. Let $h^L$ be a strictly positive singular Hermitian metric on $L$, smooth outside a
proper analytic set $\Sigma$ of $M$. 
Then for any compact set $K\subset M\setminus\Sigma$, there exists $k_0$ such that for $k\geq k_0$ the base locus $B_k$ of $H^0(M,L^k\otimes\cI(h^k))$ is disjoint of $K$.  
Moreover, for any $\ell\in\N$, there exists $C_{\ell,K}>0$ independent of $k$ such that  
for $k\geq k_0$ the following holds
\begin{equation}\label{sz1,1} 
\Big|\frac{1}{k}\Phi_k^*(\omega_{FS})-\frac{\sqrt{-1}}{2\pi} R^L\Big|_{\cC^\ell(K)}
\leqslant\frac{C_{\ell,K}}{k}\,\cdot 
\end{equation} 
\end{thm}
\begin{proof}
Let $\{S^k_j\}_{j=1}^{m_k}$ be an orthonormal basis of $H^0(M,L^k\otimes\cI(h^k))$. Then the multiplier Bergman kernel function \eqref{sing-e0-2} is given by
\[
P^{(0)}_{k,\cI}(x)=\sum_{j=1}^{m_k}|S^k_j(x)|_{h^k}^2,\;\;\;x\in M\setminus\Sigma\,.
\]
Let $K\subset M\setminus\Sigma$ be a compact set.
The expansion \eqref{s1-e3mainterz} yields $P^{(0)}_{k,\cI}(x)=b^{(0)}_0(x)k^n+o(k^n)$, as $k\to\infty$, uniformly on $K$. Since $\inf_{x\in K}b^{(0)}_0(x)>0$, there exists $k_0$ such that for all $k\geq k_0$ we have $\inf_{x\in K}P^{(0)}_{k,\cI}(x)>0$. Hence $K\cap B_k=\emptyset$, for all $k\geq k_0$.

For a local holomorphic frames $e_L$ 
of $L$ over an open set $U\subset M$, we set 
$S^k_j=f^k_je^{\otimes k}_L$, where $f^k_j\in\cO(U)$.
The choice of the basis $\{S^k_j\}_{j=1}^{m_k}$ induces an isometric identification 
$\mathbb{P}\big(H^0(M,L^k\otimes\cI(h^k))^*\big)\cong\mathbb{P}^{m_k-1}$ and in terms of this identification $\Phi_k$ has the form 
\[\Phi_k:M\setminus B_k\to\mathbb{P}^{m_k-1}\,,\;\;  \Phi_k(x)=[f_1^k(x),\ldots,f_{m_k}^k(x)]\,,
\]
hence 
\[\Phi_k^*(\omega_{FS})=\frac{\sqrt{-1}}{2\pi}\,\partial\overline\partial\log
\Big(\sum_{j=1}^{m_k}|f^k_j(x)|^2\Big)\,\;\;\text{on $U\setminus B_k$}\,,\]
thus
\begin{equation}\label{ell00,1}
\frac{1}{k}\Phi_k^*(\omega_{FS})-\frac{\sqrt{-1}}{2\pi}R^L=
-\frac{\sqrt{-1}}{2\pi k}\,\overline\partial\partial\log P^{(0)}_{k,\cI}(x)\,,\;\;\text{on $M\setminus B_k$}\,.
\end{equation}
The expansion \eqref{s1-e3mainterz} shows that $\overline\partial\partial\log P^{(0)}_{k,\cI}(x)=O(1)$, for $k\to\infty$ in the $\cC^{\ell}$-topology, since $\log P^{(0)}_{k,\cI}(x)=\log k^n+\log(b^{(0)}_{0}(x)+O(\frac1k))$ in the $\cC^{\ell+2}$-topology. Hence \eqref{sz1,1} is a consequence of \eqref{ell00,1}.
\end{proof}

\par An important application of the convergence of the Fubini-Study currents is the study of the asymptotic distribution of zeros of random holomorphic sections. After the pioneering work of Nonnenmacher-Voros
\cite{NoVo:98}, general methods were developed by Shiffman-Zelditch \cite{ShZ99} and Dinh-Sibony \cite{DS06b} to describe the asymptotic distribution of zeros of random holomorphic sections of a positive line bundle over a projective manifold endowed with a smooth positively curved metric. The paper \cite{DS06b} gives moreover very good convergence speed
and applies to general measures (e.\,g.\ equidistribution of complex zeros of homogeneous polynomials with real coefficients). Some important technical tools for higher dimension used in the previous works were introduced
by Forn{\ae}ss-Sibony \cite{FS95}. 
For the non-compact setting and the case of singular Hermitian metrics see \cite{CM11,CM12,CM13,DMS11}.

Using the results of the present paper we can further generalize some results from \cite{CM11}.

We define positive $(1,1)$ currents $\gamma_k$ on $M$, called Fubini-Study currents, by 
\begin{equation}\label{e:gammap}
\gamma_k\mid_{_U}=\frac{\sqrt{-1}}{2\pi}\,\partial\overline\partial\log
\Big(\sum_{j=1}^{m_k}|f^k_j(x)|^2\Big)\,.
\end{equation}
Then
\begin{equation}\label{ell00,2}
\frac{1}{k}\gamma_k-\frac{\sqrt{-1}}{2\pi}R^L=
-\frac{\sqrt{-1}}{2\pi k}\,\overline\partial\partial\log P^{(0)}_{k,\cI}(x)\,,\;\;\text{on $M\setminus\Sigma$}\,.
\end{equation}
This shows that the definition \eqref{e:gammap} of $\gamma_k$ is independent of the choice of holomorphic frame $e_L$ and basis $\{S^k_j\}_{j=1}^{m_k}$.

Let $\lambda_k$ be the normalized surface measure on the unit sphere ${\mathcal S}^k$ of $H^0(M,L^k\otimes\cI(h^k))$, defined in the natural way by using a fixed orthonormal basis.
Consider the probability space ${\mathcal S}_\infty=\prod_{p=1}^\infty{\mathcal S}^p$ endowed with the probability measure $\lambda_\infty=\prod_{p=1}^\infty\lambda_p$. 
Denote by $[S=0]$ the current of integration (with multiplicities) over the analytic hypersurface $\{S=0\}$ determined by a nontrivial section $S\in H^0(M,L^k\otimes\cI(h^k))$.
\begin{cor}\label{equi_big}
Let $(M,\Theta)$ be a compact Hermitian manifold and let $L\to M$ be a big line bundle. Then we have in the weak sense of currents on $M$
\begin{gather*}
\lim_{k\to\infty}\frac{1}{k}\gamma_k=\frac{\sqrt{-1}}{2\pi}R^L\,,\\
\lim_{k\to\infty}\,\frac{1}{k}\,[\sigma_k=0]=\frac{\sqrt{-1}}{2\pi}R^L\,,\;for\;\lambda_\infty\text{-a.e. sequence }\{\sigma_k\}_{k\geq1}\in{\mathcal S}_\infty\,.
\end{gather*}
\end{cor}
\begin{proof}
Let us observe that $H^0(M,L^k\otimes\cI(h^k))=H^0_{(2)}(M\setminus\Sigma,L^k,h^k,dv_M)$ and $h^L$ and $dv_M$ satisfy the conditions (A)-(C) of \cite{CM11}. Due to the asymptotic expansion \eqref{s1-e3mainterz} on $M\setminus\Sigma$ the conclusion follows from Theorems 1.1 and 4.3 from \cite{CM11}. 
\end{proof}
Corollary \ref{equi_big} generalizes \cite[Th.\,6.5]{CM11}, where the result was obtained under the hypothesis that $M$ is K\"ahler.

\par The results of this paper allow also to extend Tian's convergence theorem to the situation considered in Theorem \ref{tmain-semi1}.
Let $\{S^k_j\}^{d_k}_{j=1}$, $d_k\in\N\cup\{\infty\}$, be an orthonormal basis of $H^0_{(2)}(X,L^k\otimes K_M)$. We define the Fubini-Study currents $\gamma_k$ on $M$ in analogy to \eqref{e:gammap} 
as follows. Let $U$ be an open set and let $e_L$ be a local holomorphic frame for $L$ on $U$. Set
\begin{equation}\label{e:gammap1}
\gamma_k\mid_{_U}=\frac{\sqrt{-1}}{2\pi}\,\partial\overline\partial\log
\Big(\sum_{j=1}^{d_k}|f^k_j(x)|^2\Big)\,.
\end{equation}
The currents $\gamma_k$ don't depend on the choice of the local frame $e_L$ and are globally defined $(1,1)$ currents (see \cite[Lemma\,3.2 (ii)]{CM11}, \cite{MM07}, \cite{MM08a}). 
If $d_k<\infty$, then $\gamma_k=\Phi_k^*(\omega_{FS})$, where 
$\Phi_k:M\setminus B_k\to\mathbb{P}\big(H^0_{(2)}(X,L^k\otimes K_M)^*\big)$, $x\longmapsto\big\{s\in H^0_{(2)}(X,L^k\otimes K_M);s(x)=0\big\}$ is the Kodaira map.
We have moreover
\begin{equation}\label{ell00,3}
\frac{1}{k}\gamma_k-\frac{\sqrt{-1}}{2\pi}R^L=
-\frac{\sqrt{-1}}{2\pi k}\,\overline\partial\partial\log P_{k,K_M}(x)\,,\;\;\text{on $M$}\,.
\end{equation}
Hence Theorem \ref{tmain-semi1} implies immediately the following.
\begin{thm}\label{tian_semipos}
Let $(M,\Theta)$ be a complete K\"ahler manifold and $(L,h^L)$ be a holomorphic semi-positive line bundle over $M$, with smooth Hermitian metric $h^L$. Let $M(0)$ be the set where $(L,h^L)$ be a positive.
Then for any compact set $K\subset M(0)$, there exists $k_0$ such that for $k\geq k_0$ the base locus $B_k$ of $H^0(M,L^k\otimes K_M)$ is disjoint of $K$.  
Moreover, for any $\ell\in\N$, there exists $C_{\ell,K}>0$ independent of $k$ such that  
for $k\geq k_0$ the following holds
\begin{equation}\label{sz1,2} 
\Big|\frac{1}{k}\gamma_k-\frac{\sqrt{-1}}{2\pi} R^L\Big|_{\cC^\ell(K)}
\leqslant\frac{C_{\ell,K}}{k}\,\cdot 
\end{equation} 
\end{thm}
Theorem \ref{tian_semipos} can be used as above to prove the analogue of Corollary \ref{equi_big}.
Note that in \cite[Th.\,3.1\,(ii)]{CM13} the equidistribution of sections of adjoint bundles was actually obtained in the presence of singular Hermitian metrics.

\bigskip

\noindent
{\small\emph{
\textbf{Acknowledgments.} The methods of microlocal analysis used in this work are marked by the influence of Professor Johannes Sj\"{o}strand. The first-named author in particular wishes to express his hearty thanks for discussions on similar subjects and for giving us the idea of the
proof of Theorem~\ref{s1-mainex}.
}}

\end{document}